\let\origsection=\section \def\section{\@ifstar{\origsection*}{\mysection}}
\def\mysection{\@startsection{section}{1}\z@{.7\linespacing\@plus\linespacing}{.5\linespacing}{\normalfont\scshape\centering\S}}
\renewcommand{\PrintDOI}[1]{\doi{#1}}
\numberwithin{equation}{section}
\numberwithin{figure}{section}
\let\polishlcross=\l
\def\l{\ifmmode\ell\else\polishlcross\fi}
\def\qand{\quad\text{and}\quad}
\let\emptyset=\varnothing
\let\setminus=\smallsetminus
\def\moverlay{\mathpalette\mov@rlay}
\def\mov@rlay#1#2{\leavevmode\vtop{   \baselineskip\z@skip \lineskiplimit-\maxdimen
   \ialign{\hfil$\m@th#1##$\hfil\cr#2\crcr}}}
\newcommand{\charfusion}[3][\mathord]{
    #1{\ifx#1\mathop\vphantom{#2}\fi
        \mathpalette\mov@rlay{#2\cr#3}
      }
    \ifx#1\mathop\expandafter\displaylimits\fi}
\DeclareFontFamily{U}  {MnSymbolC}{}
\DeclareSymbolFont{MnSyC}         {U}  {MnSymbolC}{m}{n}
\DeclareFontShape{U}{MnSymbolC}{m}{n}{
    <-6>  MnSymbolC5
   <6-7>  MnSymbolC6
   <7-8>  MnSymbolC7
   <8-9>  MnSymbolC8
   <9-10> MnSymbolC9
  <10-12> MnSymbolC10
  <12->   MnSymbolC12}{}
\DeclareMathSymbol{\powerset}{\mathord}{MnSyC}{180}
\newcommand{\qedge}[7]{

	\ifx\relax#4\relax
		\def\qoffs{0pt}
	\else
		\def\qoffs{#4}
	\fi

	\def\qhedge{
		($#1+#3!\qoffs!-90:#2-#3$) --
		($#2+#1!\qoffs!-90:#3-#1$) --
		($#3+#2!\qoffs!-90:#1-#2$) -- cycle}

	\coordinate (12) at ($#1!\qoffs!90:#2$);
	\coordinate (13) at ($#1!\qoffs!-90:#3$);
	\coordinate (23) at ($#2!\qoffs!90:#3$);
	\coordinate (21) at ($#2!\qoffs!-90:#1$);
	\coordinate (31) at ($#3!\qoffs!90:#1$);
	\coordinate (32) at ($#3!\qoffs!-90:#2$);
	
	\def\nqhedge{
		(13) let \p1=($(13)-#1$), \p2=($(12)-#1$) in
			arc[start angle={atan2(\y1,\x1)}, delta angle={atan2(\y2,\x2)-atan2(\y1,\x1)-360*(atan2(\y2,\x2)-atan2(\y1,\x1)>0)}, x radius=\qoffs, y radius=\qoffs] --
		(21) let \p1=($(21)-#2$), \p2=($(23)-#2$) in
			arc[start angle={atan2(\y1,\x1)}, delta angle={atan2(\y2,\x2)-atan2(\y1,\x1)-360*(atan2(\y2,\x2)-atan2(\y1,\x1)>0)}, x radius=\qoffs, y radius=\qoffs] --
		(32) let \p1=($(32)-#3$), \p2=($(31)-#3$) in
			arc[start angle={atan2(\y1,\x1)}, delta angle={atan2(\y2,\x2)-atan2(\y1,\x1)-360*(atan2(\y2,\x2)-atan2(\y1,\x1)>0)}, x radius=\qoffs, y radius=\qoffs] --
		cycle}

		\ifx\relax#5\relax
		\def\qlwidth{1pt}
	\else
		\def\qlwidth{#5}
	\fi
	
		\ifx\relax#7\relax
		\fill \nqhedge;
	\else
		\fill[#7]\nqhedge;
	\fi

		\ifx\relax#6\relax
		\draw[line width=\qlwidth,rounded corners=\qoffs]\nqhedge;
	\else
		\draw[line width=\qlwidth,#6]\nqhedge;
	\fi
}
\newcommand{\redge}[8]{

		\ifx\relax#5\relax
		\def\qoffs{0pt}
	\else
		\def\qoffs{#5}
	\fi

				\def\rhedge{
		($#1+#4!\qoffs!-90:#2-#4$) -- 
		($#2+#1!\qoffs!-90:#3-#1$) -- 
		($#3+#2!\qoffs!-90:#4-#2$) -- 
		($#4+#3!\qoffs!-90:#1-#3$) -- cycle}

	\coordinate (12) at ($#1!\qoffs!90:#2$);
	\coordinate (14) at ($#1!\qoffs!-90:#4$);
	\coordinate (23) at ($#2!\qoffs!90:#3$);
	\coordinate (21) at ($#2!\qoffs!-90:#1$);
	\coordinate (34) at ($#3!\qoffs!90:#4$);
	\coordinate (32) at ($#3!\qoffs!-90:#2$);
	\coordinate (41) at ($#4!\qoffs!90:#1$);
	\coordinate (43) at ($#4!\qoffs!-90:#3$);
	
	\def\nrhedge{
		(14) let \p1=($(14)-#1$), \p2=($(12)-#1$) in 
			arc[start angle={atan2(\y1,\x1)}, delta angle={atan2(\y2,\x2)-atan2(\y1,\x1)-360*(atan2(\y2,\x2)-atan2(\y1,\x1)>0)}, x radius=\qoffs, y radius=\qoffs] --
		(21) let \p1=($(21)-#2$), \p2=($(23)-#2$) in 
			arc[start angle={atan2(\y1,\x1)}, delta angle={atan2(\y2,\x2)-atan2(\y1,\x1)-360*(atan2(\y2,\x2)-atan2(\y1,\x1)>0)}, x radius=\qoffs, y radius=\qoffs] --
		(32) let \p1=($(32)-#3$), \p2=($(34)-#3$) in 
			arc[start angle={atan2(\y1,\x1)}, delta angle={atan2(\y2,\x2)-atan2(\y1,\x1)-360*(atan2(\y2,\x2)-atan2(\y1,\x1)>0)}, x radius=\qoffs, y radius=\qoffs] --
		(43) let \p1=($(43)-#4$), \p2=($(41)-#4$) in 
			arc[start angle={atan2(\y1,\x1)}, delta angle={atan2(\y2,\x2)-atan2(\y1,\x1)-360*(atan2(\y2,\x2)-atan2(\y1,\x1)>0)}, x radius=\qoffs, y radius=\qoffs] --
		cycle}

		\ifx\relax#6\relax
		\def\rlwidth{1pt}
	\else
		\def\rlwidth{#6}
	\fi
	
		\ifx\relax#8\relax
		\fill \nrhedge;
	\else
		\fill[#8]\nrhedge;
	\fi

		\ifx\relax#7\relax
		\draw[line width=\rlwidth,rounded corners=\qoffs]\nrhedge;
	\else
		\draw[line width=\rlwidth,#7]\nrhedge;
	\fi
	}
\let\epsilon=\varepsilon
\let\eps=\epsilon
\let\rho=\varrho
\let\theta=\vartheta
\def\EE{{\mathds E}}
\def\NN{{\mathds N}}
\def\PP{{\mathds P}}
\def\RR{{\mathds R}}
\newcommand{\cA}{\mathcal{A}}
\newcommand{\cB}{\mathcal{B}}
\newcommand{\cC}{\mathcal{C}}
\newcommand{\cE}{\mathcal{E}}
\newcommand{\cF}{\mathcal{F}}
\newcommand{\cH}{\mathcal{H}}
\newcommand{\cI}{\mathcal{I}}
\newcommand{\cP}{\mathcal{P}}
\newcommand{\cQ}{\mathcal{Q}}
\newcommand{\cS}{\mathcal{S}}
\newcommand{\cT}{\mathcal{T}}
\newcommand{\ccP}{\mathscr{P}}
\newtheoremstyle{note}  {4pt}  {4pt}  {\sl}  {}  {\bfseries}  {.}  {.5em}          {}
\newtheoremstyle{introthms}  {3pt}  {3pt}  {\itshape}  {}  {\bfseries}  {.}  {.5em}          {\thmnote{#3}}
\newtheoremstyle{remark}  {2pt}  {2pt}  {\rm}  {}  {\bfseries}  {.}  {.3em}          {}
\theoremstyle{plain}
\newtheorem{theorem}{Theorem}[section]
\newtheorem{lemma}[theorem]{Lemma}
\newtheorem{prop}[theorem]{Proposition}
\newtheorem{cor}[theorem]{Corollary}
\newtheorem{observation}[theorem]{Observation}
\theoremstyle{note}
\theoremstyle{remark}
\newcommand*\patchAmsMathEnvironmentForLineno[1]{
\expandafter\let\csname old#1\expandafter\endcsname\csname #1\endcsname
\expandafter\let\csname oldend#1\expandafter\endcsname\csname end#1\endcsname
\renewenvironment{#1}
{\linenomath\csname old#1\endcsname}
{\csname oldend#1\endcsname\endlinenomath}}
\newcommand*\patchBothAmsMathEnvironmentsForLineno[1]{
\patchAmsMathEnvironmentForLineno{#1}
\patchAmsMathEnvironmentForLineno{#1*}}
\newcommand{\overrighharpoonup}[1]{\ThisStyle{%
 \vbox {\m@th\ialign{##\crcr
 \rightharpoonupfill \crcr
 \noalign{\kern-\p@\nointerlineskip}
 $\hfil\SavedStyle#1\hfil$\crcr}}}}
\def\rightharpoonupfill{%
$\SavedStyle\m@th\mkern+0.8mu\cleaders\hbox{$\shortbar\mkern-4mu$}\hfill\rightharpoonuptip\mkern+0.8mu$}
\def\rightharpoonuptip{%
 \raisebox{\z@}[2pt][1pt]{\scalebox{0.55}{$\SavedStyle\rightharpoonup$}}}
\def\shortbar{%
 \smash{\scalebox{0.55}{$\SavedStyle\relbar$}}}
\def\MR#1{\relax}
\newcommand{\defn}{\emph}
\newcommand{\bR}{\RR}
\newcommand{\cW}{\mathcal{W}}
\newcommand{\cM}{\mathcal{M}}
\newcommand{\cZ}{\mathcal{Z}}
\newcommand{\ordsubs}[2]{{#1}^{\underline{#2}}}
\newcommand{\unordsubs}[2]{\binom{#1}{#2}}
\newcommand{\intersecting}[1]{{#1}-intersecting}
\newcommand{\rob}{\intersecting{$\eta$}}
\newcommand{\rightharpoonupline}{\mathchoice%
	{\clipbox{{0.0\width} {0.3\height} {0.7\width} {-0.425\height}}{$\scriptstyle\rightharpoonup$}}
	{\clipbox{{0.0\width} {0.3\height} {0.7\width} {-0.425\height}}{$\scriptstyle\rightharpoonup$}}
	{\clipbox{{0.0\width} {0.4\height} {0.7\width} {-0.425\height}}{$\scriptscriptstyle\rightharpoonup$}}
	{\clipbox{{0.0\width} {0.4\height} {0.7\width} {-0.425\height}}{$\scriptscriptstyle\rightharpoonup$}}
}
\newcommand{\rightharpoonupend}{\mathchoice%
	{\clipbox{{.675\width} {0.3\height} 0pt {-0.425\height}}{$\scriptstyle\rightharpoonup$}}
	{\clipbox{{.675\width} {0.3\height} 0pt {-0.425\height}}{$\scriptstyle\rightharpoonup$}}
	{\clipbox{{.675\width} {0.4\height} 0pt {-0.425\height}}{$\scriptscriptstyle\rightharpoonup$}}
	{\clipbox{{.675\width} {0.4\height} 0pt {-0.425\height}}{$\scriptscriptstyle\rightharpoonup$}}
}
\newcommand{\overrightharpoonup}[1]{\mathchoice%
	{\vbox{\m@th\ialign{##\cr$\displaystyle\hbox{$\displaystyle\rightharpoonupline$}\mkern-1mu\cleaders\hbox{$\displaystyle\mkern-2mu\rightharpoonupline$}\hfill\mkern-2mu\rightharpoonupend$\cr\noalign{\nointerlineskip\vspace{-0pt}}$\displaystyle #1$\cr}}}
	{\vbox{\m@th\ialign{##\cr$\textstyle\hbox{$\textstyle\rightharpoonupline$}\mkern-1mu\cleaders\hbox{$\textstyle\mkern-2mu\rightharpoonupline$}\hfill\mkern-2mu\rightharpoonupend$\cr\noalign{\nointerlineskip\vspace{-0pt}}$\textstyle #1$\cr}}}
	{\vbox{\m@th\ialign{##\cr$\scriptstyle\hbox{$\scriptstyle\rightharpoonupline$}\mkern-1mu\cleaders\hbox{$\scriptstyle\mkern-2mu\rightharpoonupline$}\hfill\mkern-2mu\rightharpoonupend$\cr\noalign{\nointerlineskip\vspace{-0pt}}$\scriptstyle #1$\cr}}}
	{\vbox{\m@th\ialign{##\cr$\scriptscriptstyle\hbox{$\scriptscriptstyle\rightharpoonupline$}\mkern-1mu\cleaders\hbox{$\scriptscriptstyle\mkern-2mu\rightharpoonupline$}\hfill\mkern-2mu\rightharpoonupend$\cr\noalign{\nointerlineskip\vspace{-0pt}}$\scriptscriptstyle #1$\cr}}}
}
\newcommand{\ordered}[1]{\overrightharpoonup{\mathbf{#1}}}
\newcommand{\orderededge}[1]{\overrightharpoonup{#1}}
\newcommand{\orde}[1]{\overrightharpoonup{#1}}
\newcommand{\ordedges}{\overrightharpoonup{E}}
\newcommand{\unord}[1]{\mathbf{#1}}
\DeclarePairedDelimiter{\abs}{\lvert}{\rvert}
\newcommand{\ex}{\mathds{E}}
\newcommand{\pr}{\mathds{P}}
\DeclareMathOperator{\Ima}{Im}
\DeclareMathOperator{\dom}{dom}
\DeclarePairedDelimiter{\ceil}{\lceil}{\rceil}
\def\COMMENT#1{}
\def\TASK#1{}
\let\TASK=\footnote             
\newcommand{\typelo}{\textup{lo}}
\newcommand{\typecon}[1][]{\if\relax\detokenize{#1}\relax\textup{con}\else\textup{#1-con}\fi}
\newcommand{\typeend}[1][]{\if\relax\detokenize{#1}\relax\textup{end}\else\textup{#1-end}\fi}
\newcommand{\defeq}{=}
\DeclareMathOperator{\reg}{reg}
\newcommand{\boundary}[1]{\partial #1}
\definecolor{darkgreen}{rgb}{0,0.5,0}
\begin{document}

\title[Decomposing hypergraphs into cycle factors]{Decomposing hypergraphs into cycle factors}

\author[F.~Joos]{Felix Joos}
\address{Institut f\"ur Informatik, Universit\"at Heidelberg, Germany}
\email{joos@informatik.uni-heidelberg.de}

\author[M.~K\"uhn]{Marcus K\"uhn}
\address{Institut f\"ur Informatik, Universit\"at Heidelberg, Germany}
\email{kuehn@informatik.uni-heidelberg.de}

\author[B. Sch\"ulke]{Bjarne Sch\"ulke}
\address{Fachbereich Mathematik, Universit\"at Hamburg, Hamburg, Germany}
\email{bjarne.schuelke@uni-hamburg.de}


\date{\today}

\thanks{The research leading to these results was partially supported by the Deutsche Forschungsgemeinschaft (DFG, German Research Foundation) -- 428212407 (F. Joos and M. K\"uhn).}

\begin{abstract}
A famous result by R\"odl, Ruci\'nski, and Szemer\'edi guarantees a (tight) Hamilton cycle in~$k$-uniform hypergraphs $H$ on $n$ vertices with minimum~$(k-1)$-degree $\delta_{k-1}(H)\geq (1/2+o(1))n$, thereby extending Dirac's result from graphs to hypergraphs.
For graphs, much more is known;
each graph on $n$ vertices with $\delta(G)\geq (1/2+o(1))n$ contains $(1-o(1))r$ edge-disjoint Hamilton cycles
where $r$ is the largest integer such that $G$ contains a spanning $2r$-regular subgraph,
which is clearly asymptotically optimal.
This was proved by Ferber, Krivelevich, and Sudakov answering a question raised by K\"uhn, Lapinskas, and Osthus.

We extend this result to hypergraphs;
every $k$-uniform hypergraph $H$ on $n$ vertices with  $\delta_{k-1}(H)\geq (1/2+o(1))n$
contains $(1-o(1))r$ edge-disjoint (tight) Hamilton cycles
where $r$ is the largest integer such that $H$ contains a spanning subgraph with each vertex belonging to~$kr$ edges.
In particular, this yields an asymptotic solution to a question of Glock, K\"uhn, and Osthus.

In fact, our main result applies to approximately vertex-regular $k$-uniform hypergraphs with a weak quasirandom property
and provides approximate decompositions into cycle factors without too short cycles.
\end{abstract}

\maketitle

\section{Introduction}

Decompositions are a very active branch of extremal combinatorics.
One of the earliest results regarding decompositions of graphs is Walecki's theorem, which states that a complete graph on an odd number of vertices has a decomposition into (edge-disjoint) Hamilton cycles.
In recent years, there have been many breakthroughs in the area of decompositions, 
such as the verification of the existence of designs~\cite{keevash:14,GKLO:16}, 
the resolution of the Oberwolfach problem~\cite{GJKKO:ta}, 
and the proof of Ringel's conjecture~\cite{MPS:20,KS:20b}.

A classic result by Dirac states that a graph on~$n\geq 3$ vertices with minimum degree at least~$n/2$ contains a Hamilton cycle.
It is natural to ask how many edge-disjoint Hamilton cycles exist in this setting.
Nash-Williams~\cite{NW:71} showed that there are $\lfloor 5n/224 \rfloor$ edge-disjoint Hamilton cycles.
As the union of edge-disjoint Hamilton cycles is an even-regular spanning subgraph,
there are at most $r/2$ edge-disjoint Hamilton cycles
where~$r$ is the largest even integer for which there exists an $r$-regular spanning subgraph.
Thus, for a graph~$G$, we define $\reg_{2}(G)$ to be the largest even integer $r$ such that~$G$ contains a spanning $r$-regular subgraph
and set $\reg_{2}(n,\delta)\defeq \min \{\reg_{2}(G)\colon |V(G)|=n,\delta(G)=\delta\}$.
Csaba, K\"uhn, Lo, Osthus, and Treglown~\cite{CKLOT:16} 
improved the result by Nash-Williams by showing that all large graphs $G$ on $n$ vertices contain at least $\reg_{2}(n,\delta(G))/2$ edge-disjoint Hamilton cycles.
K\"uhn, Lapinskas,  and Osthus~\cite{KLO:13} conjectured that this can be strengthened as each single~$G$ may have $\reg_{2}(G)/2$ edge-disjoint Hamilton cycles provided $\delta(G)\geq n/2$.
They also asked whether an approximate version is true; namely, that any graph $G$ on $n$ vertices with $\delta(G)\geq (1/2+o(1))n$ contains $(1/2-o(1))\reg_{2}(G)$ edge-disjoint Hamilton cycles.
Subsequently, this was proved by Ferber, Krivelevich, and Sudakov~\cite{FKS:17}.

The main result of this paper implies an analogous statement for $k$-uniform hypergraphs for~$k\geq 2$.
To state our results, we recall some terminology.
For an integer~$k\geq 2$, a hypergraph~$H$ is called~\emph{$k$-uniform hypergraph} or~\emph{$k$-graph} if all its edges have size~$k$.
A $k$-graph whose vertex set has a cyclic ordering such that its edge set consists of all sets of $k$ consecutive vertices in this ordering is called a \emph{(tight) cycle} (we only consider tight cycles in this article).
The \emph{length} of a cycle $C$ is defined as the number of edges in $C$.
As usual, a \emph{Hamilton cycle} in~$H$ is a cycle containing all vertices of~$H$.
Let $\delta_{k-1}(H)=\min\vert\{e\in E(H): \unord{x}\subseteq e\}\vert$ where the minimum is taken over all $(k-1)$-sets $\unord{x}\subseteq V(H)$.
In analogy to the above,
we define $\reg_k(H)$ as the largest integer $r$ divisible by $k$ such that $H$ contains a spanning subgraph $F$ in which each vertex of $F$ belongs to exactly $r$ edges of $F$.

Dirac's result was first generalised to hypergraphs by R\"odl, Ruci\'nski, and Szemer\'edi~\cite{RRS:06,RRS:11,RRS:08}. 
They showed that any $k$-graph~$H$ on~$n$ vertices with $\delta_{k-1}(H)\geq (\frac{1}{2}+o(1))n$ contains a Hamilton cycle.
Observe that, trivially, $H$ contains at most $\reg_k(H)/k$ edge-disjoint Hamilton cycles.
Our main result implies that~$H$ indeed asymptotically contains that many Hamilton cycles. 
More precisely, it yields the following strengthening of the result by R\"odl, Ruci\'nski, and Szemer\'edi.

\begin{theorem}\label{thm: simple min_deg}
	For all $k$, $\eps>0$, there exists $n_0$ such that
	every $k$-graph $H$ on $n\geq n_0$ vertices with $\delta_{k-1}(H)\geq (1/2+ \eps)n$ contains $(1-\eps)\reg_k(H)/k$ edge-disjoint Hamilton cycles.
\end{theorem}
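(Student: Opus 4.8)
The plan is to deduce Theorem~\ref{thm: simple min_deg} from the paper's main theorem, which provides approximate decompositions into cycle factors (without too short cycles) for approximately vertex-regular $k$-graphs satisfying a weak quasirandom property. Write $r\defeq\reg_k(H)$; thus $k\mid r$ and $H$ has a spanning subgraph $F$ in which every vertex lies in exactly $r$ edges. Since the union of any family of edge-disjoint Hamilton cycles is a spanning subgraph in which each vertex lies in a multiple of $k$ that is at most $r$, the value $(1-\eps)r/k$ sits just below the trivial upper bound, and the task is to realise it inside~$H$. The strategy is: make $F$ quasirandom, apply the main theorem to decompose almost all of it into edge-disjoint cycle factors with no short cycles, and then splice each cycle factor into a tight Hamilton cycle using the edges of $F$ that the decomposition leaves unused.

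I would start with two preliminary observations. Averaging the codegrees of the $(k-2)$-subsets of $V(H)$ gives $\deg_H(v)\ge(1/2+\eps)\binom{n}{k-1}$ for every~$v$, and hence $r=\reg_k(H)=\Omega_\eps(n^{k-1})$ (put weight $\eps'$ on every edge, balance the weighted degrees down to the minimum, and round the resulting fractional $\Omega_\eps(n^{k-1})$-regular subgraph integrally). This tells us the quantities we will afford to lose are all small relative to the quantities we keep, and in particular that we only need the shortest-cycle bound provided by the main theorem to be a large constant depending on~$\eps$. Next I would replace~$F$ by a spanning subgraph $F'\subseteq H$ that is $(1\pm o(1))D$-regular for some $D\ge(1-\eps/3)r$ and, crucially, satisfies the weak quasirandom hypothesis of the main theorem. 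This is obtained from~$F$ by a sparse random modification — deleting a small random set of $F$-edges and adding a small random set of edges of $H\setminus F$ — which changes only an $o(1)$-fraction of the edges at each vertex but is enough to lift~$F$ (which a priori carries no pseudorandomness) into the quasirandom regime, the degree hypothesis on~$H$ guaranteeing that enough edges are always available for the modification. (If the main theorem's quasirandom hypothesis is weak enough that a suitably chosen regular subgraph of a hypergraph with $\delta_{k-1}\ge(1/2+\eps)n$ satisfies it automatically, this step simplifies to choosing~$F$ appropriately.)

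Now apply the main theorem to~$F'$. This yields an approximate decomposition of~$F'$ into at least $(1-\eps/3)D/k\ge(1-\eps)r/k$ edge-disjoint cycle factors, each a spanning union of tight cycles all of length at least a large constant $\ell_0=\ell_0(\eps)$, so each cycle factor has at most $n/\ell_0$ components; moreover the set~$L$ of leftover edges of~$F'$ has $\Omega_\eps(n^{k-1})$ edges at each vertex and inherits the quasirandom property (this last point being something one either arranges in, or reads off from, the main theorem). It remains to turn all of these cycle factors into Hamilton cycles. I would process them one at a time: for a cycle factor with $t\le n/\ell_0$ components I perform $t-1$ ``splices'', each deleting one edge from each of two cycles and rejoining them through a short tight path on previously untouched vertices, which exists because the quasirandom leftover~$L$ has large codegrees; the paths are chosen so that all the Hamilton cycles produced are pairwise edge-disjoint, and since each splice touches only $O_k(1)$ vertices while $\ell_0$ is a large constant, the demand placed on~$L$ at any vertex stays far below its $\Omega_\eps(n^{k-1})$ supply. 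The result is $(1-\eps)r/k$ edge-disjoint tight Hamilton cycles in~$H$, as required.

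\textbf{Main obstacle.} I expect the delicate step to be guaranteeing the quasirandom hypothesis of the main theorem for the modified subgraph~$F'$, together with the quasirandomness of the leftover~$L$, given that the only global information at hand is the minimum $(k-1)$-degree of the \emph{ambient} hypergraph~$H$, whereas~$F$ is an essentially arbitrary maximum (approximately) $r$-regular subgraph that need carry no structure whatsoever. One has to argue that a random modification small enough not to disturb the regularity by more than an $o(1)$-fraction already forces the required mixing, and that this can be made compatible with the edge budgeting needed for the splicing stage.
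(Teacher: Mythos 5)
Your first two paragraphs are essentially the paper's own reduction: the random modification of a maximum near-regular spanning subgraph $F$ into an $\eta$-intersecting almost-regular spanning subgraph $F'$ with degree parameter $r'\geq(1-\eps)\reg_k(H)$ is exactly Lemma~\ref{lemma: almost regular subgraph} (whose proof keeps each $F$-edge with probability $(1-\eps)+\eps\omega(e)/\omega_{\max}$ and adds each other edge of $H$ with probability $\eps\omega(e)/\omega_{\max}$, for $\omega$ a balanced perfect fractional matching, so that approximate regularity is preserved while the minimum-degree hypothesis on the ambient $H$ forces the intersecting property). Your worry in the final paragraph about this being the delicate step is therefore resolved affirmatively by that lemma.

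The genuine problem is your third stage. Theorem~\ref{thm: main} does not hand you a decomposition into cycle factors of its own choosing that you must then repair; it takes as \emph{input} any family of $(1-\eps)r'/k$ cycle factors of girth at least $L$ and produces edge-disjoint copies of them. A Hamilton cycle is itself a cycle factor (with one component) of girth $n\geq L$, so one simply feeds in $(1-\eps)r'/k$ copies of the $n$-cycle and is done --- no splicing is needed. Worse, the splicing stage as you describe it does not go through: you need the leftover $L=F'-\bigcup_i\cC_i'$ to be quasirandom with large $(k-1)$-codegrees in order to find the short connecting tight paths, but Theorem~\ref{thm: main} gives no control whatsoever over which edges at a fixed $(k-1)$-set $\unord{x}$ are consumed. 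Since $d_{F'}(\unord{x})$ may be as small as $\eta n$ while the number of cycle factors is of order $n^{k-1}$, it is entirely consistent with the conclusion of the main theorem that every edge containing some particular $\unord{x}$ is used up, leaving $L$ with codegree zero there; ``arranging'' the contrary would mean reproving a chunk of the machinery. So as written the argument has a gap, but it is repaired by deleting the splicing stage and invoking Theorem~\ref{thm: main} with each $\cC_i$ a Hamilton cycle, which is precisely what the paper does.
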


This asymptotically solves (a much stronger version of) a conjecture due to Glock, K\"uhn, and Osthus~\cite[Conjecture 6.6]{GKO:20}
which states that if we in addition to the assumptions in Theorem~\ref{thm: simple min_deg} assume that each vertex is contained in the same number of edges and $k|n$, then $H$ has a decomposition into perfect matchings.
(Observe that in this case a Hamilton cycle contains $k$ edge-disjoint perfect matchings.)

In fact, there is no need to restrict our attention only to Hamilton cycles.
We call a $k$-graph~$C$ a \emph{cycle factor} (with respect to $H$) if $C$ is a union of vertex-disjoint cycles and has the same number of vertices as $H$.
The \emph{girth} of a cycle factor is the length of its shortest cycle.

\begin{theorem}\label{thm: simple2 min_deg}
	For all $k$, $\eps>0$,
	there exist $n_0$, $L$ such that every $k$-graph $H$ on $n\geq n_0$ vertices with $\delta_{k-1}(H)\geq (1/2+ \eps)n$ contains edge-disjoint copies of 
	the members of any family of $(1-\eps)\reg_k(H)/k$ cycle factors whose girth is at least $L$.
\end{theorem}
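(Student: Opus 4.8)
The plan is to derive Theorem~\ref{thm: simple2 min_deg} from the main decomposition result of the paper, which supplies approximate decompositions into prescribed cycle factors of sufficiently large girth for $k$-graphs that are approximately vertex-regular and weakly quasirandom. Fix $k$, $\eps>0$, put $r\defeq\reg_k(H)$, and assume $r\geq k$ (otherwise the prescribed family is empty). The heart of the argument is to exhibit, inside $H$, a spanning subgraph $H^\diamond$ that is $(1\pm\eps^{100})d$-vertex-regular for some $d$ with $(1-\eps/2)r\leq d\leq r$ and that satisfies the weak quasirandomness condition the main result needs. Given such $H^\diamond$, since $(1-\eps)r/k\leq(1-\eps/2)d/k$ (because $d\geq(1-\eps/2)r$ and $(1-\eps/2)^2\geq1-\eps$), the main result applied to $H^\diamond$ yields, inside $H^\diamond\subseteq H$, edge-disjoint copies of the members of any family of $(1-\eps)r/k$ cycle factors of girth at least the constant $L$ produced by the main result; this is exactly the assertion.

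It therefore remains to produce $H^\diamond$, and this is where $\delta_{k-1}(H)\geq(1/2+\eps)n$ enters --- note that this hypothesis does \emph{not} by itself make $H$, or any of its spanning regular subgraphs, quasirandom, so a genuine argument is needed. We distinguish two regimes according to the size of $r$. If $r\leq\eps^2 n$, we bypass the main result and build the prescribed cycle factors greedily, one at a time: after $t\leq(1-\eps)r/k$ of them have been placed, every $(k-1)$-set lies in at most $2t<\eps n/2$ of the edges used so far, since a cycle factor meets a fixed $(k-1)$-set in at most two of its edges; hence the remaining $k$-graph still has minimum $(k-1)$-degree at least $(1/2+\eps/2)n$, and a R\"odl--Ruci\'nski--Szemer\'edi-style embedding theorem for spanning cycle factors of large girth --- whose hypothesis dictates a lower bound on $L$ --- supplies the next factor. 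If $r>\eps^2 n$, we start instead from a spanning subgraph $F\subseteq H$ in which every vertex lies in exactly $r$ edges (which exists by the definition of $\reg_k(H)$) and \emph{regularize} it: the comparatively few $(k-1)$-sets overloaded in $F$, and more generally the deviation of $F$ from quasirandomness, are eliminated by rerouting only $o(n)$ edges at each vertex through edges of $H\setminus F$, whose abundance is guaranteed by the codegree hypothesis (if convenient with the aid of a sparse, weakly quasirandom ``reservoir'' subgraph of $H$ of vertex-degree $\Theta(\eps^3 n)$, which is available for the same reason); this yields $H^\diamond$ with $d=\lceil(1-\eps/3)r\rceil$.

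The genuinely hard part is not this reduction but the main decomposition result itself. I would prove it by combining a R\"odl-nibble / semi-random argument to route the bulk of each prescribed cycle factor with an absorption step to finish it off, the large-girth hypothesis being exactly what keeps the prescribed cycles flexible enough to be assembled and absorbed; the central difficulty there is to carry all of this out simultaneously for the whole prescribed family while keeping the edge sets disjoint. Within the reduction above, the only nontrivial step is the regularization in the regime $r>\eps^2 n$ --- the passage from the canonical regular subgraph $F$ to a weakly quasirandom spanning subgraph of essentially the same vertex-degree --- which is precisely where the minimum-codegree hypothesis does its work.
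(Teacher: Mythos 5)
Your overall plan---reduce to Theorem~\ref{thm: main} by exhibiting an $\eta$-intersecting, approximately regular spanning subgraph of vertex degree at least $(1-o(1))\reg_k(H)$---is exactly the paper's reduction (Lemma~\ref{lemma: almost regular subgraph}), but your execution has two genuine gaps. The first concerns your insistence on $d\leq r$ and the rerouting step built around it. An $\eta$-intersecting $k$-graph has every $(k-1)$-degree at least $\eta n$ (take $\unord{x}=\unord{y}$ in the definition), hence at least $\eta n\binom{n}{k-1}/k$ edges and average vertex degree at least $\eta\binom{n}{k-1}$. A priori $r=\reg_k(H)$ can be anywhere below $\binom{n-1}{k-1}$, so for $k\geq 3$ your Case~2 is asked to produce something impossible whenever $\eps^2 n<r\ll\binom{n}{k-1}$, a regime your Case~1 does not cover either. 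Even when $r=\Theta(\binom{n}{k-1})$, ``rerouting only $o(n)$ edges at each vertex'' is quantitatively far too little: if $N_F(\unord{x})\cap N_F(\unord{y})=\emptyset$, then creating $\eta n$ common neighbours forces at least $\eta n/2$ new edges through $\unord{x}$ or through $\unord{y}$, so already some vertex gains $\Omega(n)$ edges, and handling all pairs simultaneously (e.g.\ when $F$ splits into two parts with no cross-codegrees) requires $\Theta(n^{k-1})$ additions per vertex. The repair is to drop the upper bound $d\leq r$ entirely---a larger degree only permits more cycle factors---and this is what Lemma~\ref{lemma: almost regular subgraph} does: when $r$ is small one first passes to the $n^{-1/2}$-almost regular spanning subgraph of degree at least $\tfrac18\binom{n}{k-1}$ guaranteed by Ferber--Krivelevich--Sudakov (Lemma~\ref{lemma: non trivial regular subgraph}), and one then quasirandomizes by keeping each edge of $F$ with probability $1-\eps$ while adding \emph{every} edge of $H$ with probability proportional to its weight in a balanced perfect fractional matching (Lemma~\ref{lemma: fractional matching minimum degree}); this is a constant-fraction, degree-preserving perturbation that makes all common neighbourhoods linear, not an $o(n)$-per-vertex rerouting.

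The second gap is in Case~1: the greedy argument invokes ``a R\"odl--Ruci\'nski--Szemer\'edi-style embedding theorem for spanning cycle factors of large girth'' under $\delta_{k-1}\geq(\tfrac12+\eps)n$. No such theorem exists in the literature; the paper explicitly remarks that the existence of even a \emph{single} such cycle factor under this codegree condition was not previously known, and it is itself a special case of Theorem~\ref{thm: simple2 min_deg}. The greedy reduction would be sound if every prescribed factor were a Hamilton cycle, but for general cycle factors it assumes the statement being proved. With the corrected reduction above no case distinction on $r$ is needed at all: one applies Theorem~\ref{thm: main} (with a smaller $\eps$) to the subgraph produced by Lemma~\ref{lemma: almost regular subgraph}, whose degree $r'$ satisfies $(1-\eps')r'/k\geq(1-\eps)\reg_k(H)/k$.
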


To the best of our knowledge, this is not even known for a single cycle factor.
Note that~$L$ has to grow as~$k$ grows and~$\eps$ shrinks, but we do not require any dependence on~$n$.

As it turns out, we can restrict our attention to essentially vertex-regular $k$-graphs.
To this end, 
call a~$k$-graph~$H$ on $n$ vertices \emph{$\rho$-almost $r$-regular} for some~$\rho, r \geq 0$ 
if~$d_H(v)=(1\pm\rho)r$ for all~$v\in V(H)$ and $\rho$-almost regular if it is~$\rho$-almost~$r$-regular for some~$r\geq0$.
Note that $\rho$-almost $r$-regular $k$-graphs may simply be the disjoint union of two cliques, say, and thus they may not even contain a single Hamilton cycle.
To avoid such scenarios, we work with the following fairly weak quasirandomness property.
For~$\unord{x}=\{x_1,\ldots,x_{k-1}\}\in\unordsubs{V(H)}{k-1}$, we write~\defn{$N_H(\unord{x})$} for the neighbourhood of~$\unord{x}$ in~$H$, 
that is, the set~$\{ v\in V(H): \{v,x_1,\ldots,x_{k-1}\}\in E(H) \}$.
Define~$H$ to be~\emph{\intersecting{$\eta$}} 
if for all~$\unord{x},\unord{y}\in \unordsubs{V(H)}{k-1}$, 
we have~$\abs{N(\unord{x})\cap N(\unord{y})}\geq \eta n$.
Considering a complete graph on $n$ vertices where we delete the edges of a clique on $(1-1/k+o(1))n$ vertices
implies that being~$(1/k-o(1))$-intersecting alone is not sufficient to ensure the existence of a Hamilton cycle either.

The following theorem is our main result and Theorems~\ref{thm: simple min_deg} and~\ref{thm: simple2 min_deg} follow from it.
\begin{theorem}\label{thm: main}
	For all $k$, $\eta,\varepsilon>0$,
	there exist $L,n_0,\rho>0$ such that
	every $\eta$-intersecting $\rho$-almost $r$-regular $k$-graph $H$ on $n\geq n_0$ vertices 
	contains edge-disjoint copies of the members of any family of $(1-\eps)r/k$ cycle factors whose girth is at least~$L$.
\end{theorem}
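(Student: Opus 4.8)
\medskip

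\emph{Strategy.} Write $s\defeq\lceil(1-\eps)r/k\rceil$ and let $C_1,\dots,C_s$ be the cycle factors to be embedded, each a vertex-disjoint union of tight cycles of length $\ge L$. The plan is to embed them one after another, deleting the used edges from the host as we go, the guiding principle being that the hypothesis ``$\eta$-intersecting and $\rho$-almost $r$-regular'' is \emph{robust}: every cycle factor of girth $>k$ has at most two edges through any fixed $(k-1)$-set, and --- provided each is embedded so that its edges are spread evenly over $H$ --- after embedding $C_1,\dots,C_{j-1}$ the remaining hypergraph is still $\eta'$-intersecting and $\rho'$-almost $r_{j-1}$-regular with $r_{j-1}\ge\eps r/2$, for suitable fixed $\eta',\rho'$. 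It then suffices to embed each $C_j$ inside such a hypergraph while keeping the used edges spread.

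\emph{Tools.} First I would establish, directly from the weak hypothesis, the three lemmas underpinning R\"odl--Ruci\'nski--Szemer\'edi-type arguments, but in a \emph{robust} form tolerating a prescribed sparse bad set and producing evenly spread output. (i) A \emph{connecting lemma}: any two disjoint ordered $(k-1)$-tuples are joined by a tight path of length $\le\ell_0(k,\eta')$ avoiding any prescribed set of $\le\eta' n/2$ vertices; this is a short expansion argument, using that $\abs{N(\unord x)\cap N(\unord y)}\ge\eta' n$ for all $\unord x,\unord y$. (ii) An \emph{absorbing lemma}: a tight path $A$ on $O(\gamma n)$ vertices which, for every vertex set $S$ with $\abs S\le\gamma^2 n$, can be rerouted to span $V(A)\cup S$ with unchanged endpoints; $A$ is built from absorbing gadgets, and every vertex lies in $\Omega(n^{k-1})$ of them by intersecting-ness plus almost-regularity, so a random selection serves all vertices at once. (iii) An \emph{almost-tiling lemma}: a R\"odl-nibble/semi-random argument produces a family of at most $\delta n$ vertex-disjoint short tight paths covering all but $\delta n$ vertices and using up edges in a quasirandom (hence evenly spread) way --- the nibble must be run with error terms small enough that this spreading survives all $s$ rounds.

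\emph{The iteration.} Fix $\gamma\ll\delta\ll\eps$. I would first split $E(H)$ at random into a main part and a reservoir, the reservoir taking a $\gamma$-fraction of the edges at each vertex; the reservoir remains intersecting (with a worse parameter), and connectors and absorbers are drawn only from it. For $j=1,\dots,s$ in turn, let $H_{j-1}$ be what remains of the main part together with the unused reservoir, and embed $C_j$ by: (a) applying the almost-tiling lemma inside the main part of $H_{j-1}$ to cover all but $o(n)$ vertices by short tight paths, planned so that the paths and connectors can be assembled to realise, with girth $\ge L$, the multiset of cycle lengths prescribed by $C_j$ (the absorber makes the final length adjustments); (b) splicing in the reserved absorber(s) and closing the groups into tight cycles using reserved edges and the connecting lemma; (c) invoking the absorbing lemma to swallow the remaining $o(n)$ vertices. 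Everything is kept disjoint from rounds $1,\dots,j-1$; since the almost-tiling output and the reservoir usage are evenly spread, $H_j$ is again $\eta'$-intersecting and $\rho'$-almost $r_j$-regular with $r_j\ge\eps r/2$, and the process continues. After $s$ rounds this yields the required edge-disjoint copies of $C_1,\dots,C_s$.

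\emph{Main obstacle, and the deductions.} The hardest part is making (a)--(c) coexist. The $C_j$ are spanning, so no bounded absorber suffices alone and the nibble genuinely has to cover a $(1-o(1))$-fraction of the vertices; the $s$ factors together use a $(1-\eps)$-fraction of \emph{all} edges of $H$, so the nibble must be run with very tight concentration for edge-disjointness and almost-regularity to survive all $s$ rounds; and the only structural input is the \emph{weak} $\eta$-intersecting property, far below the codegree or super-regularity usually available, so each connecting, absorbing and tiling step has to be re-proved under this hypothesis in an evenly-spread form, and one must verify the hypothesis is preserved quantitatively through all $s$ deletions --- this is where the requirement that the cycles be long, hence locally path-like and flexible, is used. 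Finally, Theorems~\ref{thm: simple min_deg} and~\ref{thm: simple2 min_deg} follow from Theorem~\ref{thm: main} by passing to a suitable spanning subgraph: $\delta_{k-1}(H)\ge(1/2+\eps)n$ makes $H$ $2\eps$-intersecting, a short regularisation argument produces a spanning subgraph that is $\eps$-intersecting and $o(1)$-almost $r$-regular with $r\ge(1-\eps/2)\reg_k(H)$, and for Theorem~\ref{thm: simple min_deg} one additionally merges the $O(n/L)$ cycles of each resulting cycle factor into a single Hamilton cycle using $O(n/L)$ further edges of $H$, which the minimum-degree hypothesis supplies and which fit into the leftover edge budget once $L=L(k,\eps)$ is large.
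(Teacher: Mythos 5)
There are two genuine gaps, and both sit exactly where the paper's main new ideas are.

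First, your absorbing structure cannot work as described. You propose a single absorbing tight path $A$ on $O(\gamma n)$ vertices that is spliced into the cycle factor and then reroutes to swallow the $o(n)$ leftover vertices. But the cycle factors only have girth at least $L$, a constant: a factor may consist entirely of cycles of length exactly $L$, and then no cycle can contain a subpath on $\Omega(n)$ vertices, so $A$ has nowhere to live. If you instead split the absorber into many constant-size gadgets distributed over many cycles (your parenthetical ``absorber(s)''), you lose control over \emph{how many} leftover vertices each cycle ends up absorbing, and hence over the final cycle lengths --- so you cannot guarantee a copy of the prescribed factor $C_j$. This is precisely the obstacle the paper isolates, and its solution is a new gadget: absorbers are grouped into \emph{blocks}, each of which absorbs \emph{exactly one} vertex no matter what the leftover set is (arranged via an auxiliary bipartite perfect matching between leftover vertices and blocks). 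A cycle is then pre-built with length $\ell-b$ using $b$ blocks and becomes an $\ell$-cycle deterministically after absorption. Your proposal contains no mechanism with this property.

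Second, the iteration. You run a nibble afresh for each of the $s=\Theta(r)$ rounds, with $r$ as large as $\Theta(n^{k-1})$, and assert that almost-regularity and the intersecting property survive all rounds provided each embedding is ``evenly spread''. This is the crux of the theorem and is left entirely unproved: each round you must remove $n$ edges while keeping the vertex-degree deviations bounded by a fixed fraction of the \emph{remaining} degree (which drops to $\eps r$), and naive per-round errors accumulate linearly over $\Theta(n^{k-1})$ rounds. The paper avoids this altogether: it produces all $r$ almost-spanning path collections \emph{simultaneously}, by converting a fractional $L$-cycle decomposition of $H-F$ (Theorem~\ref{theorem: fractional cycle decomposition}) into edge-disjoint path systems via a single application of a pseudorandom hypergraph matching theorem (Corollary~\ref{cor: simple weighted pseudorandom matching}); only the sparse completion graph $F$ is consumed iteratively, and there every step is a probabilistic construction whose per-edge probabilities are bounded by type, so Freedman's inequality controls the degradation of $F$ across all rounds. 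Without either the simultaneous covering or a quantitative martingale argument of this kind, your round-by-round scheme does not go through. (A minor point: for Theorem~\ref{thm: simple min_deg} no merging of cycles is needed --- one simply takes each $\cC_i$ to be a single Hamilton cycle, whose girth is $n\geq L$.)
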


A result of Ferber, Krivelevich, and Sudakov~\cite{FKS:16} implies that any $k$-graph $H$ on~$n$ vertices with $\delta_{k-1}(H) \geq (1/2 + o(1))n$,
contains an $n^{-1/2}$-almost $r$-regular spanning subgraph $F$ for some $r>\frac{1}{8}\binom{n}{k-1}$.
This $F$ may not be $\eta$-intersecting for some $\eta>0$,
but the next result shows that this is not an obstacle for the application of Theorem~\ref{thm: main}.

\begin{lemma}\label{lemma: almost regular subgraph}
	For all~$k,\eps>0$, there exist~$n_0,\eta>0$ such that every~$k$-graph~$H$ on~$n\geq n_0$ vertices with~$\delta_{k-1}(H)\geq (\frac{1}{2}+\eps)n$ which contains a~$\rho$-almost~$r$-regular spanning subgraph for some~$\rho\in[0,1/2]$ and~$r\geq 0$ 
	also contains an~$\eta$-intersecting~$(\rho+n^{-1/3})$-almost~$r'$-regular spanning subgraph for some~$r'\geq \max\{(1-\eps)r,\frac{1}{8}\binom{n}{k-1}\}$.
\end{lemma}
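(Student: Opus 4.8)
The plan is to start from the given $\rho$-almost $r$-regular spanning subgraph $F_0$ and enlarge it to make it $\eta$-intersecting without destroying (approximate) regularity or losing more than an $\eps$-fraction of $r$. The key observation is that the obstruction to $\eta$-intersectingness is the existence of two $(k-1)$-sets $\unord{x},\unord{y}$ with small common neighbourhood \emph{in $F_0$}; but in $H$ itself the minimum $(k-1)$-degree hypothesis forces every $(k-1)$-set to have degree at least $(\tfrac12+\eps)n$, so any two $(k-1)$-sets have at least $2\eps n$ common neighbours in $H$. So there is plenty of room in $H\setminus F_0$ to add edges. Concretely, I would first reduce to the case where $r$ is not too small: if $r < \tfrac18\binom{n}{k-1}$, one can afford to replace $F_0$ by a larger subgraph, since by the cited Ferber--Krivelevich--Sudakov result $H$ contains an $n^{-1/2}$-almost $s$-regular spanning subgraph with $s > \tfrac18\binom{n}{k-1}$; a routine argument (e.g.\ taking an appropriate union / random sparsification) produces a common refinement that is $(\rho+n^{-1/3})$-almost $r'$-regular with $r' \ge \max\{(1-\eps)r, \tfrac18\binom{n}{k-1}\}$. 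Thus we may assume $r \ge \tfrac18\binom{n}{k-1}$, i.e.\ $F_0$ already has linear-in-$\binom{n}{k-1}$ edges per vertex, which means the perturbations below are of lower order.

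Next, I would add a sparse ``quasirandom booster'' $B \subseteq H\setminus F_0$ to guarantee the intersecting property. The cleanest route is probabilistic: let $B$ be obtained by including each edge of $H\setminus F_0$ independently with a small probability $p$ (of order, say, $n^{-1/4}$), and set $F = F_0 \cup B$. A first-moment / concentration (Chernoff plus union bound over the $O(n^{2(k-1)})$ pairs of $(k-1)$-sets and over the $n$ vertices) computation shows that with high probability every $(k-1)$-set $\unord{x}$ has $\abs{N_B(\unord{x})} \ge \tfrac{p}{2}\cdot \abs{N_{H\setminus F_0}(\unord{x})}$, and since $\abs{N_{H\setminus F_0}(\unord{x})} \ge \abs{N_H(\unord{x})} - \abs{N_{F_0}(\unord{x})} \ge (\tfrac12+\eps)n - d_{F_0}^{(k-1)}(\unord{x})$. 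Here one has to be slightly careful: it is conceivable that $\unord{x}$ has \emph{large} $F_0$-degree, so that $N_{H\setminus F_0}(\unord{x})$ is small; but in that case $N_{F_0}(\unord{x})$ itself is large, and since any two $(k-1)$-sets with large $F_0$-neighbourhoods automatically have large common neighbourhood (their neighbourhoods are both linear-sized subsets of an $n$-set, so they can fail to intersect in $\ge\eta n$ vertices only if one of them has size $<(1-\eta)n$), a short case analysis splitting on whether each of $\unord x,\unord y$ has $F_0$-degree above or below, say, $(\tfrac14)n$ yields $\abs{N_F(\unord x)\cap N_F(\unord y)} \ge \eta n$ for a suitable $\eta = \eta(k,\eps) > 0$ in all cases. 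Simultaneously, by Chernoff, $d_B(v) = (1\pm o(1))\, p\, d_{H\setminus F_0}(v)$ for all $v$; since $d_{H\setminus F_0}(v) \le \binom{n}{k-1} \le 8r$ and $p = o(n^{-1/4})$... (wait, let me just say $p$ is chosen so that $p\binom{n}{k-1} \le n^{-1/3} r / 2$, which is possible precisely because $r \ge \tfrac18\binom{n}{k-1}$), we get $d_B(v) \le n^{-1/3} r$, hence $F = F_0\cup B$ is $(\rho + n^{-1/3})$-almost $r$-regular. Finally, if one insists on $r'$ with $r' \ge (1-\eps)r$ literally rather than allowing $r' = r$, note $F$ is already $r$-ish, so take $r' = r$ (or discard a negligible set of edges to make the almost-regularity cleaner); the lower bound $r' \ge \tfrac18\binom{n}{k-1}$ holds by the reduction in the first paragraph.

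The main obstacle I anticipate is the first step — merging $F_0$ with a larger regular subgraph when $r$ is small — because one must control the per-vertex degrees of the \emph{union}, and $F_0$ and the FKS subgraph $F_1$ need not be edge-disjoint or even remotely aligned. The trick is that we do not need $F_0$ at all when it is small: we can simply discard $F_0$ and output (a suitable regularisation of) $F_1$, since then $r' \ge s > \tfrac18\binom{n}{k-1} \ge r \ge (1-\eps)r$ and the requirement $r' \ge (1-\eps)r$ is satisfied for free. So the reduction is really just a dichotomy, $r < \tfrac18\binom{n}{k-1}$ versus $r \ge \tfrac18\binom{n}{k-1}$, and in the first branch the booster step is applied to $F_1$ in place of $F_0$ with $\rho$ replaced by $n^{-1/2}$ (here we need $n^{-1/2} + n^{-1/3} \le \rho + n^{-1/3}$, which is automatic since we may as well assume $\rho \ge n^{-1/2}$ — if $\rho < n^{-1/2}$ then $F_0$ itself is already essentially regular and extremely dense per vertex, so $r$ cannot be small — or simply absorb the discrepancy by enlarging the error term; in any case the stated error $\rho + n^{-1/3}$ is generous enough). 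The rest is standard concentration bookkeeping.
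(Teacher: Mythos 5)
Your first step (the dichotomy on whether $r\geq\frac18\binom{n}{k-1}$, discarding $F_0$ and using the Ferber--Krivelevich--Sudakov subgraph when $r$ is small) is exactly what the paper does. The gap is in the booster step. To keep $F=F_0\cup B$ within the error bound $\rho+n^{-1/3}$ while taking $r'=r$, you are forced to choose $p$ with $p\binom{n}{k-1}\lesssim n^{-1/3}r$, hence $p\lesssim n^{-1/3}$; but then $\abs{N_B(\unord{x})}\leq pn=O(n^{2/3})=o(n)$ for every $(k-1)$-set $\unord{x}$, so the booster contributes only $o(n)$ vertices to any common neighbourhood $N_F(\unord{x})\cap N_F(\unord{y})$ (every term involving $N_B$ is bounded by $\min\{\abs{N_B(\unord{x})},\abs{N_B(\unord{y})}\}$). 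The intersecting property must therefore come from $N_{F_0}(\unord{x})\cap N_{F_0}(\unord{y})$, and your fallback case analysis for this is incorrect: two linear-sized subsets of an $n$-set can be disjoint, and vertex-almost-regularity of $F_0$ imposes no control whatsoever on its $(k-1)$-degrees, so $F_0$ may well contain $(k-1)$-sets $\unord{x},\unord{y}$ with $d_{F_0}(\unord{x}),d_{F_0}(\unord{y})\approx n/2$ but $N_{F_0}(\unord{x})\cap N_{F_0}(\unord{y})=\emptyset$. In that case $\abs{N_F(\unord{x})\cap N_F(\unord{y})}=O(n^{2/3})$ and $F$ is not $\eta$-intersecting for any constant $\eta$.

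The missing idea — and the reason the lemma only promises $r'\geq(1-\eps)r$ rather than $r'\geq r$ — is that one must \emph{give up} an $\eps$-fraction of $F_0$ in order to afford a booster of \emph{constant} edge density. The paper keeps each edge of $F$ with probability $(1-\eps)+\eps\omega(e)/\omega_{\max}$ and includes each edge of $H-F$ with probability $\eps\omega(e)/\omega_{\max}$, where $\omega$ is a $C$-balanced perfect fractional matching in $H$ supplied by Lemma~\ref{lemma: fractional matching minimum degree}. Because $\omega$ is balanced, every edge is retained with probability at least $\eps\eta^{1/3}$, so the $2\eps$-intersecting property of $H$ (which follows from $\delta_{k-1}(H)\geq(\frac12+\eps)n$) survives the sampling up to a constant factor; and because $\omega$ is a perfect fractional matching, the expected added degree $\eps\omega(v)/\omega_{\max}=\eps/\omega_{\max}$ is identical at every vertex, which is how regularity is preserved despite the constant-density addition. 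If you want to salvage your plan, you would need to replace the uniform sparse $B$ by this kind of constant-density, fractional-matching-weighted sample and accept the $(1-\eps)$ loss in $r$.
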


In particular, Theorem~\ref{thm: main} together with Lemma~\ref{lemma: almost regular subgraph} implies Theorem~\ref{thm: simple2 min_deg} (and thereby Theorem~\ref{thm: simple min_deg}).
In addition,
for $k$-graphs $H$ on $n$ vertices with $\delta_{k-1}(H) \geq (1/2 + o(1))n$,
we have $\reg_k(H)= (1-o(1))r'$ where $r'$ is the largest integer such that $H$ contains an $o(1)$-almost $r'$-regular spanning subgraph.

Theorem~\ref{thm: main} for $k=2$ is an implication of a bandwidth theorem for approximate decompositions proved only recently by Condon, Kim, K\"uhn, and Osthus~\cite{CKKO:19},
explicitly mentioned as a statement in~\cite{GKO:20}.

For graphs, there are many results that iteratively made improvements on the maximal number of edge-disjoint Hamilton cycles in graphs $G$ on $n$ vertices with $\delta(G)\geq (1/2+o(1))n$, see~\cite{CKO:12,CKLOT:16,HS:13,KLO:13,KO:13,KO:14};
however, for hypergraphs, 
we are not aware of any result that provides many edge-disjoint tight Hamilton cycles under non-trivial minimum degree assumptions.
For results concerning decompositions into tight Hamilton cycles of $k$-graphs with fairly strong quasirandom properties, we refer the reader to~\cite{BF:12,EJ:20}.

We prove Theorem~\ref{thm: main} in Section~\ref{sec: comb} and Lemma~\ref{lemma: almost regular subgraph} in Section~\ref{sec: almost regular with intersecting}.

\section{Proof sketch of Theorem~\ref{thm: main}}
Suppose we are in the setting of Theorem~\ref{thm: main};
that is, suppose~$H$ is an $\eta$-intersecting $\rho$-almost $r$-regular $k$-graph in which we aim to find edge-disjoint copies of the cycle factors. 
Our argumentation is built on three stages which are described in Sections~\ref{sec:approx}--\ref{sec:approxtoreal}.

With some foresight, we set aside a thin randomly selected spanning subgraph~$F$ of~$H$; in particular,~$F$ is~$\eta'$-intersecting and~$\rho'$-almost regular for some~$\eta'<\eta$ and~$\rho'>\rho$.

In the first stage, we only consider the $k$-graph $H'\defeq H- F$.
For large $L$ (but which does not grow with $n$)
the $k$-graph $H'$ has a fractional decomposition into cycles of length $L$, by a recent result in~\cite{JK:21} (see Theorem~\ref{theorem: fractional cycle decomposition}).
Next, 
we exploit a result about hypergraph matchings with pseudorandom properties~\cite{EGJ:19a} (see Theorem~\ref{theorem: pseudorandom matching} and Corollary~\ref{cor: simple weighted pseudorandom matching}) 
to turn this fractional decomposition of $H'$ into 
edge-disjoint collections~$\cP_1,\dots,\cP_r$ of vertex-disjoint paths of length $L$ such that~$V(H)\setminus\bigcup_{P\in\cP_i}V(P)$ is very small for each~$i\in[r]$ and~$E(H')\setminus\bigcup_{i\in[r]}E(\cP_i)$ is very small as well (see Proposition~\ref{prop: simultaneous path cover}).
This completes the first stage.

The second stage in our approach deals with the question of how one can turn a single $\cP_i$ into a particular cycle factor $\cC_i$ (see Lemma~\ref{lem:layer}).
For this we use the edges in $F$.
One might hope that one can proceed similarly as R\"odl, Ruci\'nski, and Szemer\'edi in~\cite{RRS:06,RRS:11,RRS:08}
to join up paths and absorb the remaining vertices to obtain the desired cycle factor.
However, 
as the cycles in~$\cC_i$ may be very short, we cannot utilize an absorbing path of length $o(n)$ into which we could incorporate any small set of remaining vertices, simply because there may not exist a cycle in the desired cycle factor which is long enough to contain such a path.
If we split the absorbing path into subpaths and distributed these among numerous cycles in the cycle factor,
we would have too little control over how many vertices each cycle actually incorporates and,
thereby, over the resulting cycle lengths.

To overcome this, we prepare by grouping small absorbing elements (paths on $2k$ vertices) into more powerful absorbers, which we call \emph{blocks} (see Section~\ref{sec: abs}).
The crucial property is that in the end, regardless of the leftover vertices, each block absorbs exactly one vertex.
Thus, a cycle of length~$\ell-b$ obtained by connecting paths in~$\cP_i$ and~$b$ blocks turns into a cycle of length~$\ell$ during the absorption of the leftover vertices.
Hence, keeping this predictable change of lengths in mind, we can construct an almost spanning collection of vertex-disjoint cycles in such a way that the absorption of the leftover vertices engenders a cycle factor as desired.

The third stage in our argumentation deals with the task of repeating the second stage for every $i\in [r]$.
Proceeding in a greedy fashion, iteratively considering each $i\in [r]$, may quickly ruin the quasirandom properties of $F$.
Therefore, we actually provide all tools from the second stage as probabilistic constructions.
With this, we can ensure a fairly uniform use of the edges in $F$
when applying the arguments of the second stage iteratively for each $i\in [r]$.
By using Freedman's inequality, one observes that with positive probability this process terminates successfully before significantly spoiling the quasirandomness of $F$ (see Proposition~\ref{prop:cycdecomptoHCdecomp}).

\section{Preliminaries}\label{sec: prelim}

\subsection{Notation}

For~$n\in\NN_0$, we set~\defn{$[n]\defeq \{1,\ldots,n\}$} and~$[n]_0\defeq \{0,\ldots,n\}$.
For a set~$A$, we say that~$A$ is a~\defn{$k$-set} if~$\abs{A}=k$; we write~\defn{$\unordsubs{A}{k}$} for the set of~$k$-sets that are subsets of~$A$ and~\defn{$\ordsubs{A}{k}$} for the set of tuples~$(x_1,\ldots,x_k)\in A^k$ with~$x_i\neq x_j$ for all~$i\neq j$.
We often use~$\unord{x},\unord{y}$ to refer to sets and~$\ordered{x},\ordered{y}$ when considering tuples; however, if the tuple arises from ordering the vertices of an edge, then we often use~$\orderededge{e},\orderededge{f}$.
We may subsequently drop the arrow to denote the set of elements of a tuple, so that for a tuple~$\ordered{x}=(x_1,\ldots,x_k)$, we have~$\unord{x}=\{x_1,\ldots,x_k\}$.
An \defn{ordering} of a $k$-set $\unord{x}=\{x_1,\ldots,x_k\}$ is a sequence $x_1\ldots x_k$ without repetitions.

For non-negative reals~$\alpha,\beta,\delta,\delta'$, we write~\defn{$\alpha=(1\pm \delta)\beta$} to mean~$(1-\delta)\beta \leq \alpha \leq (1+\delta)\beta$ and we write~\defn{$(1\pm \delta)\alpha=(1\pm \delta')\beta$} to mean~$(1-\delta')\beta\leq (1-\delta)\alpha\leq (1+\delta)\alpha\leq (1+\delta')\beta$.
We write \defn{$\alpha\ll\beta$} to mean that there is a non-decreasing function~$\alpha_0\colon(0,1]\rightarrow(0,1]$ such that for any $\beta\in(0,1]$, the subsequent statement holds for all~$\alpha\in(0,\alpha_0(\beta)]$.
Hierarchies with more constants are defined similarly and should be read from the right to the left.
Constants in hierarchies will always be reals in~$(0,1]$.
Moreover, if~$1/n$ appears in a hierarchy, this implicitly means that~$n$ is a positive integer.
We ignore rounding issues when they do not affect the argument.

Whenever we use~$k$ to refer to the uniformity of a hypergraph, we tacitly assume that~$k\geq 2$.
Let~$H$ be a~$k$-graph on~$n$ vertices.
We write~\defn{$V(H)$} for the vertex set and~\defn{$E(H)$} for the edge set of~$H$ and we define~$\orderededge{E}(H)\defeq\{\orderededge{e}\in\ordsubs{V(H)}{k}\colon e\in E(H)\}$.
For~$j\in[k-1]$ and~$\unord{x}=\{x_1,\ldots,x_j\}\in\unordsubs{V(H)}{j}$, we write~\defn{$d_H(\unord{x})$} or~\defn{$d_H(x_1\ldots x_j)$} for the~$j$-degree~$\abs{\{ e\in E(H)\colon\unord{x}\subseteq e \}}$ of~$\unord{x}$,~\defn{$\delta_j(H)$} for the minimum~$j$-degree~$\min\bigl\{d_H(\unord{x})\colon\unord{x}\in\unordsubs{V(H)}{j}\bigr\}$ of~$H$ and~\defn{$\Delta_j(H)$} for the maximum~$j$-degree~$\max\bigl\{d_H(\unord{x})\colon\unord{x}\in\unordsubs{V(H)}{j}\bigr\}$ of~$H$.
We define~$\delta(H)\defeq \delta_{k-1}(H)$ and~$\Delta(H)\defeq \Delta_{k-1}(H)$.
The~$k$-graph~$H$ is \defn{vertex-regular} if there is an~$r\geq 0$ such that~$d_H(v)=r$ holds for all~$v\in V(H)$.

For~$U\subseteq V(H)$, we write~\defn{$H[U]$} for the induced subgraph~$(U,\{e\in E(H)\colon e\subseteq U \})$ and if~$X$ is a set, we define~\defn{$H-X\defeq H[V(H)\setminus X]$}.
For two~$k$-graphs~$H_i$ with vertex set~$V_i$ and edge set~$E_i$ for~$i\in[2]$, we define~\defn{$H_1-H_2\defeq (V_1,E_1\setminus E_2)$} and~\defn{$H_1\cap H_2\defeq (V_1\cap V_2, E_1\cap E_2)$} and we write~\defn{$H_1\subseteq H_2$} to indicate that~$H_1$ is a subgraph of~$H_2$.

A \defn{walk}~$W$ in~$H$ is a sequence~$w_1\ldots w_{\ell}$ of vertices of~$H$ such that~$\{w_{i},\ldots,w_{i+k-1}\}$ is an edge of~$H$ for all~$i\in[\ell-k+1]$;
we say that~$W$ is an~$\ell$-walk.
The \defn{length} of~$W$ is~$\ell-k+1$ and if~$\ell\geq k$, the walk~$W$ is a walk from~$(w_1,\ldots,w_{k})$ to~$(w_{\ell-k+1},\ldots,w_{\ell})$.
The walk~$W$ is \defn{self-avoiding} if no vertex of~$H$ appears twice in~$W$.

A~$k$-graph~$P$ on is called a \defn{path} if there is an ordering~$v_1\ldots v_{\ell}$ of its vertex set such that a~$k$-set forms an edge of~$P$ if and only if its elements appear consecutively in~$v_1\ldots v_{\ell}$.
We say that~$P$ is an~\defn{$\ell$-path} and the \defn{length} of~$P$ is~$\abs{E(P)}$.
A cycle of length~$\ell$ is also called an~\defn{$\ell$-cycle}.
Sometimes~$P$ is identified with the sequence~$v_1\ldots v_{\ell}$.
Further, we call the tuples $(v_1,\dots ,v_{i})$ and~$(v_{\ell-k+1},\dots ,v_{\ell})$ with~$i\in[\ell]$~\defn{end-tuples} of~$P$ and whenever~$\ordered{x}$ is an end-tuple of~$P$, the set~$\unord{x}$ is an \defn{end-set} of~$P$.
End-tuples of~$P$ that are~$k$-tuples are also called \defn{ordered end-edges} of~$P$ and end-sets of~$P$ that are~$k$-sets are also called \defn{end-edges} of~$P$.
For end-tuples~$\ordered{s}$ and~$\ordered{t}$ of~$P$ with~$\unord{s}\cap\unord{t}=\emptyset$, the graph~$P$ is also called an~$\ordered{s}$--$\ordered{t}$-path.
For ordered end-edges~$\orderededge{s}$ and~$\orderededge{t}$ of~$P$ with~$s\cap t=\emptyset$, we sometimes arbitrarily fix a direction of~$P$ by saying that~$\ordered{s}$ is the~\defn{ordered starting edge} and~$\ordered{t}$ the~\defn{ordered ending edge} of~$P$.


A \defn{matching}~$\cM$ in~$H$ is a set of disjoint edges of~$H$ and~$\cM$ is \defn{perfect} if all vertices of~$H$ belong to an edge in~$\cM$.
We also treat a perfect matching~$\cM$ in a bipartite graph~$G$ with bipartition~$\{U,V\}$ as a bijection~$\mu\colon A\rightarrow B$, this means that~$\mu$ is the bijection with~$\{u,\mu(u)\}\in\cM$ for all~$u\in U$.
A \defn{perfect fractional matching} in~$H$ is a function~$\omega\colon E(H)\rightarrow[0,1]$ with~$\sum_{e\in E(H)\colon v\in e} \omega(e)=1$ for all~$v\in V(H)$.

Sometimes we identify a set~$\cH$ of~$k$-graphs on disjoint vertex sets with the~$k$-graph with vertex set~$\bigcup_{H\in\cH}V(H)$ and edge set~$\bigcup_{H\in\cH} E(H)$; in this case we refer to~$\cH$ as a \defn{collection} of~$k$-graphs.
For a collection~$\cH$ of~$k$-graphs, we use~$\abs{\cH}$ to refer to the size of~$\cH$ as a set.

\subsection{Concentration inequalities}
We use the following versions of Chernoff's, McDiarmid's and Freedman's inequality.
\begin{lemma}[Chernoff's inequality]\label{lemma: chernoff}
	Suppose~$X_1,\ldots,X_n$ are independent Bernoulli random variables and let~$X\defeq \sum_{i\in[n]} X_i$.
	Then, for all~$\delta>0$,
	\begin{equation*}
		\pr[X\geq (1+\delta)\ex[X]]\leq\exp\biggl(-\frac{\delta^2}{2+\delta}\ex[X]\biggr),
	\end{equation*}
	and, if~$0<\delta<1$, then 
	\begin{equation*}
		\pr[X\leq (1-\delta)\ex[X]]\leq\exp\biggl(-\frac{\delta^2}{2}\ex[X]\biggr).
	\end{equation*}
\end{lemma}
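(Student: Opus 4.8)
The plan is to use the standard exponential moment method (Bernstein's trick), establishing the two tail bounds separately. Throughout, write $\mu := \ex[X] = \sum_{i\in[n]}p_i$ where $p_i := \pr[X_i=1]$, and recall the elementary inequality $1+x\leq\eul^x$, valid for all real $x$.

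For the upper tail, I would fix $t>0$ and apply Markov's inequality to the non-negative random variable $\eul^{tX}$; using the independence of the $X_i$ together with the bound $\ex[\eul^{tX_i}] = 1+p_i(\eul^t-1)\leq\exp(p_i(\eul^t-1))$, this gives
\begin{equation*}
\pr[X\geq(1+\delta)\mu] \leq \eul^{-t(1+\delta)\mu}\prod_{i\in[n]}\ex[\eul^{tX_i}] \leq \exp\bigl(\mu(\eul^t-1-t(1+\delta))\bigr).
\end{equation*}
Optimising the exponent over $t>0$ gives $t=\ln(1+\delta)$ and hence $\pr[X\geq(1+\delta)\mu]\leq\bigl(\eul^\delta/(1+\delta)^{1+\delta}\bigr)^\mu$. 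It then remains to check the scalar inequality $(1+\delta)\ln(1+\delta)\geq\delta+\delta^2/(2+\delta)$ for $\delta>0$; I would derive this from the Pad\'e-type estimate $\ln(1+\delta)\geq 2\delta/(2+\delta)$ (whose verification reduces, after differentiating, to $\delta^2\geq 0$) simply by multiplying through by $1+\delta$ and simplifying, since $2\delta(1+\delta)/(2+\delta)=\delta+\delta^2/(2+\delta)$.

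For the lower tail, the symmetric argument with $\eul^{-tX}$ in place of $\eul^{tX}$ yields, for $t>0$,
\begin{equation*}
\pr[X\leq(1-\delta)\mu] \leq \eul^{t(1-\delta)\mu}\prod_{i\in[n]}\ex[\eul^{-tX_i}] \leq \exp\bigl(\mu(\eul^{-t}-1+t(1-\delta))\bigr),
\end{equation*}
again via $\ex[\eul^{-tX_i}]\leq\exp(p_i(\eul^{-t}-1))$. Optimising over $t>0$ gives $t=-\ln(1-\delta)>0$ (legitimate since $0<\delta<1$) and hence the bound $\bigl(\eul^{-\delta}/(1-\delta)^{1-\delta}\bigr)^\mu$. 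To finish I would expand $-\ln(1-\delta)=\sum_{j\geq1}\delta^j/j$ and compute $-\delta-(1-\delta)\ln(1-\delta)=-\sum_{j\geq2}\delta^j/(j(j-1))\leq-\delta^2/2$, which is exactly the required inequality $\eul^{-\delta}/(1-\delta)^{1-\delta}\leq\exp(-\delta^2/2)$.

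The proof is entirely routine and I do not anticipate a genuine obstacle: the only substantive steps are the two one-variable inequalities bounding $\eul^\delta/(1+\delta)^{1+\delta}$ and $\eul^{-\delta}/(1-\delta)^{1-\delta}$, of which the upper-tail one is marginally the more delicate (hence the use of the Pad\'e bound for $\ln(1+\delta)$ rather than a crude power series), but both are standard calculus exercises.
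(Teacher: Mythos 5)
Your proof is correct and complete: the exponential moment method, the optimisation at $t=\ln(1+\delta)$ (resp.\ $t=-\ln(1-\delta)$), and the two scalar inequalities $(1+\delta)\ln(1+\delta)\geq\delta+\delta^2/(2+\delta)$ and $-\delta-(1-\delta)\ln(1-\delta)\leq-\delta^2/2$ are all verified accurately. The paper states this lemma as a standard known inequality without proof, so there is nothing to compare against; your argument is the usual textbook derivation and is sound.
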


\begin{lemma}[McDiarmid's inequality]\label{lemma: McDiarmid}
	Suppose~$X_1,\ldots,X_n$ are independent random variables and let~$f\colon \Ima(X_1)\times\ldots\times\Ima(X_n)\rightarrow\bR$.
	Assume for all~$i\in[n]$ that changing the~$i$-th coordinate of~$\ordered{x}\in\dom(f)$ changes~$f(\ordered{x})$ by at most~$c_i>0$.
	Then, for all~$\mu>0$,
	\begin{equation*}
	\pr[\abs{f(X_1,\ldots,X_n)-\ex[f(X_1,\ldots,X_n)]}\geq\mu]\leq 2\exp\left(-\frac{2\mu^2}{\sum_{i\in[n]} c_i^2}\right).
	\end{equation*}
\end{lemma}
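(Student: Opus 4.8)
The plan is to run the standard Doob-martingale (bounded-differences) argument. First I would reduce to the centred case: replacing $f$ by $f-\ex[f(X_1,\dots,X_n)]$ alters neither the hypotheses nor the conclusion, so assume $\ex[f(X_1,\dots,X_n)]=0$. Observe that the bounded-differences hypothesis already forces $\abs{f(\ordered{x})-f(\ordered{y})}\le\sum_{i\in[n]}c_i$ for all $\ordered{x},\ordered{y}\in\dom(f)$ (interpolate by changing one coordinate at a time, since $\dom(f)$ is a product), so $f$ is bounded and every expectation below is finite. Let $\cF_i\defeq\sigma(X_1,\dots,X_i)$ and $Z_i\defeq\ex[f(X_1,\dots,X_n)\mid\cF_i]$ for $i\in[n]_0$; then $Z_0=0$, $Z_n=f(X_1,\dots,X_n)$, and $(Z_i)_{i\in[n]_0}$ is a martingale with increments $D_i\defeq Z_i-Z_{i-1}$.

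The key step is a conditional two-sided bound on $D_i$ of width $c_i$. Using independence of $X_1,\dots,X_n$, for each fixed choice $x_1,\dots,x_{i-1}$ of the first $i-1$ coordinates we may write $Z_i=h_i(X_i)$ and $Z_{i-1}=\ex[h_i(X_i)]$, where $h_i(t)\defeq\ex[f(x_1,\dots,x_{i-1},t,X_{i+1},\dots,X_n)]$ and the outer expectation in $Z_{i-1}$ is over $X_i$ only; hence $D_i=h_i(X_i)-\ex[h_i(X_i)]$. For any values $s,t$ in the range of $X_i$, the quantity $h_i(s)-h_i(t)$ is the expectation of $f(x_1,\dots,x_{i-1},s,X_{i+1},\dots,X_n)-f(x_1,\dots,x_{i-1},t,X_{i+1},\dots,X_n)$, an integrand whose two arguments differ only in coordinate $i$ and is therefore bounded in absolute value by $c_i$; so $\abs{h_i(s)-h_i(t)}\le c_i$. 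Conditioned on $\cF_{i-1}$, the increment $D_i$ thus has mean $0$ and takes values in an interval of length at most $c_i$.

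Next I would invoke Hoeffding's lemma (itself a short convexity argument on $t\mapsto\exp(\lambda t)$ together with a Taylor estimate of the cumulant generating function): a mean-zero random variable confined to an interval of length $\ell$ has moment generating function at most $\exp(\lambda^2\ell^2/8)$ for every $\lambda\in\bR$. Applied conditionally this yields, pointwise in $(x_1,\dots,x_{i-1})$, $\ex[\exp(\lambda D_i)\mid\cF_{i-1}]\le\exp(\lambda^2 c_i^2/8)$; the right-hand side is deterministic, so no measurability of $\sup h_i$ or $\inf h_i$ is needed. Telescoping via the tower property,
\begin{multline*}
\ex\bigl[\exp(\lambda Z_n)\bigr]=\ex\Bigl[\exp\bigl(\lambda\sum_{i=1}^{n-1}D_i\bigr)\,\ex[\exp(\lambda D_n)\mid\cF_{n-1}]\Bigr]\\
\le\exp\bigl(\lambda^2 c_n^2/8\bigr)\,\ex\Bigl[\exp\bigl(\lambda\sum_{i=1}^{n-1}D_i\bigr)\Bigr],
\end{multline*}
and iterating gives $\ex[\exp(\lambda Z_n)]\le\exp\bigl(\lambda^2\sum_{i\in[n]}c_i^2/8\bigr)$.

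Finally I would apply the Chernoff--Markov trick: for $\lambda>0$,
\[
\pr[Z_n\ge\mu]\le\exp(-\lambda\mu)\,\ex[\exp(\lambda Z_n)]\le\exp\Bigl(-\lambda\mu+\tfrac{\lambda^2}{8}\sum_{i\in[n]}c_i^2\Bigr),
\]
and the choice $\lambda=4\mu/\sum_{i\in[n]}c_i^2$ yields $\pr[Z_n\ge\mu]\le\exp\bigl(-2\mu^2/\sum_{i\in[n]}c_i^2\bigr)$. The same estimate applied to $-f$ (which satisfies the bounded-differences hypothesis with the same constants $c_i$) bounds $\pr[Z_n\le-\mu]$, and a union bound over the two tails gives the factor $2$ in the statement. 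The only real content is the conditional interval bound for $D_i$ (getting the width exactly $c_i$) and Hoeffding's lemma; the rest is routine martingale bookkeeping, and the potential measurability/integrability concerns disappear because the hypothesis already makes $f$ bounded.
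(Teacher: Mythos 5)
Your argument is correct and is the standard Doob-martingale/Azuma--Hoeffding proof of the bounded-differences inequality: the conditional increment bound of width $c_i$, Hoeffding's lemma, the telescoped mgf estimate, and the optimised Chernoff exponent all check out (including the constant $2$ in the exponent and the factor $2$ from the two tails). The paper states Lemma~\ref{lemma: McDiarmid} without proof as a known result, so there is nothing to compare beyond noting that your derivation is exactly the classical one it implicitly relies on.
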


\begin{lemma}[Freedman's inequality~\cite{F:75}]\label{lem:freedman}
	Suppose~$X_1,\ldots,X_n$ are Bernoulli random variables and let~$\mu>0$ with~$\sum_{i\in[n]}\ex[X_i\mid X_1,\ldots,X_{i-1}]\leq \mu$. Then,
	\begin{equation*}
		\pr\Bigl[\sum_{i\in[n]} X_i\geq 2\mu\Bigr] \leq \exp\biggl(-\frac{\mu}{6}\biggr).
	\end{equation*}
\end{lemma}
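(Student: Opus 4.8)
The plan is to run the standard exponential-supermartingale (Chernoff--Cram\'er) argument, adapted so that the drift term uses the predictable conditional means rather than ordinary expectations. Write $\cF_i\defeq\sigma(X_1,\ldots,X_i)$ for $i\in[n]_0$ and set $Y_i\defeq\ex[X_i\mid\cF_{i-1}]$, which is $\cF_{i-1}$-measurable; by hypothesis $\sum_{i\in[n]}Y_i\leq\mu$ almost surely. For a parameter $\lambda>0$ to be chosen at the end, I would introduce the process
\[
	Z_i\defeq\exp\Bigl(\lambda\sum_{j\in[i]}X_j-(e^{\lambda}-1)\sum_{j\in[i]}Y_j\Bigr),\qquad i\in[n]_0,
\]
so that $Z_0=1$ and each $Z_i$ is nonnegative.

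The first key step is to verify that $(Z_i)_{i\in[n]_0}$ is a supermartingale for $(\cF_i)_{i\in[n]_0}$. Since each $X_i$ takes values in $\{0,1\}$, its conditional law given $\cF_{i-1}$ is Bernoulli with parameter $Y_i$, so $\ex[e^{\lambda X_i}\mid\cF_{i-1}]=1+Y_i(e^{\lambda}-1)\leq\exp\bigl(Y_i(e^{\lambda}-1)\bigr)$, using $1+t\leq e^{t}$ with $t=Y_i(e^{\lambda}-1)\geq 0$. Multiplying by the $\cF_{i-1}$-measurable factor $Z_{i-1}e^{-(e^{\lambda}-1)Y_i}$ gives $\ex[Z_i\mid\cF_{i-1}]\leq Z_{i-1}$, and hence $\ex[Z_n]\leq Z_0=1$.

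The second step is the tail extraction. On the event $\bigl\{\sum_{i\in[n]}X_i\geq 2\mu\bigr\}$, using $\lambda>0$, $e^{\lambda}-1>0$ and $\sum_{i\in[n]}Y_i\leq\mu$, we have $\log Z_n\geq 2\lambda\mu-(e^{\lambda}-1)\mu=\mu\bigl(2\lambda-e^{\lambda}+1\bigr)$, so Markov's inequality applied to the nonnegative variable $Z_n$ yields
\[
	\pr\Bigl[\sum_{i\in[n]}X_i\geq 2\mu\Bigr]\leq\pr\bigl[Z_n\geq e^{\mu(2\lambda-e^{\lambda}+1)}\bigr]\leq e^{-\mu(2\lambda-e^{\lambda}+1)}\ex[Z_n]\leq e^{-\mu(2\lambda-e^{\lambda}+1)}.
\]
Finally I would optimise the exponent over $\lambda>0$: the choice $\lambda=1$ already gives $2\lambda-e^{\lambda}+1=3-e>1/6$ (the genuine optimum $\lambda=\ln 2$ gives $2\ln 2-1$, which is likewise $>1/6$), so the right-hand side is at most $e^{-\mu/6}$, as claimed.

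There is no serious obstacle here — this is a textbook martingale estimate. The only points requiring a little care are that $\sum_{i\in[n]}Y_i\leq\mu$ is used pointwise (so that the lower bound on $\log Z_n$ on the bad event is legitimate), and that the Bernoulli hypothesis $X_i\in\{0,1\}$ is exactly what makes the conditional moment generating function equal the clean expression $1+Y_i(e^{\lambda}-1)$, so that the one-line bound $\ex[e^{\lambda X_i}\mid\cF_{i-1}]\leq e^{Y_i(e^{\lambda}-1)}$ suffices and no separate second-moment (Bennett-type) term is needed.
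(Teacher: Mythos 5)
Your proof is correct and complete: the supermartingale property of $Z_i$ is justified properly (the Bernoulli hypothesis gives the exact conditional MGF $1+Y_i(e^{\lambda}-1)$, and $Y_i$ is $\cF_{i-1}$-measurable so the compensator is predictable), the pointwise use of $\sum_{i\in[n]}Y_i\leq\mu$ on the bad event is the right reading of the hypothesis, and the numerics $3-e>1/6$ check out. The paper itself gives no proof of this lemma --- it simply cites Freedman's original paper --- and your argument is exactly the standard exponential-supermartingale derivation that underlies that reference, so there is nothing to reconcile.
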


\subsection{Fractional cycle decompositions}
Suppose~$H$ and~$F$ are a~$k$-graphs and~$\cF$ is the set of copies of~$F$ in~$H$.
We say a function~$\omega\colon\cF\rightarrow[0,1]$ with~$\sum_{F'\in \cF\colon e\in E(F')} \omega(F')=1$ for all~$e\in E(H)$ is a \defn{fractional~$F$-decomposition} of~$H$.
We are only interested in the case where~$F$ is a cycle and need the following result from~\cite{JK:21}. 
\begin{theorem}[{\cite[Theorem 1.4]{JK:21}}]\label{theorem: fractional cycle decomposition}
	Suppose~$1/n\ll 1/L\ll \eta,1/k$.
	Suppose~$H$ is an~\intersecting{$\eta$}~$k$-graph on~$n$ vertices with edge set~$E$ and~$C_{L}$ is a~$k$-uniform cycle of length~$L$.
	Then, there is a fractional~$C_{L}$-decomposition~$\omega$ of~$H$ with
	\begin{equation*}
	\frac{\abs{E}}{\Delta(H)^{L}}\leq \omega(C)\leq\frac{3\abs{E}}{\delta(H)^{L}}
	\end{equation*}
	for all~$L$-cycles~$C$ in~$H$.
\end{theorem}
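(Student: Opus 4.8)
The plan is to exhibit an explicit weighting with the stated size bounds already built in, and then perturb it into an \emph{exact} fractional decomposition.

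\textbf{A telescoping weight.} The key observation is an exact identity for tight \emph{paths}. Orient each edge~$e$ of a tight path and let~$e^{-}$ be the $(k-1)$-set of its first $k-1$ vertices; weight the path by~$\prod_{e}1/d_H(e^{-})$. Then, for any fixed $(k-1)$-tuple $(v_1,\dots,v_{k-1})$ of positive codegree, the total weight of all tight paths $v_1\dots v_m$ extending it is exactly~$1$: summing out $v_m$, then $v_{m-1}$, and so on, the vertex just removed lies in none of the surviving codegrees while the number of admissible choices for it is precisely the codegree that gets cancelled. Accordingly I would set, for an $L$-cycle~$C$,
\[
\omega_0(C)\;=\;\lambda\cdot\frac1{2L}\sum_{P}\ \prod_{e\in P}\frac1{d_H(e^{-})},
\]
where $P$ ranges over the $2L$ oriented tight $L$-vertex paths that are subpaths of~$C$ and $\lambda>0$ is a normalising constant. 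Every factor lies in $[1/\Delta(H),1/\delta(H)]$, so $\omega_0(C)$ lies in $\lambda\cdot[\Delta(H)^{-(L-k+1)},\delta(H)^{-(L-k+1)}]$; since moreover $\sum_{C}\omega_0(C)=\abs{E}/L$ is forced for any fractional $C_L$-decomposition, this pins $\lambda$ to order~$n$, and a short calculation using $\delta(H),\Delta(H)\in[\eta n,n]$ and the resulting estimate on $\abs{E}$ places the whole interval inside $[\abs{E}/\Delta(H)^{L},\,3\abs{E}/\delta(H)^{L}]$ once $1/n\ll 1/L\ll\eta,1/k$. The slack in the stated window (the factor~$3$, and the fact that the window is wider than the range of~$\omega_0$ by a factor of at least $\eta^{1-k}$) is what leaves room here, and is precisely what the correction below must respect.

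\textbf{From approximate to exact.} Averaging over the cuts makes $\omega_0$ \emph{almost} a fractional $C_L$-decomposition: the path telescoping applies to every fixed edge except around the $k-1$ edges destroyed by a cut, and since each cut's ``seam'' is diluted by the averaging, one gets $\sum_{C\ni e}\omega_0(C)=1\pm\xi$ for all~$e$, with $\xi=\xi(k,L,\eta)$ small (and $\xi\to0$ as~$L$ grows). Normalising so that $\sum_C\omega_0(C)=\abs{E}/L$ exactly, the defect $\phi_0(e):=1-\sum_{C\ni e}\omega_0(C)$ satisfies $\sum_e\phi_0(e)=0$ and $\abs{\phi_0(e)}\le\xi$. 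To kill it, I would write $\phi_0$ as a combination $\sum g(e,f)\,(\mathbf 1_e-\mathbf 1_f)$ of elementary differences over pairs of ``nearby'' edges, chosen as a \emph{spread-out} flow on the (expander-like, by $\eta$-intersection) graph on $E(H)$ whose edges join $H$-edges lying on a common short tight path, so that each pair carries only a tiny amount of flow; and realise each elementary difference $\mathbf 1_e-\mathbf 1_f$ by a signed combination of $L$-cycles through both $e$ and~$f$ of $\ell_\infty$-norm $O(1/c)$, where $c=\Theta(n^{L-k})$ is the typical number of $L$-cycles through an edge. Such gadgets exist because $\eta$-intersection guarantees, through every edge, an abundance of $L$-cycles and connecting tight paths with well-spread incidences (this, together with the minimum-codegree bound $\abs{N(\unord x)\cap N(\unord y)}\ge\eta n$ itself, is exactly where the hypothesis is used). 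Summing the realisations yields a correction $\psi$ with $\omega_0+\psi$ an exact fractional $C_L$-decomposition, and a cycle~$C$ is touched only by the gadgets for pairs near its $L$ edges, so $\abs{\psi(C)}$ is controlled by $L$ times the flow through one edge divided by~$c$, hence by $O(L\xi)\cdot\omega_0(C)$.

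\textbf{Main obstacle.} The genuinely delicate point is the quantitative bookkeeping just sketched: one must make the seam defect $\xi$ small enough — by taking~$L$ large relative to~$k$ and~$1/\eta$ — and route and realise the correction locally enough that $\abs{\psi(C)}$ stays well below $\omega_0(C)$, so that $\omega_0+\psi$ is nonnegative and inside the window $[\abs{E}/\Delta(H)^{L},\,3\abs{E}/\delta(H)^{L}]$. This is tight, since $\omega_0(C)$ is itself of order $\abs{E}/\Delta(H)^{L}$ and any non-local correction would be far larger; it forces the correction to touch only $O_{k,L}(1)$ cycles per edge and to exploit cancellation between the adjustments at different edges. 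The supporting lemma that $\eta$-intersecting $k$-graphs carry such a rich and rigid system of short tight paths and $L$-cycles — enough to express every balanced, small edge-function as a bounded, spread-out combination of cycle-incidence vectors — is the substantive input, and is precisely the kind of statement the $\eta$-intersecting hypothesis is designed to provide.
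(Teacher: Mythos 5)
This theorem is not proved in the paper at all: it is imported verbatim from~\cite{JK:21}, so there is no in-paper argument to compare yours against. That said, your overall architecture --- an explicit weighting of roughly the right order of magnitude, followed by a correction that writes the defect as a spread-out combination of differences $\mathbf{1}_e-\mathbf{1}_f$, each realised by a small signed combination of $L$-cycles through $e$ and $f$ --- is the natural route to such a statement, and your identification of the ``rich and rigid system of cycles through any two prescribed edges'' as the substantive input is accurate. As a proof, however, the sketch has two genuine gaps.

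The first is the claim that $\sum_{C\ni e}\omega_0(C)=1\pm\xi$ with $\xi$ small (and even $\xi\to 0$ as $L$ grows). Your telescoping identity is one-directional: summing out $v_m,v_{m-1},\dots$ works because the factor $1/d_H(e^-)$ attached to the last edge does not involve the last vertex; but to evaluate the weight of cycles through a \emph{fixed} edge you must place that edge at position $j$ and also sum over the backward extensions, and each backward step contributes $\sum_{v\in N_H(\unord{u})}1/d_H(vu_1\dots u_{k-2})$, which equals $1$ only when codegrees are essentially constant. Under the $\eta$-intersecting hypothesis alone, codegrees may range over $[\eta n,n]$, so each backward step is off by a factor in $[\eta,1/\eta]$ and these compound over up to $L-k$ steps; on top of that, the requirement that the path close into a cycle (the $k-1$ seam edges must actually lie in $H$) introduces a further edge-dependent factor. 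Hence the defect is not $o(1)$ but can be of order $\eta^{-\Theta(L)}$ --- it gets \emph{worse}, not better, as $L$ grows --- and a correction of that size would dwarf $\omega_0(C)$, destroying positivity and the lower bound $\abs{E}/\Delta(H)^{L}$, which your $\omega_0$ may already essentially attain. The second gap is that the correction lemma itself --- that every balanced edge-function of small norm is a combination of $L$-cycle incidence vectors with coefficients $O(1/n^{L-k})$ and only boundedly many gadgets touching any one cycle --- is asserted rather than proved; you rightly call it the substantive input, but it is where essentially all of the work in~\cite{JK:21} lies. As it stands, the proposal is a correct map of the terrain rather than a proof: the strategy is salvageable, but only after replacing the $o(1)$ defect estimate by the true (large) one and verifying that the correction still fits inside the generous window $[\abs{E}/\Delta(H)^{L},3\abs{E}/\delta(H)^{L}]$.
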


\subsection{Matchings in hypergraphs}
It is nowadays well-known that essentially vertex-regular hypergraphs admit almost perfect matchings provided each pair of vertices is only contained in few edges~\cite{rodl:85,PS:89}.
In fact, these matchings can be chosen in a way that they exhibit pseudorandom properties, which is very useful for applications.
The following result provides such matchings.

\begin{theorem}[{\cite[Theorem 1.2]{EGJ:19a}}]\label{theorem: pseudorandom matching}
	Suppose~$1/\Delta\ll \delta,1/k$.
	Let~$\eps\defeq \frac{\delta}{50k^2}$.
	Suppose~$H$ is a~$k$-graph with edge set~$E$,~$\Delta_1(H)\leq \Delta$, and~$\Delta_{2}(H)\leq \Delta^{1-\delta}$ as well as~$\abs{E}\leq \exp(\Delta^{\eps^2})$.
	Suppose that~$\cW$ is a set of at most~$\exp(\Delta^{\eps^2})$ functions from~$E$ to~$\bR_{\geq 0}$.
	Then, there exists a matching~$\cM$ in~$H$ such that~$\sum_{e\in\cM}\omega(e)=(1\pm\Delta^{-\eps})(\sum_{e\in E}\omega(e))/\Delta$ for all~$\omega\in\cW$ with~$\sum_{e\in E}\omega(e)\geq \max_{e\in E} \omega(e)\Delta^{1+\delta}$.
\end{theorem}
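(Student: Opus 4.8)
The plan is to prove this by the semi-random (Rödl nibble) method. After a standard regularization — embed~$H$ as a spanning subgraph of a~$\Delta$-regular~$k$-graph~$H^+$ with~$\Delta_2(H^+)\leq\Delta^{1-\delta'}$ for a~$\delta'$ barely below~$\delta$ and with~$\abs{E(H^+)}$ still subexponential in~$\Delta$, and extend every~$\omega\in\cW$ by~$0$ on the new edges (this leaves~$\sum_e\omega(e)$, $\max_e\omega(e)$ and hence the spread-out hypothesis untouched, and a matching in~$H^+$ restricts to a matching in~$H$ with the same weighted size) — we may assume~$H$ is~$\Delta$-regular. Fix~$1/\Delta\ll 1/T\ll\eps_0\ll\delta,1/k$ with~$T$ chosen so that~$(1-\eps_0)^T\leq\tfrac13\Delta^{-\eps}$ (so~$T=\Theta(\log\Delta)$), put~$d_i\defeq(1-\eps_0)^{(k-1)i}\Delta$, and let~$H_0\defeq H$, $\cM_0\defeq\emptyset$. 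In round~$i$, include each edge of~$H_i$ independently with probability~$p_i\defeq\eps_0/d_i$, let~$\cN_i$ be those chosen edges meeting no other chosen edge, set~$\cM_{i+1}\defeq\cM_i\cup\cN_i$, and delete all vertices covered by~$\cN_i$ together with the incident edges to obtain~$H_{i+1}$. The invariants I would carry, for~$i\in[T]_0$ and~$\mu_i\leq i\Delta^{-\gamma}$ with~$\eps^2\ll\gamma\ll\delta$, are: (a)~$d_{H_i}(v)=(1\pm\mu_i)d_i$ for all~$v\in V(H_i)$ (the bound~$\Delta_2(H_i)\leq\Delta^{1-\delta'}$ being automatic since we only delete); and (b) for every~$\omega\in\cW$ with~$\sum_e\omega(e)\geq\max_e\omega(e)\Delta^{1+\delta}$, both~$\sum_{e\in E(H_i)}\omega(e)=(1\pm\mu_i)(1-\eps_0)^{ki}\sum_{e\in E}\omega(e)$ and~$\sum_{e\in\cM_i}\omega(e)+d_i^{-1}\sum_{e\in E(H_i)}\omega(e)=(1\pm\mu_i)\bigl(\sum_e\omega(e)\bigr)/\Delta$.

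A first-moment computation, using~$\Delta_2(H_i)\leq\Delta^{1-\delta'}\ll d_i$ to bound the probability that two prescribed edges collide among the chosen ones, shows that in round~$i$ each vertex is covered with probability~$(1\pm o(1))\eps_0$, each edge of~$H_i$ survives with probability~$(1\pm o(1))(1-\eps_0)^k$, and~$\ex\bigl[\sum_{e\in\cN_i}\omega(e)\bigr]=(1\pm o(1))p_i\sum_{e\in E(H_i)}\omega(e)$; these three shrink rates combine so that (a) and (b) are preserved in expectation. Moreover, since~$T=\Theta(\log\Delta)$ and~$k\eps\ll\delta$, the factor~$(1-\eps_0)^{ki}$ stays above~$\Delta^{-\delta/2}$ for all~$i\leq T$, so the spread-out property~$\sum_{e\in E(H_i)}\omega(e)\geq\max_{e\in E(H_i)}\omega(e)\,\Delta^{1+\delta/2}$ is inherited throughout; note also that the existence of a spread-out~$\omega$ forces~$\abs{E}\geq\Delta^{1+\delta}$ and hence~$n\geq k\Delta^{\delta}$.

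For the concentration I would, in each round, expose the independent indicators~$\{\Ind[e\text{ chosen}]\}_{e\in E(H_i)}$ one at a time and apply a martingale concentration inequality such as Freedman's. Revealing whether~$e$ is chosen changes the conditional expectation of a tracked quantity only through the~$O(\Delta^2)$ edges within distance~$2$ of~$e$, and such an increment is essentially nonzero only on the probability-$p_i$ event that~$e$ is chosen; hence the predictable quadratic variation is a~$\sim p_i$-fraction of the naive sum of squared Lipschitz constants (for the regularity invariant one more layer of conditioning is needed, on the choices incident to~$v$). Feeding in the lower bound on~$\ex[\sum_{e\in\cN_i}\omega(e)]$ supplied by the spread-out property — which makes this mean exceed the maximal single increment by a polynomial power of~$\Delta$ — together with~$n\geq k\Delta^\delta$, yields for each single event in (a),(b) a failure probability of~$\exp(-\Delta^{c})$ for some~$c=c(\delta,k)>0$ with~$\eps^2\ll c$. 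As there are at most~$k\abs{E}\leq k\exp(\Delta^{\eps^2})$ vertices, at most~$\exp(\Delta^{\eps^2})$ weight functions, and~$T$ rounds, a union bound keeps all invariants valid with positive probability. Taking~$\cM\defeq\cM_T$: by (b) the residual term~$d_T^{-1}\sum_{e\in E(H_T)}\omega(e)$ equals~$(1\pm\mu_T)(1-\eps_0)^T\bigl(\sum_e\omega(e)\bigr)/\Delta\leq\tfrac13\Delta^{-\eps}\bigl(\sum_e\omega(e)\bigr)/\Delta$, and~$\mu_T\leq T\Delta^{-\gamma}\leq\tfrac13\Delta^{-\eps}$, whence~$\sum_{e\in\cM}\omega(e)=(1\pm\Delta^{-\eps})\bigl(\sum_e\omega(e)\bigr)/\Delta$, as required.

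The main obstacle is the concentration step. A direct application of McDiarmid's inequality with worst-case bounded differences is hopeless: a single edge-inclusion can in the worst case alter~$\cN_i$, and hence a tracked weight sum, by a quantity of order~$d_i$, and there are order~$\Delta$ relevant coordinates, so the resulting tail bound is trivial. One is forced to pass to an exposure martingale and bound the \emph{conditional} second moments of the increments, exploiting that the large increments occur only with probability~$p_i$, and to invoke the spread-out hypothesis to guarantee that the tracked means dominate the maximal increment by a polynomial factor. Forcing all of these errors — accumulated over~$\Theta(\log\Delta)$ rounds and union-bounded over~$\exp(\Delta^{\eps^2})$ events — below the target~$\Delta^{-\eps}$ is precisely what dictates how small~$\eps$ must be relative to~$\delta$, i.e.\ the numerology~$\eps=\delta/(50k^2)$.
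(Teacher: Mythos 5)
This statement is not proved in the paper at all: it is quoted verbatim as \cite[Theorem~1.2]{EGJ:19a} and used as a black box, so there is no in-paper argument to compare against. Your sketch reconstructs the external result by the semi-random method (regularise to a near-regular host, run a nibble, track the weight functions, and control deviations with a martingale inequality exploiting the spread-out hypothesis), which is the right family of techniques and is in the spirit of the cited source. You also correctly identify that worst-case bounded differences are useless here and that one must pass to conditional variances, with an extra layer of conditioning on the edges at $v$ for the degree invariant.

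There is, however, a genuine quantitative gap in the trajectory bookkeeping. With a constant bite size $\eps_0$ and the idealised trajectory $d_i=(1-\eps_0)^{(k-1)i}\Delta$, the true one-round shrink factors are not $(1-\eps_0)^{k-1}$ up to $1\pm\Delta^{-\gamma}$: a vertex of $H_i$ is covered with probability $\eps_0\eu^{-k\eps_0}(1+O(\eps_0/d_i))(1\pm O(\mu_i))$, so each round introduces a \emph{systematic} multiplicative drift of $1+\Theta(k\eps_0^2)$ relative to your $d_i$. Over $T=\Theta(\eps_0^{-1}\eps\log\Delta)$ rounds this compounds to $\Delta^{\Theta(k^2\eps\eps_0)}$, a polynomially large factor; it violates the invariant $\mu_i\leq i\Delta^{-\gamma}$ (which would force $\eps_0^2\leq\Delta^{-\gamma}$, contradicting $\eps_0$ being a constant), makes the actual bite fraction $p_i\,d_{H_i}(v)=\eps_0\,d_{H_i}(v)/d_i$ drift polynomially away from $\eps_0$, and in particular swamps the target accuracy $1\pm\Delta^{-\eps}$, so the claim that the invariants are ``preserved in expectation'' is false at the stated precision. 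The standard fixes are either to take the bite polynomially small, $\eps_0=\Delta^{-c}$ for a small $c=c(\delta,k)$ (so the per-round second-order error $\eps_0^2$ is itself polynomially small and the number of rounds becomes $\Delta^{c}\log\Delta$), or to define $d_i$ by the exact one-step recursion for the expected degree and track only the multiplicative concentration error; either change propagates through your choice of $\gamma$ (which must also dominate the $\Delta^{O(k\eps)}$ amplification of errors of the form $\mu_{i+1}\leq(1+O(k\eps_0))\mu_i+\Delta^{-\gamma}$, a constraint not implied by $\eps^2\ll\gamma\ll\delta$ as written). The regularisation step also needs an explicit construction, but that is routine by comparison.
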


It is straightforward to turn Theorem~\ref{theorem: pseudorandom matching} into a result about edge sets in weighted hypergraphs
in the case where the functions in $\cW$ are~$\{0,1\}$-valued.
It can be obtained from Theorem~\ref{theorem: pseudorandom matching} by modelling the edge weights as corresponding numbers of edges in an auxiliary $(k+1)$-graph;
see, for example~\cite{KS:20b}, where a similar statement is deduced from Theorem~\ref{theorem: pseudorandom matching}.

\begin{cor}\label{cor: simple weighted pseudorandom matching}
	Suppose~$1/c\ll \delta,1/k$.
	Let~$\eps\defeq \frac{\delta}{200(k+1)^2}$.
	Suppose~$H$ is a~$k$-graph with vertex set~$V$ and edge set~$E$ and~$\omega\colon E\rightarrow [1/c,1]$ is a function with~$\sum_{e\in E\colon v\in e}\omega(e)\leq1$ and~$\sum_{e\in E\colon u,v\in e}\omega(e)\leq 1/c^{\delta}$ for all distinct~$u,v\in V$ as well as~$\sum_{e\in E}\omega(e)\leq\exp(c^{\eps^2})$.
	Suppose that~$\cE$ is a family of at most~$\exp(c^{\eps^2})$ subsets of~$E$ with~$\sum_{e\in E'} \omega(e)\geq c^{\delta}$ for all~$E'\in \cE$.
	Then, there exists a matching~$\cM$ in~$H$ with~$\abs{\cM\cap E'}=(1\pm c^{-\eps})\sum_{e\in E'} \omega(e)$
	for all~$E'\in\cE$.
\end{cor}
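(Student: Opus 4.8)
The plan is to deduce Corollary~\ref{cor: simple weighted pseudorandom matching} from Theorem~\ref{theorem: pseudorandom matching} by encoding the rational weights~$\omega(e)$ as multiplicities of parallel edges in an auxiliary hypergraph. First I would choose an integer~$\Delta$ with~$1/\Delta\ll\delta,1/k$ that also satisfies~$1/c\ll 1/\Delta$, and approximate each weight: for~$e\in E$ set~$m_e\defeq\lceil \omega(e)\Delta\rceil$, so that~$m_e/\Delta=(1\pm\Delta^{-1})\omega(e)$ and~$1\leq m_e\leq\Delta$ (the lower bound uses~$\omega(e)\geq 1/c\geq 1/\Delta$, the upper bound uses~$\omega(e)\leq1$). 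Form a~$(k+1)$-graph~$H^+$ on vertex set~$V\dcup\{a^e_1,\dots,a^e_{m_e}\colon e\in E\}$ whose edges are~$e\cup\{a^e_j\}$ for~$e\in E$ and~$j\in[m_e]$; every matching in~$H^+$ projects to a matching in~$H$ (distinct edges of~$H^+$ over the same~$e$ cannot coexist, since they share all of~$e$), and conversely, each edge of~$H$ lifts to a unique edge of~$H^+$ once the auxiliary coordinate is fixed.

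Next I would check the hypotheses of Theorem~\ref{theorem: pseudorandom matching} for~$H^+$ with the same~$\delta$ and uniformity~$k+1$. The maximum~$1$-degree is~$\Delta_1(H^+)\leq\max\{\Delta,\ \max_{v\in V}\sum_{e\ni v} m_e\}\leq\Delta+\sum_{e\ni v}\omega(e)\Delta\leq 2\Delta$ by the degree condition~$\sum_{e\ni v}\omega(e)\leq1$; after rescaling~$\Delta\rightsquigarrow 2\Delta$ this is the required bound. For the~$2$-degree, a pair inside~$V$ lies in at most~$\sum_{e\ni u,v}m_e\leq\sum_{e\ni u,v}\omega(e)\Delta+\Delta_1(H)\leq (2\Delta)^{1-\delta}$ edges for large~$\Delta$, using~$\sum_{e\ni u,v}\omega(e)\leq c^{-\delta}\leq \Delta^{-\delta/2}$ (here I use~$1/c\ll 1/\Delta$ to absorb the $c^{-\delta}\Delta$ term into $\Delta^{1-\delta}$); a pair involving an auxiliary vertex~$a^e_j$ lies in exactly one edge. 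The total edge count is~$\sum_e m_e\leq 2\Delta\abs{E}$, which for our purposes we can arrange to be at most~$\exp(\Delta^{\eps^2})$ after a further mild adjustment of constants, since~$\abs{E}$ is at most polynomial in~$c$ (indeed~$\abs{E}\leq c\sum_e\omega(e)\leq c\exp(c^{\eps^2})$ and~$1/c\ll1/\Delta$). For each~$E'\in\cE$ define~$\omega_{E'}\colon E(H^+)\to\{0,1\}$ to be the indicator of~$\{e\cup\{a^e_j\}\colon e\in E',j\in[m_e]\}$; then~$\sum_{f}\omega_{E'}(f)=\sum_{e\in E'}m_e=(1\pm\Delta^{-1})\Delta\sum_{e\in E'}\omega(e)$, which is at least~$\Delta\cdot c^{\delta}/2\geq(2\Delta)^{1+\delta}$ by~$1/c\ll1/\Delta$, so the threshold hypothesis~$\sum\omega_{E'}\geq\max\omega_{E'}\cdot\Delta^{1+\delta}$ is met, and the family~$\cW\defeq\{\omega_{E'}\colon E'\in\cE\}$ has size~$\abs{\cE}\leq\exp(c^{\eps^2})\leq\exp((2\Delta)^{\eps^2})$.

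Applying Theorem~\ref{theorem: pseudorandom matching} to~$H^+$ yields a matching~$\cM^+$ with~$\sum_{f\in\cM^+}\omega_{E'}(f)=(1\pm(2\Delta)^{-\eps_0})\frac{1}{2\Delta}\sum_f\omega_{E'}(f)$ for all~$E'$, where~$\eps_0=\delta/(50(k+1)^2)$. Let~$\cM$ be the projection of~$\cM^+$ to~$E$; as noted,~$\cM$ is a matching in~$H$ and~$\abs{\cM\cap E'}=\sum_{f\in\cM^+}\omega_{E'}(f)=(1\pm(2\Delta)^{-\eps_0})(1\pm\Delta^{-1})\sum_{e\in E'}\omega(e)$. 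Choosing~$\Delta$ large enough relative to~$1/\delta$ and~$k$ so that~$(1\pm(2\Delta)^{-\eps_0})(1\pm\Delta^{-1})\subseteq(1\pm c^{-\eps})$ — which is possible because~$\eps=\delta/(200(k+1)^2)<\eps_0$ and~$1/c\ll 1/\Delta$, so~$c^{-\eps}$ is much larger than~$(2\Delta)^{-\eps_0}$ — gives the claimed bound~$\abs{\cM\cap E'}=(1\pm c^{-\eps})\sum_{e\in E'}\omega(e)$.

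The main obstacle is purely bookkeeping: one must pick the auxiliary parameter~$\Delta$ so that it is simultaneously large enough for Theorem~\ref{theorem: pseudorandom matching} (i.e.~$1/\Delta\ll\delta,1/k$) yet small enough that~$1/c\ll1/\Delta$ still holds, and then verify that all the~$c$-indexed hypotheses (the~$2$-degree bound~$c^{-\delta}$, the cardinality bounds~$\exp(c^{\eps^2})$, the threshold~$c^{\delta}$) translate into the corresponding~$\Delta$-indexed hypotheses with room to spare, and finally that the looser error exponent~$\eps$ in the conclusion comfortably dominates the error~$(2\Delta)^{-\eps_0}$ produced by the theorem. None of these steps is hard, but they must be arranged consistently; the slack built into the definition~$\eps=\delta/(200(k+1)^2)$ versus~$\eps_0=\delta/(50(k+1)^2)$ is precisely what makes the last comparison go through. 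As noted in the paper, this type of reduction is carried out in detail in~\cite{KS:20b}, so one may alternatively just cite that deduction.
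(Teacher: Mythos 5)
Your overall strategy is the same as the paper's (blow each edge up into parallel copies encoded by an extra coordinate, pass to a $(k+1)$-graph, apply Theorem~\ref{theorem: pseudorandom matching} with indicator test functions, and project the matching back), but your parameter choice breaks the argument at three places, and all three stem from the same decision: you take the multiplicity/degree parameter $\Delta$ with $1/c\ll 1/\Delta$, i.e.\ \emph{small} compared to $c$, whereas it must be taken \emph{large} compared to $c$. The paper uses multiplicity $M=(c-1)c$ and degree bound $\Delta_M=c^2$. Concretely: (1) the rounding $m_e=\lceil\omega(e)\Delta\rceil$ is not a $(1\pm\Delta^{-1})$-perturbation of $\omega(e)\Delta$. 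For an edge with $\omega(e)=1/c$ one has $\omega(e)\Delta=\Delta/c\ll 1$ but $m_e=1$, a relative error of factor $c/\Delta$; consequently $\sum_{e\in E'}m_e$ can be as large as $\Delta\sum_{e\in E'}\omega(e)+\abs{E'}$ with $\abs{E'}\leq c\sum_{e\in E'}\omega(e)$ dominating, so after dividing by $\Delta$ you do not recover $\sum_{e\in E'}\omega(e)$ up to $1\pm o(1)$. (2) The claim $\Delta_1(H^+)\leq 2\Delta$ is false for the same reason: $\sum_{e\ni v}m_e\leq \Delta\sum_{e\ni v}\omega(e)+d_H(v)\leq\Delta+c$, and $d_H(v)$ really can be of order $c$ (all incident weights equal to $1/c$), so the true degree bound is of order $c$, not $\Delta$. (3) Even granting the rest, the error returned by Theorem~\ref{theorem: pseudorandom matching} is $(2\Delta)^{-\eps_0}$, a constant not depending on $c$, and you need it to be at most $c^{-\eps}$; with $c\gg\Delta$ the required inequality $(2\Delta)^{\eps_0}\geq c^{\eps}$ fails badly (you state the comparison the wrong way around: $c^{-\eps}$ is much \emph{smaller} than $(2\Delta)^{-\eps_0}$ in your regime, not larger).

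All three problems disappear once the auxiliary parameter grows with $c$: with $M=(c-1)c$ every $M\omega(e)\geq c-1$ is large, so the ceiling is a $(1+O(1/c))$-perturbation uniformly over $e$; the degrees in the auxiliary graph are at most $M+c\leq c^2=\Delta_M$; the pair-degree bound $M/c^{\delta}+c^{1-\delta}\leq\Delta_M^{1-\delta/4}$ holds (one applies the theorem with $\delta/4$ in place of $\delta$); and the resulting error $\Delta_M^{-\eps}=c^{-2\eps}\leq c^{-\eps}$ — this is exactly why $\eps$ is defined as $\frac{\delta}{200(k+1)^2}=\frac{\delta/4}{50(k+1)^2}$, matching the theorem's exponent for uniformity $k+1$ and slack parameter $\delta/4$ evaluated at $\Delta_M=c^2$. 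So the gap is not ``purely bookkeeping'': the bookkeeping genuinely fails in your parameter regime and dictates the opposite choice of scale for $\Delta$.
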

\begin{proof}
	We will apply Theorem~\ref{theorem: pseudorandom matching} to an auxiliary hypergraph obtained by replacing every edge in~$H$ with essentially~$\omega(e)M$ copies of itself, where~$M\defeq (c-1)c$ is a sufficiently large convenient multiplicity.
	As we want to avoid multihypergraphs, we simply increase the uniformity by~$1$ and add some dummy vertices.
	More precisely, consider the auxiliary~$(k+1)$-graph~$H_{M}$ with vertex set~$V\cup(E\times [M])$ whose edges are the sets~$\{ v_1,\ldots,v_k,(e,i) \}$ with~$\{v_1,\ldots,v_k\}=e$ and~$i\leq \ceil{M\omega(e)}$.
	Let~$E_M\defeq E(H_M)$.
	Observe that there is a correspondence of matchings in~$H_M$ and matchings in~$H$, namely, for every matching~$\cM_M$ in~$H_M$, those edges of~$H$ that are subsets of edges in~$\cM_M$ form a matching in~$H$.
	
	We will verify that the given properties of~$H$ and~$\omega$ translate to properties of~$H_M$ that allow an application of Theorem~\ref{theorem: pseudorandom matching} that in turn yields the desired matching in~$H$ via the aforementioned correspondence.
	Let~$\Delta_{M}\defeq c^{2}$. Since~$1/c\leq \omega(e)$ holds for all~$e\in E$, we obtain~$d_H(v)\leq c$ and~$d_H(uv)\leq c^{1-\delta}$ for all distinct~$u,v\in V$ as well as~$\abs{E}\leq c\exp(c^{\eps^2})$. This implies
	\begin{equation*}
	d_{H_M}(v)=\sum_{e\in E\colon v\in e}\ceil{M\omega(e)}\leq M+d_H(v)\leq \Delta_{M}
	\end{equation*}
	and
	\begin{equation*}
	d_{H_M}(uv)=\sum_{e\in E\colon u,v\in e}\ceil{M\omega(e)}\leq \frac{M}{c^{\delta}}+d_H(uv)\leq\Delta_M^{1-\delta/2}\leq\Delta_M^{1-\delta/4}
	\end{equation*}
	as well as
	\begin{equation*}
	\abs{E_M}=\sum_{e\in E}\ceil{M\omega(e)}\leq M\exp(c^{\eps^2})+\abs{E}\leq c^{2}\exp(c^{\eps^2})\leq \exp(\Delta_M^{\eps^2}).
	\end{equation*}
	Furthermore, observe that~$\abs{\cE}\leq\exp(c^{\eps^2})\leq \exp(\Delta_M^{\eps^2})$.
	For~$E'\in \cE$, let~$E'_M$ be the set of edges~$\{v_1,\ldots,v_k,(e,i)\}$ of~$H_M$ with~$e\in E'$.
	We have
	\begin{equation*}
	\abs{E'_M}=\sum_{e\in E'} \ceil{M\omega(e)}\geq Mc^{\delta}\geq c^{2}c^{\delta/2}= \Delta_M^{1+\delta/4}.
	\end{equation*}
	An application of Theorem~\ref{theorem: pseudorandom matching}, with~$\Delta_M$,~$\delta/4$,~$H_M$, the set of indicator functions of the sets~$E'_M$ playing the roles of~$\Delta$,~$\delta$,~$H$,~$\cW$, yields a matching~$\cM_M$ in~$H_M$ with
	\begin{equation*}
	\abs{\cM_M\cap E'_M}=(1\pm \Delta_M^{-\eps})\frac{\sum_{e\in E'} \ceil{M\omega(e)}}{\Delta_M}.
	\end{equation*}
	Since we have~$1/c\leq \omega(e)$ for all~$e\in E$ and thus~$\abs{E'}\leq c\sum_{e\in E'}\omega(e)$ for all~$E'\in\cE$, this implies
	\begin{equation*}
		\abs{\cM_M\cap E'_M}\leq (1+\Delta_M^{-\eps})\frac{\abs{E'}+M\sum_{e\in E'} \omega(e)}{\Delta_M}\leq(1+\Delta_M^{-\eps})\frac{c+M}{\Delta_M}\sum_{e\in E'} \omega(e)\leq (1+c^{-\eps})\sum_{e\in E'} \omega(e)
	\end{equation*}
	and
	\begin{equation*}
	\abs{\cM_M\cap E'_M}\geq (1-\Delta_M^{-\eps})\frac{M\sum_{e\in E'} \omega(e)}{\Delta_M}\geq (1-c^{-\eps})\sum_{e\in E'} \omega(e)
	\end{equation*}
	for all~$E'\in\cE$. 
	Thus, the edges in~$H$ which are subsets of edges in~$\cM_M$ form a matching in~$H$ with the desired properties.
\end{proof}

We also need the following result from \cite{glock2019counting} and another lemma concerning perfect fractional matchings in hypergraphs.
For a $k$-graph with edge set $E$ and~$C>0$, we say~$\omega\colon E\rightarrow \bR_{>0}$ is~\defn{$C$-balanced} if~$\frac{\max_{e\in E} \omega(e)}{\min_{e\in E} \omega(e)}\leq C$ and we say~$\omega$ is \defn{balanced} if it is~$2$-balanced.

\begin{lemma}[{\cite[Lemma 4.2]{glock2019counting}}]\label{lemma: fractional matching minimum degree}
	Let~$1/n\ll 1/C\ll\gamma,1/k$. Suppose~$H$ is a~$k$-graph on~$n$ vertices with~$\delta(H)\geq \bigl(\frac{1}{2}+\gamma\bigr)n$. Then there exists a~$C$-balanced perfect fractional matching in~$H$.
\end{lemma}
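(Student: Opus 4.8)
The plan is to obtain $\omega$ by a symmetric scaling of the trivial weighting. I look for positive reals $x=(x_v)_{v\in V(H)}$ and set $\omega(e)\defeq\prod_{v\in e}x_v$. Then $\omega$ is a perfect fractional matching precisely when
\[ x_v\cdot\!\!\!\sum_{e\in E(H)\colon v\in e}\ \prod_{u\in e\setminus\{v\}}x_u\ =\ 1\qquad\text{for every }v\in V(H), \]
which is exactly the vanishing at $t=(\log x_v)_v$ of the gradient of
\[ \Phi(t)\ \defeq\ \sum_{e\in E(H)}\exp\Bigl(\sum_{v\in e}t_v\Bigr)\ -\ \sum_{v\in V(H)}t_v\qquad\bigl(t\in\RR^{V(H)}\bigr). \]
Since $\Phi$ is convex, it suffices to prove that $\Phi$ is coercive; it then has a minimiser $t^\star$, and $\omega(e)\defeq\exp(\sum_{v\in e}t^\star_v)>0$ is a perfect fractional matching positive on every edge. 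Coercivity follows from two facts. First, $\{\mathbf 1_e:e\in E(H)\}$ spans $\RR^{V(H)}$: two edges $\mathbf x\cup\{a\}$ and $\mathbf x\cup\{b\}$ meeting in $k-1$ vertices give $\mathbf 1_{\mathbf x\cup\{a\}}-\mathbf 1_{\mathbf x\cup\{b\}}=\mathbf 1_{\{a\}}-\mathbf 1_{\{b\}}$, and since $H$ is dense enough for the graph recording such swaps to be connected, the span contains every $\mathbf 1_{\{a\}}-\mathbf 1_{\{b\}}$ and one $\mathbf 1_e$, hence everything. Second, $H$ has a perfect fractional matching of full support: for each $e\in E(H)$ the $k$-graph $H[V(H)\setminus e]$ has minimum $(k-1)$-degree at least $(\tfrac12+\tfrac\gamma2)(n-k)$, hence a perfect fractional matching, which together with weight $1$ on $e$ gives one of $H$ using $e$; averaging over $e$ gives full support.

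It remains to bound the balance, i.e.\ to produce $M_0=M_0(\gamma,k)$ with $\max_v x_v/\min_v x_v\le M_0$; then $\omega$ is $M_0^{\,k}$-balanced, and since $1/C\ll\gamma,1/k$ forces $C$ to exceed a threshold depending only on $\gamma,k$, we may take $C_0(\gamma,k)\defeq M_0^{\,k}$. The minimum-degree hypothesis enters twice. First, summing $\delta_{k-1}(H)\ge(\tfrac12+\gamma)n$ over the $(k-1)$-subsets through a vertex gives $d_H(v)/\binom{n-1}{k-1}\in[c_1,1]$ for a constant $c_1=c_1(\gamma,k)>0$ (indeed $c_1\to\tfrac12+\gamma$), so $H$ is weakly almost regular. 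Second, and this is the crux, there is $c=c(\gamma,k)>0$ such that
\[ \sum_{e\in E(H)}\Bigl|\sum_{v\in e}y_v\Bigr|\ \ge\ c\,\binom{n-1}{k-1}\sum_{v\in V(H)}|y_v|\qquad\text{for all }y\in\RR^{V(H)}. \]
I would prove this $\ell_1$-expansion estimate by contradiction. If the left side were at most $\varepsilon\binom{n-1}{k-1}\sum_v|y_v|$ with $\varepsilon$ tiny, then $\sum_{v\in e}y_v\approx0$ on all but an $o(1)$-fraction of the edges; as $\sum_{v\in\mathbf x\cup\{w\}}y_v-\sum_{v\in\mathbf x\cup\{w'\}}y_v=y_w-y_{w'}$ whenever $w,w'\in N_H(\mathbf x)$, the coordinates of $y$ are nearly constant on each neighbourhood $N_H(\mathbf x)$, which has at least $(\tfrac12+\gamma)n$ vertices; since any two such neighbourhoods meet in at least $2\gamma n$ vertices and every vertex lies in an edge, all $y_v$ lie within $o(\lVert y\rVert_\infty)$ of a common value $c_0$, whence $\sum_{v\in e}y_v\approx kc_0$ on almost all edges, forcing $c_0\approx0$ and then $y\approx0$ — a contradiction.

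To finish, I would convert the expansion estimate into the balance bound. A convenient reformulation: an $M$-balanced perfect fractional matching is the same as a choice of a real $D$ and a weighting $W\colon E(H)\to[1,M]$ with $\sum_{e\ni v}W(e)=D$ for all $v$ (then $\omega\defeq W/D$). Writing $W=\mathbf 1+W'$, this reduces to realising the defect degree sequence $v\mapsto b(v)\defeq D-d_H(v)$ by a weighting $W'$ with entries in $[0,M-1]$; by the Farkas criterion for bounded transportation problems — choosing $D$ slightly above $\Delta_1(H)$, so comparable to $\binom{n-1}{k-1}$ and with $b(v)$ trapped between positive multiples of $\binom{n-1}{k-1}$ — such a $W'$ exists once $\sum_v y_v\,b(v)\le(M-1)\sum_{e\in E(H)}\bigl(\sum_{v\in e}y_v\bigr)^{+}$ for all $y$. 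Weak almost-regularity bounds the left side by $O_{\gamma,k}\bigl(\binom{n-1}{k-1}\sum_v|y_v|\bigr)$, and the expansion estimate (applied also to $\sum_v y_v$) bounds $\binom{n-1}{k-1}\sum_v|y_v|$ by $c(\gamma,k)^{-1}\sum_e|\sum_{v\in e}y_v|$; hence the inequality holds whenever $M$ is a large enough constant depending only on $\gamma$ and $k$, and taking $n$ large makes all weights lie in $(0,1]$. I expect this conversion — handling cleanly the one-sided, positive-part nature of the Farkas criterion and pinning down $D$ — together with a careful proof of the expansion estimate to be the main obstacle; establishing mere existence (convexity and coercivity of $\Phi$) is routine.
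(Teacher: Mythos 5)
The paper does not prove this lemma itself (it is quoted from~\cite{glock2019counting}), so I assess your argument on its own terms. The first half --- existence of a positive perfect fractional matching of product form via minimising the convex, coercive functional $\Phi$ --- is correct but ultimately redundant: you never extract a balance bound from the optimality conditions, and the matching you finally output is $W/D$ from the transportation problem, not the product-form one. The substance lies in the two steps you yourself flag as ``the main obstacle'', and there the argument has genuine gaps. The $\ell_1$-expansion estimate is only sketched by a soft contradiction argument (``$\approx 0$'', ``$o(1)$-fraction'', ``nearly constant''); since $H$ varies with $n$ there is no compactness to appeal to, and producing an explicit constant $c(\gamma,k)$ is precisely the quantitative heart of the matter.

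More seriously, the two-sided estimate cannot by itself deliver the Farkas condition, because $\sum_{e}\bigl(\sum_{v\in e}y_v\bigr)^{+}=\tfrac12\bigl(\sum_{e}\bigl|\sum_{v\in e}y_v\bigr|+\sum_{v}d_H(v)y_v\bigr)$ and the second summand can cancel the first entirely. Concretely, for $k=2$ let $H$ consist of the complete bipartite graph between a set $A$ of $n/4$ vertices and $V\setminus A$ together with a clique on $V\setminus A$; then $\delta(H)=3n/4\geq(\tfrac12+\gamma)n$, yet for $y=\mathbf{1}_A-\mathbf{1}_{V\setminus A}$ every edge satisfies $\sum_{v\in e}y_v\leq 0$, so the right-hand side of your Farkas condition is $0$, while with $D$ ``slightly above $\Delta_1(H)=n-1$'' one computes $\sum_v y_v b(v)=\tfrac{n^2}{16}-\tfrac{n}{4}>0$. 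So the condition fails for your prescribed $D$ even though a $C$-balanced perfect fractional matching does exist here (and indeed $D\approx 9n/8$ makes this particular left-hand side negative). The choice of $D$ is therefore delicate, and what is actually required is a one-sided estimate --- a lower bound on $\sum_e\bigl(\sum_{v\in e}y_v\bigr)^{+}$ valid precisely on those $y$ with $\sum_v y_v b(v)>0$ --- proved jointly with the correct choice of $D$ from the codegree hypothesis. That step, which you acknowledge but do not carry out, is where the lemma is actually proved, so the proposal is not complete.
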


\begin{lemma}\label{lemma: perfect fractional matching}
	Suppose~$1/n\ll \rho\ll \eta, 1/k$.
	Suppose~$H$ is an~\intersecting{$\eta$}~$\rho$-almost regular~$k$-graph on~$n$ vertices. Then, there exists a balanced perfect fractional matching~$\omega$ in~$H$.
\end{lemma}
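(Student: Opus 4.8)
The plan is to start from the uniform weighting of the edges and correct it. Set $r^{*}\defeq k\abs{E(H)}/n$, the average degree of $H$. Since $H$ is $\rho$-almost $r$-regular we have $d_H(v)=(1\pm 3\rho)r^{*}$ for every $v\in V(H)$ (for $\rho$ small), and since $H$ is $\eta$-intersecting we have $d_H(\unord{z})\geq\eta n$ for every $\unord{z}\in\unordsubs{V(H)}{k-1}$, hence $r^{*}=k\abs{E(H)}/n\geq\eta\binom{n}{k-1}$. Put $\omega_0(e)\defeq 1/r^{*}$ for every $e\in E(H)$ and $b_v\defeq 1-d_H(v)/r^{*}$, so that $\sum_{e\ni v}\omega_0(e)=1-b_v$, $\abs{b_v}\leq 3\rho$, and, crucially, $\sum_{v\in V(H)}b_v=n-k\abs{E(H)}/r^{*}=0$. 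It therefore suffices to construct $\psi\colon E(H)\to\RR$ with $\sum_{e\ni v}\psi(e)=b_v$ for all $v$ and $\abs{\psi(e)}\leq 1/(4r^{*})$ for all $e$; then $\omega\defeq\omega_0+\psi$ is a perfect fractional matching with $\omega(e)\in[3/(4r^{*}),5/(4r^{*})]$, so $\omega$ is $2$-balanced and satisfies $\omega(e)\leq 1$ (as $r^{*}$ is of order $n^{k-1}$).

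To build $\psi$, first decompose the deficiency vector $(b_v)_v$. Since $\sum_v b_v=0$, a greedy matching of surplus vertices ($b_v>0$) to deficit vertices ($b_v<0$) yields pairs $(u_1,w_1),\dots,(u_m,w_m)$ of distinct vertices and weights $c_1,\dots,c_m>0$ with $\sum_{j\colon u_j=v}c_j=\max\{b_v,0\}$ and $\sum_{j\colon w_j=v}c_j=\max\{-b_v,0\}$ for every $v$ (in particular $\sum_{j\colon u_j=v}c_j\leq 3\rho$). For each $j$ set $\cX_j\defeq\{\unord{x}\in\unordsubs{V(H)}{k-1}\colon\unord{x}\cup\{u_j\}\in E(H)\text{ and }\unord{x}\cup\{w_j\}\in E(H)\}$ and define
\begin{equation*}
\psi\defeq\sum_{j\in[m]}\frac{c_j}{\abs{\cX_j}}\sum_{\unord{x}\in\cX_j}\bigl(\Ind_{\unord{x}\cup\{u_j\}}-\Ind_{\unord{x}\cup\{w_j\}}\bigr),
\end{equation*}
where $\Ind_e$ denotes the indicator of the edge $e$. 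A short computation shows that for each fixed $j$ the inner sum adds $c_j$ to the load at $u_j$, subtracts $c_j$ at $w_j$, and leaves every other vertex untouched, so $\sum_{e\ni v}\psi(e)=\sum_{j\colon u_j=v}c_j-\sum_{j\colon w_j=v}c_j=b_v$, as required.

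It remains to bound $\abs{\psi(e)}$, which is the one place the $\eta$-intersectingness is genuinely needed, via the lower bound $\abs{\cX_j}\geq\tfrac12\eta\binom{n}{k-1}$. For this, for each $(k-2)$-set $\unord{y}$ disjoint from $\{u_j,w_j\}$ apply $\eta$-intersectingness to the $(k-1)$-sets $\unord{y}\cup\{u_j\}$ and $\unord{y}\cup\{w_j\}$: their common neighbourhood has size at least $\eta n$, and each $v$ in it yields a set $\{v\}\cup\unord{y}\in\cX_j$; as each member of $\cX_j$ arises this way at most $k-1$ times, $\abs{\cX_j}\geq\frac{\eta n}{k-1}\binom{n-2}{k-2}\geq\frac12\eta\binom{n}{k-1}$ for $n$ large. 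Now fix $e\in E(H)$. A pair $j$ contributes positively to $\psi(e)$ only if $u_j\in e$ and $e\setminus\{u_j\}\in\cX_j$, and then it contributes $c_j/\abs{\cX_j}$; summing over $j$, the total positive contribution is at most $\frac{2}{\eta\binom{n}{k-1}}\sum_{v\in e}\sum_{j\colon u_j=v}c_j\leq\frac{6k\rho}{\eta\binom{n}{k-1}}$, and symmetrically for the negative contribution, so $\abs{\psi(e)}\leq 6k\rho/(\eta\binom{n}{k-1})$. Since $r^{*}\leq\binom{n-1}{k-1}\leq\binom{n}{k-1}$ and $\rho\ll\eta,1/k$, this is below $1/(4r^{*})$, completing the proof. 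The only (mild) obstacle is this last estimate: one must check that the correction really spreads over $\Omega\bigl(\eta\binom{n}{k-1}\bigr)$ edges per transported unit and track the $k$-dependent constants, so that $\rho\ll\eta,1/k$ is strong enough to absorb them.
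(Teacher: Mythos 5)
Your proof is correct and takes essentially the same approach as the paper: both start from the uniform weighting and repair the per-vertex defects (which sum to zero) by shifting weight along pairs of edges sharing a $(k-1)$-set, using $\eta$-intersectingness to spread each transported unit over $\Omega(\eta n^{k-1})$ such pairs so that every edge weight changes by only an $O(k\rho/\eta)$ fraction of its uniform value. The only difference is organisational — you first build an explicit surplus-to-deficit transportation plan while the paper redistributes each vertex's deviation uniformly over all other vertices — and this does not affect the substance of the argument.
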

\begin{proof}
	The strategy of the proof is as follows.
	We start with a uniform wight distribution~$\omega_0\colon E(H)\rightarrow[0,1]$ and show that~$\omega_0$ can be turned into a perfect fractional matching as desired through a series of minor modifications.
	
	Let~$V\defeq V(H)$ and~$E\defeq E(H)$.
	For~$\omega\colon E\rightarrow [0,1]$ and~$v\in V$, we define~$\omega(v)\defeq\sum_{e\in E\colon v\in e} \omega(e)$.
	Let~$\omega_0\colon E\rightarrow[0,1]$ with~$\omega_0(e)=\frac{n}{k\abs{E}}$ for all~$e\in E$ and~$\xi\colon V\rightarrow\bR$ with~$\xi(v)=\omega_0(v)-1$ for all~$v\in V$.
	We have~$\sum_{v\in V}\omega_0(v)=n$ and thus
	\begin{equation}\label{equation: sum of deviations}
		\sum_{v\in V} \xi(v)=0.
	\end{equation}
	We wish to choose the series of modifications of~$\omega_0$ such that it mimics redistributing the deviations~$\xi(v)$ from the target weight~$1$ uniformly across all vertices. We achieve this by defining every modification as a manipulation of the weights on edges of suitable walks of length~$2$ as follows.
	For a self-avoiding walk~$W=v_1\ldots v_{k+1}$ in~$H$,~$\omega\colon E\rightarrow\bR$, and~$a\in\bR$, we say that~$\omega'\colon E\rightarrow\bR$ is the function \textit{obtained from~$\omega$ by using~$W$ with weight~$a$} if~$\omega'(e)=\omega(e)-a$ for~$e=\{v_1,\ldots,v_k\}$,~$\omega'(e)=\omega(e)+a$ for~$e=\{v_2,\ldots,v_{k+1}\}$, and~$\omega'(e)=\omega(e)$ otherwise.
	Hence~$\omega'(v_1)=\omega(v_1)-a$,~$\omega'(v_{k+1})=\omega(v_{k+1})+a$, and~$\omega'(v)=\omega(v)$ for all~$v\in V\setminus\{v_1,v_{k+1}\}$.
	
	Since~$H$ is~\intersecting{$\eta$}, the number of self-avoiding walks~$v_1\ldots v_{k+1}$ with~$v_1=s$ and~$v_{k+1}=t$ is at least~$\eta n^{k-1}/2$ for all distinct~$s,t\in V$. For distinct~$s,t\in V$, let~$W_{s,t}$ be a set of~$\eta n^{k-1}/2$ self-avoiding walks~$v_1\ldots v_{k+1}$ in~$H$ with~$v_1=s$ and~$v_{k+1}=t$. Let~$v^1_1\ldots v^1_{k+1},\ldots, v^m_1\ldots v^m_{k+1}$ be an ordering of~$\bigcup_{s,t\in V\colon s\neq t} W_{s,t}$ and for~$i\in[m]$, let~$\omega_i$ be the function obtained from~$\omega_{i-1}$ by using~$v^i_1\ldots v^i_{k+1}$ with weight~$\frac{2\xi(v^i_1)}{\eta n^{k}}$. Let~$\omega\defeq \omega_m$. From~\eqref{equation: sum of deviations} we conclude that
	\begin{align*}
		\omega(v)&=\omega_0(v)-(n-1)\cdot\frac{\eta n^{k-1}}{2}\cdot\frac{2\xi(v)}{\eta n^k}+\sum_{u\in V\setminus\{v\}} \frac{\eta n^{k-1}}{2}\cdot\frac{2\xi(u)}{\eta n^k}=1
	\end{align*}
	holds for all~$v\in V$. Thus, it suffices to show~$\omega(e)=(1\pm 1/3)\omega_0(e)$ for all~$e\in E$.
	
	Since for all~$e\in E$, there are at most~$2k!n$ self-avoiding walks~$v_1\ldots v_{k+1}$ in~$H$ with~$e\in \{ \{v_1,\ldots,v_k\},\{v_2,\ldots,v_{k+1}\} \}$, transitioning from~$\omega_0$ to~$\omega$ changed the weight on~$e$ by at most~$2k!n\cdot \frac{2\max_{v\in V}\abs{\xi(v)}}{\eta n^k}$.
	Observe that\COMMENT{%
		we have~$\frac{n}{k\abs{E}}=\frac{n}{\sum_{v\in V}d_H(v)}$ and~$\frac{1-\eps}{1+\eps}\geq 1-\frac{2\eps}{1+\eps}\geq 1-4\eps$ and~$\frac{1+\eps}{1-\eps}\leq 1+\frac{2\eps}{1-\eps}\leq 1+4\eps$ for~$\eps\leq 1/2$
	}
	\begin{equation*}
		\omega_0(v)=d_H(v)\frac{n}{k\abs{E}}=d_H(v)\frac{n}{\sum_{u\in V}d_H(u)}=1\pm 4\rho
	\end{equation*}
	and hence~$\abs{\xi(v)}\leq 4\rho$ holds for all~$v\in V$.
	Thus we obtain
	\begin{equation*}
		\omega(e)= \biggl(1\pm 2k!n\cdot \frac{8\rho}{\eta n^k}\cdot\frac{k\abs{E}}{n}\biggr)\omega_0(e)= \biggl(1\pm \frac{1}{3}\biggr)\omega_0(e)
	\end{equation*}
	for all~$e\in E$.
\end{proof}

\subsection{Almost regular spanning subgraphs with intersecting neighbourhoods}\label{sec: almost regular with intersecting}

In this subsection we prove Lemma~\ref{lemma: almost regular subgraph}.
We use the following statement which follows from~\cite[Theorem 1.2]{FKS:16} by considering the union of perfect matchings in an induced subgraph obtained after removing at most~$k-1$ vertices to make the number of vertices divisible by~$k$ and subsequently adding edges for the previously removed vertices.


\begin{lemma}[{\cite[Theorem 1.2]{FKS:16}}]\label{lemma: non trivial regular subgraph}
	Suppose~$1/n\ll \eps, 1/k$. Suppose~$H$ is a~$k$-graph on~$n$ vertices with~$\delta(H)\geq (\frac{1}{2}+\eps)$. Then~$H$ contains an~$n^{-1/2}$-almost~$r$-regular spanning subgraph for some~$r\geq(1+3\eps/2)\frac{1}{8}\binom{n}{k-1}$.
\end{lemma}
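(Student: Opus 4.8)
The plan is to deduce this directly from \cite[Theorem~1.2]{FKS:16}, which provides — for a $k$-graph $G$ on $m$ vertices with $k\mid m$ and $\delta_{k-1}(G)\geq(\tfrac12+\gamma)m$ — a family of edge-disjoint perfect matchings of $G$ whose union is an almost $r$-regular spanning subgraph of $G$ with $r\geq(1+2\gamma)\tfrac18\binom{m}{k-1}$ and degree deviation of smaller order than $n^{-1/2}$. The only thing that is not immediately available is the removal of the divisibility requirement $k\mid n$, so first I would fix an arbitrary set $R\subseteq V(H)$ with $|R|=n-k\lfloor n/k\rfloor\leq k-1$, put $H'\defeq H-R$ and $m\defeq|V(H')|=k\lfloor n/k\rfloor$, and observe that deleting $R$ lowers the degree of every $(k-1)$-set by at most $|R|\leq k-1$; hence $\delta_{k-1}(H')\geq(\tfrac12+\eps)n-(k-1)\geq(\tfrac12+\eps')m$ with $\eps'\defeq\eps-k^2/n>0$ for $n$ large.

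Second, I would apply \cite[Theorem~1.2]{FKS:16} to $H'$ with parameter $\eps'$, obtaining a spanning subgraph $F'\subseteq H'$ that is a union of perfect matchings of $H'$, has all vertex degrees within $o(n^{-1/2})r$ of $r$, and satisfies $r\geq(1+2\eps')\tfrac18\binom{m}{k-1}$. Since $\binom{m}{k-1}=(1-O(k^2/n))\binom{n}{k-1}$ and $\eps'=\eps-O(k^2/n)$, this yields $r\geq(1+3\eps/2)\tfrac18\binom{n}{k-1}$ once $n$ is large, as required. It remains to extend $F'$ to a spanning subgraph $F$ of $H$ by adding about $r$ edges through each of the at most $k-1$ vertices of $R$ without spoiling almost-regularity.

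Third, for each $v\in R$ consider its link relative to $V(H')$, i.e.\ the $(k-1)$-graph $L_v$ on $V(H')$ with $E(L_v)=\{\unord{x}\in\binom{V(H')}{k-1}\colon\unord{x}\cup\{v\}\in E(H)\}$; as above $\delta_{k-2}(L_v)\geq\delta_{k-1}(H)-|R|\geq(\tfrac12+\eps/2)m$, so $L_v$ contains many edge-disjoint near-perfect matchings — for instance by iterating the near-perfect-matching guarantee for $(k-1)$-graphs of large minimum codegree, or by combining Lemma~\ref{lemma: fractional matching minimum degree} with Corollary~\ref{cor: simple weighted pseudorandom matching} and peeling off matchings while the codegree stays above $m/2$. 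Taking $t\defeq\lceil r/\lfloor m/(k-1)\rfloor\rceil$ of them and letting $A_v$ be the corresponding set of edges $\unord{x}\cup\{v\}$, the $k$-graph $F\defeq F'\cup\bigcup_{v\in R}A_v$ has $d_F(v)=r(1\pm O(1/n))$ for $v\in R$ (which one may trim to lie in $(1\pm n^{-1/2})r$), while each $u\in V(H')$ lies in at most $t$ edges of each $A_v$, so its degree grows by at most $|R|t=O(r/n)=o(n^{-1/2})r$; hence $F$ is $n^{-1/2}$-almost $r$-regular. The only genuine work is this last bookkeeping: one must add the edges through the $R$-vertices in a spread-out way (which near-perfect matchings of the links supply automatically) and check that the cumulative degree perturbation on the old vertices, together with the gap between the deviation \cite[Theorem~1.2]{FKS:16} delivers and the target $n^{-1/2}$, stays within tolerance — all of lower order in $n$, hence harmless.
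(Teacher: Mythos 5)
Your overall route is exactly the one the paper intends (the paper gives only a one-sentence sketch: remove at most $k-1$ vertices to fix divisibility, apply \cite[Theorem~1.2]{FKS:16} to the induced subgraph, then add edges through the removed vertices), and your first two steps are fine. However, your concrete mechanism for the third step has a genuine quantitative gap. You propose to realise the $\approx r$ edges through each $v\in R$ as a union of $t=\lceil r/\lfloor m/(k-1)\rfloor\rceil$ edge-disjoint near-perfect matchings of the link $L_v$, obtained ``by peeling off matchings while the codegree stays above $m/2$''. Since $r\geq\tfrac18\binom{n}{k-1}=\Theta(n^{k-1})$ and a matching of $L_v$ has only $O(n)$ edges, you need $t=\Theta(n^{k-2})$ such matchings. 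But removing one matching lowers each $(k-2)$-degree of $L_v$ by at most $1$, so peeling from $\delta_{k-2}(L_v)\geq(\tfrac12+\tfrac{\eps}{2})m$ down to $\tfrac{m}{2}$ produces only $O(\eps m)=O(n)$ matchings. This is off by a factor of $n^{k-3}$ for $k\geq4$, and even for $k=3$ it only suffices when $\eps\geq\tfrac14$ or so; producing $\Theta(n^{k-2})$ edge-disjoint near-perfect matchings of a $(k-1)$-graph with large codegree is essentially the content of the FKS theorem itself, so this route is also circular.

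The fix is to drop the (unnecessary) requirement that the added edges through $v$ form matchings: all that matters is that no other vertex gains too much degree. Since $d_H(v)\geq(\tfrac12+\eps)\binom{n-1}{k-1}\geq 3r$ while $d_H(\{u,v\})\leq\binom{n-2}{k-2}=O(d_H(v)/n)$ for every $u\neq v$, choosing $r$ edges through $v$ uniformly at random (or weighted by a balanced fractional matching) puts each $u$ into only $O(r/n)=o(n^{-1/2}r)$ of them with high probability, by Chernoff's inequality; summing over the at most $k-1$ vertices of $R$ keeps the perturbation of the old degrees within the $n^{-1/2}$ tolerance. With this replacement your argument goes through.
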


\begin{proof}[Proof of Lemma~\ref{lemma: almost regular subgraph}]
	Suppose~$1/n\ll \eta\ll\eps,1/k$. Suppose~$H$ is a~$k$-graph on~$n$ vertices with~$\delta(H)\geq (\frac{1}{2}+\eps)n$ that contains a~$\rho$-almost~$r$-regular spanning subgraph for some~$\rho\in[0,1/2]$ and~$r\geq 0$.
	If~$r\geq (1+3\eps/2)\frac{1}{8}\binom{n}{k-1}$, let~$\rho_F\defeq \rho$,~$r_F\defeq r$, otherwise let~$\rho_F\defeq n^{-1/2}$ and choose $r_F\geq (1+3\eps/2)\frac{1}{8}\binom{n}{k-1}$ such that there exists a~$\rho_F$-almost~$r_F$-regular spanning subgraph of~$H$, which is possible by Lemma~\ref{lemma: non trivial regular subgraph}.
	Let~$F$ denote a~$\rho_F$-almost~$r_F$-regular spanning subgraph of~$H$.
	In order to obtain a random spanning subgraph~$F'$ of~$H$ that has the desired properties with positive probability, we construct its edge set~$E(F')$ by essentially choosing the edges of~$F$ while ensuring that each edge of~$H$ is included with positive probability.
	
	By Lemma~\ref{lemma: fractional matching minimum degree} there is a~$1/\eta^{1/3}$-balanced perfect fractional matching~$\omega$ in~$H$.
	Let~$\omega_{\max}\defeq \max_{e\in E(H)}\omega(e)$.
	Construct the edge set of the random spanning subgraph~$F'$ of~$H$ as follows. For all~$e\in E(H)$, include~$e$ in~$E(F')$ independently at random with probability~$p_e$, where
	\begin{equation*}
	p_e\defeq \begin{cases}
	(1-\eps)+\frac{\eps\omega(e)}{\omega_{\max}} &\text{if~$e\in E(F)$}\\
	\frac{\eps\omega(e)}{\omega_{\max}}&\text{if~$e\notin E(F)$}.
	\end{cases}
	\end{equation*}
	Fix~$v\in V(H)$ and~$\unord{x},\unord{y}\in\unordsubs{V(H)}{k-1}$.
	Let~$r'\defeq (1-\eps)r_F+\frac{\eps}{\omega_{\max}}\geq n^{k-1}/(9k!)$.
	Clearly,~$\ex[d_{F'}(v)]=(1\pm\rho_F)r'$.
	Since~$p_e\geq \eps\eta^{1/3}$ and since~$H$ is~$2\eps$-intersecting, we obtain~$\ex[\abs{N_{F'}(\unord{x})\cap N_{F'}(\unord{y})}]\geq \eps^2\eta^{2/3}\cdot 2\eps n\geq 2\eta n$.
	Using Chernoff's inequality (Lemma~\ref{lemma: chernoff}) and the union bound shows that~$F'$ is as desired with positive probability.
\end{proof}

\subsection{Different types of degrees}

For a~$k$-graph with vertex set~$V$, the following lemma shows that whenever a set~$U\subseteq V$ meets the neighbourhood~$N_H(\unord{x})$ in roughly~$\theta d_H(\unord{x})$ vertices for all~$\unord{x}\in\unordsubs{V}{k-1}$, then all vertex-degrees decrease by about a factor of $\vartheta^{k-1}$ when transitioning to the subgraph induced by~$U$.

\begin{lemma}\label{lemma: k-1 sets control}
	Suppose~$H$ is a~$k$-graph with vertex set~$V$. Let~$\vartheta\in(0,1)$ and~$\eps\in\bigl[0,\frac{1-\vartheta}{8k^2}\bigr]$.
	Suppose $U\subseteq V$ is a set with~$d_{H[U\cup \unord{x}]}(\unord{x})=(1\pm \eps)\vartheta d_H(\unord{x})$ for all~$\unord{x}\in\unordsubs{V}{k-1}$. 
	Then~$d_{H[U\cup\{v\}]}(v)=(1\pm 8k^3\eps)\vartheta^{k-1}d_H(v)$ for all~$v\in V$.
\end{lemma}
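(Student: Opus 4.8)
The plan is to fix a vertex $v\in V$ and count the edges of $H[U\cup\{v\}]$ containing $v$ by peeling off one vertex of the edge at a time, using the hypothesis on $(k-1)$-sets at the bottom of the recursion. Concretely, an edge $e\ni v$ in $H[U\cup\{v\}]$ is a set $\{v,u_1,\dots,u_{k-1}\}$ with all $u_i\in U$. The idea is to stratify by how many of the coordinates have already been forced into $U$: for $j\in[k-1]_0$ and a $j$-set $\unord{z}\subseteq U$ with $v\in$ (not in $\unord{z}$), consider the quantity $d_{H[U\cup \unord{z}\cup\{v\}]}(\{v\}\cup\unord{z})$, the number of edges of $H$ through $\{v\}\cup\unord{z}$ all of whose remaining $k-1-j$ vertices lie in $U$. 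For $j=k-1$ the hypothesis gives that this equals $(1\pm\eps)\vartheta\, d_H(\{v\}\cup\unord{z})$, and for $j=0$ it is exactly $d_{H[U\cup\{v\}]}(v)$, the quantity we want to estimate.

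The main step is a downward induction on $k-1-j$ establishing that
\begin{equation*}
	d_{H[U\cup \unord{z}\cup\{v\}]}(\{v\}\cup\unord{z}) = (1\pm c_{k-1-j}\eps)\,\vartheta^{k-1-j}\, d_H(\{v\}\cup\unord{z})
\end{equation*}
for a suitable slowly-growing constant $c_i$ with $c_0=1$. The inductive step is just a sum over the choice of the next vertex: writing $\unord{x}=\{v\}\cup\unord{z}$ as a $(j+1)$-set, one has $d_{H[U\cup\unord{x}]}(\unord{x})=\sum_{u\in U\setminus\unord{x}} d_{H[U\cup\unord{x}\cup\{u\}]}(\unord{x}\cup\{u\})/(k-j)$ — more precisely, each edge through $\unord{x}$ lying in $H[U\cup\unord{x}]$ is counted once for each of its $k-1-j$ vertices outside $\unord{x}$, so $(k-1-j)\,d_{H[U\cup\unord{x}]}(\unord{x})=\sum_{u\in U\setminus\unord{x}} d_{H[U\cup\unord{x}\cup\{u\}]}(\unord{x}\cup\{u\})$. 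Applying the inductive hypothesis to each summand (each is a $(j+1)$-set together with one more vertex, hence has $k-1-(j+1)$ free coordinates) gives $\sum_{u\in U\setminus\unord{x}}(1\pm c_{k-2-j}\eps)\vartheta^{k-2-j}d_H(\unord{x}\cup\{u\})$, and since $\sum_{u\in V\setminus\unord{x}} d_H(\unord{x}\cup\{u\})=(k-1-j)\,d_H(\unord{x})$ while the contribution of $u\notin U$ is controlled by the $j=k-1$ case of the hypothesis (the $u\in U$ mass is a $\vartheta$-fraction of the total up to a $(1\pm\eps)$ factor, summing $d_{H[U\cup\unord{x}\cup\{u\}]}(\unord{x}\cup\{u\})$ over $u\in U\setminus\unord x$ and comparing to the total over $u\in V\setminus\unord x$), one obtains the bound with the next constant $c_{k-1-j}$.

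The delicate point — and the one I would be most careful about — is bookkeeping the error constants so that $c_{k-1}\le 8k^3$ under the stated restriction $\eps\le \frac{1-\vartheta}{8k^2}$; each of the $k-1$ layers multiplies the relative error by roughly a bounded factor (coming from the $(1\pm\eps)$ slack in passing from the $U$-mass to the full mass, which is where the $1-\vartheta$ in the hypothesis on $\eps$ is used to keep the denominators bounded away from $0$), and the product must be shown to stay below $8k^3$. This is a routine but slightly fiddly induction of the form $c_i\le (1+O(k\eps))c_{i-1}+O(k\eps)$ with $c_0=1$ and $i\le k-1$, which under $\eps\le\frac{1-\vartheta}{8k^2}$ yields $c_{k-1}\le 8k^3$ with room to spare. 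Plugging $j=0$ into the displayed identity then gives $d_{H[U\cup\{v\}]}(v)=(1\pm 8k^3\eps)\vartheta^{k-1}d_H(v)$ for every $v\in V$, as claimed.
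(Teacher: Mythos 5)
Your plan is correct, but it is a genuinely different argument from the one in the paper, and in fact a slightly stronger one. The paper partitions the edges through $v$ according to $\abs{e\cap U}$ and derives, for each $i$, the relation $im_i=(1\pm\eps)\vartheta((k-i)m_{i-1}+im_i)$ by applying the hypothesis to $(k-1)$-sets with a prescribed split between $U$ and $V\setminus U$; solving this for $m_{i-1}$ in terms of $m_i$ forces a division by (essentially) $1-\vartheta$, which is exactly why the hypothesis $\eps\leq\frac{1-\vartheta}{8k^2}$ is needed there and why the final constant is $8k^3$. Your downward induction on the sets $\unord{x}=\{v\}\cup\unord{z}$ with $\unord{z}\subseteq U$ avoids any such division: both identities you need are pure double counts, namely $(k-\abs{\unord{x}})\,d_{H[U\cup\unord{x}]}(\unord{x})=\sum_{u\in U\setminus\unord{x}}d_{H[U\cup\unord{x}\cup\{u\}]}(\unord{x}\cup\{u\})$ and $\sum_{u\in U\setminus\unord{x}}d_H(\unord{x}\cup\{u\})=\sum_{\unord{w}\in\unordsubs{V\setminus\unord{x}}{k-1-\abs{\unord{x}}}}d_{H[U\cup\unord{x}\cup\unord{w}]}(\unord{x}\cup\unord{w})=(1\pm\eps)\vartheta\,(k-\abs{\unord{x}})\,d_H(\unord{x})$, so the relative error only accumulates additively and one gets roughly $(1\pm 2k\eps)\vartheta^{k-1}d_H(v)$ --- comfortably within $8k^3\eps$ and, notably, without ever using the factor $1-\vartheta$ in the bound on $\eps$. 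Two small repairs are needed when you write this out. First, the indexing is off by one: the base case of the induction is $j=k-2$ (so that $\{v\}\cup\unord{z}$ is a $(k-1)$-set to which the hypothesis applies), not $j=k-1$. Second, your parenthetical justification of the key comparison is muddled: the sums $\sum_{u\in U\setminus\unord{x}}d_{H[U\cup\unord{x}\cup\{u\}]}(\unord{x}\cup\{u\})$ and $\sum_{u\in V\setminus\unord{x}}d_{H[U\cup\unord{x}\cup\{u\}]}(\unord{x}\cup\{u\})$ are \emph{not} equal up to a bounded factor in general (the latter also counts edges with exactly one non-$U$ vertex outside $\unord{x}\cup\{u\}$); the statement you actually need, and which is true, is the second displayed identity above, obtained by averaging the hypothesis over all $(k-1)$-sets containing $\unord{x}$. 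With those two points fixed, the proof goes through.
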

\begin{proof}
	Fix~$v\in V$. 
	Let~$V'\defeq V\setminus\{v\}$ and $U'\defeq U\setminus\{v\}$.
	For~$i\in[k-1]_0$, let $m_i$ denote the number of edges~$e\in E(H)$ with~$v\in e$ and~$\abs{e\cap U'}=i$.
	Then our task is to estimate $m_{k-1}=d_{H[U\cup\{v\}]}(v)$.
	To this end, we inductively relate~$m_{i}$ and~$m_{i-1}$ for all~$i\in[k-1]$.
	
	For~$i\in[k-1]$, we have
	\begin{align*}
		im_i&=\sum_{\unord{x}\in\unordsubs{U'}{i-1},\unord{y}\in\unordsubs{V'\setminus U'}{k-i-1}}d_{H[U\cup\{v\}\cup\unord{x}\cup\unord{y}]}(\{v\}\cup\unord{x}\cup\unord{y})
		=(1\pm\eps)\vartheta\sum_{\unord{x}\in\unordsubs{U'}{i-1},\unord{y}\in\unordsubs{V'\setminus U'}{k-i-1}}d_{H}(\{v\}\cup\unord{x}\cup\unord{y})\\
		&=(1\pm\eps)\vartheta((k-i)m_{i-1}+im_i)
	\end{align*}
	and hence,\COMMENT{%
		Fix~$\vartheta'=(1\pm\eps)\vartheta$ with~$im_i=\vartheta'((k-i)m_{i-1}+im_i)$ and obtain~$\frac{1-\vartheta'}{\vartheta'}\frac{i}{k-i}m_{i}=m_{i-1}$. Then use		
		$\frac{1-\vartheta+\eps\vartheta}{\vartheta-\eps\vartheta}
		=\frac{1-\vartheta}{\vartheta}\frac{1+\frac{\eps\vartheta}{1-\vartheta}}{1-\eps}
		\leq\frac{1-\vartheta}{\vartheta}\frac{1+\frac{\eps}{1-\vartheta}}{1-\frac{\eps}{1-\vartheta}}
		=\frac{1-\vartheta}{\vartheta}\Bigl(1+\frac{2\frac{\eps}{1-\vartheta}}{1-\frac{\eps}{1-\vartheta}}\Bigr)
		\leq\frac{1-\vartheta}{\vartheta}\Bigl(1+4\frac{\eps}{1-\vartheta}\Bigr)$ and
		$\frac{1-\vartheta-\eps\vartheta}{\vartheta+\eps\vartheta}
		=\frac{1-\vartheta}{\vartheta}\frac{1-\frac{\eps\vartheta}{1-\vartheta}}{1+\eps}
		\geq\frac{1-\vartheta}{\vartheta}\frac{1-\frac{\eps}{1-\vartheta}}{1+\frac{\eps}{1-\vartheta}}
		=\frac{1-\vartheta}{\vartheta}\Bigl(1-\frac{2\frac{\eps}{1-\vartheta}}{1+\frac{\eps}{1-\vartheta}}\Bigr)
		\geq \frac{1-\vartheta}{\vartheta}\Bigl(1-2\frac{\eps}{1-\vartheta}\Bigr)$ to conclude
		$\Bigl(1\pm \frac{4\eps}{1-\vartheta} \Bigr)\frac{1-\vartheta}{\vartheta}\frac{k-i}{i}m_{i-1}=m_i$
	}
	\begin{equation*}
		m_{i-1}=\biggl(1\pm \frac{4\eps}{1-\vartheta}\biggr)\frac{1-\vartheta}{\vartheta} \frac{i}{k-i}m_{i}.
	\end{equation*}
	From this, we inductively conclude that\COMMENT{%
		(assume $ax\leq 1$) use~$(1+x)^a\leq 1+\sum_{a'\in[a]} (ax)^{a'}\leq 1+\sum_{a'\in[a]} ax= 1+a^2x$ and~$(1-x)^a\geq 1-\sum_{a'\in[a]} (ax)^{a'}\geq 1-\sum_{a'\in[a]} ax= 1-a^2x$ for~$x=\frac{4\eps}{1-\vartheta}$ and~$a=k-1$.
	}
	\begin{equation*}
		m_i=\biggl(1\pm \frac{4k^2\eps}{1-\vartheta}\biggr)\frac{(1-\vartheta)^{k-1-i}}{\vartheta^{k-1-i}} \binom{k-1}{i}m_{k-1}.
	\end{equation*}
	Recall that $\sum_{i=0}^{k-1} \binom{k-1}{i}(1-\vartheta)^{k-1-i}\vartheta^i=((1-\vartheta)+\vartheta)^{k-1}=1$. 
	Therefore,
	\begin{align*}
	d_H(v)-m_{k-1}&=\sum_{i=0}^{k-2}m_i=\biggl(1\pm \frac{4k^2\eps}{1-\vartheta}\biggr)\frac{m_{k-1}}{\vartheta^{k-1}}\sum_{i=0}^{k-2}\binom{k-1}{i}(1-\vartheta)^{k-1-i}\vartheta^{i}\\&=\biggl(1\pm \frac{4k^2\eps}{1-\vartheta}\biggr)\frac{1-\vartheta^{k-1}}{\vartheta^{k-1}}m_{k-1}
	\end{align*}
	and thus,\COMMENT{%
		First obtain~$d_H(v)=(1\pm \frac{4k^2(1-\vartheta^{k-1})\eps}{1-\vartheta})\frac{1}{\vartheta^{k-1}}m_{k-1}$.
		Then use: If~$x=(1\pm \delta)y$ and~$\delta\leq 1/2$, then~$\frac{1}{1-\delta}x\geq y$ and~$\frac{1}{1+\delta}x\leq y$ and thus~$(1-2\delta)x\leq \Bigl(1-\frac{\delta}{1+\delta}\Bigr)x=\frac{1}{1+\delta}x\leq y\leq \frac{1}{1-\delta}x= \Bigl(1+\frac{\delta}{1-\delta}\Bigr)x\leq (1+2\delta)x$.
		Finally use~$1-\vartheta^{k-1}=1-(1-(1-\vartheta))^{k-1}\leq 1-(1-k(1-\vartheta))=k(1-\vartheta)$
	}
	\begin{equation*}
	m_{k-1}=\biggl(1\pm \frac{8k^2(1-\vartheta^{k-1})\eps}{1-\vartheta}\biggr)\vartheta^{k-1}d_H(v)=(1\pm 8k^3\eps)\vartheta^{k-1}d_H(v)
	\end{equation*}
	which completes the proof.
\end{proof}

\subsection{Many paths in intersecting~$k$-graphs}

We say that a walk~$W=w_{1}\ldots w_{\ell}$ in a~$k$-graph is \defn{internally self-avoiding} if the walks~$w_{1}\ldots w_{\ell-k}$ and~$w_{k+1}\ldots w_{\ell}$ are self-avoiding.
We use the following result from~\cite{JK:21}.
\begin{lemma}[{\cite[Lemma 2.3]{JK:21}}]\label{lemma: connecting}
	Suppose~$1/n\ll \alpha \ll 1/\ell\ll\eta,1/k$.
	Suppose~$H$ is an~\intersecting{$\eta$}~$k$-graph on~$n$ vertices with vertex set~$V$.
	Then, for all~$\orderededge{s},\orderededge{t}\in\orderededge{E}(H)$, the number of internally self-avoiding~$\ell$-walks from~$\orderededge{s}$ to~$\orderededge{t}$ in~$H$ is at least~$\alpha n^{\ell-2k}$.
\end{lemma}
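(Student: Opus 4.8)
The plan is to build such walks greedily, following the standard template for connecting lemmas: spend almost the whole length of the walk on an essentially unconstrained ``free'' extension away from $\orde{s}$, and reserve only a bounded ``connecting'' segment to merge into $\orde{t}$. The only input we need from the hypothesis is the trivial consequence that every $\unord{x}\in\unordsubs{V}{k-1}$ satisfies $\abs{N_H(\unord{x})}=\abs{N_H(\unord{x})\cap N_H(\unord{x})}\geq\eta n$, and hence, pairing $(k-2)$-sets with an extra vertex, that $\abs{E(H)}\geq c(\eta,k)\,n^{k}$ for some $c(\eta,k)>0$.

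\emph{Free extension.} Write $\orde{s}=(s_1,\dots,s_k)$ and $\orde{t}=(t_1,\dots,t_k)$, and fix a constant $L=L(\eta,k)$ to be specified below, so that $\ell>L$ by the hierarchy. Starting from $\orde{s}$, choose $w_{k+1},w_{k+2},\dots,w_{m}$ with $m\defeq\ell-L+k$ one at a time, each time taking $w_{k+i}\in N_H(\{w_{i+1},\dots,w_{k+i-1}\})$ and avoiding every $w_j$ already chosen as well as $t_1,\dots,t_k$. Since $\ell$ is bounded in terms of $\eta$ and $k$, at least $\eta n-2\ell\geq\eta n/2$ choices remain at each of the $m-k$ steps, so there are at least $(\eta n/2)^{\ell-L}$ such self-avoiding partial walks $w_1\dots w_{m}$, each disjoint from $\{t_1,\dots,t_k\}$ and each ending in an ordered edge $\orde{b}\defeq(w_{m-k+1},\dots,w_{m})$ with $b\cap t=\emptyset$.

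\emph{Connecting (the crux).} It remains to find $L-k$ further vertices $w_{m+1},\dots,w_{\ell}$ with $(w_{\ell-k+1},\dots,w_{\ell})=\orde{t}$, i.e.\ an internally self-avoiding $L$-walk from $\orde{b}$ to $\orde{t}$ whose interior avoids the vertices of the free part. I would isolate this as a stand-alone claim: for a suitable $L=L(\eta,k)$ and all ordered edges $\orde{b},\orde{t}$ of a \rob{} $k$-graph on $n$ vertices with $b\cap t=\emptyset$, the number of internally self-avoiding $L$-walks from $\orde{b}$ to $\orde{t}$ is at least $\alpha_0 n^{L-2k}$ for some $\alpha_0=\alpha_0(\eta,k)>0$. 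Here the naive sequential greedy fails (for $k\geq 3$): whichever interior vertex of the connecting segment is pinned down last lies in $k$ windows simultaneously, and the \rob{} property only bounds \emph{pairwise} intersections of neighbourhoods, which need not have a large common refinement. Instead the claim should be proved by a first-moment count: route the connecting segment through a central vertex $v$, expand outwards from $v$ towards $\orde{b}$ and towards $\orde{t}$ leaving ample slack on each side, and bound the number of valid configurations from below by summing over $v$ and over the vertices flanking $v$; each local count is $\Omega(\eta n)$ by the \rob{} property, while $\sum_v d_H(v)=k\abs{E(H)}\geq c(\eta,k)n^{k}$ forces the central ``turning'' constraint to be met with room to spare for a positive density of configurations. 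The windows near the two ends of the connecting segment involve only two neighbourhoods at a time and are handled by further applications of the \rob{} property. I expect the main difficulty to be exactly this step: showing that the turning constraint and the boundary constraints of the connecting segment can be satisfied \emph{simultaneously}, and not merely that each holds for most configurations in isolation; the freedom coming from $L$ being a large constant is what should make this possible. (For $k=2$ there is no turning vertex and the claim is elementary, recovering the classical graph case.)

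Finally, glue the pieces: to each of the $(\eta n/2)^{\ell-L}$ free partial walks, with terminal ordered edge $\orde{b}$, append one of the $\alpha_0 n^{L-2k}$ connecting $L$-walks from $\orde{b}$ to $\orde{t}$, and discard the $O(n^{L-2k-1})$ of them that reuse a vertex of the free part; the result is an internally self-avoiding $\ell$-walk from $\orde{s}$ to $\orde{t}$, and distinct choices give distinct walks. Multiplying the two counts and using $\alpha\ll1/\ell\ll\eta,1/k$ (so that $\tfrac12(\eta/2)^{\ell-L}\alpha_0\geq\alpha$, with $L,\alpha_0$ depending only on $\eta,k$) yields at least $\alpha n^{\ell-2k}$ such walks, as required.
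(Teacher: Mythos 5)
First, a point of reference: the paper does not prove this statement at all --- it is imported verbatim as \cite[Lemma~2.3]{JK:21} --- so there is no internal proof to compare against, and your write-up has to stand on its own. Its architecture (a long free greedy extension from~$\orde{s}$, a bounded connecting segment into~$\orde{t}$, gluing, discarding collisions) is sound, and you correctly locate the crux: for~$k\geq 3$, whichever vertex of a window-by-window construction is pinned down last lies in~$k$ windows whose other vertices are already fixed, so it must be drawn from a~$k$-wise intersection of neighbourhoods, which the $\eta$-intersecting hypothesis does not control. But the resolution you sketch does not resolve this. Expanding outwards from a central vertex~$v$ only avoids multiple simultaneous constraints while both ends of the growing segment are free; the moment the segment has to dock onto a prescribed ordered edge --- $\orde{t}$ on one side, and $\orde{b}$ on the other if the free part is fixed first --- the vertex of the segment adjacent to that edge which is chosen last again sits in~$k$ fully determined windows. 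Your claim that ``the windows near the two ends of the connecting segment involve only two neighbourhoods at a time'' is therefore not correct: the outward expansion merely relocates the $k$-wise intersection problem from the centre to the docking points; it does not remove it.

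The appeal to~$\sum_v d_H(v)=k\abs{E(H)}\geq c(\eta,k)n^k$ does not fill the hole either: a first-moment computation of that kind lower-bounds the expected number of \emph{individually} satisfied window constraints over a random configuration, whereas what is needed is a lower bound on the number of configurations satisfying \emph{all} constraints simultaneously, and positive density for each constraint separately is compatible with the conjunction being empty (this is precisely why $\delta_{k-1}(H)\geq\eta n$ alone, which holds for a disjoint union of two cliques, does not suffice and the pairwise intersecting condition must enter in an essential, non-local way). You concede this yourself (``I expect the main difficulty to be exactly this step''), which is honest but decisive: the isolated claim about $L$-walks between two prescribed ordered edges with $b\cap t=\emptyset$ \emph{is} essentially the lemma, up to the routine padding you carry out around it, and it is left unproved. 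Repairing the argument requires a genuinely different mechanism for the merge --- some averaging or iterative re-routing argument rather than a reordering of greedy choices --- which is presumably why the present paper imports the lemma from~\cite{JK:21} as a black box rather than reproving it.
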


\section{Approximate decomposition into path coverings}\label{sec:approx}
In this section we use Corollary~\ref{cor: simple weighted pseudorandom matching} to turn fractional decompositions provided by Theorem~\ref{theorem: fractional cycle decomposition} into approximate decompositions of almost vertex-regular~$k$-graphs into almost spanning collections of paths.
These collections of paths form the basis for the construction of cycle factors in Section~\ref{sec:approxtoreal}.

We need to keep track of how~$k$-sets that may be edges in a larger graph are distributed with respect to the paths constructed below.
Suppose~$H$ is a~$k$-graph on~$n$ vertices with vertex set~$V$ and~$\cP$ is a collection of paths in~$H$.
Let~$\unord{e}\in \binom{V}{k}$.
We use the following terminology to classify~$\unord{e}$ with respect to~$\cP$, where we often tacitly assume that the collection of paths with respect to which we classify a~$k$-set is obvious from the context.
We say that~$\unord{e}$ is
\begin{enumerate}[label=\textup{(\roman*)}]
	\item \defn{$j$-ending} (or of type \defn{$\typeend[$j$]$}) if~$j$ is the maximal integer for which there is a~$j$-set~$\unord{x}\subseteq \unord{e}$ and a path~$P\in\cP$ such that~$\unord{x}$ is an end-set of~$P$;
	\item \defn{ending} if it is~$j$-ending for some~$j\in [k]$;
	\item \defn{leftover} (or of type \defn{$\typelo$}) if it is neither ending nor a subset of~$V(\cP)$;
	\item \defn{$j$-concentrated} (or of type \defn{$\typecon[$j$]$}) if it is neither ending nor leftover and~$j$ is the maximal integer for which there is a~$j$-set~$\unord{x}\subseteq \unord{e}$ and a path~$P\in\cP$ such that~$\unord{x}\subseteq V(P)$;
	\item \defn{concentrated} if it is~$j$-concentrated for some~$j\in[k]$.
\end{enumerate}

We denote the set of types by~$\cT\defeq \cT_{\typeend}\cup\{\typelo\}\cup\cT_{\typecon}$ where~$\cT_{\typeend}\defeq \{\typeend[$1$],\dots ,\typeend[$k$]\}$ and~$\cT_{\typecon}\defeq \{\typecon[$1$],\ldots,\typecon[$k$]\}$.
Note that given~$\cP$ and~$\unord{e}$, the~$k$-set~$\unord{e}$ has a unique type with respect to~$\cP$.
Given multiple collections~$\cP_1,\ldots,\cP_r$ of paths, for~$i\in[r]$, we use~$\tau(\unord{e},i)$ to denote the type of~$\unord{e}$ with respect to~$\cP_i$, and for~$\tau\in\cT$, we set~$\cI_{\tau}(\unord{e})\defeq \{i\in[r]\colon \tau(\unord{e},i)=\tau\}$, where we tacitly assume that the index set and the collection of paths that belongs to a given index are obvious from the context.
\begin{prop}\label{prop: simultaneous path cover}
	Suppose~$1/n\ll \rho,1/L\ll \eta,\mu,1/k$.
	Let~$H$ be an~\intersecting{$\eta$}~$k$-graph on~$n$ vertices with vertex set~$V$ such that there is an integer~$r$ with~$kr\leq d_H(v)\leq (1+\rho)kr$ for all~$v\in V$.
	Then, there exist edge-disjoint collections~$\cP_1,\ldots,\cP_{r}$ of~$L$-paths in~$H$ with~$\abs{V(\cP_i)}\geq (1-\mu)n$ for all~$i\in[r]$ such that the following holds for all~$\unord{e}\in\binom{V}{k}$.
	\begin{enumerate}[label=\textup{(\roman*)}]
		\item\label{lemma: simultaneous path cover 1} $\abs{\cI_{\typelo}(\unord{e})}\leq\mu r$;
		\item\label{lemma: simultaneous path cover 2} $\abs{\cI_{\typecon[$j$]}(\unord{e})}\leq n^{k-j}/\eta^{2L}$ for all~$j\in[k-1]$ and~$\abs{\cI_{\typecon[$k$]}(\unord{e})}\leq n/\eta^{2L}$;
		\item\label{lemma: simultaneous path cover 3} $\abs{\cI_{\typeend[$j$]}(\unord{e})}\leq n^{k-j}/L^{1/2}$ for all~$j\in[k-1]$.
	\end{enumerate}
\end{prop}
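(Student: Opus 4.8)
The plan is to obtain the collections $\cP_1,\dots,\cP_r$ all at once, as the colour classes of a single pseudorandom matching in a large weighted auxiliary hypergraph built from a fractional cycle decomposition of $H$. First I would fix auxiliary constants with $1/n\ll 1/c\ll \delta\ll 1/L$ and $\eps\defeq\delta/(200(k+1)^2)$, and apply Theorem~\ref{theorem: fractional cycle decomposition} to get a fractional $C_L$-decomposition $\omega$ of $H$ with $|E(H)|/\Delta(H)^L\le\omega(C)\le 3|E(H)|/\delta(H)^L$ for every $L$-cycle $C$. Here $\delta(H)\ge\eta n$ because $H$ is \intersecting{$\eta$}, and $|E(H)|=\tfrac1k\sum_v d_H(v)\in[nr,(1+\rho)nr]$; moreover $(k-1)d_H(v)=\sum_{\unord x\ni v}d_H(\unord x)\ge\binom{n-1}{k-2}\eta n$ forces $r=\Omega_k(\eta n^{k-1})$, a lower bound on $r$ I will use repeatedly.

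Next I would form the auxiliary hypergraph $\cA$ on the vertex set consisting of an \emph{edge-vertex} for each $e\in E(H)$ and a \emph{slot-vertex} $(v,i)$ for each $v\in V(H)$, $i\in[r]$. Its hyperedges are the pairs $(P,i)$ with $i\in[r]$ and $P$ an $L$-path in $H$ whose closure $C_P$ (obtained by adding the $k-1$ wrap-around edges) is an $L$-cycle in $H$; the hyperedge $(P,i)$ comprises the edge-vertices in $E(P)$ together with the slot-vertices $(v,i)$ for $v\in V(P)$, and carries weight $w(P,i)\defeq\alpha\,\omega(C_P)/L$ where $\alpha\defeq 1/((1+\rho)r)$. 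Since each $L$-cycle has exactly $L$ distinct linearisations as an $L$-path, $\sum_{C_P=C}w(P,i)=\alpha\,\omega(C)$, so $\sum_{C\ni e}\omega(C)=1$ gives total weight $\alpha r=1/(1+\rho)\le 1$ at every edge-vertex, while $\sum_{C\ni v}\omega(C)=d_H(v)/k$ gives total weight $\alpha d_H(v)/k\in[\tfrac1{1+\rho},1]$ at every slot-vertex; the pairwise weighted codegrees of $\cA$ are $O_{k,L}(n^{-1}\eta^{-O(L)})$ in the edge/edge and edge/slot cases (using that $O(n^{L-k-1})$ $L$-cycles pass through two prescribed edges), $O_{k,L}(n^{k-2}\eta^{-L}/r)=O_{k,L}(n^{-1}\eta^{-O(L)})$ in the same-colour slot/slot case, and $0$ for slots of distinct colours; and a short estimate shows $w(P,i)\in[1/c,1]$ for an appropriate $c=\Theta_{k,L}(n^{L-1}/\eta^{O(L)})$. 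Because $\delta\ll 1/L$, this $c$ satisfies $c^{\delta}\le n^{o(1)}$, so all codegrees are at most $c^{-\delta}$, every set in the family below of weight $\gtrsim n^{1/2}$ clears the threshold $c^{\delta}$, and $\exp(c^{\eps^2})$ dominates every size/weight bound in sight.

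I would then take the test family $\cE$ to contain: for each $i$ the set $E'_i$ of all colour-$i$ hyperedges (so $\sum w(E'_i)=\alpha|E(H)|/L\ge n/(2L)$); for each $v$ the set $E'_v$ of hyperedges through a slot of $v$ (so $\sum w(E'_v)=\alpha r\,d_H(v)/k\ge r/2$); for each $\unord x\in\binom{V(H)}{j}$ with $2\le j\le k$ the set $E'_{\unord x}$ of hyperedges $(P,i)$ with $\unord x\subseteq V(P)$ (so $\sum w(E'_{\unord x})=\alpha r\sum_{C\supseteq\unord x}\omega(C)=O_{k,L}(n^{k-j}\eta^{-L})$); and for each $\unord x\in\binom{V(H)}{j}$ with $1\le j\le k-1$ the set $E''_{\unord x}$ of hyperedges $(P,i)$ having $\unord x$ as an end-set of $P$. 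For the last family I would use that a set of $j$ consecutive vertices of an $L$-cycle lies in $k-j+1$ of its edges, so $\sum_{C}\omega(C)\bOne[\unord x\text{ consecutive in }C]\le d_H(\unord x)/(k-j+1)\le n^{k-j}$ and only $O(k)$ linearisations of such a $C$ can have $\unord x$ as an end-set, whence the much sharper bound $\sum w(E''_{\unord x})=O_k(n^{k-j}/L)$. Any member of $\cE$ of weight below $c^{\delta}$ is padded up to weight in $[c^{\delta},2c^{\delta}]$ by adjoining arbitrary further hyperedges; this affects only the $j=k$ instances of $E'_{\unord x}$, and since $c^{\delta}\le n^{o(1)}$ the padded bound $2c^{\delta}$ lies below every target quantity. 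As $|\cE|\le 3kn^{k}\le\exp(c^{\eps^2})$ and $\sum_{E(\cA)}w\le n^{k}/L\le\exp(c^{\eps^2})$, Corollary~\ref{cor: simple weighted pseudorandom matching} yields a matching $\cM$ in $\cA$ with $|\cM\cap E'|=(1\pm c^{-\eps})\sum_{e\in E'}w(e)$ for all $E'\in\cE$. Setting $\cP_i\defeq\{P:(P,i)\in\cM\}$, the matching's incidences at edge-vertices give edge-disjointness across $i$, those at slot-vertices give vertex-disjointness within each $i$, and $|V(\cP_i)|=L|\cM\cap E'_i|=(1\pm c^{-\eps})\alpha|E(H)|\ge(1-\mu)n$. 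For the type conditions: if $\unord e$ has type $\typelo$ with respect to $\cP_i$ then some $v\in\unord e$ is uncovered, i.e.\ $(v,i)\notin\bigcup\cM$, and as at most one colour-$i$ hyperedge meets a slot of $v$, we get $|\cI_{\typelo}(\unord e)|\le\sum_{v\in\unord e}\bigl(r-|\cM\cap E'_v|\bigr)\le k\cdot(\mu/k)r=\mu r$, proving~(i); if $\unord e$ is $j$-concentrated (resp.\ $j$-ending) with respect to $\cP_i$ then some $\unord x\in\binom{\unord e}{j}$ lies in $V(P)$ (resp.\ is an end-set of $P$) for a path $P\in\cP_i$, so $i$ lies in the colour-set of $\cM\cap E'_{\unord x}$ (resp.\ $\cM\cap E''_{\unord x}$), and summing the estimates over the $\binom kj$ choices of $\unord x$ gives~(ii) for $j\le k-1$ and~(iii), while the $j=k$ case of~(ii) uses the padded bound $|\cM\cap E'_{\unord e}|\le 2(1+c^{-\eps})c^{\delta}\le n/\eta^{2L}$.

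The main obstacle is the interplay of constants around Corollary~\ref{cor: simple weighted pseudorandom matching}: balancedness ($w(P,i)\ge 1/c$) forces $c$ to be polynomially large in $n$, of order roughly $n^{L-1}$, whereas the codegree hypothesis ($\Delta_2$-weight $\le c^{-\delta}$) and the admissibility threshold ($\sum w(E')\ge c^{\delta}$) push in the opposite direction, and reconciling them is exactly why $\delta$ must be taken much smaller than $1/L$ so that $c^{\delta}=n^{o(1)}$. The second delicate point is choosing the right granularity for $\cE$ so that all three bounds come out with their stated exponents: condition~(iii) needs the sharp $1/L$-type saving, which only materialises after one records that a $j$-element end-set of a path occupies $j$ consecutive vertices of the corresponding cycle and hence lies in $k-j+1$ of its edges, and then invokes the identity $\sum_{C\ni e}\omega(C)=1$; the cruder bound "$\unord x\subseteq V(C)$" that suffices for~(ii) would only yield a $\eta^{-O(L)}$-type estimate, far too weak for~(iii).
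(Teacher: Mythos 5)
Your proof is correct and reaches the same destination as the paper's, but it takes a genuinely different route at the point where the paths (rather than cycles) are produced. The paper first applies the pseudorandom-matching machinery to an auxiliary $2L$-graph whose hyperedges are pairs $(C,i)$ with $C$ an $L$-cycle, obtaining edge-disjoint almost-spanning cycle collections with control on conditions \ref{lemma: simultaneous path cover 1} and \ref{lemma: simultaneous path cover 2}; it then converts each cycle to a path by deleting $k-1$ consecutive edges \emph{uniformly at random and independently}, and a separate Chernoff argument shows each $j$-set becomes an end-set in at most $3n^{k-j}/L$ collections, which gives \ref{lemma: simultaneous path cover 3}. You instead bake the linearisation into the auxiliary hypergraph from the start — hyperedges are pairs $(P,i)$ with $P$ an $L$-path whose closure is an $L$-cycle, weighted by $\omega(C_P)/(\Gamma L)$ — and obtain \ref{lemma: simultaneous path cover 3} deterministically inside the same application of Corollary~\ref{cor: simple weighted pseudorandom matching} via the extra test sets $E''_{\unord{x}}$. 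The key arithmetic fact is identical in both proofs: a $j$-set occupies a unique consecutive window of a tight cycle, hence lies in $k-j+1$ of its edges and is an end-set of only $2$ of the $L$ linearisations, which is where the factor $1/L$ in \ref{lemma: simultaneous path cover 3} comes from. Your version buys a cleaner single-shot construction at the cost of a slightly more delicate weight bookkeeping; the paper's version keeps the auxiliary hypergraph smaller and symmetric under rotation at the cost of an extra probabilistic post-processing step. Your padding trick for the low-weight test sets $E'_{\unord{e}}$ with $j=k$ also differs from (and is a fine substitute for) the paper's observation that the $\typecon[$k$]$ bound follows from the $\typecon[$k-1$]$ bound.

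Two harmless slips you should fix in a write-up: the total weight at an edge-vertex is $\tfrac{L-k+1}{L}\cdot\tfrac{1}{1+\rho}$ rather than $\tfrac{1}{1+\rho}$, since only $L-k+1$ of the $L$ linearisations of a cycle through $e$ retain the edge $e$ (this only helps, as you merely need the degree to be at most $1$); and your test family omits $E'_{\unord{x}}$ for singletons, so the $j=1$ case of \ref{lemma: simultaneous path cover 2} should be noted as trivial via $\abs{\cI_{\typecon[$1$]}(\unord{e})}\leq r\leq n^{k-1}\leq n^{k-1}/\eta^{2L}$. Also, the $\eps$ in Corollary~\ref{cor: simple weighted pseudorandom matching} should be computed with the uniformity of the auxiliary hypergraph ($2L-k+1$), not with $k$.
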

Note that the proof also yields edge-disjoint collections~$\cC_1,\ldots,\cC_{r}$ of~$L$-cycles instead of the paths with~$\abs{V(\cC_i)}\geq (1-\mu)n$ for all~$i\in[r]$ (without the properties~\ref{lemma: simultaneous path cover 1}--\ref{lemma: simultaneous path cover 3});
we use this in the proof of Lemma~\ref{lem:layer}.

\begin{proof}[Proof of Proposition~\ref{prop: simultaneous path cover}]
	First we argue that it suffices to find collections~$\cC_1,\ldots,\cC_{r}$ of~$L$-cycles in~$H$ with properties similar to those of collections of paths in the statement; then we obtain such collections of cycles by applying Corollary~\ref{cor: simple weighted pseudorandom matching} in an auxiliary hypergraph that represents a fractional cycle decomposition given by Theorem~\ref{theorem: fractional cycle decomposition}.
	
	Suppose~$\cC_1,\ldots,\cC_{r}$ are edge-disjoint collections of~$L$-cycles in~$H$.
	Note that for all~$j\in[k-1]$ and~$\unord{x}\subseteq \unordsubs{V}{j}$, there are at most~$n^{k-j}$ collections~$\cC_i$ with~$i\in[r]$ where the elements of~$\unord{x}$ appear consecutively in a cycle in~$\cC_i$ and for all~$i\in[r]$, there is at most one cycle~$C\in\cC_i$ such that the elements of~$\unord{x}$ appear consecutively in~$C$.
	Now, for all~$i\in[r]$ and every cycle in~$C\in\cC_i$, delete~$k-1$ consecutive edges uniformly at random and independently of the edges deleted in the other cycles to obtain collections of paths~$\cP_1,\dots,\cP_r$.
	Then the expected value of the random variable counting the number of collections~$\cP_i$ with~$i\in[r]$ where~$\unord{x}$ is an end-set of a path in~$\cP_i$ is at most~$2n^{k-j}/L$.
	Hence Chernoff's inequality (Lemma~\ref{lemma: chernoff}) entails that it is possible to delete~$k-1$ consecutive edges of every cycle in~$\cC_i$ for all~$i\in[r]$ to obtain collections of paths~$\cP_1,\dots,\cP_r$ such that for all~$j\in[k-1]$ and~$\unord{x}\in\unordsubs{V}{j}$, there are at most~$3n^{k-j}/L$ collections~$\cP_i$ with~$i\in[r]$ where~$\unord{x}$ is an end-set of a path in~$\cP_i$.
	For such collections~$\cP_1,\dots,\cP_r$,~$j\in[k-1]$ and~$\unord{e}\in \binom{V}{k}$, we have~$\cI_{\typeend[$j$]}(\unord{e})\leq \binom{k}{j}\cdot 3n^{k-j}/L\leq n^{k-j}/L^{1/2}$ and thus it suffices to obtain edge-disjoint collections~$\cC_1,\ldots,\cC_{r}$ of~$L$-cycles in~$H$ with~$\abs{V(\cC_i)}\geq (1-\mu)n$ for all~$i\in[r]$ such that the following holds.
	\begin{itemize}
		\item $\abs{\{ i\in[r]\colon v\notin V(\cC_i) \}}\leq \mu r/k$ for all~$v\in V$;
		\item $\abs{\{ i\in[r]\colon \exists C\in\cC_i\colon \unord{x}\subseteq V(C) \}}\leq n^{k-j}/(\eta^{2L}\binom{k}{j})$ for all~$j\in[k-1]$ and~$\unord{x}\in\unordsubs{V}{j}$.
	\end{itemize}
	
	Let~$E\defeq E(H)$ and let~$\cC_L(H)$ denote the set of~$L$-cycles in~$H$.
	From Theorem~\ref{theorem: fractional cycle decomposition} we obtain a fractional~$L$-cycle decomposition~$\omega$ of~$H$ with
	\begin{equation}\label{equation: weights are close}
		\frac{\abs{E}}{n^L}\leq \omega(C)\leq \frac{3\abs{E}}{\eta^L n^L}
	\end{equation}
	for all~$C\in\cC_L(H)$.
	Consider the~$2L$-graph~$H^{\star}$ with vertex set~$(V\times[r])\cup E$ where we add for each~$C\in\cC_L(H)$ and all~$i\in[r]$, the edge~$e_{C,i}^{\star}\defeq \{(v,i)\colon v\in V(C)\}\cup E(C)$ (note that the edge~$e_{C,i}^{\star}$ uniquely identifies~$C$ and~$i$).
	Let~$E^{\star}\defeq E(H^{\star})$ and~$\Gamma\defeq (1+\rho)r\geq \Delta(H)/k$ and let~$\omega^{\star}\colon E^{\star}\rightarrow [0,1]$ be the edge weight function with~$\omega^{\star}(e_{C,i}^{\star})=\omega(C)/\Gamma$ for all~$e^{\star}\in E^{\star}$. Here,~$\omega^{\star}$ is a representation of~$\omega$ that is normalized such that for all~$v\in V$ and~$i\in [r]$, we have
	\begin{equation}\label{equation: 1-degree 1}
		\sum_{\substack{e^{\star}\in E^{\star}\colon\\(v,i)\in e^{\star}}} \omega^{\star}(e^{\star})=\frac{1}{k}\cdot\sum_{\substack{e\in E(H)\colon\\ v\in e}}\sum_{\substack{C\in \cC_L(H)\colon\\ e\in E(C)}}\frac{\omega(C)}{\Gamma}=\frac{1}{k}\cdot \frac{d_H(v)}{\Gamma}\in \biggl[\frac{1}{1+\rho},1\biggr].
	\end{equation}
	Note that there is a correspondence of matchings in~$H^{\star}$ to edge-disjoint collections of~$L$-cycles in~$H$, namely a matching~$\cM^{\star}$ in~$H^{\star}$ corresponds to the collections~$\cC_1,\ldots,\cC_r$, where~$\cC_i=\{C\in\cC_L(H)\colon e_{C,i}^{\star}\in\cM^{\star}\}$ for all~$i\in[r]$.
	This will allow us to obtain collections of cycles with the desired properties from Corollary~\ref{cor: simple weighted pseudorandom matching}.
	We now introduce appropriate parameters including suitable subsets of~$E^{\star}$ and check that the conditions necessary for a suitable application of Corollary~\ref{cor: simple weighted pseudorandom matching} hold in this setting.
	
	Let~$\delta\defeq \frac{1}{3L}$,~$\eps\defeq \frac{\delta}{900L^2}$, and~$c\defeq n^{L}$.
	From \eqref{equation: weights are close} we obtain (with some room to spare)
	\begin{equation}\label{equation: weight bounds}
		\frac{1}{c}\leq \frac{\eta}{2k!\Gamma n^{L-k}}\leq \frac{\abs{E}}{\Gamma n^L}\leq\frac{\omega(C)}{\Gamma}\leq \frac{3 \abs{E}}{\eta^{L}\Gamma n^L}\leq  \frac{3}{\eta^{L}\Gamma n^{L-k}}\leq \frac{1}{n^{L-3/2}}c^{-\delta}
	\end{equation}
	for all~$C\in\cC_L(H)$. To complete our analysis of the $1$-degrees in~$H^{\star}$, we observe that in addition to~\eqref{equation: 1-degree 1}, for all~$e\in E$, we have
	\begin{equation}\label{equation: 1-degree 2}
		\sum_{\substack{e^{\star}\in E^{\star}\colon\\e\in e^{\star}}} \omega^{\star}(e^{\star})=r\cdot\sum_{\substack{C\in \cC_L(H)\colon\\ e\in E(C)}}\frac{\omega(C)}{\Gamma}=r\cdot\frac{1}{\Gamma}\leq 1.
	\end{equation}
	Let us now consider the~$2$-degrees in~$H^{\star}$.
	For distinct~$u,v\in V$ and distinct~$e,f\in E$, we have
	\begin{gather*}
		\abs{\{ C\in\cC_L(H)\colon u,v\in V(C) \}}\leq L n^{L-2},\quad \abs{\{ C\in\cC_L(H)\colon v\in V(C)\wedge e\in E(C) \}}\leq k! n^{L-k}\\
		\text{and}\quad\abs{\{ C\in\cC_L(H)\colon e,f\in E(C) \}}\leq 2k! n^{L-k-1}.
	\end{gather*}
	Using \eqref{equation: weight bounds}, this yields for all~$i\in[r]$, that
	\begin{align}
		\sum_{\substack{e^{\star}\in E^{\star}:\\(u,i),(v,i)\in e^{\star}}} \omega^{\star}(e^{\star})&=\sum_{\substack{C\in \cC_L(H):\\u,v\in V(C)}} \frac{\omega(C)}{\Gamma}\leq \frac{L}{n^{1/2}}c^{-\delta}\leq c^{-\delta},\label{equation: 2-degree 1}\\
		\sum_{\substack{e^{\star}\in E^{\star}:\\(v,i),e\in e^{\star}}} \omega^{\star}(e^{\star})&=\sum_{\substack{C\in \cC_L(H):\\v\in V(C)\wedge e\in E(C)}} \frac{\omega(C)}{\Gamma}\leq \frac{k!}{n^{k-3/2}}c^{-\delta}\leq c^{-\delta},\label{equation: 2-degree 2}\quad\text{and}\quad\\
		\sum_{\substack{e^{\star}\in E^{\star}:e,f\in e^{\star}}} \omega^{\star}(e^{\star})&=r\cdot\sum_{\substack{C\in \cC_L(H):\\e,f\in E(C)}} \frac{\omega(C)}{\Gamma}\leq \frac{2k!r}{n^{k-1/2}}c^{-\delta}\leq c^{-\delta}.\label{equation: 2-degree 3}
	\end{align}
	Furthermore, observe that
	\begin{equation}\label{equation: total weight}
		\sum_{e^{\star}\in E(H^{\star})}\omega^{\star}(e^{\star})=r\cdot\sum_{C\in\cC_L(H)}\frac{\omega(C)}{\Gamma}=r\cdot\frac{\abs{E}}{\Gamma L}\leq \exp(c^{\eps^2})
	\end{equation}
	gives an upper bound for the total weight on the edges of~$H^{\star}$.
	For~$i\in[r]$,~$v\in V$,~$j\in[k-1]$, and~$\unord{x}\in\unordsubs{V}{j}$, we define edge-sets as follows.
	\begin{gather*}
		E_i^{\star}\defeq \{ e_{C,i'}^{\star}\in E^{\star}: i'=i \},\quad E_v^{\star}\defeq \{ e_{C,i'}^{\star}\in E^{\star}: v\in V(C) \}
		\quad\text{and}\quad E_{\unord{x}}^{\star}\defeq \{ e_{C,i'}^{\star}\in E^{\star}: \unord{x}\subseteq  V(C)\}.
	\end{gather*}
	We have
	\begin{equation}\label{equation: test set size 1}
		\sum_{e^{\star}\in E_i^{\star}}\omega^{\star}(e^{\star})=\sum_{C\in\cC_L(H)} \frac{\omega(C)}{\Gamma}=\frac{\abs{E}}{\Gamma L}\geq\frac{r n}{\Gamma L}\geq c^{\delta}
	\end{equation}
	and
	\begin{equation}\label{equation: test set size 2}
		\sum_{e^{\star}\in E_v^{\star}}\omega^{\star}(e^{\star})=r\cdot\frac{1}{k}\cdot\sum_{\substack{e\in E(H)\colon\\ v\in e}}\sum_{\substack{C\in \cC_L(H)\colon\\ e\in E(C)}}\frac{\omega(C)}{\Gamma}=r\cdot\frac{1}{k}\cdot \frac{d_H(v)}{\Gamma}\geq \frac{r}{1+\rho}\geq c^{\delta}.
	\end{equation}
	Since Lemma~\ref{lemma: connecting} implies~$\abs{\{ C\in\cC_L(H)\colon d_C(\unord{x})\geq 1 \}}\geq n^{L-j-1/3}$, using \eqref{equation: weight bounds} we obtain
	\begin{equation}\label{equation: test set size 3}
		\sum_{e^{\star}\in E_{\unord{x}}^{\star}}\omega^{\star}(e^{\star})\geq r\cdot\sum_{\substack{C\in\cC_L(H)\colon\\ d_C(\unord{x})\geq 1}}\frac{\omega(C)}{\Gamma}\geq r\cdot n^{L-j-1/3}\cdot\frac{\eta}{2k!\Gamma n^{L-k}}\geq n^{k-j-1/2}\geq c^{\delta}.
	\end{equation}
	Thus, since~$r+n+\sum_{j\in [k-1]}\binom{n}{j}\leq\exp(c^{\eps^2})$ holds and by~\eqref{equation: 1-degree 1}--\eqref{equation: test set size 3} we may apply Corollary~\ref{cor: simple weighted pseudorandom matching} to obtain a matching~$\cM^{\star}$ in~$H^{\star}$ with
	\begin{equation*}
		\abs{\cM^{\star}\cap E_i^{\star}}\geq (1-c^{-\eps})\frac{\abs{E}}{\Gamma L}\geq (1-c^{-\eps})\frac{rn}{\Gamma L}\geq (1-\mu)\frac{n}{L},
	\end{equation*}
	\begin{equation*}
		\abs{\cM^{\star}\cap E_v^{\star}}\geq (1-c^{-\eps})\frac{r}{k}\cdot\frac{d_H(v)}{\Gamma}\geq (1-c^{-\eps})\frac{r^2}{\Gamma}\geq \biggl(1-\frac{\mu}{k}\biggr) r,
	\end{equation*}
	and
	\begin{equation*}
		\abs{\cM^{\star}\cap E_{\unord{x}}^{\star}}\leq (1+c^{-\eps})\sum_{e^{\star}\in E_{\unord{x}}^{\star}}\omega^{\star}(e^{\star})
	\end{equation*}
	for all~$i\in[r]$,~$v\in V$,~$j\in[k-1]$, and~$\unord{x}\in\unordsubs{V}{j}$. Since we have~$\abs{\{ C\in\cC_L(H)\colon \unord{x}\subseteq V(C)\}}\leq L^{j}n^{L-j}$, \eqref{equation: weight bounds} implies
	\begin{equation*}
		\abs{\cM^{\star}\cap E_{\unord{x}}^{\star}}\leq (1+c^{-\eps})r\cdot \sum_{\substack{C\in\cC_L(H)\colon\\ \unord{x}\subseteq V(C)}}\frac{\omega(C)}{\Gamma}\leq (1+c^{-\eps})r\cdot L^{j}n^{L-j}\cdot\frac{3}{\eta^{L} \Gamma n^{L-k}}\leq \frac{1}{\eta^{2L}\binom{k}{j}}n^{k-j}
	\end{equation*}
	and thus, choosing~$\cC_i$ as the collection of~$L$-cycles~$C$ in~$H$ with~$e_{C,i}^{\star}\in \cM^{\star}$ yields collections of cycles in~$H$ with the desired properties.
\end{proof}

\section{Ingredients for absorption}\label{sec: abs}

Suppose we are given a~$k$-graph~$H$ on~$n$ vertices.
In this section we construct a collection of paths~$\cP$ in~$H$ whose lengths do not grow with~$n$
such that any small set of vertices~$X$ can be absorbed into these paths; that is, for every path~$P\in\cP$, there is a new path~$P'$ with the same end-tuples as~$P$ and~$V(P')\subseteq V(P)\cup X$ such that the new paths form a collection of paths~$\cP'$ with~$V(\cP')= V(P)\cup X$.

There are two main novelties in our setting.
Firstly, we choose~$\cP$ randomly in an extremely uniform way such that $V(\cP)$ behaves like a uniformly chosen vertex set of size $|V(\cP)|$.
Secondly, we can control how many vertices each path in~$\cP$ will absorb if an adversary determines a set of vertices to absorb. 
Here the main difficulty is that the lengths of the paths in~$\cP$ do not grow with~$n$.
To the best of our knowledge, this problem has not been dealt with in the literature so far.

\subsection{Random walks and vertex absorbers}

Let~$H$ be a~$k$-graph on~$n$ vertices with vertex set~$V$ and edge set~$E$.
For~$x\in V$, a path~$A=a_1\ldots a_{2k}$ in~$H$ is called an \defn{$x$-absorber}\footnote{The reader familiar with the topic knows that usually absorbers consist of~$2k-2$ vertices; in fact, we only deviate from this to enhance readability and simplify some arguments.} if~$a_1\ldots a_{k}xa_{k+1}\ldots a_{2k}$ is also a path in~$H$.
In what follows, we describe how certain random walks contain many vertex-disjoint~$x$-absorbers for all~$x\in V$ simultaneously
and at the same time ensure that the set of vertices visited by these random walks behaves like a vertex set chosen uniformly at random among all vertex sets of the same size.

In what follows,~$a$,~$i$,~$\ell$,~$L$,~$t_{\star}$ are always positive integers.
Let~$W=w_1\ldots w_{L}$ be a walk.
For~$i\in[L/(2k+\ell)]$, we define
\begin{equation*}
	A^{\ell}_{i}(W)\defeq w_{(2k+\ell)(i-1)+1}\ldots w_{(2k+\ell)(i-1)+2k}\quad\text{and}\quad\cA^{\ell}(W)\defeq \biggl\{ A^{\ell}_i(W)\colon i\in\biggl[\frac{L}{2k+\ell}\biggr]\biggr\}.
\end{equation*}
We may think of~$A^{\ell}_i$ as the~$i$-th potential absorber in~$W$ when requiring~$\ell$ vertices between absorbers whereas~$\cA^{\ell}(W)$ is the set of all these potential absorbers in~$W$.
To gain control over where absorbed vertices will be placed, we consider absorbers in groups which we call \defn{blocks}.
Towards the end of the section, our construction yields a set of blocks~$\cB$ and during the absorption, every block~$B\in\cB$ will absorb exactly one vertex.
More precisely, we say that a walk~$B=b_1\ldots b_{a(2k+\ell)}$ in~$H$ is an~\defn{$(a,\ell)$-block} in~$H$; that is,~$B$ can be split into~$a$ consecutive walks that consist of a~$2k$-walk followed by an~$\ell$-walk.
For~$i\in[L/(a(2k+\ell))]$, we define
\begin{equation*}
	B^{a,\ell}_{i}(W)\defeq w_{a(2k+\ell)(i-1)+1}\ldots w_{a(2k+\ell)i}\quad\text{and}\quad\cB^{a,\ell}(W)\defeq \biggl\{B^{a,\ell}_{i}(W)\colon i\in\biggl[\frac{L}{a(2k+\ell)}\biggr]\biggr\}
\end{equation*}
where~$B^{a,\ell}_{i}(W)$ can be considered as the~$i$-th~$(a,\ell)$-block in~$W$ whereas~$\cB^{a,\ell}(W)$ is the set of all these~$(a,\ell)$-blocks in~$W$.
Later, we will choose absorbers and blocks randomly.
In contrast to other approaches existing in the literature, 
we build our absorbing structure via random walks whose distributions are given by perfect fractional matchings to ensure a very uniform distribution of the vertices visited by these random walks.

We introduce the convention that sequences~$s_m\ldots s_n$ with~$m>n$ are considered as the empty sequence which we identify with~$\emptyset$.
For~$\omega\colon E\rightarrow\bR_{\geq 0}$,~$j\in[k]$, and~$v_{1},\ldots,v_j\in V$, 
we define~$\omega(v_1\ldots v_{j})\defeq \sum_{e\in E\colon v_1,\ldots, v_j\in e}\omega(e)$ and we set~$\omega(\emptyset)\defeq \sum_{e\in E}\omega(e)$.

Let~$\omega\colon E\rightarrow\bR_{>0}$.
We say a sequence of~$V$-valued random variables~$X_1\ldots X_{t_{\star}}$ is a random walk in~$H$ \defn{with parameters~$(L,\omega)$}, or simply an~\defn{$(L,\omega)$-random walk} in~$H$, if its distribution is given by
\begin{equation}\label{equation: distribution of random walk}
\begin{aligned}
	\pr[X_t=v_t\mid X_1\ldots X_{t-1}=v_1\ldots v_{t-1}]
	&=\pr[X_t=v_t\mid X_{t-m}\ldots X_{t-1}=v_{t-m}\ldots v_{t-1}]\\
	&=\begin{cases}
	\frac{\omega(v_{t-m}\ldots v_t)}{(k-m)\omega(v_{t-m}\ldots v_{t-1})}
	&\text{if~$v_t\notin \{v_{t-m},\ldots,v_{t-1}\}$}\\
	0&\text{if~$v_t\in \{v_{t-m},\ldots,v_{t-1}\}$}
	\end{cases}
\end{aligned}
\end{equation}
for all~$t\in[t_{\star}]$,~$m\defeq \min\{k-1,t-1 \mod L\}$, and~$v_1,\ldots,v_{t}\in V$ with~$\pr[X_1\ldots X_{t-1}=v_1\ldots v_{t-1}]>0$. This is indeed a probability distribution because for all~$j\in[k-1]$ and~$v_1,\ldots,v_j\in V$, we have
\begin{equation*}
	\sum_{v\in V\setminus\{v_1,\ldots,v_j\}} \omega(v_{1}\ldots v_j v)=(k-j)\omega(v_{1}\ldots v_{j}).
\end{equation*}
Whenever we consider an~$(L,\omega)$-random walk in a~$k$-graph~$H$, we assume that~$L$ is a positive integer and~$\omega\colon E(H)\rightarrow\bR_{>0}$.
Observe that if~$X_1\ldots X_{t_{\star}}$ is an~$(L,\omega)$-random walk in a~$k$-graph~$H$, then also~$X_{L(s-1)+1}\ldots X_{Ls}$ is an~$(L,\omega)$-random walk in~$H$ for all~$s\in[t_{\star}/L]$.
\begin{observation}\label{observation: independent}
	Suppose~$H$ is a~$k$-graph.
	Suppose~$X_1\ldots X_{t_{\star}}$ is an~$(L,\omega)$-random walk in~$H$.
	Then, the random walks~$X_1\ldots X_{Ls}$ and~$X_{Ls+1}\ldots X_{t_{\star}}$ are independent for all positive integers $s$.
\end{observation}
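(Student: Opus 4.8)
The plan is to prove the equivalent statement that for all~$v_1,\ldots,v_{t_{\star}}\in V$,
\begin{equation*}
	\pr[X_1\ldots X_{t_{\star}}=v_1\ldots v_{t_{\star}}]=\pr[X_1\ldots X_{Ls}=v_1\ldots v_{Ls}]\cdot\pr[X_{Ls+1}\ldots X_{t_{\star}}=v_{Ls+1}\ldots v_{t_{\star}}];
\end{equation*}
since the~$X_t$ take values in the finite set~$V$, this product formula for all value tuples is exactly the asserted independence of the two random vectors. We may assume~$1\le Ls<t_{\star}$, since otherwise one of the walks is empty. The whole argument rests on a single observation about~\eqref{equation: distribution of random walk}: the one-step law \emph{resets} at every multiple of~$L$. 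Indeed, fix~$t$ with~$Ls+1\le t\le t_{\star}$ and put~$m\defeq\min\{k-1,(t-1)\bmod L\}$; writing~$t-1=qL+p$ with~$0\le p\le L-1$, we have~$q\ge s$ (because~$t-1\ge Ls$) and~$m\le p$, so~$t-m\ge qL+1\ge Ls+1$. In particular, when~$t\equiv 1\pmod L$ we get~$p=0$, hence~$m=0$, and~\eqref{equation: distribution of random walk} forces
\begin{equation*}
	\pr[X_t=v_t\mid X_1\ldots X_{t-1}=v_1\ldots v_{t-1}]=\frac{\omega(v_t)}{k\,\omega(\emptyset)},
\end{equation*}
which does not depend on the history at all; in general, \eqref{equation: distribution of random walk} shows that whenever the conditioning event has positive probability, this one-step probability equals a fixed function~$g_t(v_{t-m},\ldots,v_t)$ of those coordinates alone, and since~$t-m\ge Ls+1$ it depends on none of~$v_1,\ldots,v_{Ls}$.

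With this in hand I would finish by the chain rule. For~$t\le Ls$ the one-step probabilities~$\pr[X_t=v_t\mid X_1\ldots X_{t-1}=v_1\ldots v_{t-1}]$ are precisely the factors in the chain-rule expansion of~$\pr[X_1\ldots X_{Ls}=v_1\ldots v_{Ls}]$. For~$t\ge Ls+1$, an induction on~$t$ shows that for every~$v_1,\ldots,v_{Ls}$ with~$\pr[X_1\ldots X_{Ls}=v_1\ldots v_{Ls}]>0$ one has
\begin{equation*}
	\pr[X_1\ldots X_t=v_1\ldots v_t]=\pr[X_1\ldots X_{Ls}=v_1\ldots v_{Ls}]\cdot\prod_{t'=Ls+1}^{t}g_{t'}(v_{t'-m_{t'}},\ldots,v_{t'}),
\end{equation*}
where~$m_{t'}\defeq\min\{k-1,(t'-1)\bmod L\}$; at each step the new factor is, by the previous paragraph, the function~$g_{t'}$ evaluated at coordinates with indices in~$\{Ls+1,\ldots,t'\}$, so positivity of the prefix propagates and all the conditional probabilities involved are genuinely defined. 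Summing this identity over all~$v_1,\ldots,v_{Ls}$ and using~$\sum_{v_1,\ldots,v_{Ls}}\pr[X_1\ldots X_{Ls}=v_1\ldots v_{Ls}]=1$ gives~$\pr[X_{Ls+1}\ldots X_{t_{\star}}=v_{Ls+1}\ldots v_{t_{\star}}]=\prod_{t'=Ls+1}^{t_{\star}}g_{t'}(v_{t'-m_{t'}},\ldots,v_{t'})$; taking~$t=t_{\star}$ in the identity above then yields the product formula when the first marginal is positive, and when it is zero both sides of the product formula vanish.

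The genuine content of the proof is thus just the ``reset'' remark together with the index computation~$t-m\ge Ls+1$; the one point that requires care is the bookkeeping of conditioning on zero-probability events, which is precisely why I would organise the final step as the displayed induction on prefix probabilities rather than as a formal identity between conditional probabilities. Beyond this bookkeeping I do not foresee any difficulty.
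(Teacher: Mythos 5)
Your proof is correct and is precisely the argument the paper implicitly relies on (the statement is given as an Observation without proof): the index computation showing $t-m\geq Ls+1$ for $t\geq Ls+1$, together with the reset $m=0$ at $t\equiv 1\pmod L$, is exactly why the two segments are independent. The care you take with zero-probability prefixes by phrasing the induction in terms of unconditional prefix probabilities is appropriate, though not essential to the idea.
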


Recall that~$\omega\colon E\rightarrow \bR_{>0}$ is \defn{balanced} if~$\frac{\max_{e\in E} \omega(e)}{\min_{e\in E} \omega(e)}\leq 2$.
For an~$(L,\omega)$-random walk~$X_1\ldots X_{t_{\star}}$ in~$H$ for some balanced~$\omega$, the balancedness of~$\omega$ allows us to bound the probability of the event that~$X_t=X_{t'}$ for some distinct~$t,t'\in[t_{\star}]$; the union bound yields the following observation.
\begin{observation}\label{observation: pairwise disjoint}
	Let~$1/n\ll \eta, 1/k$. 
	Suppose~$H$ is a~$k$-graph on~$n$ vertices with~$\delta(H)\geq \eta n$.
	Suppose~$X_1\ldots X_{t_{\star}}$ with~$t_{\star}\leq 2n^{1/3}$ is an~$(L,\omega)$-random walk in~$H$ for some balanced~$\omega$.
	Then, we have
	\begin{equation*}
		\pr[\text{$X_1\ldots X_{t_{\star}}$ is self-avoiding}]\geq 1-n^{-1/4}.
	\end{equation*}
\end{observation}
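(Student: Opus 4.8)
The plan is to bound, for each pair of indices $1\le t<t'\le t_\star$, the probability of the collision event $X_t=X_{t'}$ by $2k/(\eta n)$, and then to union-bound over the at most $\binom{t_\star}{2}\le t_\star^2/2\le 2n^{2/3}$ such pairs; the resulting bound $\tfrac{4k}{\eta}n^{-1/3}$ is at most $n^{-1/4}$ once $n$ is large in terms of $\eta$ and $k$, which is permitted by the hierarchy $1/n\ll\eta,1/k$. To estimate a single collision probability I would condition on the prefix $X_1\ldots X_{t'-1}$: this fixes the value $v\defeq X_t$, and by the transition rule~\eqref{equation: distribution of random walk} the conditional probability that $X_{t'}=v$ is either $0$ (if $v$ is one of the last $m\defeq\min\{k-1,t'-1\bmod L\}$ recorded vertices) or, with $S\defeq\{v_{t'-m},\dots,v_{t'-1}\}$ (so $S=\emptyset$ when $m=0$), equals $\omega(v_{t'-m}\dots v_{t'-1}v)\big/\bigl((k-m)\,\omega(v_{t'-m}\dots v_{t'-1})\bigr)$. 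Since $\omega$ is balanced, the numerator is at most $d_H(S\cup\{v\})\max_{e}\omega(e)$ and the denominator at least $(k-m)\,d_H(S)\min_{e}\omega(e)$ (reading $\abs{E(H)}$ in place of $d_H(\emptyset)$ when $m=0$), so the ratio is at most $\tfrac{2\,d_H(S\cup\{v\})}{(k-m)\,d_H(S)}\le\tfrac{2\,d_H(S\cup\{v\})}{d_H(S)}$.

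Everything thus reduces to a purely combinatorial estimate, which I would isolate as a claim: if $\delta_{k-1}(H)\ge\eta n$, then for every $j$-set $S\subseteq V(H)$ with $0\le j\le k-1$ and every $v\in V(H)\setminus S$ we have $d_H(S\cup\{v\})\le\tfrac{k}{\eta n}\,d_H(S)$ (where $j=0$ means $S=\emptyset$ and $d_H(\emptyset)=\abs{E(H)}$). The plan for the claim: since $\delta_{k-1}(H)\ge\eta n>0$, every set of size at most $k-1$ lies in some edge and every $(k-1)$-set has degree at least $\eta n$; double-counting pairs $(T,e)$ with $S\subseteq T\subseteq e$, $\abs{T}=k-1$, $e\in E(H)$ gives $(k-j)\,d_H(S)=\sum_{T\supseteq S,\;\abs{T}=k-1}d_H(T)\ge\binom{n-j}{k-1-j}\eta n$, while trivially $d_H(S\cup\{v\})\le\binom{n-j-1}{k-j-1}$; dividing, and using that the resulting binomial ratio is at most $1$, yields the factor $\tfrac{k-j}{\eta n}\le\tfrac{k}{\eta n}$. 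The instance $j=0$ of the claim is precisely what governs the ``block-start'' steps $m=0$, where the walk resamples a fresh vertex with probability $\omega(v)/(k\,\omega(\emptyset))\le\tfrac{2\,d_H(v)}{k\,\abs{E(H)}}\le\tfrac{2}{\eta n}$, the last inequality using $\abs{E(H)}\ge\tfrac{\eta n}{k}\binom{n}{k-1}$ against $d_H(v)\le\binom{n-1}{k-1}$.

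Combining the two ingredients yields $\pr[X_{t'}=X_t\mid X_1\ldots X_{t'-1}]\le 2k/(\eta n)$ for every admissible prefix, hence $\pr[X_t=X_{t'}]\le 2k/(\eta n)$ by averaging over the prefix, and the union bound described above completes the argument. The step I expect to be the main obstacle is the combinatorial claim: the point to get right is that a $(k-1)$-degree condition of order $n$ is surprisingly rigid — it forces every $j$-degree to be of order $n^{k-1-j}$ and, more importantly, makes the ratio $d_H(S\cup\{v\})/d_H(S)$ of consecutive degrees uniformly $O\bigl(1/(\eta n)\bigr)$ across all $j\in\{0,\dots,k-1\}$, which is exactly what allows a single per-step bound to cover both the interior steps and the block-start steps of the walk.
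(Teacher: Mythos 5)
Your proposal is correct and follows exactly the route the paper indicates (the paper only sketches this observation in one sentence: bound each pairwise collision probability via the balancedness of~$\omega$ and the degree condition, then union-bound over the at most~$2n^{2/3}$ pairs). Your combinatorial claim that $\delta_{k-1}(H)\geq\eta n$ forces $d_H(S\cup\{v\})\leq \tfrac{k}{\eta n}d_H(S)$ for all $j$-sets $S$ with $j\leq k-1$ is verified correctly by the double count, and it is the same degree estimate the authors use implicitly elsewhere (e.g.\ in the proof of Lemma~4.9), so there is nothing to add.
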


The following lemma shows that 
for an~$(L,\omega)$-random walk~$X_1\ldots X_{t_{\star}}$ in~$H$ not only the transition probabilities are determined by~$\omega$, 
but also the probability of~$X_1\ldots X_{t_{\star}}$ consecutively visiting a sequence of vertices can be easily computed in terms of~$\omega$.

\begin{lemma}\label{lemma: probability for tuple}
	Suppose~$H$ is a~$k$-graph on~$n$ vertices with vertex set~$V$. 
	Suppose~$X_1\ldots X_{L}$ is an~$(L,\omega)$-random walk in~$H$.
	Let~$t\in[L]$ and~$j\in[\min\{k,t\}]$. Then, we have
	\begin{equation}\label{equation: probability for tuple}
		\pr[X_{t-j+1}\ldots X_t=v_{-j+1}\ldots v_0]=\frac{(k-j)!\omega(v_{-j+1}\ldots v_{0})}{k!\omega(\emptyset)}
	\end{equation}
	for all~$v_{-j+1},\ldots,v_{0}\in V$.
\end{lemma}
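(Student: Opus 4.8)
The plan is to prove~\eqref{equation: probability for tuple} by a downward induction on~$j$, starting from the case~$j=0$, which by the convention~$\omega(\emptyset)=\sum_{e\in E}\omega(e)$ and the empty-sequence convention reads~$\pr[\text{(empty event)}]=1$ (and is trivially true), and descending to general~$j$. Equivalently, and perhaps more cleanly, I would fix~$j$ and induct on~$t$ from~$t=j$ upward, but the essential computation is the same: relate the probability of visiting a length-$j$ block ending at time~$t$ to the probability of visiting the length-$(j-1)$ prefix of that block ending at time~$t-1$, using the definition~\eqref{equation: distribution of random walk} of the transition probabilities.

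First I would record the base case. When~$t=j$ (so in particular~$t\le k$), the reset index~$m=\min\{k-1,t-1\bmod L\}$ increases by~$1$ at each step from~$0$ up to~$j-1$, so the walk has not yet ``reset''; here one multiplies the transition probabilities from step~$1$ through step~$j$ and telescopes: the denominators~$(k-m)\omega(v_1\ldots v_m)$ cancel against the numerators~$\omega(v_1\ldots v_{m+1})$ of the next step, leaving~$\frac{\omega(v_{1}\ldots v_{j})}{k(k-1)\cdots(k-j+1)\,\omega(\emptyset)}=\frac{(k-j)!\,\omega(v_1\ldots v_j)}{k!\,\omega(\emptyset)}$, which is exactly the claim (after relabelling~$v_1\ldots v_j$ as~$v_{-j+1}\ldots v_0$). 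Here I use that for~$t-1<L$ we have~$m=\min\{k-1,t-1\}$, and that at step~$1$ the conditional distribution is~$\pr[X_1=v]=\frac{\omega(v)}{k\,\omega(\emptyset)}$, which follows from~\eqref{equation: distribution of random walk} with~$m=0$ together with~$\omega(\emptyset)=\sum_{e}\omega(e)$ and the identity~$\sum_{v}\omega(v)=k\,\omega(\emptyset)$.

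For the inductive step, suppose the formula holds for~$(t-1,j)$ for all relevant~$j$, and consider~$\pr[X_{t-j+1}\ldots X_t=v_{-j+1}\ldots v_0]$. Write this as~$\pr[X_{t-j+1}\ldots X_{t-1}=v_{-j+1}\ldots v_{-1}]$ times the conditional probability~$\pr[X_t=v_0\mid X_{t-j+1}\ldots X_{t-1}=v_{-j+1}\ldots v_{-1}]$. By~\eqref{equation: distribution of random walk}, the latter conditional probability depends only on the last~$m=\min\{k-1,t-1\bmod L\}$ coordinates and equals~$\frac{\omega(v_{-m}\ldots v_0)}{(k-m)\,\omega(v_{-m}\ldots v_{-1})}$; the induction hypothesis applied with~$j$ replaced by~$m$ and by~$j-1$ (both at time~$t-1$) gives~$\pr[X_{t-m}\ldots X_{t-1}=v_{-m}\ldots v_{-1}]=\frac{(k-m)!\,\omega(v_{-m}\ldots v_{-1})}{k!\,\omega(\emptyset)}$ and~$\pr[X_{t-j+1}\ldots X_{t-1}=v_{-j+1}\ldots v_{-1}]=\frac{(k-j+1)!\,\omega(v_{-j+1}\ldots v_{-1})}{k!\,\omega(\emptyset)}$. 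The issue is that the conditional probability is naturally expressed in terms of the length-$m$ window, not the length-$(j-1)$ window, so to combine these cleanly one should first argue that, conditioned on~$X_{t-m}\ldots X_{t-1}=v_{-m}\ldots v_{-1}$, the coordinate~$X_t$ has the stated conditional law (this is the Markov property built into~\eqref{equation: distribution of random walk}), and then multiply: $\frac{(k-j+1)!\,\omega(v_{-j+1}\ldots v_{-1})}{k!\,\omega(\emptyset)}\cdot\frac{\omega(v_{-m}\ldots v_0)}{(k-m)\,\omega(v_{-m}\ldots v_{-1})}$. For this to collapse to~$\frac{(k-j)!\,\omega(v_{-j+1}\ldots v_0)}{k!\,\omega(\emptyset)}$ one needs~$m=j-1$ along the way; this holds provided~$t-1\bmod L\ge j-1$, i.e. provided the window~$[t-j+1,t]$ does not straddle a reset point. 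Since~$t\le L$ in the statement, and for~$t\le k$ we are in the base-case regime while for~$k<t\le L$ we have~$m=k-1$ and one only cares about~$j\le k$ with the window inside~$[t-k+1,t]\subseteq[1,L]$, the equality~$m=j-1$ indeed holds in all cases of interest after a short case check.

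\textbf{Main obstacle.} I expect the only genuinely fiddly point to be the bookkeeping of the reset index~$m=\min\{k-1,t-1\bmod L\}$ near~$t=k$ and~$t=L$: one must verify that for every~$t\in[L]$ and~$j\in[\min\{k,t\}]$ the length-$j$ window ending at~$t$ lies within a single ``fresh'' stretch, so that the telescoping of the transition probabilities is not interrupted by a reset. Once that is pinned down, the computation is the routine telescoping product described above, and the case~$j=\min\{k,t\}=t\le k$ (i.e. a window starting at time~$1$) is the genuine base case, with larger~$t$ handled by the one-step Markov recursion. I would present the argument as: (i) fix~$j$, induct on~$t$; (ii) base case~$t=j$ by telescoping from step~$1$; (iii) inductive step via the Markov property and the identity~$m=j-1$, closing the induction.
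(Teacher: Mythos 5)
Your base case (telescoping the transition probabilities for a window starting at time~$1$) is fine and matches what the paper uses at~$t=1$. The gap is in your inductive step. You decompose $\pr[X_{t-j+1}\ldots X_t=v_{-j+1}\ldots v_0]$ as $\pr[X_{t-j+1}\ldots X_{t-1}=v_{-j+1}\ldots v_{-1}]$ times a one-step transition probability, and you correctly note that the transition law in~\eqref{equation: distribution of random walk} is expressed in terms of the last $m=\min\{k-1,t-1\}$ coordinates rather than the last $j-1$; but you then claim that $m=j-1$ ``holds in all cases of interest after a short case check''. That check fails: once $t>j$, the only way to have $\min\{k-1,t-1\}=j-1$ is $j=k$, so for every $j<k$ (for instance $j=1$ and $t=k+1$, where $m=k-1\neq 0$) the length-$(j-1)$ window does not determine the conditional law of $X_t$. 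A posteriori that conditional law does take the clean form $\omega(v_{-j+1}\ldots v_0)/\bigl((k-j+1)\,\omega(v_{-j+1}\ldots v_{-1})\bigr)$, but this is a consequence of the lemma rather than an input to its proof, so invoking it in the inductive step would be circular.

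The repair is to marginalise over the $h=m-j+1$ hidden coordinates: write $\pr[X_{t-j+1}\ldots X_t=v_{-j+1}\ldots v_0]$ as the sum over $(v_{-m},\ldots,v_{-j})\in\ordsubs{U}{h}$ of $\pr[X_{t-m}\ldots X_t=v_{-m}\ldots v_0]$ with $U=V\setminus\{v_{-j+1},\ldots,v_0\}$, apply the one-step decomposition to the full length-$(m+1)$ window (where conditioning on the last $m$ coordinates is legitimate), invoke the induction hypothesis for the length-$m$ window at time $t-1$, and then collapse the resulting sum via the identity $\sum_{(v_{-m},\ldots,v_{-j})\in\ordsubs{U}{h}}\omega(v_{-m}\ldots v_0)=\binom{k-j}{h}\,h!\,\omega(v_{-j+1}\ldots v_0)$. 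This is exactly the paper's argument (induction on $t$, establishing the formula for all $j\in[\min\{k,t\}]$ simultaneously at each step), and it is the combinatorial step missing from your proposal.
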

\begin{proof}
	We prove the statement by induction on~$t$.
	If~$t=1$, then the statement is true by choice of~$X_1$.
	Next assume that~\eqref{equation: probability for tuple} is true for a~$t\in [L-1]$ and all~$j\in[\min\{k,t\}]$.
	
	Given such a~$t\in[L-1]$, let~$j\in[\min\{k,t+1\}]$ and~$m\defeq \min\{k-1,t\}$ (hence~$j\leq m+1$) as well as~$v_{-j+1},\ldots,v_{0}\in V$ be given. Furthermore, let~$U\defeq V\setminus\{v_{-j+1},\ldots,v_{0}\}$ and~$h\defeq m-j+1\geq 0$.
	Now we establish~\eqref{equation: probability for tuple} with~$t+1$ instead of~$t$. We compute
	\begin{align}
		\pr[X_{t-j+2}\ldots X_{t+1}=v_{-j+1}\ldots v_0]
		&=\sum_{(v_{-m},\ldots, v_{-j})\in \ordsubs{U}{h}}\pr[X_{t-m+1}\ldots X_{t+1}=v_{-m}\ldots v_0]\nonumber\\
		&=\sum_{(v_{-m},\ldots, v_{-j})\in \ordsubs{U}{h}}\frac{\omega(v_{-m}\ldots v_0)\pr[X_{t-m+1}\ldots X_t=v_{-m}\ldots v_{-1}]}{(k-m)\omega(v_{-m}\ldots v_{-1})}\label{equation: use of distribution}\\
		&=\sum_{(v_{-m},\ldots, v_{-j})\in \ordsubs{U}{h}} \frac{\omega(v_{-m}\ldots v_0)\cdot (k-m)!\omega(v_{-m}\ldots v_{-1})}{(k-m)\omega(v_{-m}\ldots v_{-1})\cdot k!\omega(\emptyset)}\label{equation: use of induction hypothesis}\\
		&=\frac{(k-m-1)!}{k!\omega(\emptyset)}\sum_{(v_{-m},\ldots, v_{-j})\in \ordsubs{U}{h}} \omega(v_{-m}\ldots v_0)\nonumber\\
		&=\frac{(k-m-1)!}{k!\omega(\emptyset)}\sum_{\substack{e\in E\colon\\ v_{-j+1},\ldots,v_{0}\in e}} \binom{k-j}{h}\cdot h!\cdot\omega(e)\nonumber\\
		&=\frac{(k-j)!\omega(v_{-j+1}\ldots v_{0})}{k!\omega(\emptyset)},\nonumber
	\end{align}
	where we used~\eqref{equation: distribution of random walk} for~\eqref{equation: use of distribution} and the induction hypothesis for~\eqref{equation: use of induction hypothesis}.
\end{proof}

The next lemma shows that whenever~$X_1\ldots X_{t_{\star}}$ is an~$(L,\omega)$-random walk in~$H$ for some balanced~$\omega$ and~$x$ a vertex in a slightly larger~$k$-graph, then after a few steps~$X_1\ldots X_{t_{\star}}$ has a decent chance of producing an~$x$-absorber in~$2k$ consecutive steps.
This follows easily, because as an~$\eta$-intersecting~$k$-graph,~$H$ contains sufficiently many suitable~$x$-absorbers, Lemma~\ref{lemma: connecting} guarantees that there are sufficiently many ways for the random walk to arrive at such an~$x$-absorber independent of the starting conditions and the balancedness of~$\omega$ entails that every walk extending an already chosen initial segment of the random walk occurs with sufficiently large probability.

\begin{observation}\label{observation: absorber occurs}
	Let~$1/n\ll \alpha\ll \nu_+,\ell \ll \eta, 1/k$ and~$L\geq 2k$. 
	Suppose~$H_+$ is an~$\eta$-intersecting~$k$-graph on at most~$(1+\nu_+)n$ vertices with vertex set~$V_+$ and~$H$ is an induced subgraph of~$H_+$ on~$n$ vertices. 
	Suppose~$X_1\ldots X_L$ is an~$(L,\omega)$-random walk in~$H$ for some balanced~$\omega$. Then, for all~$t\in\{-\ell,\ldots,L-\ell-2k\}$ and~$x\in V_+$, we have
	\begin{equation*}
		\pr[\text{$X_{t+\ell+1}\ldots X_{t+\ell+2k}$ is an~$x$-absorber in~$H_+$}\mid X_1\ldots X_{t}]\geq \alpha.
	\end{equation*}
\end{observation}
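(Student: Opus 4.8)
The plan is to condition on the initial segment $X_1\dots X_t$ (with the convention that this is empty when $t\le 0$), and then estimate the probability that the next relevant block of the walk traces out an $x$-absorber. First I would note that it suffices to find a lower bound that holds uniformly over all admissible initial segments $v_1\dots v_t$ with positive probability: write $\ordered{p}\defeq (v_{t-k+1},\dots,v_t)$ for the last $k$ coordinates (or the shorter available tuple, or the empty tuple if $t<k$ or $t\le 0$), which is the only data the Markov-type distribution~\eqref{equation: distribution of random walk} remembers going forward. By definition, $X_{t+\ell+1}\dots X_{t+\ell+2k}$ is an $x$-absorber in $H_+$ exactly when $a_1\dots a_{2k}$ and $a_1\dots a_k x a_{k+1}\dots a_{2k}$ are both paths in $H_+$, where $a_i\defeq X_{t+\ell+i}$; since $H\subseteq H_+$ is induced and the $a_i$ lie in $V(H)$, this is a condition that the random walk (which only uses edges of $H$) can satisfy.

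The core of the argument is a counting-times-probability lower bound. First I would use $\eta$-intersectingness of $H$ (hence of $H_+$) to see that there are many $x$-absorbers: the number of ordered $2k$-tuples $(a_1,\dots,a_{2k})$ in $V(H)$ such that $a_1\dots a_k x a_{k+1}\dots a_{2k}$ is a path is at least $c\, n^{2k}$ for some $c=c(\eta,k)>0$ — indeed one can build the two halves greedily, at each step picking a common neighbour of the previous $k-1$ chosen vertices together with either $x$ or the appropriate overlap vertex, and $\eta$-intersectingness guarantees $\ge \eta n$ choices (minus at most $O(k)$ forbidden vertices) at every step. Next, for each such target absorber $(a_1,\dots,a_{2k})$, Lemma~\ref{lemma: connecting} supplies at least $\alpha' n^{\ell-k}$ internally self-avoiding $\ell$-walks in $H$ that connect $\ordered{p}$ (padded out to an ordered end-edge if $t<k$, which only costs a constant factor) to the ordered starting edge $(a_1,\dots,a_k)$ of the absorber — here $1/n\ll \alpha'\ll 1/\ell\ll\eta,1/k$, so we do need $\ell$ to be at least some constant depending on $\eta,k$, which is exactly the hypothesis $\alpha\ll\nu_+,\ell\ll\eta,1/k$. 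Concatenating, this produces at least $c\alpha' n^{\ell+k}$ admissible continuations $w_1\dots w_{\ell+2k}$ of the walk of the form "$\ell$-walk reaching $(a_1,\dots,a_k)$, followed by the $2k$ absorber vertices $a_{k+1},\dots,a_{2k}$" — wait, more carefully, the continuation has $\ell+2k-k=\ell+k$ new vertices beyond $\ordered{p}$, so $\le n^{\ell+k}$ of them in total and $\ge c\alpha' n^{\ell+k}$ good ones.

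Finally I would show each such continuation has probability at least $\beta^{\ell+2k} n^{-(\ell+k)}$ for some $\beta=\beta(k)>0$ under the $(L,\omega)$-random walk, using balancedness of $\omega$: each single transition probability in~\eqref{equation: distribution of random walk} equals $\frac{\omega(v_{t-m}\dots v_t)}{(k-m)\omega(v_{t-m}\dots v_{t-1})}$, and since $\omega$ is balanced the numerator (a sum over edges through a $(m{+}1)$-set) is at least $\frac{1}{2}\cdot\frac{1}{d_H(\text{that }(m{+}1)\text{-set})}$ of the denominator summed appropriately — concretely each transition is $\ge \frac{1}{2(k-m)}\cdot\frac{\delta(H)\text{-ish count}}{\Delta\text{-ish count}}\ge \frac{1}{2k}\cdot\frac{1}{n}$ roughly, because balancedness makes $\omega$ behave like the uniform weight up to a factor $2$, and $H$ has $\delta(H)\ge\eta n$ so the relevant degree ratios are $\Theta(1/n)$. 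Multiplying over the $\ell+2k$ steps and summing over the $\ge c\alpha' n^{\ell+k}$ good continuations gives probability $\ge c\alpha' n^{\ell+k}\cdot (2k n)^{-(\ell+2k)}\cdot\eta^{\Theta(\ell+2k)}\ge\alpha$, provided $\alpha$ was chosen small enough in terms of $\ell,\eta,k$ — which is precisely the hierarchy $1/n\ll\alpha\ll\nu_+,\ell\ll\eta,1/k$. I expect the main obstacle to be bookkeeping the boundary cases $t\le 0$ and $t<k$ (where $\ordered{p}$ is short or empty, so Lemma~\ref{lemma: connecting} must be applied to a padded tuple or replaced by a direct greedy count) and making sure the self-avoidance constraint "$v_t\notin\{v_{t-m},\dots,v_{t-1}\}$" in~\eqref{equation: distribution of random walk} does not kill continuations — but since we only forbid repetitions within a sliding window of size $k-1$, and our good continuations are built to be internally self-avoiding, at each step at most $k-1$ vertices are forbidden, absorbed harmlessly into the constants.
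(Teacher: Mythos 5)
Your proposal follows exactly the route the paper intends for this observation (the paper gives no formal proof; the sentence preceding the statement describes precisely your three ingredients: many $x$-absorbers from $\eta$-intersectingness, many ways to reach one via Lemma~\ref{lemma: connecting}, and a pointwise lower bound on the probability of each continuation from balancedness). The structure is sound and the boundary cases ($t\le 0$, short prefix) are handled acceptably via Lemma~\ref{lemma: probability for tuple}.

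However, your power-of-$n$ bookkeeping is off, and as literally written the final inequality fails: you count $\ge c\alpha' n^{\ell+k}$ good continuations but charge each one a probability of order $n^{-(\ell+2k)}$, so the product is $\Theta(n^{-k})\to 0$, which is not $\ge\alpha$ for any constant $\alpha$. The correct count is as follows. The continuation $X_{t+1}\ldots X_{t+\ell+2k}$ consists of $\ell+2k$ new vertices. For a fixed ordered absorber $(a_1,\ldots,a_{2k})$ (of which there are $\ge c_1 n^{2k}$), the connecting walk runs from the $k$-tuple $\ordered{p}$ to $(a_1,\ldots,a_k)$ and is a $(2k+\ell)$-walk, so Lemma~\ref{lemma: connecting} (applied with $2k+\ell$ in place of $\ell$ there) gives $\ge\alpha_0 n^{(2k+\ell)-2k}=\alpha_0 n^{\ell}$ choices, not $\alpha' n^{\ell-k}$. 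Hence there are $\ge c_1\alpha_0\, n^{\ell+2k}$ good continuations, each occurring with probability $\ge (c_2/n)^{\ell+2k}$ by balancedness, and the powers of $n$ now cancel to yield the constant $c_1\alpha_0 c_2^{\ell+2k}\ge\alpha$. With this correction your argument is complete.
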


Measuring the impact of removing the vertices of an~$(L,\omega)$-random walk in~$H$ from~$H$ is one of the core objectives in this subsection.
The following lemma shows that for each~$(k-1)$-set of vertices of~$H$, its neighbourhood is essentially visited as often as expected.
Via Lemma~\ref{lemma: k-1 sets control}, this transfers to the vertex degrees appropriately.

\begin{lemma}\label{lemma: same fraction}
	Let~$1/n\ll \eta, 1/k, 1/L$.
	Suppose~$H$ is a~$k$-graph on~$n$ vertices with vertex set~$V$ and~$\delta(H)\geq \eta n$.
	Suppose~$X_1\ldots X_{t_{\star}}$ with~$n^{1/3}/2\leq t_{\star}\leq 2n^{1/3}$ is an~$(L,\omega)$-random walk in~$H$ for some balanced perfect fractional matching~$\omega$ in~$H$. Let~$U\defeq V\setminus \{ X_t\colon t\in[t_{\star}] \}$.
	Then, for all~$\unord{x}\in\unordsubs{V}{k-1}$, we have
	\begin{equation}\label{equation: remaining degree}
		\pr\biggl[d_{H[U\cup\unord{x}]}(\unord{x})=(1\pm n^{-31/40})\biggl(1-\frac{t_{\star}}{n}\biggr)d_H(\unord{x})\biggr]\geq 1-\exp(-n^{1/14}).
	\end{equation}
\end{lemma}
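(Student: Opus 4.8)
The plan is to fix $\unord{x}\in\unordsubs{V}{k-1}$ and control the random variable $Y\defeq d_{H[U\cup\unord{x}]}(\unord{x})=\abs{N_H(\unord{x})\setminus\{X_t\colon t\in[t_{\star}]\}}$ by first computing $\ex[Y]$ and then showing concentration. For the expectation, I would write $Y=\sum_{v\in N_H(\unord{x})}\Ind[v\notin\{X_1,\ldots,X_{t_{\star}}\}]$, so that $\ex[Y]=\sum_{v\in N_H(\unord{x})}\pr[v\neq X_t\text{ for all }t]$. By Observation~\ref{observation: independent} the blocks $X_{L(s-1)+1}\ldots X_{Ls}$ for $s\in[t_{\star}/L]$ are mutually independent, and each is an $(L,\omega)$-random walk; within a single block, Lemma~\ref{lemma: probability for tuple} (with $j=1$) gives $\pr[X_t=v]=\omega(v)/(k\,\omega(\emptyset))=1/n$ for every $t$ and every $v$, since $\omega$ is a \emph{perfect} fractional matching (so $\omega(v)=1$ and $k\,\omega(\emptyset)=\sum_{v}\omega(v)=n$). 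Hence a crude inclusion--exclusion / union bound over the $t_{\star}$ steps gives $\pr[v\in\{X_1,\ldots,X_{t_{\star}}\}]=\frac{t_{\star}}{n}+O(t_{\star}^2/n^2)$, with the error term of order $n^{-1/3}$ since $t_{\star}\leq 2n^{1/3}$; summing over the $\geq\eta n$ vertices of $N_H(\unord{x})$ yields $\ex[Y]=(1\pm O(n^{-1/3}))\bigl(1-\tfrac{t_{\star}}{n}\bigr)d_H(\unord{x})$, which is well inside the claimed $(1\pm n^{-31/40})$ window.

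\textbf{Concentration.} For the concentration I would use the block independence: define the independent random blocks $Z_s\defeq (X_{L(s-1)+1},\ldots,X_{Ls})$ for $s\in[t_{\star}/L]$, so $Y$ is a function of the $\lceil t_{\star}/L\rceil \leq 3n^{1/3}/L$ independent variables $Z_1,\ldots,Z_{t_{\star}/L}$. Changing one block $Z_s$ changes the set $\{X_t\colon t\in[t_{\star}]\}$ by at most $L$ vertices, hence changes $Y$ by at most $L$. Applying McDiarmid's inequality (Lemma~\ref{lemma: McDiarmid}) with $c_s=L$ and $\sum c_s^2\leq 3Ln^{1/3}$, and deviation $\mu\defeq n^{-31/40}\bigl(1-\tfrac{t_{\star}}{n}\bigr)d_H(\unord{x})\geq \tfrac12\eta n^{9/40}$ (using $d_H(\unord{x})\geq\delta(H)\geq\eta n$), gives a failure probability of order $\exp\bigl(-c\,n^{9/20}/(Ln^{1/3})\bigr)=\exp\bigl(-c\,n^{2/15}/L\bigr)$, which is at most $\exp(-n^{1/14})$ for $n$ large since $1/n\ll 1/L$ (note $2/15>1/14$). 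Combining with the expectation estimate via the triangle inequality: with probability $\geq 1-\exp(-n^{1/14})$ we have $\abs{Y-\ex[Y]}\leq\mu/2$, say, and $\abs{\ex[Y]-(1-t_{\star}/n)d_H(\unord{x})}\leq\mu/2$, hence $Y=(1\pm n^{-31/40})(1-t_{\star}/n)d_H(\unord{x})$.

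\textbf{Main obstacle.} The one genuinely delicate point is getting $\ex[Y]$ right to within the required relative error $n^{-31/40}$: the event $\{v\notin\{X_1,\ldots,X_{t_{\star}}\}\}$ couples steps across all $\lceil t_{\star}/L\rceil$ blocks, and within a block the steps are \emph{not} independent, only Markovian with memory $k-1$, so one cannot simply multiply. The clean way around this is to bound $\pr[v\in\{X_1,\ldots,X_{t_{\star}}\}]$ from above by $\sum_{t=1}^{t_{\star}}\pr[X_t=v]=t_{\star}/n$ (Lemma~\ref{lemma: probability for tuple} with $j=1$ applies to each block separately, and $\pr[X_t=v]$ is the same $1/n$ in every block) and from below by Bonferroni: $\pr[v\in\{X_1,\ldots,X_{t_{\star}}\}]\geq \sum_t\pr[X_t=v]-\sum_{t<t'}\pr[X_t=v,\,X_{t'}=v]$. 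The pair probabilities $\pr[X_t=v,X_{t'}=v]$ are $O(1/n^2)$ uniformly: when $t,t'$ lie in different blocks this is immediate by independence and the $1/n$ single-step bound; when they lie in the same block one invokes Lemma~\ref{lemma: probability for tuple} together with the fact that $\omega$ is balanced and a $2$-degree bound (at most $O(1/n^{\eta})$ weight on edges through a fixed pair, or simply $\Delta_2(H)\leq n$ suffices here since we only need $O(1/n^2)$ after summing against the transition weights). Thus $\sum_{t<t'}\pr[X_t=v,X_{t'}=v]=O(t_{\star}^2/n^2)=O(n^{-4/3})$, and summing over $v\in N_H(\unord{x})$ the total correction to $\ex[Y]$ is $O(n^{-1/3})\cdot d_H(\unord{x})$, comfortably below $n^{-31/40}d_H(\unord{x})$. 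Everything else is routine; no union bound over $\unord{x}$ is needed here since the statement is for a fixed $(k-1)$-set, though the $\exp(-n^{1/14})$ bound is clearly strong enough to union-bound over all $\binom{n}{k-1}$ such sets later.
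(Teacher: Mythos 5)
Your proposal is correct, and it reaches the conclusion by a route that differs from the paper's in both halves of the argument. The paper never computes $\ex\bigl[d_{H[U\cup\unord{x}]}(\unord{x})\bigr]$ at all: it writes the number of visited neighbours as $\sum_{\ell\in[L]}N_\ell-Z$, where $N_\ell$ counts the visits to $N_H(\unord{x})$ at times congruent to $\ell$ modulo $L$ (a sum of independent indicators by Observation~\ref{observation: independent}) and $Z$ counts repeat visits; it applies Chernoff's inequality to each $N_\ell$ separately and controls $Z$ by stochastic domination by a binomial variable with parameters $t_{\star}$ and $n^{-1/2}$, using the balancedness of $\omega$ and $\delta(H)\geq\eta n$. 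You instead compute the expectation of the unvisited count directly via $\pr[X_t=v]=1/n$ plus a Bonferroni correction for the pair events, and get concentration in one stroke from McDiarmid's inequality applied to the $\lceil t_{\star}/L\rceil$ independent blocks with Lipschitz constant $L$. Both arguments rest on exactly the same two ingredients (Lemma~\ref{lemma: probability for tuple} with $j=1$ and block independence), and your pair bound $\pr[X_t=v,\,X_{t'}=v]=O(1/(\eta n^2))$ is the same conditional estimate the paper uses to bound $\pr[Z_t=1\mid Z_1,\ldots,Z_{t-1}]$. Your version avoids the split into residue classes; the paper's avoids any expectation computation for the distinct count. Both yield tail bounds comfortably below $\exp(-n^{1/14})$.

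Two of your exponents are mis-stated, though neither slip is fatal. The per-vertex Bonferroni correction is $O(t_{\star}^2/n^2)=O(n^{-4/3})$, as you correctly write in your last paragraph, so the relative error in $\ex[Y]$ is $O(n^{-4/3})$, not $O(n^{-1/3})$; this matters, because $n^{-1/3}$ would \emph{not} fit inside the window $n^{-31/40}$ (as $1/3<31/40$), whereas $n^{-4/3}$ does. Likewise, in the McDiarmid exponent $2\mu^2/\sum_s c_s^2$ one gets $n^{9/20-1/3}=n^{7/60}$ rather than $n^{2/15}$; since $7/60>1/14$, the conclusion is unaffected.
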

\begin{proof}
	Since Lemma~\ref{lemma: probability for tuple} yields~$\pr[X_t=v]=1/n$ for all~$t\in[t_{\star}]$ and~$v\in V$ and since for all~$\ell\in[L]$, the random variables~$X_{L(s-1)+\ell}$ with~$s\in[t_{\star}/L]$ are mutually independent by Observation~\ref{observation: independent}, the statement is a consequence of Chernoff's inequality (Lemma~\ref{lemma: chernoff}).
	
	Let us turn to the details.
	Fix~$\unord{x}\in\unordsubs{V}{k-1}$.
	To see that~\eqref{equation: remaining degree} holds, we will show that\COMMENT{%
		If~$\abs{N_H(\unord{x})\cap \{ X_t\colon t\in[t_{\star}] \}}=(1\pm n^{-1/8})\frac{t_{\star}}{n}\abs{N_H(\unord{x})}$, then~$d_{H[U\cup\unord{x}]}(\unord{x})=\biggl(1\pm n^{-1/8}\frac{\frac{t_{\star}}{n}}{1-\frac{t_{\star}}{n}}\biggr)\biggl(1-\frac{t_{\star}}{n}\biggr)d_H(\unord{x})$. Observe that~$n^{-1/8}\frac{\frac{t_{\star}}{n}}{1-\frac{t_{\star}}{n}}\leq 3n^{-\frac{2}{3}-\frac{1}{8}}=3n^{-\frac{19}{24}}=3n^{-\frac{3}{4}-\frac{1}{24}}\leq n^{-31/40}$.
	}
	\begin{equation}\label{equation: quatifiaction of removal}
		\pr\biggl[\abs{N_H(\unord{x})\cap \{ X_t\colon t\in[t_{\star}] \}}=(1\pm n^{-1/8})\frac{t_{\star}}{n}\abs{N_H(\unord{x})}\biggr]\geq 1-\exp(-n^{1/14}).
	\end{equation}
	To this end, for~$t\in[L]$, let~$Y_t$ denote the indicator random variable of the event that~$X_t\in N_H(\unord{x})$.
	Note that Lemma~\ref{lemma: probability for tuple} implies~$\pr[Y_t=1]=\abs{N_H(\unord{x})}/n$.
	For~$\ell\in[L]$, let~$N_{\ell}\defeq \sum_{s\in[t_{\star}/L]} Y_{L(s-1)+\ell}$.
	Crucially, note that~$N_{\ell}$ is the sum of the independent random variables~$Y_{L(s-1)+\ell}$ with~$s\in[t_{\star}/L]$.
	Furthermore, for~$t\in[t_{\star}]$, let~$Z_t$ denote the indicator random variable of the event that~$X_t\in \{X_{t'}\colon t'\in[t-1]\}\cap N_H(\unord{x})$ and let~$Z\defeq \sum_{t\in[t_{\star}]} Z_t$.
	Observe that
	\begin{equation*}
		\abs{N_H(\unord{x})\cap \{ X_t\colon t\in[t_{\star}] \}}=\sum_{\ell\in[L]}N_{\ell}-Z.
	\end{equation*}
	Let us estimate~$N_{\ell}$ and~$Z$.
	For all~$\ell\in[L]$, Chernoff's inequality (Lemma~\ref{lemma: chernoff}) entails\COMMENT{%
		$\pr\biggl[N_{\ell}=\biggl(1\pm \frac{n^{-1/8}}{2}\biggr)\frac{\abs{N_H(\unord{x})}}{n}\frac{t_{\max\star}}{L}\biggr]\geq 1-2\exp(-\frac{n^{-1/4}}{12}\frac{\abs{N_H(\unord{x})}}{n}\frac{t_{\star}}{L})\geq 1-2\exp(-\frac{n^{-1/4}}{12}\eta\frac{n^{1/3}}{2L})$
	}
	\begin{equation*}
		\pr\biggl[N_{\ell}=\biggl(1\pm \frac{n^{-1/8}}{2}\biggr)\frac{\abs{N_H(\unord{x})}}{n}\frac{t_{\star}}{L}\biggr]\geq 1-\exp(-n^{1/13}).
	\end{equation*}
	Furthermore, from~$t_{\star}\leq 2n^{1/3}$,~$\delta(H)\geq \eta n$,~\eqref{equation: distribution of random walk}, and the balancedness of~$\omega$, we obtain\COMMENT{%
		Let~$m\defeq \min\{k-1,t-1\mod L\}$. Use~$\pr[Z_t=1\mid Z_1,\ldots, Z_{t-1}]\leq \frac{\frac{n^{k-m-1}}{(k-m-1)!}\max_{e\in E} \omega(e)}{\frac{\eta}{2} \frac{n^{k-m}}{(k-m)!}\min_{e\in E}\omega(e)}\cdot 2n^{1/3}\leq \frac{2kn^{k-m-1}\max_{e\in E} \omega(e)}{\eta n^{k-m}\min_{e\in E}\omega(e)}\cdot 2n^{1/3}\leq \frac{8k}{\eta}n^{-2/3}$
	}
	\begin{equation*}
		\pr[Z_t=1\mid Z_1,\ldots, Z_{t-1}]\leq n^{-1/2}
	\end{equation*}
	for all~$t\in[t_{\star}]$. This shows that~$Z$ is stochastically dominated by a binomial random variable with parameters~$t_{\star}$ and~$n^{-1/2}$ and thus Chernoff's inequality (Lemma~\ref{lemma: chernoff}) implies\COMMENT{%
		$\pr\biggl[Z\geq \biggl(1+\frac{\eta}{3}n^{3/8}\biggr)n^{-1/2}t_{\star}\biggr]
		\leq \exp(-\frac{(\frac{\eta}{3}n^{3/8})^2}{2+\frac{\eta}{3}n^{3/8}}n^{-1/2}t_{\star})
		\leq \exp(-\frac{(\frac{\eta}{3}n^{3/8})^2}{2\frac{\eta}{3}n^{3/8}}n^{-1/2}\frac{n^{1/3}}{2})
		= \exp(-\frac{\eta}{12}n^{3/8}n^{-1/2}n^{1/3})$
	}
	\begin{align*}
		\pr\biggl[Z\geq \frac{n^{-1/8}}{2}\frac{t_{\star}}{n}\abs{N_H(\unord{x})}\biggr]
		&\leq \pr\biggl[Z\geq \biggl(1+\frac{\eta}{3}n^{3/8}\biggr)n^{-1/2}t_{\star}\biggr]\\
		&\leq \exp(-n^{1/6}).
	\end{align*}
	The union bound yields~\eqref{equation: quatifiaction of removal}.
\end{proof}

\subsection{Building the absorbing structure}

The following result verifies the existence of few vertex-disjoint paths whose union contains many vertex absorbers.
Moreover, there is in fact a ``uniform'' probabilistic construction of these paths.
To state the next result, we introduce the following terminology concerning paths.
For a path~$P=v_1\ldots v_{\ell}$, we define the boundary~\defn{$\boundary{P}$} as~$\boundary{P}\defeq P[\{v_1,\ldots,v_k,v_{\ell-k+1},\ldots,v_{\ell}\}]$ if~$\ell\geq 2k+1$ and~$\boundary{P}\defeq P$ otherwise. 
Furthermore, for a collection of paths~$\cP$, we define~$\boundary{\cP}\defeq \bigcup_{P\in\cP} \boundary{P}$.

\begin{lemma}\label{lem: absstruct}
	Let~$1/n\ll \rho \ll 1/L \ll 1/a\ll \nu_+, 1/\ell \ll \eta,1/k$.
	Suppose~$H_+$ is an~$\eta$-intersecting~$k$-graph on at most~$(1+\nu_+)n$ vertices with vertex set~$V_+$ and~$H$ is an induced~$\rho$-almost regular subgraph of~$H_+$ on~$n$ vertices with edge set~$E$.
	Let~$\vartheta\defeq 1/a$ and for all~$x\in V_+$, denote the set of~$x$-absorbers in~$H_+$ by~$\cA_x$.
	Then, there is a probabilistic construction of a collection~$\cP$ of~$L$-paths in~$H$ together with a set~$\cB\subseteq \bigcup_{P\in\cP} \cB^{a,\ell}(P)$ such that the following holds.
	\begin{enumerate}[label=\textup{(\roman*)}]
		\item\label{item: number of paths} $\abs{\cP}\leq \vartheta^2 n/L$;
		\item\label{item: almost regularity persists} $H-V(\cP)$ is~$2\rho$-almost regular;
		\item\label{item: sufficiently many absorbing blocks} $\abs{\{ B\in\cB\colon \cA^{\ell}(B)\cap \cA_x\neq\emptyset \}}\geq 3\vartheta^4n$ for all~$x\in V_+$ (in particular,~$\abs{\cB}\geq 3\vartheta^4 n$);
		\item\label{item: no bad blocks} $\abs{\{ x\in V_+\colon \cA^{\ell}(B)\cap\cA_x =\emptyset \}}\leq \vartheta^4n$ for all~$B\in\cB$;
		\item\label{item: probability for occurence} $\pr[e\in E(\cP)]\leq \vartheta\frac{1}{n^{k-1}}$ and~$\pr[e\in E(\boundary{P})]\leq \frac{1}{L}\frac{1}{n^{k-1}}$ for all~$e\in E$.
	\end{enumerate}
\end{lemma}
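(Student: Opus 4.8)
The plan is to build the collection $\cP$ as the union of $\Theta(\vartheta^2 n/L)$ independent $(L,\omega)$-random walks in $H$, where $\omega$ is a balanced perfect fractional matching in $H$ guaranteed by Lemma~\ref{lemma: perfect fractional matching} (applicable since $H$ is $\eta$-intersecting and $\rho$-almost regular with $\rho$ small). Concretely, I would take $t_\star \defeq \lceil \vartheta^2 n/L\rceil \cdot L$, run an $(L,\omega)$-random walk $X_1\ldots X_{t_\star}$, and let $\cP$ consist of the $t_\star/L$ consecutive $L$-blocks $X_{L(s-1)+1}\ldots X_{Ls}$; by Observation~\ref{observation: independent} these segments are mutually independent, and by Observation~\ref{observation: pairwise disjoint} the whole walk is self-avoiding with probability $\ge 1-n^{-1/4}$, so with high probability $\cP$ is genuinely a collection of vertex-disjoint $L$-paths, giving~\ref{item: number of paths}. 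The set $\cB$ will be chosen as a suitable subfamily of $\bigcup_{P\in\cP}\cB^{a,\ell}(P)$: within each $L$-path we have $\lfloor L/(a(2k+\ell))\rfloor$ candidate $(a,\ell)$-blocks, and each such block contains $a$ candidate $2k$-absorber windows $A^\ell_i$, separated by $\ell$ "buffer" vertices.

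The heart of the argument is the absorber-counting, items~\ref{item: sufficiently many absorbing blocks} and~\ref{item: no bad blocks}. For a fixed block position inside a fixed random walk and a fixed $x\in V_+$, Observation~\ref{observation: absorber occurs} gives that the $i$-th window $A^\ell_i$ of that block is an $x$-absorber with probability $\ge\alpha$, conditionally on everything before it; since a block has $a$ windows and $1/a \ll 1/\ell \ll \eta$, the block contains \emph{some} $x$-absorber with probability $\ge 1-(1-\alpha)^a$, which we can make close to $1$ — in particular $\ge 1-\vartheta^8$, say. Summing the indicator over all $\Theta(\vartheta^2 n/(a(2k+\ell)L))\cdot$(blocks per walk) $= \Theta(\vartheta^2 n/(a(2k+\ell)))$ blocks, the expected number of blocks capturing a given $x$ is at least $(1-\vartheta^8)$ times this, which dominates $3\vartheta^4 n$ once $\vartheta$ is small relative to $a,\ell,k$; a Freedman-type concentration (Lemma~\ref{lem:freedman}) applied to the sequence of block-indicator random variables, together with a union bound over $x\in V_+$ (there are at most $(1+\nu_+)n$ of them, and the failure probability is exponentially small), yields~\ref{item: sufficiently many absorbing blocks} with high probability. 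For~\ref{item: no bad blocks}, fix instead a block $B$ and estimate, by essentially the same window-by-window bound, the expected number of $x\in V_+$ for which \emph{no} window of $B$ is an $x$-absorber: it is at most $(1+\nu_+)n\cdot(1-\alpha)^a \le \vartheta^4 n/2$; one more concentration inequality (here $B$ has bounded length so McDiarmid, Lemma~\ref{lemma: McDiarmid}, or a direct argument works, viewing the count as a function of the $\le a(2k+\ell)$ vertices of $B$ with bounded coordinate-effect) plus a union bound over the $O(\vartheta^2 n)$ blocks gives~\ref{item: no bad blocks}.

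Item~\ref{item: almost regularity persists} follows from Lemma~\ref{lemma: same fraction}: since $n^{1/3}/2 \le t_\star \le 2n^{1/3}$ would be the regime there, but here $t_\star = \Theta(\vartheta^2 n)$ is linear in $n$, so I would instead split the single long walk into $\Theta(n^{2/3})$ independent sub-walks of length $\Theta(n^{1/3})$ (legitimate by Observation~\ref{observation: independent}), apply Lemma~\ref{lemma: same fraction} to each sub-walk to control $|N_H(\unord{x})\cap\{\text{visited vertices}\}|$ up to error $n^{-1/8}$ per sub-walk, and sum: with high probability, for every $(k-1)$-set $\unord{x}$, the fraction of $N_H(\unord{x})$ removed is $(1\pm o(1))\vartheta^2$, uniformly. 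Then Lemma~\ref{lemma: k-1 sets control}, applied with $\vartheta' \defeq 1-\vartheta^2 + o(1)$ in the role of its $\vartheta$ and $\eps = o(1)$, transfers this to the vertex degrees: every vertex degree in $H - V(\cP)$ is $(1\pm o(1))(1-\vartheta^2)^{k-1} d_H(v)$, and since $H$ is $\rho$-almost regular and $1/L \ll 1/a$ makes $\vartheta^2$ tiny, $o(1) + $ the spread forced by $\rho$ stays below $2\rho$ — wait, one must be a touch careful: $2\rho$ is the target but the degrees are scaled down by $(1-\vartheta^2)^{k-1}$, which is \emph{uniform}, so after rescaling the relative spread is still essentially $\rho + o(1) \le 2\rho$, giving~\ref{item: almost regularity persists}. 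Finally, for~\ref{item: probability for occurence}: by Lemma~\ref{lemma: probability for tuple}, for a single $L$-walk and any fixed ordered edge $\orde e$, $\pr[\orde e$ appears as steps $t{-}k{+}1\ldots t] = \tfrac{1}{k!\,\omega(\emptyset)}\omega(\orde e)$, and summing over the $\le L$ positions and the $k!$ orderings of $e$, and using $\omega(\emptyset) = n/k$ together with balancedness (so $\omega(e) \le 2\bar\omega$ with $\bar\omega$ the average weight, which is $\Theta(1/n^{k-1})$ up to the edge count), one gets $\pr[e\in E(P)] = O(L/n^{k-1})$ for one path; multiplying by $|\cP| = O(\vartheta^2 n/L)$ gives $\pr[e\in E(\cP)] = O(\vartheta^2/n^{k-1}) \le \vartheta/n^{k-1}$ for $\vartheta$ small, and restricting to the $O(k)$ boundary positions of a path in place of all $L$ gives the second bound $O(\vartheta^2/n^{k-1}) \le \tfrac{1}{L}\tfrac{1}{n^{k-1}}$ (using $1/L \ll 1/a$, i.e. $\vartheta^2 \ll 1/L$). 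The main obstacle, I expect, is bookkeeping the three nested concentration arguments (over windows within a block, over blocks within the walk, and the union over $V_+$) so that all the exceptional probabilities remain $\exp(-n^{\Omega(1)})$ and survive the union bounds simultaneously; the probabilistic constructions themselves are routine once the hierarchy $\rho \ll 1/L \ll 1/a \ll \nu_+,1/\ell \ll \eta,1/k$ is exploited in the right order.
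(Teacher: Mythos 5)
There is a genuine gap at the very first step of your construction: a single $(L,\omega)$-random walk of length $t_\star=\Theta(\vartheta^2 n)$ is \emph{not} self-avoiding with high probability, and Observation~\ref{observation: pairwise disjoint} does not apply to it --- that observation explicitly requires $t_\star\le 2n^{1/3}$. Since each step of the walk lands on any given vertex with probability $\Theta(1/n)$ (Lemma~\ref{lemma: probability for tuple}), the expected number of self-intersections of a walk of linear length is $\Theta(n)$, so almost surely your $\cP$ is not a collection of vertex-disjoint paths at all. Your later fix for item~\ref{item: almost regularity persists} --- splitting the walk into $\Theta(n^{2/3})$ independent sub-walks of length $\Theta(n^{1/3})$ --- does not repair this: independent sub-walks drawn from the same distribution on the same vertex set collide with each other with probability tending to $1$. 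The missing idea, which is the heart of the paper's proof, is a \emph{staged} construction: run a short walk of length $n^{1/3}$ (which is self-avoiding whp by Observation~\ref{observation: pairwise disjoint}), \emph{delete its vertices from the host graph}, verify via Lemma~\ref{lemma: same fraction} and Lemma~\ref{lemma: k-1 sets control} that the remaining graph is still almost regular and intersecting, recompute a balanced perfect fractional matching on the remaining graph via Lemma~\ref{lemma: perfect fractional matching}, and only then run the next walk. Disjointness across stages is then automatic, and the degree control for~\ref{item: almost regularity persists} is obtained by iterating the per-stage estimate $\vartheta^2 n/n^{1/3}$ times rather than by summing over overlapping sub-walks.

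A second, smaller problem is your concentration argument for~\ref{item: no bad blocks}. A single $(a,\ell)$-block has only $a(2k+\ell)=O(1)$ random coordinates, and changing one vertex of the block can change $\abs{\{x\in V_+\colon \cA^{\ell}(B)\cap\cA_x=\emptyset\}}$ by up to $\Theta(n)$, so McDiarmid's inequality gives only a constant (not exponentially small) failure probability per block; this does not survive a union bound over the $\Theta(\vartheta^2 n)$ blocks. The paper sidesteps this by aiming only for \emph{constant} success probability of each good event (Markov's inequality on the expected number of bad blocks, which is at most $\vartheta^4 n/5$, yields $\Pr[\cE_2]\ge 4/5$), intersecting the four good events to get probability at least $1/5$, and then \emph{conditioning} on this intersection; the conditioning inflates the per-edge probabilities in~\ref{item: probability for occurence} by a factor of at most $5$, which the hierarchy absorbs. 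Your sketch also omits this last point --- how the conditioning on the good events interacts with the edge-probability bounds --- which must be addressed for~\ref{item: probability for occurence} to hold for the final (conditioned) construction.
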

\begin{proof}
	The key idea of the proof is as follows. Constructing~$\cP$ by starting with~$\cP=\emptyset$ and iteratively adding suitable random walks in~$H$ to~$\cP$ yields a~$\cP$ as desired apart from~\ref{item: probability for occurence} with probability at least~$1/5$. This can be used to obtain an appropriate probability distribution on such collections~$\cP$.
	
	More precisely, for~$t_{\star}\defeq n^{1/3}$, we will construct~$\cP$ in~$s_{\star}\defeq \vartheta^2 n/t_{\star}$ \defn{stages}, where in stage~$s\in[s_{\star}]$ we potentially add the~$L$-walks generated by an~$(L,\omega^{s-1})$-random walk~$X^s=X^s_1\ldots X^s_{t_{\star}}$ in a random subgraph~$H^{s-1}$ of~$H$ for a balanced perfect fractional matching~$\omega^{s-1}$ in~$H^{s-1}$.
	To this end, for every subgraph~$S\subseteq H$ with a balanced perfect fractional matching, fix one such perfect fractional matching in~$S$ to which we refer as the perfect fractional matching \defn{assigned} to~$S$.
	We inductively define the random~$k$-graphs~$H=H^0\supseteq \ldots\supseteq H^{s_{\star}}$ and random walks~$X^1,\ldots,X^{s_{\star}}$ as follows.
	\begin{enumerate}[label=\textup{(\arabic*)}]
		\item Let~$H^0\defeq H$;
		\item for~$s\in[s_{\star}]$, define the random walk~$X^s=X^s_1\ldots X^s_{t_{\star}}$ in~$H^{s-1}$ and~$H^s\subseteq H^{s-1}$ as follows.
		\begin{enumerate}[label=\textup{(\arabic{enumi}\alph*)}]
			\item If there is a balanced perfect fractional matching in~$H^{s-1}$, let~$X^s=X^s_1\ldots X^s_{t_{\star}}$ be an~$(L,\omega^{s-1})$-random walk in~$H^{s-1}$, where~$\omega^{s-1}$ denotes the perfect fractional matching assigned to~$H^{s-1}$;
			otherwise let~$X^s\defeq X^{s-1}$;
			\item if~$X^s$ is self-avoiding, let~$H^s\defeq H^{s-1}-\{ X^s_t\colon t\in[t_{\star}] \}$;
			otherwise let~$H^s\defeq H^{s-1}$.
		\end{enumerate}
	\end{enumerate}
	Note that Lemma~\ref{lemma: perfect fractional matching} guarantees a balanced perfect fractional matching in~$H_0=H$ and that for~$s\in[s_{\star}]$, if~$X^s=X^{s-1}$, then~$H^s=H^{s-1}$ by construction of~$H^{s-1}$ even if~$X^s$ is self-avoiding.
	
	We may think of stages~$s\in[s_{\star}]$ where there is no balanced perfect fractional matching in~$H^{s-1}$ and stages~$s\in[s_{\star}]$ where~$X^s$ is not self-avoiding as \defn{failed} stages as these stages fail to generate~$L$-paths disjoint from each other and all~$L$-paths previously added to~$\cP$. While stages that fail due to the absence of a balanced perfect fractional matching in~$H^{s-1}$ are \defn{fatal} in the sense that they entail failure in all subsequent stages, we can otherwise recover from failure in the sense that subsequent stages may still be successful. We repeat previously generated paths in the case of fatal failure simply for convenience.
	
	Later we show that with probability at least~$4/5$ no fatal failure occurs; that is, there is a balanced perfect fractional matching in~$H^{s-1}$ for all~$s\in[s_{\star}]$. We also show that with probability at least~$4/5$, there are at most~$n^{1/2}$ non-fatal failures, that is, there are at most~$n^{1/2}$ stages~$s\in[s_{\star}]$ such that~$X^s$ is not self-avoiding.
	
	For~$s\in[s_{\star}]_0$, let~$V^s\defeq V(H^s)$ and let~$V\defeq V^0=V(H)$.
	We use~$p_{\star}\defeq t_{\star}/L$ to refer to the number of~$L$-walks generated in every stage. For~$s\in[s_{\star}]$ and~$p\in[p_{\star}]$, let~$P^s_p\defeq X^s_{L(p-1)+1}\ldots X^s_{Lp}$ denote the~$p$-th walk generated in stage~$s$ and let~$\cP^s\defeq \{ P^s_p\colon p\in[p_{\star}] \}$ denote the set of all walks generated in stage~$s$. Let
	\begin{gather*}
		\cP'\defeq \bigcup_{s\in[s_{\star}]} \cP^s,\qquad \cB(\cP')\defeq \bigcup_{P\in\cP'} \cB^{a,\ell}(P),\\
		\cP\defeq \bigcup_{\substack{s\in[s_{\star}]\colon \text{$X^s$ is self-avoiding}}}\cP^s\qquad\text{ and}\qquad\cB(\cP)\defeq \bigcup_{P\in\cP} \cB^{a,\ell}(P).
	\end{gather*}
	
	In accordance with~\ref{item: no bad blocks}, we say that an~$(a,\ell)$-block~$B$ is \defn{good} if there are at most~$\vartheta^4 n$ vertices~$x\in V_+$ such that~$\cA^{\ell}(B)$ does not contain an~$x$-absorber in~$H_+$ and we call~$B$ \defn{bad} otherwise.
	We define events~$\cE_1$,~$\cE_2$,~$\cE_3$, and~$\cE_4$ as follows.
	\begin{enumerate}[label=$\cE_{\arabic*}$:]
		\item For all~$x\in V_+$, there are at least~$5\vartheta^4 n$ triples~$(s,p,i)$ with~$s\in [s_{\star}]$,~$p\in[p_{\star}]$,~$i\in [L/(a(2k+\ell))]$, and~$\cA^{\ell}(B^{a,\ell}_i(P^s_p))\cap \cA_x\neq\emptyset$;
		\item there are at most~$\vartheta^4 n$ bad blocks in~$\cB(\cP')$;
		\item there are at most~$n^{1/2}$ stages~$s\in[s_{\star}]$ such that~$X^s$ is not self-avoiding;
		\item $H^{s_{\star}}$ is~$(\rho+n^{-1/12})$-almost regular.
	\end{enumerate}
	We claim the following.
	\begin{align}\label{equation: event claim}
		\begin{minipage}[c]{0.875\textwidth}\em
			If~$\cE\defeq \cE_1\cap\ldots\cap \cE_4$ occurs, then~$\cP$ with~$\cB\defeq \{ B\in \cB(\cP)\colon \text{$B$ is good} \}$ satisfies~\ref{item: number of paths}--\ref{item: no bad blocks}.
		\end{minipage}\ignorespacesafterend
	\end{align}
	To see that this is true, we argue as follows.
	First observe that since~$H-X$ is~$(\eta/2)$-intersecting for all~$X\subseteq V$ of size at most~$\eta n/2$, the random~$k$-graphs~$H^s$ with~$s\in[s_{\star}]$ are~$(\eta/2)$-intersecting. Thus if~$\cE_4$ occurs, there exists a balanced perfect fractional matching in~$H^{s_{\star}}$ by Lemma~\ref{lemma: perfect fractional matching}. This implies that there was a balanced perfect fractional matching in~$H^{s}$ for all~$s\in[s_{\star}]_0$ and thus no fatal failure occured; otherwise~$H^{s_{\star}}$ would be equal to the first such~$k$-graph without a perfect fractional matching by construction.
	
	Next note that if~$\cE_1\cap\cE_3$ occurs, the number of triples~$(s,p,i)$ with~$s\in [s_{\star}]$,~$p\in[p_{\star}]$,~$i\in [L/(a(2k+\ell))]$, and~$\cA^{\ell}(B^{a,\ell}_i(P^s_p))\cap \cA_x\neq\emptyset$ such that~$X^s$ is self-avoiding is at least~$4\vartheta^4 n$ for all~$x\in V_+$. This together with the previous observation shows that whenever~$\cE_1\cap \cE_3\cap \cE_4$ occurs we have that for all~$x\in V_+$, there are at least~$4\vartheta^4 n$ blocks~$B\in\cB(\cP)$ with~$\cA^{\ell}(B)\cap \cA_x\neq\emptyset$. If~$\cE_2$ now also occurs in addition to~$\cE_1\cap \cE_3\cap \cE_4$, we lose at most~$\vartheta^4 n$ blocks by dropping bad blocks which shows that~\ref{item: sufficiently many absorbing blocks} holds.
	
	Finally, since we only consider good blocks,~\ref{item: no bad blocks} holds,~$s_{\star}\cdot p_{\star}=\vartheta^2 n/L$ implies that~\ref{item: number of paths} holds by construction of~$\cP$, and if~$\cE_4$ occurs,~\ref{item: almost regularity persists} holds because~$H-V(\cP)=H^{s_{\star}}$.
	This proves~\eqref{equation: event claim}.
	
	Let us finish the proof assuming~$\pr[\cE]\geq 1/5$. By~\eqref{equation: event claim}, this implies that choosing~$\hat{\cP}$ from all possible realisations of~$\cP$ such that~$\pr[\hat{\cP}=\cQ]=\pr[\cP=\cQ\mid\cE]$ for all possible realisations~$\cQ$ of~$\cP$ is a probabilistic construction as desired because for all~$e\in E$ and~$i\in [L-k+1]$, Lemma~\ref{lemma: probability for tuple} entails\COMMENT{%
		Use~$\frac{\omega^{s-1}(x_1\ldots x_j)}{\omega^{s-1}(\emptyset)}\leq \frac{\frac{n^{k-j}}{(k-j)!}\max_{e\in E}\omega^{s-1}(e)}{\frac{\eta}{2}\frac{n^k}{k!}\min_{e\in E}\omega^{s-1}(e)}\leq 2\frac{\frac{n^{k-j}}{(k-j)!}}{\frac{\eta}{2}\frac{n^k}{k!}}$
	}
	\begin{align*}
		\pr[\exists (X_t)_{t\in[L]}\in \hat{\cP}\colon \{X_{i},\ldots,X_{i+k-1}\}=e]&=\pr[\exists (X_t)_{t\in[L]}\in \cP\colon \{X_{i},\ldots,X_{i+k-1}\}=e\mid\cE]\\
		&\leq 5\sum_{s\in[s_{\star}]}\sum_{(X_t)_{t\in[L]}\in\cP^s}\pr[\{X_{i},\ldots,X_{i+k-1}\}=e]\\
		&= 5\sum_{s\in[s_{\star}]}\sum_{(X_t)_{t\in[L]}\in\cP^s}k!\frac{\omega^{s-1}(e)}{k!\omega^{s-1}(\emptyset)}\\
		&\leq 5\cdot s_{\star}\cdot p_{\star}\cdot k!\cdot 2\frac{1}{\frac{\eta}{2} n^k}
		\leq \frac{\vartheta}{L}\frac{1}{n^{k-1}},
	\end{align*}
	which implies~\ref{item: probability for occurence}.
	
	It remains to prove~$\pr[\cE]\geq 1/5$. This easily follows if~$\pr[\cE_i]\geq 4/5$ for all~$i\in[4]$, which is what we prove next. For~$x\in V_+$ and~$s\in[s_{\star}]$, let~$Y_{x,s}$ denote the random variable counting the pairs~$(p,i)$ with~$p\in [p_{\star}]$,~$i\in[L/(2k+\ell)]$, and~$A^{\ell}_i(P^s_p)\in \cA_x$. Note that~$\cE_1$ occurs if~$Y_{x,s}\geq 5\vartheta^3 n/s_{\star}$ for all~$x\in V_+$ and~$s\in[s_{\star}]$.
	Observation~\ref{observation: independent} in conjunction with Observation~\ref{observation: absorber occurs} shows that~$Y_{x,s}$ stochastically dominates a binomial random variable with parameters~$p_{\star}\frac{L}{2k+\ell}$ and~$\vartheta^{1/2}$. Hence Chernoff's inequality (Lemma~\ref{lemma: chernoff}) implies that
	\begin{align*}
		\pr\biggl[Y_{x,s}<\frac{5\vartheta^3 n}{s_{\star}}\biggr]\leq \pr\biggl[Y_{x,s}\leq\frac{1}{2}\cdot\vartheta^{1/2}\cdot p_{\star} \frac{L}{2k+\ell}\biggr]\leq \exp(-n^{1/4})
	\end{align*}
	and the union bound yields~$\pr[\cE_1]\geq 4/5$.
	
	For all~$x\in V_+$ and~$B\in\cB(\cP')$, Observation~\ref{observation: absorber occurs} implies
	\begin{equation*}
		\pr[\cA^{\ell}(B)\cap\cA_x =\emptyset]\leq (1-\vartheta^{1/2})^a\leq \vartheta^5/2.
	\end{equation*}
	This entails
	\begin{equation*}
		\ex[\abs{\{x\in V_+\colon \cA^{\ell}(B)\cap\cA_x =\emptyset\}}]\leq \vartheta^5n.
	\end{equation*}
	Using Markov's inequality we obtain~$\pr[\text{$B$ is bad}]\leq\vartheta$ for all~$B\in\cB(\cP')$. This shows that the expected value of the random variable counting the number of bad blocks in~$\cB(\cP')$ is at most
	\begin{equation*}
		\vartheta\cdot s_{\star}\cdot p_{\star}\cdot \frac{L}{a(2k+\ell)}= \frac{\vartheta^4}{2k+\ell}n\leq \vartheta^4 n/5.
	\end{equation*}
	Again using Markov's inequality, this yields~$\pr[\cE_2]\geq 4/5$.
	
	For all~$s\in[s_{\star}]$, Observation~\ref{observation: pairwise disjoint} implies that~$X^s$ is self-avoiding with probability at least~$1-2n^{-1/4}$. From this, we obtain that the expected value of the random variable counting the stages~$s\in[s_{\star}]$ where~$X^s$ is not self-avoiding is at most~$2n^{-1/4}\cdot s_{\star}= 2\vartheta^2 n^{5/12}\leq n^{1/2}/5$. Using Markov's inequality we conclude that~$\pr[\cE_3]\geq 4/5$.
	
	To see that~$\pr[\cE_4]\geq 4/5$ holds, we show that all random~$k$-graphs~$H^s$ with~$s\in[s_{\star}]_0$ are almost regular with high probability.
	More precisely, for~$s\in[s_{\star}]_0$, let~$\cE^s_4$ denote the event that~$H^s$ is~$(\rho+sn^{-3/4})$-almost regular. Our goal is to show
	\begin{equation}\label{equation: almost regular up to s}
		\pr\Bigl[\bigcap_{s'=0}^{s}\cE^{s'}_4\Bigr]\geq 1-s\exp(n^{-1/15})
	\end{equation}
	for all~$s\in[s_{\star}]_0$. This suffices because~$\cE^{s_{\star}}_4\subseteq \cE_4$. We proceed by induction on~$s$. First note that~$\pr[\cE^0_4]=1$.
	Next, assume that~\eqref{equation: almost regular up to s} holds for some~$s\in[s_{\star}-1]_0$.
	Then, Lemma~\ref{lemma: perfect fractional matching} guarantees that there is no fatal failure in stage~$s+1$.
	For~$U\defeq V^s\setminus \{ X^{s+1}_t\colon t\in[t_{\star}] \}$ and~$\unord{x}\in\unordsubs{V^{s}}{k-1}$, Lemma~\ref{lemma: same fraction} entails
	\begin{equation*}
		\pr\biggl[d_{H^{s}[U\cup \unord{x}]}(\unord{x})=(1\pm n^{-31/40})\biggl(1-\frac{t_{\star}}{\abs{V^{s}}}\biggr)d_{H^{s}}(\unord{x})\biggm|\bigcap_{s'=0}^{s}\cE^{s'}_4\biggr]\geq 1-\exp(-n^{1/14}).
	\end{equation*}
	Lemma~\ref{lemma: k-1 sets control}\COMMENT{%
	If~$X^{s+1}_1\ldots X^{s+1}_{t_{\star}}$ is not self-avoiding there is nothing to show by construction of~$H^{s+1}$. Assume that~$X^{s+1}_1\ldots X^{s+1}_{t_{\star}}$ is self-avoiding. If~$d_{H^{s}[U\cup \unord{x}]}(\unord{x})=(1\pm n^{-31/40})\biggl(1-\frac{t_{\star}}{\abs{V^{s}}}\biggr)d_{H^{s}}(\unord{x})$ holds for all~$\unord{x}\in\unordsubs{V^s}{k-1}$ and~$H^s$ is~$(\rho+sn^{-3/4})$-almost~$r$-regular for some~$r$, then Lemma~\ref{lemma: k-1 sets control} implies that~$H^{s+1}$ is~$(\rho+sn^{-3/4}+16k^3n^{-31/40})$-almost~$\biggl(1-\frac{t_{\star}}{\abs{V^{s}}}\biggr)^{k-1}r$-regular. Observe that~$\rho+sn^{-3/4}+16k^3n^{-31/40}\leq \rho+(s+1)n^{-3/4}$.} and the union bound yield~$\pr[\cE^{s+1}_4\mid\bigcap_{s'=0}^{s}\cE^{s'}_4]\geq 1-\exp(-n^{1/15})$ and thus by induction hypothesis~$\pr[\bigcap_{s'=0}^{s+1}\cE^{s'}_4]\geq 1-(s+1)\exp(-n^{1/15})$.
\end{proof}

In the following Proposition~\ref{prop:randabsorption} we employ the absorbers provided by Lemma~\ref{lem: absstruct} to absorb sets of vertices~$X$ in the sense that for all suitable~$X$, we find a selection of these absorbers such that the selected absorbers can simultaneously absorb all vertices in~$X$.
As this requires matching all vertices~$x\in X$ to an~$x$-absorber, we make use of the following observation which follows easily by iteratively using Hall's theorem and removing perfect matchings.
\begin{observation}\label{observation: bijaux}
	Suppose~$G$ is a bipartite graph on~$2n$ vertices with bipartition~$\{V_1,V_2\}$ with~$\abs{V_1}=\abs{V_2}=n$.
	For~$i\in[2]$, let~$\delta_i\defeq \min\{d_G(v)\colon v\in V_i¸\}$.
	Then, there are at least~$(\delta_1+\delta_2-n)/2$ disjoint perfect matchings in~$G$.
\end{observation}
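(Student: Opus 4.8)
The plan is to reduce to the standard degree‑sum criterion for a perfect matching in a balanced bipartite graph and then strip off perfect matchings one by one. If $\delta_1+\delta_2-n\le 0$ there is nothing to prove, so assume $\delta_1+\delta_2\ge n$. First I would record the following claim: \emph{any bipartite graph $G'$ with parts $V_1,V_2$ of size $n$ with $\min_{v\in V_1}d_{G'}(v)+\min_{v\in V_2}d_{G'}(v)\ge n$ has a perfect matching.} To see this, I would check Hall's condition on the side $V_1$. Let $S\subseteq V_1$ be nonempty and write $N_{G'}(S)$ for its neighbourhood. If $\abs{S}\le\min_{v\in V_1}d_{G'}(v)$, then for any $v\in S$ we have $\abs{N_{G'}(S)}\ge d_{G'}(v)\ge\abs{S}$. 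Otherwise, if some vertex $w\in V_2\setminus N_{G'}(S)$ existed, it would have at least $\min_{v\in V_2}d_{G'}(v)$ neighbours, all lying in $V_1\setminus S$, so $\abs{S}\le n-\min_{v\in V_2}d_{G'}(v)<\min_{v\in V_1}d_{G'}(v)<\abs{S}$, a contradiction; hence $N_{G'}(S)=V_2$ and $\abs{N_{G'}(S)}=n\ge\abs{S}$. Thus $G'$ satisfies Hall's condition and has a perfect matching.

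Next I would iterate. A perfect matching meets every vertex exactly once, so deleting one from a graph lowers every vertex degree by exactly $1$, and hence lowers the minimum degree over $V_1$ and over $V_2$ by exactly $1$. Put $G_0\defeq G$, and having constructed $G_t$, let $M_{t+1}$ be a perfect matching of $G_t$ and set $G_{t+1}\defeq G_t-M_{t+1}$. Then $\min_{v\in V_1}d_{G_t}(v)=\delta_1-t$ and $\min_{v\in V_2}d_{G_t}(v)=\delta_2-t$, so by the claim $G_t$ indeed contains a perfect matching as long as $(\delta_1-t)+(\delta_2-t)\ge n$, i.e.\ as long as $t\le(\delta_1+\delta_2-n)/2$. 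The matchings $M_1,M_2,\dots$ produced in this way are pairwise edge‑disjoint by construction, and the process runs for $t=0,1,\dots,\floor{(\delta_1+\delta_2-n)/2}$, yielding $\floor{(\delta_1+\delta_2-n)/2}+1\ge(\delta_1+\delta_2-n)/2$ disjoint perfect matchings, as required.

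I do not expect a genuine obstacle here; the only points needing a little care are the off‑by‑one bookkeeping in the final count and the fact that the hypothesis $\min_{v\in V_1}d_{G_t}(v)+\min_{v\in V_2}d_{G_t}(v)\ge n$ automatically excludes an isolated vertex on either side — an isolated vertex in, say, $V_1$ would force every vertex of $V_2$ to be adjacent to all of $V_1$, contradicting the existence of that isolated vertex — so the Hall argument legitimately applies at every step.
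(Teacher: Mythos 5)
Your proof is correct and follows exactly the route the paper indicates (the paper states the observation without a written proof, remarking only that it ``follows easily by iteratively using Hall's theorem and removing perfect matchings''). The only cosmetic slip is the strict inequality $n-\min_{v\in V_2}d_{G'}(v)<\min_{v\in V_1}d_{G'}(v)$ in your Hall's-condition check, which should be ``$\leq$''; the contradiction $\abs{S}<\abs{S}$ survives unchanged.
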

\begin{prop}\label{prop:randabsorption}
	Let~$1/n\ll \rho \ll 1/L\ll \vartheta\ll \nu_+ \ll \eta,1/k$.
	Suppose~$H_+$ is an~$\eta$-intersecting~$k$-graph on at most~$(1+\nu_+)n$ vertices with vertex set~$V_+$ and edge set~$E_+$ and~$H$ is a~$\rho$-almost regular induced subgraph of~$H_+$ on~$n$ vertices.
	Then, there is a probabilistic construction of a collection~$\cP$ of~$L$-paths in~$H$ together with a function~$\sigma\colon \cP\rightarrow[L]_0$ such that the following holds.
	\begin{enumerate}[label=\textup{(\roman*)}]
		\item\label{item: random absorption number of paths} $\abs{\cP}\leq \vartheta^2 n/L$;
		\item\label{item: random absorption capacity}$c\defeq \sum_{P\in\cP} \sigma(P)\geq \vartheta^4n$;
		\item\label{item: random absorption regularity} $H- V(\cP)$ is~$2\rho$-almost regular;
		\item\label{item: random absorption absorption} for all probabilistic constructions of a set~$X\subseteq V_+$ with~$X\cap V(\cP)=\emptyset$ and~$\abs{X}=c$, there is a probabilistic construction of a collection~$\cP'$ of paths in~$H_+$ such that the following holds.
		\begin{enumerate}[label=\textup{(\roman{enumi}.\roman*)}]
			\item\label{item: random absorption absorption structure} There is a bijection~$\varphi:\cP\to\cP'$ such that for all~$P\in\cP$, the path~$\varphi(P)$ is an~$(L+\sigma(P))$-path with~$V(\varphi(P))\subseteq V(P)\cup X$ that has the same ordered end-edges as~$P$;
			\item\label{item: random absorption absorption probabilities 1} for all~$e\in E_+$, we have
			\begin{equation*}
				\pr[e\in E(\cP')]\leq \frac{1}{\vartheta^4 n^{k-1}}\qand\pr[e\in E(\boundary{\cP'})]\leq \frac{1}{Ln^{k-1}}.
			\end{equation*}
		\end{enumerate}
	\end{enumerate}
\end{prop}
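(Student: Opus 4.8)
The plan is to assemble the absorber blocks supplied by Lemma~\ref{lem: absstruct} into a single global absorbing gadget. First I would introduce auxiliary constants: set $a\defeq\lceil 1/\vartheta\rceil$, fix $\ell$ to be a sufficiently large constant depending only on $\eta$ and $k$, and apply Lemma~\ref{lem: absstruct} with $\rho,L,a,\nu_+,\ell,\eta,k$ and $H_+,H$ in the natural roles; writing $\vartheta'\defeq 1/a$ for the parameter called $\vartheta$ there, the hierarchy of the present statement gives $1/n\ll\rho\ll1/L\ll1/a\ll\nu_+,1/\ell\ll\eta,1/k$, while for $\vartheta$ small one has $(\vartheta')^2\le\vartheta^2$ and $3(\vartheta')^4\ge\vartheta^4$. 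The lemma yields a probabilistic construction of a collection $\cP$ of $L$-paths in $H$ together with a set $\cB\subseteq\bigcup_{P\in\cP}\cB^{a,\ell}(P)$ of blocks. I then set $\sigma(P)\defeq\abs{\cB\cap\cB^{a,\ell}(P)}$ for $P\in\cP$ and $c\defeq\sum_{P\in\cP}\sigma(P)=\abs{\cB}$; since $\sigma(P)\le\abs{\cB^{a,\ell}(P)}<L$, $\sigma$ maps into $[L]_0$. Then~\ref{item: random absorption number of paths} and~\ref{item: random absorption regularity} are immediate from Lemma~\ref{lem: absstruct}\ref{item: number of paths} and~\ref{item: almost regularity persists}, and~\ref{item: random absorption capacity} follows from Lemma~\ref{lem: absstruct}\ref{item: sufficiently many absorbing blocks}, which gives $\abs{\cB}\ge3(\vartheta')^4n\ge\vartheta^4n$.

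For~\ref{item: random absorption absorption}, fix any probabilistic construction of $X\subseteq V_+\setminus V(\cP)$ with $\abs{X}=c=\abs{\cB}$, and form the bipartite graph $G$ with classes $X$ and $\cB$ in which $x$ and $B$ are adjacent precisely when $\cA^{\ell}(B)\cap\cA_x\ne\emptyset$. By Lemma~\ref{lem: absstruct}\ref{item: sufficiently many absorbing blocks} each $x\in X$ has $G$-degree at least $3(\vartheta')^4n$, and by Lemma~\ref{lem: absstruct}\ref{item: no bad blocks} each $B\in\cB$ has $G$-degree at least $c-(\vartheta')^4n$; since both classes have size $c$, Observation~\ref{observation: bijaux} produces $M\ge(\vartheta')^4n$ edge-disjoint perfect matchings in $G$. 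The construction of $\cP'$ now chooses one of these matchings $\phi\colon X\to\cB$ uniformly at random (using fresh randomness), and for each $x\in X$ picks an index $i$ with $A^{\ell}_i(\phi(x))\in\cA_x$ and replaces the segment $a_1\ldots a_{2k}\defeq A^{\ell}_i(\phi(x))$ of the host path of $\phi(x)$ by $a_1\ldots a_k\,x\,a_{k+1}\ldots a_{2k}$. Distinct blocks of $\cB$ are vertex-disjoint and $X\cap V(\cP)=\emptyset$, so this yields a collection $\cP'$ of vertex-disjoint paths in $H_+$ and a bijection $\varphi\colon\cP\to\cP'$. Each absorption inserts one vertex of $X$ between positions $k$ and $L-k$ of its host path, so the first $k$ and last $k$ vertices of $\varphi(P)$ agree with those of $P$; hence $\varphi(P)$ is an $(L+\sigma(P))$-path with $V(\varphi(P))\subseteq V(P)\cup X$ and the same ordered end-edges as $P$, which establishes~\ref{item: random absorption absorption structure}.

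It remains to prove the probability bounds~\ref{item: random absorption absorption probabilities 1}, which is the technical heart. Since the end-edges are untouched and $\varphi(P)$ is long, $E(\boundary{\varphi(P)})=E(\boundary{P})$ for every $P$, so $E(\boundary{\cP'})=E(\boundary{\cP})$ and the second bound is exactly Lemma~\ref{lem: absstruct}\ref{item: probability for occurence}. For the first bound, fix $e\in E_+$. If $e\subseteq V(\cP)$, then $e\in E(\cP')$ forces $e\in E(\cP)$ (the edges of $\cP'$ inside $V(\cP)$ are precisely the edges of $\cP$ not straddling an insertion point), contributing at most $\pr[e\in E(\cP)]\le\vartheta'/n^{k-1}$. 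Otherwise $e$ is newly created, so $\abs{e\cap X}=1$, say $e\cap X=\{v\}$, and $e\setminus\{v\}$ is forced to be the set of $k-1$ consecutive vertices around the centre of the absorber slot into which $v$ was absorbed. Here is the key point: for a fixed realisation of $\cP$ there is at most one absorber slot of $\cP$ in which $e\setminus\{v\}$ can sit this way --- the slots are vertex-disjoint and, inside a path, begin at positions more than $k$ apart --- and $e$ then appears only if $v$ is matched to the block containing that slot, an event of conditional probability at most $1/M\le(\vartheta')^{-4}n^{-1}$ over the random matching. Furthermore the event that the fixed $(k-1)$-set $e\setminus\{v\}$ occurs at all as $k-1$ consecutive vertices of some path of $\cP$ implies $(e\setminus\{v\})\cup\{w\}\in E(\cP)$ for some $w\in V(H)$, hence has $\pr_{\cP}$-probability at most $n\cdot\vartheta'/n^{k-1}=\vartheta'/n^{k-2}$ by Lemma~\ref{lem: absstruct}\ref{item: probability for occurence}. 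Summing the second case over the $\le k$ candidates $v\in e$ yields $\pr[e\in E(\cP')]\le\vartheta'/n^{k-1}+k\cdot(\vartheta')^{-4}n^{-1}\cdot\vartheta'/n^{k-2}=\vartheta'/n^{k-1}+k(\vartheta')^{-3}/n^{k-1}\le\vartheta^{-4}/n^{k-1}$, using $\vartheta'\le\vartheta$ and $\vartheta\ll1/k$. Crucially the final bound depends only on the law of $\cP$, so it holds for every admissible distribution of $X$. The main obstacle is exactly this last step: the bound must survive an adversarial $X$, and it is the extra randomness of the matching $\phi$, together with the uniformity engineered into Lemma~\ref{lem: absstruct}, that forces the ``new-edge'' probabilities down.
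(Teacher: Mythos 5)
Your proposal is correct and follows essentially the same route as the paper: apply Lemma~\ref{lem: absstruct} with $a\approx 1/\vartheta$, set $\sigma(P)=\abs{\cB\cap\cB^{a,\ell}(P)}$, build the auxiliary bipartite graph between $X$ and $\cB$, extract $\geq\vartheta^4 n$ edge-disjoint perfect matchings via Observation~\ref{observation: bijaux}, choose one uniformly at random, and bound $\pr[e\in E(\cP')]$ by splitting into old edges and new edges containing one vertex of $X$, using vertex-disjointness of the blocks to get the $1/(\vartheta^4 n)$ matching probability. The only deviations (ceiling on $a$, choosing the absorber slot at absorption time rather than fixing $A_x(B)$ in advance) are cosmetic and do not affect the argument.
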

\begin{proof}
	For the probabilistic construction of the collection~$\cP$ together with a function~$\sigma$, we will employ Lemma~\ref{lem: absstruct}. Then, for the probabilistic construction of~$\cP'$ we will randomly absorb all vertices~$x\in X$ into the paths in~$\cP$; that is, we will randomly place every vertex~$x\in X$ in the center of an~$x$-absorber in~$H_+$ that is a subgraph of a path in~$\cP$.
	
	In more detail, we argue as follows.
	Let~$E\defeq E(H)$.
	For~$x\in V_+$, let~$\cA_x$ denote the set of~$x$-absorbers in~$H_+$.
	Choose~$\ell$ such that~$\vartheta\ll \nu_+,1/\ell \ll \eta,1/k$.
	Let~$a\defeq 1/\vartheta$.
	Lemma~\ref{lem: absstruct} provides a probabilistic construction of a collection~$\cP$ of~$L$-paths in~$H$ together with a set~$\cB\subseteq \bigcup_{P\in\cP} \cB^{a,\ell}(P)$ such that the following holds.
	\begin{enumerate}[label=\textup{(\arabic*)}]
		\item\label{item: app number of paths} $\abs{\cP}\leq \vartheta^2 n/L$;
		\item\label{item: app almost regularity persists} $H-V(\cP)$ is~$2\rho$-almost regular;
		\item\label{item: app sufficiently many absorbing blocks} $\abs{\{ B\in\cB\colon \cA^{\ell}(B)\cap \cA_x\neq\emptyset \}}\geq 3\vartheta^4n$ for all~$x\in V_+$;
		\item\label{item: app no bad blocks} $\abs{\{ x\in V_+\colon \cA^{\ell}(B)\cap\cA_x =\emptyset \}}\leq \vartheta^4n$ for all~$B\in\cB$;
		\item\label{item: app probability for occurence} $\pr[e\in E(\cP)]\leq \vartheta\frac{1}{n^{k-1}}$ and~$\pr[e\in E(\boundary{P})]\leq \frac{1}{L}\frac{1}{n^{k-1}}$ for all~$e\in E$.
	\end{enumerate}
	Let~$\sigma\colon \cP\rightarrow [L]_0$ with~$\sigma(P)=\abs{\cB\cap \cB^{a,\ell}(P)}$ for all~$P\in\cP$ and let~$c\defeq \abs{\cB}$. Then~\ref{item: random absorption number of paths}--\ref{item: random absorption regularity} hold.
	
	For all~$(a,\ell)$-blocks~$B$ in~$H$ and all~$x\in V_+$ with~$\cA_x\cap \cA^{\ell}(B)\neq\emptyset$, we fix one~$x$-absorber~$A_x(B)\in \cA^{\ell}(B)$ in~$H_+$ for later use.
	For~$\cP$ as above and a probabilistic construction of a set~$X\subseteq V_+$ with~$X\cap V(\cP)=\emptyset$ and~$\abs{X}=c$, we obtain~$\cP'$ through random absorption of all elements of~$X$ into the paths in~$\cP$ as follows.
	Consider the auxiliary bipartite graph~$G$ with bipartition~$\{X,\cB\}$ and an edge between~$x\in X$ and~$B\in \cB$ if and only if~$\cA_x\cap \cA^{\ell}(B)\neq\emptyset$. Intuitively, edges in this graph represent possible absorptions of elements of~$X$ into the adjacent blocks.
	Thus, we may think of perfect matchings in~$G$ as representations of valid stategies for the absorption of all elements of~$X$.
	Due to~\ref{item: app sufficiently many absorbing blocks} and~\ref{item: app no bad blocks}, we have~$d_G(x)\geq 3\vartheta^4n$ for all~$x\in X$ and~$d_G(B)\geq c-\vartheta^4n$ for all~$B\in\cB$.
	Therefore, Observation~\ref{observation: bijaux} guarantees the existence of a set of~$\vartheta^4 n$ edge-disjoint perfect matchings in~$G$.
	Pick one matching uniformly at random from this set and interpret this matching as a bijection~$\mu\colon X\rightarrow\cB$.
	Let~$\varphi$ denote the bijection defined on~$\cP$ that maps~$P\in\cP$ to the path obtained from~$P$ when placing~$\mu^{-1}(B)$ in the center of~$A_{\mu^{-1}(B)}(B)$ for all~$B\in\cB\cap\cB^{a,\ell}(P)$. Let~$\cP'\defeq \Ima(\varphi)$.
	
	It remains to check that this defines a probabilistic construction of a collection~$\cP'$ of paths satisfying~\ref{item: random absorption absorption probabilities 1}.
 	For all~$\unord{x}\in \unordsubs{V_+}{k-1}$ and all possible realisations~$\cQ$ of~$\cP$, there is at most one block~$B\in \bigcup_{P\in \cQ}\cB^{a,\ell}(P)$ with~$d_B(\unord{x})\geq 1$.
	Hence, for all~$x\in V_+$ and~$\unord{x}\in \unordsubs{V_+}{k-1}$, we have
	\begin{equation}\label{equation: probability matching x to rest}
		\pr[x\in X\wedge d_{\mu(x)}(\unord{x})\geq 1\mid \cP,X]\leq \frac{1}{\vartheta^4 n}
	\end{equation}
	by construction of~$\mu$.
	Thus, for all~$e\in E_+$,~\ref{item: app probability for occurence} yields
	\begin{align*}
		\pr[e\in E(\cP')]&\leq\pr[\unord{x}\cap X=\emptyset\wedge e\in E(\cP)]
		+\sum_{x\in e}\pr[x\in X\wedge  d_{\mu(x)}(e\setminus\{x\})\geq 1]\\
		&\overset{\mathclap{\ref{item: app probability for occurence}}}{\leq}\frac{\vartheta}{n^{k-1}}+\sum_{x\in e}\pr[x\in X\wedge d_{\mu(x)}(e\setminus\{x\})\geq 1\wedge d_{\cP}(e\setminus\{x\})\geq 1]\\
		&\overset{\mathclap{\eqref{equation: probability matching x to rest}}}{\leq}\frac{\vartheta}{n^{k-1}}+\sum_{x\in e}\frac{1}{\vartheta^4n}\pr[d_{\cP}(e\setminus\{x\})\geq 1]\\
		&\leq \frac{\vartheta}{n^{k-1}}+\sum_{x\in e}\frac{1}{\vartheta^4n}\sum_{v\in V_+}\pr[(e\setminus\{x\})\cup\{v\}\in E(\cP)]\\
		&\overset{\mathclap{\ref{item: app probability for occurence}}}{\leq} \frac{\vartheta}{n^{k-1}}+j\cdot \frac{1}{\vartheta^4 n}\cdot (1+\nu_+)n\cdot\frac{\vartheta}{n^{k-1}}
		\leq \frac{1}{\vartheta^4 n^{k-1}}.
	\end{align*}
	Furthermore, we obtain
	\begin{equation*}
		\pr[e\in E(\boundary{\cP'})]=\pr[e\in E(\boundary{\cP})]\leq \frac{1}{L}\frac{1}{n^{k-1}},
	\end{equation*}
	which completes the proof.
\end{proof}

\section{From paths to cycles}\label{sec:approxtoreal}
In this section, we perform the step from the yield of Proposition~\ref{prop: simultaneous path cover} to the decomposition into cycle factors as in our main theorem.
We do this by describing a random process which converts one almost spanning path collection into a cycle factor and subsequently using another random process which repeatedly applies the first one to transform path collection after path collection.
The first step is done in Lemma~\ref{lem:layer} and the second in Proposition~\ref{prop:cycdecomptoHCdecomp}.

We begin with the following somewhat standard lemma, which states that in any~$\eta$-intersecting~$k$-graph, there is a small (reservoir) set such that between any two ordered edges, there are many short paths with all ``inner'' vertices in this set.
Complementing the notation~$\boundary{P}$, for a path~$P=v_1,\dots,v_{\ell}$, we define the \emph{interior} of~$P$ as (the subpath)~$P^{\circ}=v_{k+1},\dots,v_{\ell-k}$ if~$\ell\geq 2k+1$ and~\defn{$P^{\circ}\defeq P[\emptyset]$} otherwise.
The vertex set of~$P^{\circ}$ is the set of \defn{inner vertices} of~$P$.
For a collection~$\cP$ of paths, we define~$\cP^{\circ}=\bigcup_{P\in\cP}P^{\circ}$.
\begin{lemma}\label{lem:reservoir}
	Suppose~$1/n \ll \beta,\rho \ll1/\ell_0, 1/\ell_1\ll \eta, 1/k$, where~$\ell_0\leq\ell_1$.
	Suppose~$H$ is an~$\eta$-intersecting~$\rho$-almost regular~$k$-graph on~$n$ vertices with vertex set~$V$ and edge set~$E$.
	Then there is a set~$R\subseteq V$ such that the following holds.
	\begin{enumerate}[label=\textup{(\roman*)}]
		\item\label{item: reservoir size} $\beta n/2 \leq\abs{R}\leq \beta n$;
		\item\label{item: reservoir connectivity}
		for all~$\orde{s},\orde{t}\in\ordedges$ with~$s\cap t=\emptyset$ and all integers~$\ell\in[\ell_0,\ell_1]$, the number of~$\orde{s}$--$\orde{t}$-paths~$P$ in~$H$
		with~$\vert V(P^{\circ})\vert=\ell$ and~$V(P^{\circ})\subseteq R$ is at least~$\beta \vert R\vert ^{\ell}$;
		\item\label{item: reservoir regularity} $H-R$ is~$2\rho$-almost regular.
	\end{enumerate}
\end{lemma}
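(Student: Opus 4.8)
The plan is to take $R$ to be a random subset of $V$, including each vertex independently with probability $p\defeq 3\beta/4$, and to show that \ref{item: reservoir size}--\ref{item: reservoir regularity} hold simultaneously with positive probability. Property \ref{item: reservoir size} is immediate from Chernoff's inequality (Lemma~\ref{lemma: chernoff}): since $\abs{R}$ is a sum of $n$ independent Bernoulli variables with $\ex[\abs{R}]=3\beta n/4$, we have $\abs{R}=(1\pm 1/3)\ex[\abs{R}]\subseteq[\beta n/2,\beta n]$ with probability $1-e^{-\Omega(n)}$.

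For \ref{item: reservoir connectivity}, I would fix $\orde s,\orde t\in\ordedges$ with $s\cap t=\emptyset$ and an integer $\ell\in[\ell_0,\ell_1]$. The number of $\orde s$--$\orde t$-paths $P$ in $H$ with $\abs{V(P^{\circ})}=\ell$ and $V(P^{\circ})\subseteq R$ is at least the number of self-avoiding $(2k+\ell)$-walks $w_1\ldots w_{2k+\ell}$ in $H$ with $(w_1,\ldots,w_k)=\orde s$, $(w_{k+\ell+1},\ldots,w_{2k+\ell})=\orde t$ and $w_{k+1},\ldots,w_{k+\ell}\in R$, since distinct such walks give rise to distinct paths of the required form (with inner vertices $w_{k+1},\ldots,w_{k+\ell}$). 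By Lemma~\ref{lemma: connecting}, applied with $2k+\ell$ in place of $\ell$ and a common constant $\alpha=\alpha(\ell_1,k)>0$ valid for all $\ell\in[\ell_0,\ell_1]$, the number of internally self-avoiding walks $w_1\ldots w_{2k+\ell}$ in $H$ from $\orde s$ to $\orde t$ is at least $\alpha n^{\ell}$; and since $s\cap t=\emptyset$ forces the first $k$ and last $k$ vertices of such a walk to be disjoint, every internally self-avoiding such walk is in fact self-avoiding. Hence the expected number of the walks counted above is at least $\alpha(pn)^{\ell}$, and since changing the membership of a single vertex in $R$ changes this number by at most $\ell n^{\ell-1}$ (the number of walks of the above shape through a fixed inner position), McDiarmid's inequality (Lemma~\ref{lemma: McDiarmid}) shows that it is at least $\tfrac12\alpha(pn)^{\ell}=\tfrac12\alpha(3/4)^{\ell}(\beta n)^{\ell}$ with probability $1-e^{-\Omega(n)}$. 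As $\beta$ is sufficiently small, so that $\tfrac12\alpha(3/4)^{\ell_1}\geq\beta$, this is at least $\beta(\beta n)^{\ell}\geq\beta\abs{R}^{\ell}$ whenever $\abs{R}\leq\beta n$; a union bound over the at most $n^{2k}\ell_1$ triples $(\orde s,\orde t,\ell)$ then yields \ref{item: reservoir connectivity} on an event of probability $1-o(1)$.

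For \ref{item: reservoir regularity}, let $U\defeq V\setminus R$ and $\vartheta\defeq 1-p$, and note $H-R=H[U]$. For every $\unord x\in\binom{V}{k-1}$ we have $d_{H[U\cup\unord x]}(\unord x)=d_H(\unord x)-\abs{N_H(\unord x)\cap R}$, where $\abs{N_H(\unord x)\cap R}$ is a sum of $d_H(\unord x)=\abs{N_H(\unord x)}\geq\eta n$ independent Bernoulli($p$) variables (using that $H$ is $\eta$-intersecting). Hence, with $\eps\defeq\min\{\rho,\beta\}/(16k^3)$, Chernoff's inequality and a union bound over the at most $n^{k-1}$ sets $\unord x$ give that with probability $1-e^{-\Omega(n)}$ we have $d_{H[U\cup\unord x]}(\unord x)=(1\pm\eps)\vartheta d_H(\unord x)$ for all $\unord x\in\binom{V}{k-1}$. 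Since $\eps\leq\frac{p}{8k^2}=\frac{1-\vartheta}{8k^2}$, Lemma~\ref{lemma: k-1 sets control} then gives $d_{H[U\cup\{v\}]}(v)=(1\pm 8k^3\eps)\vartheta^{k-1}d_H(v)$ for all $v\in V$; combined with $d_H(v)=(1\pm\rho)r$ and $8k^3\eps\leq\rho/2$, this yields $d_{H-R}(v)\in(1\pm 2\rho)\vartheta^{k-1}r$ for all $v\in U$, so $H-R$ is $2\rho$-almost $\vartheta^{k-1}r$-regular.

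Intersecting the three events above, each of probability $1-o(1)$, produces a set $R$ with all the required properties. The only point requiring care is the choice of the auxiliary parameters: $p$ must be a constant multiple of $\beta$ (to place $\abs{R}$ in the prescribed window while still guaranteeing $\Omega((\beta n)^{\ell})$ suitable paths through $R$), and $\eps$ must remain below $\frac{1-\vartheta}{8k^2}$ yet be small enough that the error $8k^3\eps$ produced by Lemma~\ref{lemma: k-1 sets control} is absorbed into the $2\rho$ regularity slack, which is why $\eps$ is taken proportional to $\min\{\rho,\beta\}$; everything else is a routine application of the two concentration inequalities and of Lemmas~\ref{lemma: connecting} and~\ref{lemma: k-1 sets control}.
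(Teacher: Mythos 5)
Your proposal is correct and follows exactly the route the paper indicates (the paper only sketches this proof, stating that it follows from Lemma~\ref{lemma: connecting} via a random set with inclusion probability $3\beta/4$, Chernoff, McDiarmid, and a union bound); you have simply supplied the omitted standard calculations, and the details — including the reduction of internally self-avoiding walks to self-avoiding ones via $s\cap t=\emptyset$, the Lipschitz bound $\ell n^{\ell-1}$ for McDiarmid, and the transfer from $(k-1)$-degrees to vertex degrees via Lemma~\ref{lemma: k-1 sets control} — all check out.
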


This follows easily from Lemma~\ref{lemma: connecting} by considering a random set of vertices in which each vertex is included independently at random (for instance, with probability~$3\beta/4$) and using Chernoff's inequality, McDiarmid's inequality, and the union bound to show that the random set has the desired properties with high probability.
We omit the calculations, which are standard by now.

The next lemma ensures that under the right conditions, many tuples can be connected in a probabilistically well behaved way.
It will later be useful when building the cycle factor in Lemma~\ref{lem:layer}.
\begin{lemma}\label{lem:probconn}
	Suppose~$1/n\ll\zeta\ll\beta,1/\ell_1,1/\ell_0\ll1/k$, where~$\ell_0\leq\ell_1$. 
	Suppose~$H$ is a~$k$-graph on~$n$ vertices with vertex set~$V$ and edge set~$E$.
	Suppose that~$\mathcal{Q}=\{\orde{s}_1,\orde{t}_1,\dots,\orde{s}_{m},\orde{t}_m\}\subseteq \ordedges$ is a random set with~$m\leq \zeta n$,~$e\cap e'=\emptyset$ for all distinct~$\orde{e},\orde{e}'\in\cQ$, and~$\mathds{P}[\orde{e}\in\cQ]\leq\frac{\zeta}{n^{k-1}}$ for every~$\orde{e}\in\ordedges$.
	Further, for all~$i\in[m]$, let~$\lambda_i\in[\ell_0,\ell_1]$ and suppose that~$\cP_{i}$ is a set of at least~$\beta n^{\lambda_i}$~$\orde{s}_i$--$\orde{t}_i$-paths with~$\lambda_i$ inner vertices.
	Then, there is a probabilistic construction of a collection~$\cW\subseteq\bigcup_{i\in[m]}\cP_i$ of paths with~$\vert\cW\cap\cP_i\vert=1$ for all~$i\in[m]$ and~$\PP[e\in E(\cW)]\leq\frac{\zeta^{1/2}}{n^{k-1}}$ for every~$e\in E$.
\end{lemma}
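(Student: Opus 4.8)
The plan is to construct $\cW$ by a random greedy connection process. We process the indices $i=1,\dots,m$ in order; having already chosen $W_1,\dots,W_{i-1}$, we let $F_i\defeq\bigcup_{j\in[m]\setminus\{i\}}(\unord{s}_j\cup\unord{t}_j)\cup\bigcup_{j<i}V(W_j^{\circ})$ be the set of vertices that $W_i$ is required to avoid internally, we set $\cP_i'\defeq\{P\in\cP_i\colon V(P^{\circ})\cap F_i=\emptyset\}$, and we choose $W_i$ uniformly at random from $\cP_i'$ (independently of everything else given the history). Since the condition $e\cap e'=\emptyset$ for distinct $\orde{e},\orde{e}'\in\cQ$ forces the $2m$ sets $\unord{s}_1,\unord{t}_1,\dots,\unord{s}_m,\unord{t}_m$ to be pairwise disjoint, and the inner vertices of any member of $\cP_i$ are disjoint from $\unord{s}_i\cup\unord{t}_i$, the paths $W_1,\dots,W_m$ are pairwise vertex-disjoint, so $\cW\defeq\{W_1,\dots,W_m\}$ is a collection of paths with $\abs{\cW\cap\cP_i}=1$ for all $i\in[m]$.

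First I would check that the process never gets stuck; in fact $\abs{\cP_i'}\geq\beta n^{\lambda_i}/2$ throughout. A fixed vertex lies on at most $\lambda_i n^{\lambda_i-1}$ of the $\orde{s}_i$--$\orde{t}_i$-paths with $\lambda_i$ inner vertices (choose its inner position, then the remaining inner vertices), and $\abs{F_i}\leq 2km+m\ell_1\leq(2k+\ell_1)\zeta n$, so at most $(2k+\ell_1)\ell_1\zeta n^{\lambda_i}\leq\tfrac12\beta n^{\lambda_i}$ members of $\cP_i$ meet $F_i$, using $\zeta\ll\beta,1/\ell_0,1/\ell_1,1/k$ and $\lambda_i\leq\ell_1$.

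The substantive step is the bound $\PP[e\in E(\cW)]\leq\zeta^{1/2}n^{-(k-1)}$ for a fixed $e\in E$. Write $\PP[e\in E(\cW)]\leq\sum_{i\in[m]}\PP[e\in E(W_i)]$. If $e\in E(W_i)$ then $e$ is a window of $k$ consecutive vertices in the vertex sequence $\orde{s}_i\,y^{(i)}_1\cdots y^{(i)}_{\lambda_i}\,\orde{t}_i$ of $W_i$; since $\lambda_i\geq\ell_0>k$, such a window either (a) lies entirely among the inner vertices, or (b) meets exactly one of $\unord{s}_i,\unord{t}_i$, in at most $k-1$ vertices. Conditioning on $\cQ$, on $\cP_1,\dots,\cP_m$, and on $W_1,\dots,W_{i-1}$, and using $\abs{\cP_i'}\geq\tfrac12\beta n^{\lambda_i}$: in case (a), each of the at most $\lambda_i$ positions of an inner window equals $e$ with conditional probability at most $k!\,n^{\lambda_i-k}/(\tfrac12\beta n^{\lambda_i})=2k!/(\beta n^k)$, so $\sum_i\PP[W_i\text{ has an inner window equal to }e]\leq m\ell_1\cdot 2k!/(\beta n^k)\leq 2\ell_1 k!\,\zeta/(\beta n^{k-1})$. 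For a case-(b) window at position $j\geq2$ meeting $\unord{s}_i$, the event forces $\orde{s}_i$ to be one of at most $\binom{k}{k-j+1}(k-j+1)!\,n^{j-1}$ ordered edges (choose which $(k-j+1)$-subset of $e$ appears as its last coordinates, order it, and pick the first $j-1$ coordinates), and then forces the first $j-1$ inner vertices of $W_i$ to be the remaining $j-1$ vertices of $e$, which has conditional probability at most $2(j-1)!/(\beta n^{j-1})$; the case $j=1$ just forces $\unord{s}_i=e$. The key point is that, summing over $i$ and then over the relevant ordered edges $\orde{e}$, one has $\sum_{i\in[m]}\PP[\orde{s}_i=\orde{e}]\leq\PP[\orde{e}\in\cQ]\leq\zeta n^{-(k-1)}$, because the events $\{\orde{s}_i=\orde{e}\}$, $i\in[m]$, are disjoint. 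Since $\binom{k}{k-j+1}(k-j+1)!\,(j-1)!=k!$, summing over $j\in[k]$ gives $\sum_i\PP[W_i\text{ has a window at }\unord{s}_i\text{ equal to }e]\leq 2k\cdot k!\,\zeta/(\beta n^{k-1})$, and the same holds for $\unord{t}_i$. Adding everything, $\PP[e\in E(\cW)]\leq 6\ell_1 k!\,\zeta/(\beta n^{k-1})\leq\zeta^{1/2}/n^{k-1}$, the last inequality because $\zeta\ll\beta,1/\ell_1,1/k$ forces $\zeta^{1/2}\leq\beta/(6\ell_1 k!)$.

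The main obstacle is precisely this last estimate for the windows incident to the endpoint edges $\orde{s}_i,\orde{t}_i$: a bare union bound $\sum_i\PP[e\in E(W_i)]$ that uses the hypothesis $\PP[\orde{e}\in\cQ]\leq\zeta n^{-(k-1)}$ for each $i$ separately loses a factor of $m=\Theta(\zeta n)$, yielding only a bound of order $\zeta^2 n^{-(k-2)}$, which is far too weak. The remedy is to interchange the order of summation so that each ordered edge $\orde{e}$ is charged $\PP[\orde{e}\in\cQ]$ only once, exploiting that $\orde{e}$ is the $i$-th starting (resp.\ ending) edge for at most one $i$; everything else reduces to counting paths through a fixed set of at most $(2k+\ell_1)\zeta n$ vertices.
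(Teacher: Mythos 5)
Your proposal is correct and follows essentially the same route as the paper: an iterative random greedy choice of $W_i$ uniformly from the candidates in $\cP_i$ avoiding all previously used vertices and all of $\cQ$, followed by a union bound in which the decisive step is to charge each ordered edge its probability $\PP[\orde{e}\in\cQ]$ only once, using that the elements of $\cQ$ are pairwise disjoint so the events $\{\orde{s}_i=\orde{e}\}$ (resp.\ $\{\unord{y}\subseteq e'\}$ for $\orde{e}'\in\cQ$) are mutually exclusive. The only cosmetic difference is that you organise the endpoint contribution by the position of the $k$-window relative to $\orde{s}_i$, whereas the paper sums over the subsets $\unord{y}\subseteq e$ landing in $s_i$ or $t_i$; the resulting estimates are the same.
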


\begin{proof}
	We aim to connect the ordered edges~$\orde{s}_i$ and~$\orde{t}_{i}$ by choosing one of the paths in~$\cP_i$ uniformly at random.
	However, since the paths shall be vertex-disjoint, we employ an iterative procedure where in each step we only consider those paths in~$\cP_i$ which are vertex-disjoint from all previously chosen ones and all ordered edges in~$\cQ$.
	
	Suppose we have already chosen paths~$W_1,\dots,W_{i-1}$ for some~$i\in[m-1]$, where~$W_j$ is
	an~$\orde{s}_{j}$--$\orde{t}_{j}$-path with~$\lambda_j$ inner vertices for all~$j\in[i-1]$.
	Then $$\Big\vert\bigcup_{j\in[i-1]} V(W_j)\cup\bigcup_{\orde{e}\in\cQ}e\Big\vert\leq\ell_1\cdot m+2(k-1)\cdot m\leq \frac{\beta}{2}n\,.$$
	Thus, since~$\vert\cP_i\vert\geq\beta n^{\lambda_i}$, there are still at least~$
	\beta n^{\lambda_i}-\frac{\beta}{2}n^{\lambda_i}=\frac{\beta}{2} n^{\lambda_i}$ paths in~$P\in\cP_{i}$ with~$V(P)\cap(\bigcup_{j\in[i-1]} V(W_j)\cup\bigcup_{\orde{e}\in\cQ}e)=\emptyset$.
	Choose one of these uniformly at random as~$W_i$.
	
	Now, set~$\cW=\{W_1,\dots,W_m\}$ and let us analyse the probabilities.
	For all~$j\in[k]_0$,~$\unord{x}\in\unordsubs{V}{j}$ and~$i\in[m]$, the number of~$\orde{s}_i$--$\orde{t}_i$-paths~$W$ in~$H$ with~$\abs{V(W^{\circ})}=\lambda_i$ and~$\unord{x}\subseteq V(W^{\circ})$ is at most~$\ell_1^k \cdot n^{\lambda_i-j}$. Therefore, by the choice of~$\cW$, this implies~$\pr[\unord{x}\subseteq V(W_i^{\circ})\mid \orde{s}_1,\ldots,\orde{s}_m,\orde{t}_1,\ldots,\orde{t}_m]\leq \frac{2\ell_1^k}{\beta n^{j}}$.
	Furthermore, for all~$j\in[k]$ and~$\unord{x}\in\unordsubs{V}{j}$, the number of ordered edges~$\orde{e}\in\ordedges$ with~$\unord{x}\subseteq e$ is at most~$k^k\cdot n^{k-j}$ and hence~$\pr[\exists \orde{e}\in\cQ\colon \unord{x}\subseteq e]\leq \frac{\zeta k^k}{n^{j-1}}$ holds.
	Thus, for all~$e\in E$, we obtain
	\begin{align*}
		\pr[e\in E(\cW)]&\leq \sum_{i\in[m]}\pr[e\subseteq s_i\cup V(W_i^{\circ})\vee e\subseteq V(W_i^{\circ})\cup t_i]\\
		&\leq \sum_{i\in[m]}\sum_{\unord{y}\subseteq e} \sum_{\orde{e}'\in\{\orde{s}_i,\orde{t}_i\}}\pr[e\setminus\unord{y}\subseteq V(W_i^{\circ})\mid \unord{y}\subseteq e']\pr[ \unord{y}\subseteq e']\\
		&\leq \sum_{\unord{y}\subseteq e}\frac{2\ell_1^{k}}{\beta n^{\abs{e\setminus\unord{y}}}}\sum_{i\in[m]}\sum_{\orde{e}'\in\{\orde{s}_i,\orde{t}_i\}}\pr[\unord{y}\subseteq e']\\
		&=\frac{4\ell_1^{k}m}{\beta n^k}+\sum_{\unord{y}\subseteq e\colon\unord{y}\neq\emptyset}\frac{2\ell_1^{k}}{\beta n^{\abs{e\setminus\unord{y}}}}\pr[\exists \orde{e}'\in\cQ\colon \unord{y}\subseteq e']\\
		&\leq \frac{4\zeta\ell_1^{k}}{\beta n^{k-1}}+2^{k}\cdot \frac{\zeta k^k\cdot 2\ell_1^k}{\beta n^{k-1}}\leq\frac{\zeta^{1/2}}{n^{k-1}}\,,
	\end{align*}
	which completes the proof.
\end{proof}

In the following lemma, we transform a path collection as yielded by Proposition~\ref{prop: simultaneous path cover} into a cycle factor.
It resembles the usual final step in a proof by absorption, in particular, this is where we will use Proposition~\ref{prop:randabsorption}.
However, in order to subsequently apply this construction process multiple times in the proof of Proposition~\ref{prop:cycdecomptoHCdecomp} without ruining certain quasirandom properties, we need to construct the cycle factor probabilistically and take care that the probability for any edge to occur in the constructed cycle factor is small enough.
Figure~\ref{pic: transformation} illustrates this process.

\begin{lemma}\label{lem:layer}
	Suppose~$1/n\ll \mu,1/L\ll\delta\ll\rho\ll \eta,1/k$. 
	Suppose~$H$ is a~$k$-graph on~$n$ vertices with vertex set~$V$ and~$F$ is an~\rob ~$\rho$-almost regular spanning subgraph of~$H$.
	Let~$\mathcal{P}\subseteq H-F$ be a collection of~$L$-paths with~$\vert V(\mathcal{P}) \vert\geq (1-\mu) n$ and let~$\cC$ be a cycle factor on~$n$ vertices of girth at least~$L^3$.
	
	Then, there is a probabilistic construction of a copy~$\cC'$ of~$\cC$ in~$H$ such that~$\cC' \subseteq \cP\cup F$ and for every~$e\in E(F)$ of type~$\tau\in\mathcal{T}\setminus\{\typeend[k]\}$, we have~$\mathds{P}[e\in E(F\cap \cC')]\leq p_{\tau}$, where~$p_{1\textup{-con}}=\frac{\delta^{1/2}}{n^{k-1}}$,~$p_{j\textup{-end}}=\frac{1}{\delta^{k} n^{k-j}}$ for all~$j\in[k-1]$, and~$p_{j
		\textup{-con}}=p_{\textup{lo}}=\frac{1}{\delta ^{k}n^{k-1}}$ for all~$j\in[k]\setminus\{1\}$.
\end{lemma}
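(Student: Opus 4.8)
The plan is a refined absorption argument in which the extra edges of~$F$ supply all the ``connecting'' and ``absorbing'' machinery. I would reserve a small reservoir~$R$ inside~$F$ via Lemma~\ref{lem:reservoir}, build a flexible absorbing structure inside~$F$ via Proposition~\ref{prop:randabsorption}, and then join the paths of~$\cP$, the absorbing paths, and the leftover vertices into a copy of~$\cC$. The crucial feature, following Section~\ref{sec: abs}, is that the absorbing structure is a collection~$\cP_{\mathrm{abs}}$ of short paths together with a capacity function~$\sigma$ such that every such path~$P$ will eventually absorb \emph{exactly}~$\sigma(P)$ vertices, whatever the leftover turns out to be; so if a cycle of~$\cC'$ is assembled from a~$\cP_{\mathrm{abs}}$-path~$P$ (among other pieces) and has length~$\ell-\sigma(P)$ before absorption, it has length exactly~$\ell$ afterwards. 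This predictability is what makes it possible to build cycles of prescribed, possibly short, lengths, which one long absorbing path could not.

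Concretely, I would fix suitable auxiliary constants (among them a constant~$\vartheta$ with~$\rho\ll\vartheta\ll\eta$ controlling the absorbing structure) and connector lengths, randomly split~$F$ into two edge-disjoint spanning subgraphs~$F_1,F_2$ (with~$F_1$ hosting the absorbing structure and~$F_2$ the reservoir and the connectors), and check via Chernoff's inequality and a union bound that both parts remain intersecting and almost regular. Applying Lemma~\ref{lem:reservoir} to~$F_2$ yields a reservoir~$R$ such that for any two disjoint ordered edges~$\orde{s},\orde{t}$ there are many~$\orde{s}$--$\orde{t}$-paths in~$F_2$ of each of a range of short lengths with interior contained in~$R$; after a short preprocessing step (where one may also use the cycle version of Proposition~\ref{prop: simultaneous path cover} noted after its statement) we may take~$R$ disjoint from~$V(\cP)$. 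Applying Proposition~\ref{prop:randabsorption} to~$F_1$ yields a probabilistic construction of~$\cP_{\mathrm{abs}}$ together with a capacity function~$\sigma$ of total value~$c\defeq\sum_{P}\sigma(P)\ge\vartheta^4 n$, such that~$F_1-V(\cP_{\mathrm{abs}})$ stays almost regular and such that any later-specified $c$-set disjoint from~$V(\cP_{\mathrm{abs}})$ can be absorbed into the paths of~$\cP_{\mathrm{abs}}$ according to~$\sigma$, all with the per-edge probability bounds in Proposition~\ref{prop:randabsorption}\ref{item: random absorption absorption probabilities 1}.

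The core step is the combinatorial design of the skeleton. I would assign the paths of~$\cP$, the paths of~$\cP_{\mathrm{abs}}$, the vertices of~$R$, and the remaining leftover vertices to the cycles of~$\cC$ so that: every path of~$\cP\cup\cP_{\mathrm{abs}}$ is used exactly once; the vertices assigned to a target cycle of length~$\ell$, together with suitable short connectors joining the assigned pieces cyclically, total exactly~$\ell$ minus the~$\sigma$-value that cycle receives; the reservoir~$R$ is used up entirely by connector interiors; and exactly~$c$ vertices remain unassigned. The girth bound~$\ell\ge L^3$ provides the slack needed to realise every target length, since~$L$-paths and bounded-length connectors tile any sufficiently long cycle while~$\cP_{\mathrm{abs}}$ and~$R$ are small; if~$\cP$ by itself would leave fewer than~$c$ unassigned vertices, one simply withholds a few of its paths and returns their vertices to the pool. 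With the assignment fixed, I would realise the connectors: Lemma~\ref{lemma: connecting} and Lemma~\ref{lem:reservoir}\ref{item: reservoir connectivity} supply an abundance of choices for each one, and Lemma~\ref{lem:probconn}, applied to the family of end-tuple pairs to be joined, produces pairwise vertex-disjoint connectors~$\cW$ in~$F$ with~$\pr[e\in E(\cW)]\le\zeta^{1/2}/n^{k-1}$ for all~$e$. This yields vertex-disjoint cycles with the prescribed length deficits covering all but a $c$-set~$X$ disjoint from~$V(\cP_{\mathrm{abs}})$, and applying the absorption of Proposition~\ref{prop:randabsorption}\ref{item: random absorption absorption} to~$X$ turns these into a copy~$\cC'$ of~$\cC$ with~$\cC'\subseteq\cP\cup F$.

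It remains to bound~$\pr[e\in E(F\cap\cC')]$ for~$e\in E(F)$ of type~$\tau\in\cT\setminus\{\typeend[k]\}$; the type~$\typeend[k]$ is vacuous here, since an end-edge of a~$\cP$-path lies in~$\cP$, hence not in~$F$. Every~$F$-edge of~$\cC'$ is a connector edge, a block edge, or an absorption edge, each class governed by one of the probabilistic sub-constructions above, so I would union-bound over the three classes using Proposition~\ref{prop:randabsorption}\ref{item: random absorption absorption probabilities 1} and Lemma~\ref{lem:probconn} together with bounds on how many~$k$-sets of each type can appear next to a junction as opposed to deep inside a~$\cP$-path. A concentrated or leftover~$k$-set occurs only as a block or absorption edge or as a connector edge away from a junction, and so receives a bound of order~$1/n^{k-1}$ (with the factor~$1/\delta^k$ absorbing bounded multiplicities); a~$j$-ending~$k$-set sits next to a junction, where there are fewer competing configurations, hence the weaker bound~$1/(\delta^k n^{k-j})$; and the sharp bound~$\delta^{1/2}/n^{k-1}$ for a~$1$-concentrated~$k$-set reflects the near-uniform distributions driving the random walks of Proposition~\ref{prop:randabsorption} and the connector choices, which make it unlikely for a block or a connector to touch a single prescribed vertex of a~$\cP$-path. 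The genuinely hard part will be the exact-length bookkeeping in the skeleton step: unlike the classical Hamilton-cycle argument, where a single long absorbing path makes the final length automatic, here each of the possibly~$\Theta(n)$ short cycles of~$\cC$ must hit a prescribed length, so the fixed-capacity blocks are indispensable and the whole assignment must be arranged so that the leftover has size \emph{precisely}~$c=\sum_{P}\sigma(P)$.
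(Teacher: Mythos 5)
Your overall architecture (reservoir, capacity-$\sigma$ absorbing structure from Proposition~\ref{prop:randabsorption}, length bookkeeping per cycle, connectors via Lemma~\ref{lem:probconn}, final absorption of a set of size exactly $c$) matches the paper's proof, but there is one genuine gap: your proposal cannot deliver the bound $p_{1\textup{-con}}=\delta^{1/2}/n^{k-1}$. You attribute this bound to the near-uniformity of the random walks and connector choices, but those constructions only give per-edge probabilities of order $1/(\vartheta^4 n^{k-1})$ and $\zeta^{1/2}/n^{k-1}$ on whatever vertex set they are run on, and since $\delta\ll\rho\ll\vartheta$ in the hierarchy, both of these exceed $\delta^{1/2}/n^{k-1}$ by a large margin. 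The mechanism the paper actually uses is different: it first includes each path of $\cP$ in a sub-collection $\cP'$ \emph{independently with probability $1-\delta$} and runs the entire reservoir/absorption/cover/connect machinery inside $V_1=V\setminus V(\cP')$, which has size $\Theta(\delta n)$. A $1$-concentrated edge $e$ meets $k$ distinct paths of $\cP$, so $e\subseteq V_1$ forces all $k$ of them to be withheld, an event of probability $O(\delta^k)$; multiplying by the conditional bound $O(L'/\vert V_1\vert^{k-1})=O(L'/(\delta n)^{k-1})$ gives $O(L'\delta/n^{k-1})\leq\delta^{1/2}/n^{k-1}$. Your withholding of paths is only a deterministic counting fix to make the leftover have size $c$, so it provides no such $\delta^k$ factor; with your setup $1$-concentrated edges would only satisfy the same $1/(\delta^k n^{k-1})$-type bound as the other concentrated types, and that is fatal downstream, since in Proposition~\ref{prop:cycdecomptoHCdecomp} the $1$-concentrated index set $\cI_{1\textup{-con}}(\unord{e})$ can have size $\Theta(r)=\Theta(n^{k-1})$, so only a bound of the form $o(1)/n^{k-1}$ keeps the Freedman sum below $\delta^{1/3}n/2$.

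A secondary consequence of omitting the $\delta$-thinning: you propose taking the reservoir $R$ disjoint from $V(\cP)$ after ``preprocessing,'' but $V\setminus V(\cP)$ is an arbitrary set of size at most $\mu n$ with no guaranteed connectivity properties, so Lemma~\ref{lem:reservoir} cannot be made to place $R$ there. In the paper this problem disappears because $R$ is chosen inside the random set $V_1$ of size $\Theta(\delta n)$, on which $F$ is still intersecting and almost regular by the goodness of $\cP'$. I would recommend adding the random selection of $\cP'$ as the very first step and conditioning all subsequent probability bounds on it, as the paper does.
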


\begin{proof}
	We use the absorption method to transform the almost spanning collection of paths~$\cP$ into the desired cycle factor~$\cC'$.
	To this end, we perform the usual steps of a proof via absorption in a~$k$-graph similar to~$F[V\setminus V(\cP)]$.
	That is, we set aside the absorbing structure (via Proposition~\ref{prop:randabsorption}), cover almost everything (via Proposition~\ref{prop: simultaneous path cover}), connect all the paths in the approximate covering, in the absorbing structure, and, in this case, paths in~$\cP$ (via Lemma~\ref{lemma: connecting}), and finally, we absorb the remaining vertices.
	Note however, that here we actually want to perform all these steps as a probabilistic construction and analyse the probabilities for edges of~$F$ to occur in~$F\cap\cC'$.
	
	Suppose
	\begin{align}
		1/n\ll \mu,1/L\ll\delta\ll\rho\ll 1/L'\ll\mu'\ll\beta\ll\vartheta\ll\ell_1\ll\ell_0\ll\eta,1/k\,.
	\end{align} 
	We say a set~$\hat{\mathcal{P}}\subseteq \mathcal{P}$ is \emph{good} if for~$\hat{V}=V\setminus V(\hat{\mathcal{P}})$, we have~$\vert \hat{V}\vert\in [\frac{\delta}{2}n,\frac{3}{2}\delta n]$, for all~$\unord{x},\unord{y}\in \unordsubs{V}{k-1}$, we have~$\vert N_{F}(\unord{x})\cap N_{F}(\unord{y})\cap\hat{V}\vert\geq \frac{\eta}{2} \vert \hat{V}\vert$, and~$F[\hat{V}]$ is~$2\rho$-almost regular.
	Consider a random selection~$\mathcal{P}_{\text{rand}}$ of paths in~$\mathcal{P}$ which includes every path in~$\mathcal{P}$ independently with probability~$1-\delta$.
	Then McDiarmid's inequality (Lemma~\ref{lemma: McDiarmid}) guarantees that~$\mathds{P}[\mathcal{P}_{\text{rand}}\text{ is good}]\geq\frac{99}{100}$, say.
	Denote the set of good sets by~$\ccP$ and pick a set~$\mathcal{P}'=\{P_1,\dots ,P_m\}$ at random from~$\ccP$ such that $$\PP[\cP'=\cQ]=\PP[\cP_{\text{rand}}=\cQ\mid\cP_{\text{rand}}\in\ccP]$$
	for all~$\cQ\in\ccP$.
	Then~$\mathcal{P}'$ is good and we set~$V_1=V\setminus V(\mathcal{P}')$.
	For~$P\in\mathcal{P}$, we have~$V(P)\subseteq V_1$ if and only if~$P\notin\mathcal{P}'$, and each path in~$\cP$ is included in~$\cP'$ independently with probability~$1-\delta$.
	Thus, for distinct~$\widetilde{P}_1,\dots,\widetilde{P}_k\in\cP$, we have 
	\begin{align}\label{eq:layerprobpathcont}
		\mathds{P}\Big[\bigcup_{i\in[k]}V(\widetilde{P}_i)\subseteq V_1\Big]=&\frac{\mathds{P}[\cP_{\text{rand}}\in\ccP\land \forall i\in[k]: \widetilde{P}_i\notin \mathcal{P}_{\text{rand}}]}{\mathds{P}[\cP_{\textup{rand}}\in\ccP]}\nonumber\\
		\leq&\PP[\forall i\in[k]: \widetilde{P}_i\notin \mathcal{P}_{\text{rand}}]\cdot\frac{100}{99}\leq \frac{100\delta^k}{99}\,.
	\end{align}
	
	Next, we describe the construction of a copy~$\cC'$ of~$\cC$ and analyse $\mathds{P}[e\in E(F\cap\cC')\mid \cP']$ for all~$e\in E$ (note that fixing~$\cP'$ in particular fixes~$V_1$).
	In the end, we use this to deduce the upper bound on the probabilities for different types of edges of~$F$ to lie in~$E(F\cap\cC')$.
	Since we perform several probabilistic constructions sequentially, in principle, we could obtain all the probabilities conditioned on all previous steps.
	However, for our analysis it suffices to condition only on the choice of~$\cP'$.
	
	\begin{figure}[t]
		\centering
		\begin{tikzpicture}
		\tikzmath{
			\standardLineWidth=0.08;
			\pNumber=5;
			\pRadius=0.2;
			\pLength=5;
			\pSpacing=0.5;
			\pLineWidth=2*\standardLineWidth;
			\pBraceOffset=1.7;
			\pBraceLineWidth=\standardLineWidth;
			\pPrimeNumber=3;
			\pPrimeLineWidth=\pLineWidth;
			\pPrimeBraceOffset=0.6;
			\pPrimeBraceLineWidth=\standardLineWidth;
			\pIIndex=2;
			\wTopSpacing=0.7;
			\wNumber=4;
			\wRadius=0.36;
			\wLength=1.6;
			\wSpacing=\wRadius;
			\wLineWidth=0.08;
			\wBraceOffset=0.25;
			\wBraceLineWidth=\standardLineWidth;
			\sNumber=3;
			\sRadius=\wRadius;
			\sLength=0.75;
			\sSpacing=\sRadius;
			\sLineWidth=\wLineWidth;
			\sBraceOffset=\wBraceOffset;
			\sBraceLineWidth=\standardLineWidth;
			\uvLength=0.4;
			\uvLineWidth=\wLineWidth;
			\xBottomSpacing=0.7;
			\xWidth=2;
			\xHeight=1;
			\connectionLineWidth=0.04;
			\pwConnectionOffsetFactor=0.6;
			\rWidth=1;
			\rHeight=2;
			\rLabelBottomWeight=0.6;
			\rLabelOffset=1;
			\arrowLineWidth=0.1;
			\arrowTipOffset=0.2;
			\vOffset=0.12;
			\vLineWidth=\connectionLineWidth;
			\vLabelOffset=0.6;
			\vLabelBottomWeight=0.6;
			\braceAmplitude=0.2;
		}
		
		\definecolor{pColor}{rgb}{0,0.6,0}
		\definecolor{wColor}{rgb}{0,0,1}
		\definecolor{sColor}{rgb}{0.75,0,0.75}
		\definecolor{rColor}{rgb}{1,0.5,0}
		\definecolor{uvColor}{rgb}{0,0.75,1}
		\definecolor{vColor}{rgb}{0.9,0.9,0.9}
		\tikzset{
			pPath/.style={draw=pColor, opacity=0.25},
			pLabel/.style={text=pColor, opacity=0.25},
			pPrimePath/.style={draw=pColor},
			pPrimeLabel/.style={text=pColor},
			wPath/.style={draw=wColor},
			wLabel/.style={text=wColor},
			sPath/.style={draw=sColor},
			sLabel/.style={text=sColor},
			xSet/.style={draw=sColor, fill=sColor, opacity=0.4},
			xLabel/.style={text=sColor},
			connection/.style={draw=rColor},
			uvPath/.style={draw=uvColor},
			uvLabel/.style={text=uvColor},
			rSet/.style={inner color=rColor, outer color=vColor, opacity=0.45},
			rLabel/.style={text=rColor},
			vArea/.style={draw=black, fill=vColor},
			vLabel/.style={text=black},
			helper/.style={inner sep=0, outer sep=0}
		}
		
		\foreach \i in {1,...,\pNumber} {
			\draw[pPath, line width=\pLineWidth*1cm] ++({\pLength+\pRadius+0.5*\pLineWidth},{-(2*\pRadius+\pSpacing)*(\i-1)-0.5*\pLineWidth}) --++ (-\pLength,0) node[helper, pos=0](p\i-start){} arc (90:270:\pRadius) node[helper, pos=0](p\i-arc-start){} node[helper, pos=0.5](p\i-center){} node[helper, pos=1](p\i-arc-end){} --++ (\pLength,0) node[helper, pos=1](p\i-end){};
		}
		
		\foreach \i in {1,...,\pPrimeNumber} {
			\draw[pPrimePath, line width=\pPrimeLineWidth*1cm] ++({\pLength+\pRadius+0.5*\pLineWidth},{-(2*\pRadius+\pSpacing)*(\i-1)-0.5*\pLineWidth}) --++ (-\pLength,0) arc (90:270:\pRadius) --++ (\pLength,0);
		}
		
		\foreach \i in {1,...,\wNumber} {
			\draw[wPath, line width=\wLineWidth*1cm] ++({0.5*\wLineWidth+(2*\wRadius+\wSpacing)*(\i-1)},{-(\pPrimeNumber*(2*\pRadius+\pSpacing)-\pSpacing+\pLineWidth+\wTopSpacing)}) --++ (0,-\wLength) node[helper, pos=0](w\i-start){} arc (180:360:\wRadius) node[helper, pos=0](w\i-arc-start){} node[helper, pos=0.5](w\i-center){} node[helper, pos=1](w\i-arc-end){} --++ (0,\wLength) node[helper, pos=1](w\i-end){};
		}
		
		\foreach \i in {1,...,\sNumber} {
			\draw[sPath, line width=\sLineWidth*1cm] ++({0.5*\sLineWidth+(2*\wRadius+\wSpacing)*\wNumber+(2*\sRadius+\sSpacing)*(\i-1)},{-(\pPrimeNumber*(2*\pRadius+\pSpacing)-\pSpacing+\pLineWidth+\wTopSpacing+\wLength+\wRadius-\sLength-\sRadius)}) --++ (0,-\sLength) node[helper, pos=0](s\i-start){} arc (180:360:\sRadius) node[helper, pos=0](s\i-arc-start){} node[helper, pos=0.5](s\i-center){} node[helper, pos=1](s\i-arc-end){} --++ (0,\sLength) node[helper, pos=1](s\i-end){};
		}
		
		\foreach \i in {1,...,\pPrimeNumber} {
			\draw[uvPath, line width=\uvLineWidth*1cm] (p\i-start.center) --++ (\uvLength,0) node[helper, pos=0.5](u\i-center){} node[helper, pos=1](u\i-end){};
			\draw[uvPath, line width=\uvLineWidth*1cm] (p\i-end.center) --++ (\uvLength,0) node[helper, pos=0.5](v\i-center){} node[helper, pos=1](v\i-end){};
		}
		
		\pgfmathsetmacro{\endIndex}{\pPrimeNumber-1}
		\foreach \i in {1,...,\endIndex} {
			\pgfmathsetmacro{\next}{int(\i+1)}
			\draw[connection, line width=\connectionLineWidth*1cm] (v\i-end.center) to[out=0, in=0] (u\next-end.center);
		}
		
		\pgfmathsetmacro{\endIndex}{\wNumber-1}
		\foreach \i in {1,...,\endIndex} {
			\pgfmathsetmacro{\next}{int(\i+1)}
			\draw[connection, line width=\connectionLineWidth*1cm] (w\i-end.center) to[out=90, in=90] (w\next-start.center);
		}
		
		\pgfmathsetmacro{\endIndex}{\sNumber-1}
		\foreach \i in {1,...,\endIndex} {
			\pgfmathsetmacro{\next}{int(\i+1)}
			\draw[connection, line width=\connectionLineWidth*1cm] (s\i-end.center) to[out=90, in=90] (s\next-start.center);
		}
		
		\draw[connection, line width=\connectionLineWidth*1cm] (v\pPrimeNumber-end.center) to[out=0, in=0] ([yshift=-\pwConnectionOffsetFactor*\pSpacing*1cm]v\pPrimeNumber-end.center) --++ (-\pLength,0) to[out=180,in=90] (w1-start.center);
		
		\draw[connection, line width=\connectionLineWidth*1cm] (w\wNumber-end.center) to[out=90,in=90] (s1-start.center|-w\wNumber-end.center) -- (s1-start.center);
		
		\draw[connection, line width=\connectionLineWidth*1cm] (s\sNumber-end.center) -- (s\sNumber-end.center|-v1-end.center) to[out=90,in=0,out looseness=0.7] (u1-end.center);
		
		\path (s1-start.center)--(s\sNumber-end.center) node[helper, pos=0.5](s-center){};
		\path (s-center.center) --++ (0,\xBottomSpacing) node[helper, pos=1](x-center){};
		\begin{scope}[transparency group, xSet]
		\node[draw=none, fill=sColor, ellipse, minimum width=\xWidth*1cm, minimum height=\xHeight*1cm] (X) at (x-center.center){};
		\foreach \i in {1,...,\sNumber} {
			\path (s\i-start.center) -- (s\i-end.center) node[helper, pos=0.5](s\i-control){};
			\draw[-{latex}, line width=\arrowLineWidth*1cm, line cap=round] (X.center) .. controls (s\i-control.center) .. ([yshift=\arrowTipOffset*1cm]s\i-center.center);
		}
		\end{scope}
		
		\path (u1-end.center) -- (v\pPrimeNumber-end) node[pos=\rLabelBottomWeight, helper](uv-end-center){};
		\begin{pgfonlayer}{layer-1}
		\node[rSet, minimum width=\rWidth*1cm, minimum height=\rHeight*1cm, ellipse] at ([xshift=\rLabelOffset*1cm]uv-end-center.center) {};
		\end{pgfonlayer}
		
		\begin{pgfonlayer}{layer-2}
		\filldraw[line width=\vLineWidth*1cm, vArea] (p1-start.center) ++ (0,{(0.5*\pLineWidth+\vOffset)*1cm})  -- (p\pPrimeNumber-end.center) node[pos=0, helper](v-start){} --++ (0,{(-\pSpacing+(1-\pwConnectionOffsetFactor)*\pSpacing+\vOffset)*1cm}) --++ ({(-\pLength-\pRadius-0.5*\pLineWidth-\vOffset)*1cm},0) --++ (0,{(-\wTopSpacing-\wLength-\wRadius-0.5*\wLineWidth+(1-\pwConnectionOffsetFactor)*\pSpacing-2*\vOffset)*1cm}) --++ ({(\wNumber*(2*\wRadius+\wSpacing)+\sNumber*(2*\sRadius+\sSpacing)-\sSpacing+\sLineWidth+2*\vOffset)*1cm},0)node[pos=1, helper](v-bottom-right){} |- cycle ;
		\end{pgfonlayer}
		\node[helper] at (v-start.center -| v-bottom-right.center) (v-top-right){};
		
		\draw[decorate, decoration={brace, amplitude=\braceAmplitude*1cm, mirror}, pPath, line width=\pBraceLineWidth*1cm, line cap=round] ([xshift={(-\pRadius-0.5*\pLineWidth-\pBraceOffset)*1cm}]p1-arc-start.center) -- ([xshift={(-\pRadius-0.5*\pLineWidth-\pBraceOffset)*1cm}]p\pNumber-arc-end.center) node[pos=0.5, left, pLabel, xshift=-\braceAmplitude*1cm]{$\cP$};
		
		\draw[decorate, decoration={brace, amplitude=\braceAmplitude*1cm, mirror}, pPrimePath, line width=\pPrimeBraceLineWidth*1cm, line cap=round] ([xshift={(-\pRadius-0.5*\pPrimeLineWidth-\pPrimeBraceOffset)*1cm}]p1-arc-start.center) -- ([xshift={(-\pRadius-0.5*\pPrimeLineWidth-\pPrimeBraceOffset)*1cm}]p\pPrimeNumber-arc-end.center) node[pos=0.5, left, pPrimeLabel, xshift=-\braceAmplitude*1cm]{$\cP'$};
		\node[left, pPrimeLabel] at (p\pIIndex-center){$P_i$};
		
		\draw[decorate, decoration={brace, amplitude=\braceAmplitude*1cm, mirror}, wPath, line width=\wBraceLineWidth*1cm, line cap=round] ([yshift={(-\wRadius-0.5*\wLineWidth-\wBraceOffset)*1cm}]w1-arc-start.center) -- ([yshift={(-\wRadius-0.5*\wLineWidth-\wBraceOffset)*1cm}]w\wNumber-arc-end.center) node[pos=0.5, below, wLabel, yshift=-\braceAmplitude*1cm]{$\widehat{\cW}$};
		
		\draw[decorate, decoration={brace, amplitude=\braceAmplitude*1cm, mirror}, sPath, line width=\sBraceLineWidth*1cm, line cap=round] ([yshift={(-\sRadius-0.5*\sLineWidth-\sBraceOffset)*1cm}]s1-arc-start.center) -- ([yshift={(-\sRadius-0.5*\sLineWidth-\sBraceOffset)*1cm}]s\sNumber-arc-end.center) node[pos=0.5, below, sLabel, yshift=-\braceAmplitude*1cm]{$\cS$};
		
		\node[right, uvLabel] at (u\pIIndex-end.center) {$\ordered{u}_i$};
		\node[right, uvLabel] at (v\pIIndex-end.center) {$\ordered{v}_i$};
		
		\node[xLabel] at (x-center.center) {$X$};
		
		\node[rLabel] at ([xshift=\rLabelOffset*1cm]uv-end-center.center){$R$};
		
		\path (v-top-right.center) -- (v-bottom-right.center) node[pos=\vLabelBottomWeight, right, vLabel]{$V_1$};
		
		\end{tikzpicture}
		\caption{Transforming a path collection into a Hamilton cycle.}\label{pic: transformation}
	\end{figure}
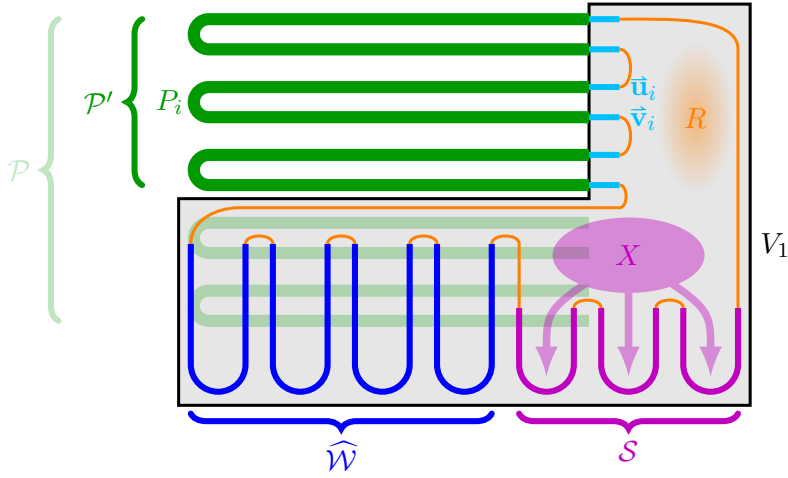
	
	Set aside a set~$R\subseteq V_1$ provided by applying Lemma~\ref{lem:reservoir} with~$F[V_1]$,~$\beta$,~$\ell_0$,~$\ell_1$,~$2\rho$,~$\eta/2$ here taking the roles of~$H$,~$\beta$,~$\ell_0$,~$\ell_1$,~$\rho$,~$\eta$ in Lemma~\ref{lem:reservoir}.
	Then~$\frac{\beta}{2}\vert V_1\vert\leq\vert R\vert\leq \beta\vert V_1\vert$ and for all~$\orde{s},\orde{t}\in\ordedges(F)$ with~$s\cap t=\emptyset$ and all integers~$\ell\in[\ell_0,\ell_1]$, the number of~$\orde{s}$--$\orde{t}$-paths~$P$ in~$H$ with~$\vert V(P^{\circ})\vert=\ell$ and~$V(P^{\circ})\subseteq R$ is at least~$\beta \vert R\vert ^{\ell}$.
	Further, Lemma~\ref{lem:reservoir} guarantees that~$F[V_1\setminus R]$ is~$4\rho$-almost regular.
	
	Since we later only want to deal with edges of~$F[V_1]$, we extend all paths in~$\cP'$ by an edge of~$F$ at each end.
	More precisely, for~$i\in[m]$, let~$\orde{a}_i$ be the ordered starting edge and~$\orde{b}_i$ be the ordered ending edge of~$P_i\in\cP'$ and inductively choose~$\orde{u}_i=(u_i^1,\dots,u_i^{k})\in\ordedges(F)$ and~$\orde{v}_i=(v_i^1,\dots,v_i^{k})\in\ordedges(F)$ for each~$i\in[m]$ as follows.
	
	Suppose that for some~$i\in[m]$, we already have defined ordered edges~$\orde{u}_{i'},\orde{v}_{i'}\in \ordedges(F)$ for all~$i'\in[i-1]$ such that~$\orde{u}_{i'}P_{i'}\orde{v}_{i'}$ is a path in~$H$.
	Set $$U_i=V_1\setminus \Big(R\cup\bigcup_{i'\in[i-1]}u_{i'}\cup v_{i'}\Big)$$
	and note that since~$m\leq\frac{n}{L}$,  we know that~$\vert U_i\vert\geq (1-\frac{3}{2}\beta)\vert V_1\vert$ holds.
	Moreover, since~$\mathcal{P}'$ is good, we know that for all~$\unord{x},\unord{y}\in\unordsubs{V}{k-1}$ and~$U\subseteq V_1$ with~$\vert U\vert\geq (1-2\beta)\vert V_1\vert$, we have
	\begin{align}\label{eq:neighinV1-}
		\vert N_{F}(\unord{x})\cap N_{F}(\unord{y})\cap U\vert\geq \frac{\eta}{3}\vert V_1\vert\qand \vert N_F(\unord{x})\cap U\vert\geq \frac{\eta}{3}\vert V_1\vert\,.
	\end{align}
	
	Suppose for some~$j\in[k]$, vertices~$u_i^{j'}$ are given for all~$j'\in\{j+1,\dots,k\}$.
	Then choose~$u_i^j\in N_F(u_i^{j+1},\dots,u_i^{k},a_i^{1},\dots,a_i^{j-1})\cap U_i$ uniformly at random (note that due to~\eqref{eq:neighinV1-}, this means that~$u_i^j$ is chosen uniformly at random from a set of size at least~$\frac{\eta}{3}\vert V_1\vert$).
	Subsequently, suppose that for some~$j\in[k]$, vertices~$v_i^{j'}$ are given for all~$j'\in[j-1]$.
	Then choose~$$v_i^j\in N_F(b_i^{j+1},\dots,b_i^{k},v_i^1,\dots,v_i^{j-1})\cap (U_i\setminus u_i)$$ uniformly at random.
	By this definition, the edges~$u_1,v_1,\dots,u_m,v_m$ are pairwise disjoint.
	Furthermore, observe that the definition of~$\orde{u}_i$ and~$\orde{v}_i$ together with~\eqref{eq:neighinV1-} yield the following.
	For all~$j\in[k]$,~$i\in[m]$, and~$\unord{x}\in\unordsubs{V}{j}$,
	\begin{align}\label{eq:probinoneuv}
		\mathds{P}[\unord{x}\subseteq u_i\lor\unord{x}\subseteq v_i\mid \cP']\leq 2k^j\left(\frac{3}{\eta \vert V_1\vert}\right)^{j}\leq\frac{\ell_0}{\vert V_1\vert^{j}}\,.
	\end{align}
	
	Now set~$V_2= V_1\setminus (R\cup\bigcup_{i\in[m]} u_{i}\cup v_{i})$ and note that by~\eqref{eq:neighinV1-},~$F[V_2]$ is~$\eta/3$-intersecting.
	Furthermore, since~$F[V_1\setminus R]$ is~$\eta/3$-intersecting and~$4\rho$-almost regular and~$\vert\bigcup_{i\in[m]} u_i\cup v_i\vert\leq \frac{2kn}{L}$, we know that~$F[V_2]$ is~$5\rho$-almost regular.
	
	Now choose~$\nu_+$ such that~$\vartheta\ll\nu_+\ll\eta,1/k$ and apply Proposition~\ref{prop:randabsorption} with~$F[V_1]$,~$F[V_2]$, $5\rho$, $L'$, $\vartheta$, $\nu_+$,~$\eta/2$ here in place of~$H_+$,~$H$,~$\rho$,~$L$,~$\vartheta$,~$\nu_+$,~$\eta$ there.
	
	This engenders a probabilistic construction of a pair~$(\cS,\sigma)$ where~$\cS$ is a collection of~$L'$-paths in~$F[V_2]$ and~$\sigma\colon \cS\rightarrow [L']_0$ is a function such that the following holds.
	\begin{enumerate}[label=\textup{(S\arabic*)}]
		\item\label{item: app random absorption number of paths} $\abs{\cS}\leq \vartheta^2 \vert V_2\vert/L'$;
		\item\label{item: app random absorption capacity}$c=\sum_{S\in\cS} \sigma(S)\geq \vartheta^4\vert V_2\vert$;
		\item\label{item: app random absorption regularity} setting~$V_3=V_2\setminus V(\mathcal{S})$, we have that~$F[V_3]$ is~$10\rho$-almost regular;
		\item\label{item: app random absorption absorption} for all probabilistic constructions of a set~$X\subseteq V_1$ with~$X\cap V(\cS)=\emptyset$ and~$\abs{X}=c$, there is a probabilistic construction of a collection~$\cS'$ of paths in~$F[V_1]$ such that the following holds.
		\begin{enumerate}[label=\textup{(S\arabic{enumi}.\arabic*)}]
			\item\label{item: app random absorption absorption structure} There is a bijection~$\varphi:\cS\to\cS'$ such that for all~$S\in\cS$, the path~$\varphi(S)$ is an~$(L'+\sigma(S))$-path with~$V(\varphi(S))\subseteq V(S)\cup X$ that has the same ordered end-edges as~$S$;
			\item\label{item: app random absorption absorption probabilities} for all~$e\in E(F)$, we have
			\begin{equation*}
			\pr[e\in E(\cS')\mid \cP']\leq \frac{1}{\vartheta^4 \vert V_2\vert^{k-1}}\qand\pr[e\in E(\boundary{\cS'})\mid \cP']\leq \frac{1}{L'\vert V_2\vert^{k-1}}.
			\end{equation*}
		\end{enumerate}
	\end{enumerate}
	
	The next step is to cover almost all vertices of~$F[V_3]$ by long paths.
	As in the other steps, we need to do this in a probabilistic way.
	This will be achieved by utilising a weak version of Proposition~\ref{prop: simultaneous path cover} followed by some random selections.
	
	By~\ref{item: app random absorption number of paths},~$F[V_3]$ is~$\eta/4$-intersecting and due to property~\ref{item: app random absorption regularity}, it is~$10\rho$-almost~$r$-regular for some~$r\geq \frac{\eta}{5}\binom{\vert V_3\vert}{k-1}$. 
	Thus, setting~$r'=(1-10\rho)\frac{r}{k}$ yields~$kr'\leq d_{F[V_3]}(v)\leq(1+21\rho)kr'$ for all~$v\in V_3$.
	Hence, we can apply Proposition~\ref{prop: simultaneous path cover} with~$F[V_3]$,~$21\rho$,~$L'$,~$\eta/4$,~$\mu'$,~$r'$ here instead of~$H$,~$\rho$,~$L$,~$\eta$,~$\mu$,~$r$ there.
	As remarked after the statement of Proposition~\ref{prop: simultaneous path cover}, the proof also provides edge-disjoint collections~$\cW_1,\ldots,\cW_{r'}$ of~$L'$-cycles in~$F[V_3]$ with~$V(\cW_i)\geq (1-\mu')\vert V_3\vert$ for all~$i\in[r']$.
	Let~$\widehat{\cW}$ be a random collection of~$L'$-paths obtained by choosing one of~$\mathcal{W}_1,\dots ,\mathcal{W}_{r'}$ uniformly at random and then independently deleting~$k-1$ consecutive edges in each cycle.
	Note that~$\vert V(\widehat{\cW})\vert\geq(1-\mu')\vert V_3\vert$ and that for each~$e\in E(F)$, we have
	\begin{align}\label{eq:layerprobcover}
		\mathds{P}[e\in E(\widehat{\cW})\mid \cP']\leq\frac{1}{r'}\leq\frac{\ell_0}{\vert V_1\vert^{k-1}}\,
	\end{align}
	and
	\begin{align}\label{eq:layerprobcoverend}
		\mathds{P}[e\in E(\boundary{\widehat{\cW}})\mid \cP']\leq\frac{2}{L'r'}\leq\frac{1}{L'^{2/3}\vert V_1\vert^{k-1}}\,.
	\end{align}
	
	Next, we will utilise Lemma~\ref{lem:probconn} applied to the ordered end-edges of the paths in~$\widehat{\cW}$, the ordered end-edges of the paths in~$\cS$, and the ordered edges~$\orde{u}_1,\orde{v}_1,\dots,\orde{u}_m,\orde{v}_m$ to connect the respective paths to cycles.
	First, we say which paths we aim to put into one cycle and afterwards we prepare for the application of Lemma~\ref{lem:probconn}. 
	
	Let~$C_1,\dots,C_h$ be the cycles in~$\cC$ and for~$i\in[h]$, set~$L_i=\vert V(C_i)\vert\geq L^3$.
	We now inductively define collections of paths~$\cZ_1,\dots,\cZ_h$ which we use to construct copies of the cycles~$C_1,\dots,C_h$.
	Suppose that for~$i\in[h]$, we already have chosen collections of paths~$\cZ_1,\dots,\cZ_{i-1}$.
	Next, we pick a set~$\cZ_i$ of previously unused paths.
	More precisely, for~$i'\in[m]$, write~$P_{i'}'=\orde{u}_{i'}P_{i'}\orde{v}_{i'}$ and choose $$\mathcal{Z}_i\subseteq\big(\{P_1',\dots,P_m'\}\cup\widehat{\cW}\cup\cS\big)\setminus\bigcup_{j\in[i-1]}\cZ_j$$ such that~$\vert\cZ_i\cap\cS\vert$ is maximal with~$\vert\mathcal{Z}_i\vert\cdot\ell_0+\vert V(\mathcal{Z}_i)\vert +\sum_{S\in\cS\cap\cZ_i}\sigma(S)\leq L_i$ and such that subject to this~$\vert \cZ_i\vert$ is maximal.
	Observe that since~$\vert V(\{P_1',\dots,P_m'\}\cup\widehat{\cW}\cup\cS)\vert\geq n-\vartheta^4\vert V_2\vert$, since each path in~$\{P_1',\dots,P_m'\}\cup\widehat{\cW}\cup\cS$ has at most~$2L$ vertices (and for a path~$S\in\cS$, we in fact have~$\vert V(S)\vert+\sigma(S)\leq 2L$), and by~\ref{item: app random absorption capacity}, this definition implies 
	\begin{align}\label{eq:numvertinpath}
		L_i-2L-\ell_0<\vert\mathcal{Z}_i\vert\cdot\ell_0+\vert V(\mathcal{Z}_i)\vert+\sum_{S\in\cS\cap\cZ_i}\sigma(S)\leq L_i\,.
	\end{align}
	Therefore,~$L_i\geq L^3$ entails~$\vert\cZ_i\vert\geq L$ and so~$\vert\cZ_i\vert (\ell_1-\ell_0)\geq 2L-\ell_0$.
	Together with~\eqref{eq:numvertinpath}, this implies\COMMENT{%
		Note that~\eqref{eq:numvertinpath} gives~$\vert\cZ_i\vert\cdot\ell_0\leq L_i-\Big(\vert V(\mathcal{Z}_i)\vert+\sum_{S\in\cS\cap\cZ_i}\sigma(S)\Big)$ and adding~$\vert\cZ_i\vert (\ell_1-\ell_0)\geq 2L-\ell_0$ to the first inequality of~$\eqref{eq:numvertinpath}$ gives~$\vert\cZ_i\vert\cdot\ell_1\geq L_i-\Big(\vert V(\mathcal{Z}_i)\vert+\sum_{S\in\cS\cap\cZ_i}\sigma(S)\Big)$.
		Hence, we have~$\vert\cZ_i\vert\cdot\ell_0\leq L_i-\Big(\vert V(\mathcal{Z}_i)\vert+\sum_{S\in\cS\cap\cZ_i}\sigma(S)\Big)\leq\vert\cZ_i\vert\cdot\ell_1$ which yields the existence of the integers~$\lambda_i^j$.
	}
	that there are integers~$\lambda_i^1,\dots,\lambda_i^{\vert \cZ_i\vert}\in[\ell_0,\ell_1]$ with $$\sum_{j\in[\vert\cZ_i\vert]}\lambda_i^j=L_i-\Big(\vert V(\mathcal{Z}_i)\vert+\sum_{S\in\cS\cap\cZ_i}\sigma(S)\Big)\,.$$
	
	Next, we aim to connect the paths in~$\mathcal{Z}_i$ by means of Lemma~\ref{lem:probconn} to a cycle~$Z_i$ for every~$i\in[h]$, using paths with~$\lambda_i^1,\dots,\lambda_i^{\vert \cZ_i\vert}$ inner vertices.
	To this end, list the paths in~$\cZ_i$ arbitrarily as~$A_i^1,\dots, A_i^{\vert\cZ_i\vert}$ for all~$i\in[h]$, and for all~$i\in[h]$ and~$g\in[\vert\cZ_i\vert]$, let~$\orde{s}_i^g$ and~$\orde{t}_i^g$ be the starting and ending edge of~$A_i^g$, respectively.
	Further, write~$p_{\star}=\sum_{i\in[h]}\vert\cZ_i\vert$ and for~$p\in[p_{\star}]$, let~$\iota_1(p),\iota_2(p)$ be such that~$(\iota_1(p),\iota_2(p))$ is the~$p$-th element in the lexicographic ordering of the tuples in~$\{(i,g):i\in[h],g\in[\vert\cZ_i\vert]\}$.
	Now, for every~$p\in[p_{\star}]$, set~$\orde{s}_p=\orde{s}_{\iota_1(p)}^{\iota_2(p)+1}$ (where we view the upper index modulo~$\iota_2(p)$) and~$\orde{t}_p=\orde{t}_{\iota_1(p)}^{\iota_2(p)}$.
	Note that when for every~$p\in[p_{\star}]$, we connect~$\orde{s}_p$ and~$\orde{t}_p$ by a path with~$\lambda_{\iota_1(p)}^{\iota_2(p)}$ inner vertices, we obtain cycles~$Z_1,\dots,Z_h$ such that~$\vert V(Z_i)\vert=L_i-\sum_{S\in\cS\cap\cZ_i}\sigma(S)$ for all~$i\in[h]$.
	
	Set~$\cQ=\{\orde{s}_1,\orde{t}_1,\dots,\orde{s}_{p_{\star}},\orde{t}_{p_{\star}}\}$ and note that $$\vert\cQ\vert\leq 2\big(m+\vert\widehat{\cW}\vert+\vert\cS\vert\big)\leq \frac{2n}{L}+\frac{2\vert V_1\vert}{L'}+\frac{2\vartheta^2\vert V_2\vert}{L'}\leq\frac{\vert V_1\vert}{L'^{1/2}}\,.$$
	Further, by~\ref{item: app random absorption absorption probabilities}, by~\eqref{eq:probinoneuv} (together with the union bound), and by~\eqref{eq:layerprobcoverend} we know that for~$e\in E(F)$,
	\begin{align}\label{eq:layerprobtuples}
		\mathds{P}[\orde{e}\in\mathcal{Q}\mid \cP']\leq \frac{\vartheta}{L'\vert V_2\vert^{k-1}}+\frac{1}{L^{1/2}\vert V_1\vert^{k-1}}+\frac{1}{L'^{2/3}\vert V_1\vert^{k-1}}\leq\frac{1}{L'^{1/2}\vert V_1\vert^{k-1}}\,.
	\end{align}
	
	For~$i\in[p_{\star}]$, let~$\cP_{i}$ be the set of all~$\orde{s}_{i}$--$\orde{t}_{i}$-paths~$P$ with~$\vert V(P^{\circ})\vert=\lambda_{\iota_1(i)}^{\iota_2(i)}$ and~$V(P^{\circ})\subseteq R$.
	Recall that the properties of~$R$ guarantee that~$\vert\cP_{i}\vert\geq\frac{\beta}{2}\vert R\vert^{\lambda_{\iota_1(i)}^{\iota_2(i)}}\geq\beta^{\ell_1+2}\vert V_1\vert^{\lambda_{\iota_1(i)}^{\iota_2(i)}}$.
	
	Now we apply Lemma~\ref{lem:probconn} with~$F[V_1]$,~$\cQ$,~$\big(\lambda_{\iota_1(i)}^{\iota_2(i)}\big)_{i\in[p_{\star}]}$,~$(\cP_i)_{i\in[p_{\star}]}$,~$\frac{1}{L'^{1/2}}$,~$\beta^{\ell_1+2}$, $\ell_1$, $\ell_0$ here instead of~$H$,~$\cQ$,~$(\lambda_i)_{i\in[m]}$,~$(\cP_i)_{i\in[m]}$,~$\zeta$,~$\beta$,~$\ell_1$,~$\ell_0$ there.
	
	Lemma~\ref{lem:probconn} then yields a probabilistic construction of a collection of paths~$\cW'\subseteq\bigcup_{i\in[p_{\star}]}\cP_i$ with~$\vert\cW'\cap\cP_i\vert=1$ for all~$i\in[p_{\star}]$ such that for every~$e\in E(F)$, we have
	\begin{align}\label{eq:layerprobconn}
		\mathcal{P}[e\in E(\cW')\mid \cP']\leq\frac{1}{L'^{1/4}\vert V_1\vert^{k-1}}\,.
	\end{align}
	This leaves us with cycles~$Z_1,\dots,Z_h$ such that for all~$i\in[h]$, the cycle~$Z_i$ contains the paths in~$\cZ_i$ as subpaths and~$\vert V(Z_i)\vert=L_i-\sum_{S\in\mathcal{S}\cap\cZ_i}\sigma(S)$.
	Observe that in particular, every element of~$\mathcal{S}$ is a subpath in one of the cycles~$Z_1,\dots,Z_h$.

	We aim to absorb the set~$X=V_1\setminus \bigcup_{i\in[h]} V(Z_i)$ of not yet covered vertices into the paths in~$\mathcal{S}$.
	For this, note that since~$\vert V(Z_i)\vert=L_i-\sum_{S\in\cS\cap\mathcal{Z}_i}\sigma(S)$ for all~$i\in[h]$, we have~$\vert X\vert=\sum_{S\in\cS}\sigma(S)=c$ because~$\sum_{i\in[h]}L_i=n$.
	So property~\ref{item: app random absorption absorption} indeed allows us to absorb~$X$ into the cycles~$Z_i$.
	More precisely, there is a probabilistic construction of a set~$\cS'$ of vertex-disjoint paths in~$F[V_1]$ such that
	\begin{itemize}
		\item\label{item: random absorption absorption structure} there is a bijection~$\varphi:\cS\to\cS'$ such that for all~$S\in\cS$, the path~$\varphi(S)$ is an~$(L'+\sigma(S))$-path with~$V(\varphi(S))\subseteq V(S)\cup X$ that has the same end-tuples as~$S$;
		\item\label{item: random absorption absorption probabilities} for all~$e\in E(F)$, we have
		\begin{align}\label{eq:probinS'}
			\pr[e\in E(\cS')\mid \cP']\leq \frac{1}{\vartheta^4 \vert V_2\vert^{k-1}}\qand\pr[e\in E(\boundary{\cS'})\mid \cP']\leq \frac{1}{L'\vert V_2\vert^{k-1}}.
		\end{align}
	\end{itemize}
	Due to the properties of~$\cS'$, replacing the every path~$S\in\cS\cap\cZ_i$ in the cycle~$Z_i$ by the path~$\varphi(S)$ for all~$i\in[h]$ leaves us with vertex-disjoint cycles~$C_1',\dots ,C_h'$ with~$\vert C_i'\vert=L_i$, that is,~$C_i'$ is a copy of~$C_i$ for all~$i\in[h]$.
	Thus we have constructed a copy of the cycle factor~$\cC$.

\medskip

	Finally, let us collect the upper bounds for the probabilities for the edges of different types to occur in the constructed cycle factor~$\cC'=\bigcup_{i\in[h]}C_i'$.
	First note that for~$i'\in[m]$ and~$e\in E(F)$ which is not ending (with respect to~$\mathcal{P}$), we have that~$e\in E(\orde{u}_{i'}\orde{a}_{i'})$ can only happen for some~${i'}\in[m]$ if~$e=u_{i'}$ and the analogous remark holds for~$\orde{b}_{i'}\orde{v}_{i'}$.
	Therefore, given~$e\in E(F)$ which is not ending,~\eqref{eq:probinoneuv} (together with the union bound) yields
	\begin{align}\label{eq:layeruvfinal}
		\mathds{P}\Big[e\in\bigcup_{{i'}\in[m]}E(\orde{u}_{i'}\orde{a}_{i'})\cup E(\orde{b}_{i'}\orde{v}_{i'})\mid \cP'\Big]\leq\frac{1}{L^{1/2}\vert V_1\vert^{k-1}}\,.
	\end{align}
	For~$j\in[k-1]$ and a~$j$-ending edge~$e\in E(F)$, we consider all partitions~$\{\unord{x}_1,\unord{x}_2\}$ of~$e$ for which there exists a~$P\in\cP$ such that~$\unord{x}_1$ is an end-set of~$P$.
	The event~$e\in E(\orde{u}_{i'}\orde{a}_{i'})$ (respectively~$e\in E(\orde{b}_{i'}\orde{v}_{i'})$) can only happen for some~${i'}\in[m]$, if for one of these partitions,~$\unord{x}_1$ is an end-set of~$P_{i'}\in\cP'$ and~$\unord{x}_2\subseteq u_{i'}$ (respectively~$\unord{x}_2\subseteq v_{i'}$).
	Note that for a fixed realisation of~$\cP'$, this can happen for at most one partition and one~${i'}\in[m]$.
	Further, since~$e$ is~$j$-ending, for each such partition, we have~$\vert\unord{x}_1\vert\leq j$.
	Thus,~\eqref{eq:probinoneuv} implies that
	\begin{align}\label{eq:layeruvfinal2}
		\mathds{P}\Big[e\in\bigcup_{{i'}\in[m]}E(\orde{u}_{i'}\orde{a}_{i'})\cup E(\orde{b}_{i'}\orde{v}_{i'})\mid \cP'\Big]\leq\frac{\ell_0}{\vert V_1
		\vert^{k-j}}\,.
	\end{align}
	
	Note that the probabilities analysed after~\eqref{eq:layerprobpathcont} were the probabilities for edges of~$F$ to occur in some subpath conditioned on the choice of~$\cP'$.
	While we do not need to make use of this for most types, we will do so for~$1$-concentrated edges.
	First note that for~$e\in E(F)$, if~$e\in E(F\cap \cC')$, then
	\begin{align}\label{eq:layerpossocc}
		e\in E(\widehat{\cW})\cup E(\cW')\cup E(\cS')\cup \bigcup_{{i'}\in[m]}E(\orde{u}_{i'}\orde{a}_{i'})\cup E(\orde{b}_{i'}\orde{v}_{i'})\,,
	\end{align}
	and if~$e$ is not ending, then in addition~$e\subseteq V_1$ holds.
	If~$e\in E(F)$ is~$1$-concentrated with respect to~$\cP$, for the event~$e\subseteq V_1$ to occur,~$V(P)\subseteq V_1$ has to hold for each of the~$k$ paths in~$\cP$ which contain a vertex of~$e$. 
	Therefore, using~\eqref{eq:layerpossocc} together with the bounds on the respective probabilities in~\eqref{eq:layerprobcover},~\eqref{eq:layerprobconn},~\eqref{eq:probinS'},~\eqref{eq:layeruvfinal}, and~\eqref{eq:layerprobpathcont}, entails
	\begin{align*}
		\mathds{P}[e\in E(F\cap\cC')]=\mathds{P}\big[e\in E(F\cap\cC')\mid e\subseteq V_1\big]\mathds{P}[e\subseteq V_1]\leq\frac{L'}{\vert V_1\vert^{k-1}}\frac{100\delta^{k}}{99}\leq\frac{\delta^{1/2}}{n^{k-1}}\,.
	\end{align*}
	
	If~$e\in E(F)$ is~$j$-ending (with respect to~$\cP$) for a~$j\in[k-1]$,~\eqref{eq:layerpossocc} together with the bounds in~\eqref{eq:layerprobcover},~\eqref{eq:layerprobconn},~\eqref{eq:probinS'}, and~\eqref{eq:layeruvfinal2} give that 
	\begin{align*}
		\mathds{P}[e\in E(F\cap\cC')]
		&\leq \mathds{P}\big[e\in E(\widehat{\cW})\cup E(\cW')\cup E(\cS')\big]+\mathds{P}\Big[e\in\bigcup_{{i'}\in[m]}E(\orde{u}_{i'}\orde{a}_{i'})\cup E(\orde{b}_{i'}\orde{v}_{i'})\Big]\\
		&\leq\frac{L'}{\vert V_1\vert^{k-1}}+\frac{\ell_0}{\vert V_1\vert^{k-j}}\leq\frac{1}{\delta^{k}n^{k-j}}\,.
	\end{align*}
	Lastly, if~$e\in E(F)$ is neither~$1$-concentrated nor ending, \eqref{eq:layerpossocc} together with~\eqref{eq:layerprobcover},~\eqref{eq:layerprobconn},~\eqref{eq:probinS'}, and~\eqref{eq:layeruvfinal} entail that $$\mathds{P}[e\in E(F\cap\cC')]\leq\frac{L'}{\vert V_1\vert^{k-1}}\leq\frac{1}{\delta^{k} n^{k-1}}\,.$$
	
	Summarised, we provided a probabilistic construction of a cycle factor~$\cC'$ which is a copy of the cycle factor~$\cC$ such that for every~$e\in E(F)$, the probability~$\mathds{P}[e\in E(F\cap\cC')]$ is bounded as claimed in the statement.
\end{proof}

The next proposition says that given an approximate decomposition of the edge set into approximate partitions of the vertex set into long paths, that is, given the setup after applying Proposition~\ref{prop: simultaneous path cover}, we can indeed obtain an approximate decomposition of the edge set into given cycle factors of not too small girth.

\begin{prop}\label{prop:cycdecomptoHCdecomp}
	Suppose~$1/n\ll 1/L\ll\mu\ll\rho\ll \eta,1/k$. 
	Suppose~$H$ is a~$k$-graph on~$n$ vertices with vertex set~$V$ and~$F$ is an~$\eta$-intersecting~$\rho$-almost regular spanning subgraph of~$H$.
	Suppose~$\mathcal{P}_1,\dots ,\mathcal{P}_{r}$ are edge-disjoint collections of~$L$-paths in~$H-F$ with~$\vert V(\cP_i)\vert\geq (1-\mu) n$ for all~$i\in[r]$ such that for all~$\unord{e}\in \unordsubs{V}{k}$, we have
	\begin{enumerate}[label=\textup{(\roman*)}]
		\item\label{it:leftover} $\vert\mathcal{I}_{\textup{lo}}(\unord{e})\vert\leq\mu r$;
		\item\label{it:jcon} $\vert\mathcal{I}_{j\textup{-con}}(\unord{e})\vert\leq 2^{L^2}n^{k-j}$ for all~$j\in[k-1]$ and~$\vert\mathcal{I}_{k\textup{-con}}(\unord{e})\vert\leq 2^{L^2}n$;
		\item\label{it:j-end} $\vert\mathcal{I}_{j\textup{-end}}(\unord{e})\vert\leq\frac{n^{k-j}}{L^{1/2}}$ for all~$j\in[k-1]$.
	\end{enumerate}
	Then, for all cycle factors~$\cC_1,\dots,\cC_r$ on~$n$ vertices of girth at least~$L^3$, there are edge-disjoint copies of~$\mathcal{C}_1,\dots,\mathcal{C}_r$ in~$H$.
\end{prop}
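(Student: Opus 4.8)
The plan is to apply Lemma~\ref{lem:layer} iteratively, once for each index $i\in[r]$, in order to convert the path collection $\cP_i$ (together with edges of $F$) into a copy $\cC_i'$ of the cycle factor $\cC_i$, while keeping all $\cC_i'$ edge-disjoint. The paths in $\cP_1,\dots,\cP_r$ are already edge-disjoint and lie in $H-F$, so edge-disjointness of the path parts is automatic; the danger is that the edges of $F$ used to connect and absorb in step $i$ may collide with those used in some step $i'\ne i$. To control this, I would fix a hierarchy $1/n\ll 1/L\ll\mu\ll\delta\ll\rho\ll\eta,1/k$ and process the indices $i=1,\dots,r$ one at a time, maintaining at each step a subgraph $F_i\subseteq F$ of "still available" edges of $F$ (with $F_0=F$ and $F_i=F_{i-1}-E(F\cap\cC_i')$) and showing that $F_i$ remains $\eta/2$-intersecting and (say) $2\rho$-almost regular throughout. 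For the $i$-th step I would apply Lemma~\ref{lem:layer} with $H$, $F_{i-1}$, $\cP_i$, $\cC_i$ playing the roles of $H$, $F$, $\cP$, $\cC$; note that $\cP_i$ is a collection of $L$-paths in $H-F\subseteq H-F_{i-1}$ with $|V(\cP_i)|\ge(1-\mu)n$, as required, and the girth hypothesis $\ge L^3$ carries over directly.

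The heart of the argument is the probabilistic bookkeeping. Lemma~\ref{lem:layer} gives a \emph{probabilistic} construction of $\cC_i'$ with $\cC_i'\subseteq\cP_i\cup F_{i-1}$ and explicit per-edge upper bounds $\PP[e\in E(F_{i-1}\cap\cC_i')]\le p_{\tau(e,i)}$ for the type $\tau(e,i)$ of $e$ with respect to $\cP_i$, where $p_{1\text{-con}}=\delta^{1/2}n^{-(k-1)}$, $p_{j\text{-end}}=\delta^{-k}n^{-(k-j)}$, and $p_{j\text{-con}}=p_{\text{lo}}=\delta^{-k}n^{-(k-1)}$ (there is no bound for type $k\text{-end}$, but only one index $i'$ can have a given $k$-set as an end-edge of a path, so those cause no accumulation). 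For a fixed $e\in E(F)$, I would like to bound $\sum_{i\in[r]}\PP[e\in E(F\cap\cC_i')]$ by summing $p_{\tau(e,i)}$ over $i$, using precisely the counting hypotheses \ref{it:leftover}--\ref{it:j-end}: the leftover indices contribute $\le\mu r\cdot\delta^{-k}n^{-(k-1)}$; the $j$-concentrated indices contribute $\le 2^{L^2}n^{k-j}\cdot\delta^{-k}n^{-(k-1)}$ for $j\le k-1$ and $\le 2^{L^2}n\cdot\delta^{-k}n^{-(k-1)}$ for $j=k$, which because of $1/L\ll\delta$ (hence $2^{L^2}$ is dwarfed by $\delta^{-k}$ only if we are careful — in fact since $\rho,\mu\ll\delta$ and $1/L\ll\mu$, the constant $2^{L^2}$ is fixed while $n\to\infty$, so each such term is $o(1)$); the $j$-ending indices contribute $\le L^{-1/2}n^{k-j}\cdot\delta^{-k}n^{-(k-j)}=\delta^{-k}L^{-1/2}$; and the $1$-concentrated indices are the delicate ones, contributing $\le 2^{L^2}n^{k-1}\cdot\delta^{1/2}n^{-(k-1)}=2^{L^2}\delta^{1/2}$ — this is exactly why Lemma~\ref{lem:layer} was engineered to give the much smaller bound $\delta^{1/2}n^{-(k-1)}$ (rather than $\delta^{-k}n^{-(k-1)}$) for $1$-concentrated edges, since those are the only type whose index set can be as large as $\Theta(n^{k-1})$. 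Summing, $\sum_{i}\PP[e\in E(F\cap\cC_i')]\le O(\mu\,\delta^{-k}\,r/n^{k-1})+O(\delta^{-k}L^{-1/2})+o(1)$; using $d_F(v)=\Theta(\rho\text{-almost }r)$ so that $r=O(n^{k-1})$, this is $o(d_F(v))$ — in fact I would aim for something like $\le\eta n^{k-1}/100$ — provided $\mu$ and $1/L$ are small enough relative to $\delta$ and $\delta$ small relative to $\eta,1/k$.

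With the expected total $F$-overlap per edge controlled, the final step is to show that with positive probability the whole process of $r$ successive applications succeeds, i.e. that $F_i$ never degrades below $\eta/2$-intersecting and $2\rho$-almost regular. Here I would run the $r$ constructions sequentially and, for each fixed $(k-1)$-set $\unord{x}$ and each vertex $v$, track the random variables counting how many edges of $F$ through $\unord{x}$ (resp.\ incident to $v$) have been used up to step $i$; each such count is a sum $\sum_i \bOne[e\in E(F\cap\cC_i')]$ of indicators whose conditional expectations (given the history) sum to $o(d_F(\unord{x}))$ by the computation above, so Freedman's inequality (Lemma~\ref{lem:freedman}) gives that with probability $1-\exp(-\Omega(\text{poly}(n)))$ the total usage is, say, at most $\eta n/10$ for this $\unord{x}$; a union bound over the at most $n^{k-1}$ choices of $\unord{x}$ and $n$ choices of $v$ shows that with positive probability \emph{every} intermediate $F_i$ remains $\eta/2$-intersecting and $2\rho$-almost regular, hence Lemma~\ref{lem:layer} is applicable at every step. (One must be slightly careful that the conditional-expectation bounds from Lemma~\ref{lem:layer} were stated unconditionally; but since at step $i$ we apply the lemma to the current graph $F_{i-1}$ which is a subgraph of $F$, and the lemma's probability bounds are in terms of $n$ and the ambient parameters — not of $F_{i-1}$'s edge count — they hold verbatim conditioned on the first $i-1$ steps, which is exactly the form Freedman's inequality needs.) Taking any outcome in the positive-probability good event yields edge-disjoint copies $\cC_1',\dots,\cC_r'$ of $\cC_1,\dots,\cC_r$ in $H$, completing the proof.

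The main obstacle, as indicated, is making the union bound over all $(k-1)$-sets work for the $1$-concentrated edges: naively those indices number up to $2^{L^2}n^{k-1}$, which would blow up the per-edge expected overlap to a constant rather than $o(1)$, so the whole argument hinges on the sharp $1$-concentrated bound $p_{1\text{-con}}=\delta^{1/2}n^{-(k-1)}$ from Lemma~\ref{lem:layer} together with the hierarchy $\mu\ll\delta\ll\rho\ll\eta$ forcing $2^{L^2}\delta^{1/2}$ and $\delta^{-k}L^{-1/2}$ to be negligible — and on checking that Freedman's inequality still yields a strong enough tail bound to survive the $n^{k-1}$-fold union bound, which it does since the relevant expectations are polynomially large in $n$ while the deviations we forbid are a constant fraction larger.
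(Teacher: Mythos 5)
Your proposal follows essentially the same route as the paper's proof: iteratively apply Lemma~\ref{lem:layer} to the residual graph $F-\bigcup_{j<i}\cC'_j$, sum the per-edge probabilities $p_{\tau}$ against the counting hypotheses \ref{it:leftover}--\ref{it:j-end}, and use Freedman's inequality with a union bound over $(k-1)$-sets to guarantee that with positive probability the residual graph stays $\eta/2$-intersecting and $2\rho$-almost regular at every step. (The paper additionally introduces surrogate indicators $Y_i^{\unord{x}}$, defined also on the event that an earlier step has already failed, so that the conditional-expectation hypothesis of Freedman's inequality holds at every step; your closing parenthetical is the informal version of exactly this device.)

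One numerical slip should be repaired, and it sits precisely where you locate the crux. You bound the $1$-concentrated contribution by $\vert\mathcal{I}_{1\textup{-con}}(\unord{e})\vert\cdot p_{1\textup{-con}}\leq 2^{L^2}n^{k-1}\cdot\delta^{1/2}n^{-(k-1)}=2^{L^2}\delta^{1/2}$ and assert this is negligible. It is not: in the hierarchy $1/L\ll\mu\ll\delta$ the parameter $L$ is chosen \emph{after} $\delta$, so $2^{L^2}$ dominates every power of $1/\delta$ (this is also why the term $\delta^{-k}L^{-1/2}$ coming from the ending types \emph{is} negligible, whereas $2^{L^2}\delta^{1/2}$ is not). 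The repair is to ignore hypothesis~\ref{it:jcon} for $j=1$ and use the trivial bound $\vert\mathcal{I}_{1\textup{-con}}(\unord{e})\vert\leq r$: since the collections $\cP_i$ are edge-disjoint and each covers at least $(1-\mu)n$ vertices, one has $(1-\mu)nr\leq\sum_{i}\vert V(\cP_i)\vert\leq k\vert E(H)\vert$, so $r=O(n^{k-1})$ and the $1$-concentrated contribution per edge is $O(\delta^{1/2})$, comfortably below the threshold (of order $\delta^{1/3}$ per edge) needed for the degree $d_{F\cap\cC'_{<i}}(\unord{x})$ of any $(k-1)$-set in the used part of $F$ to remain at most $\delta^{1/4}n$. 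This is exactly how the paper handles that term; with this correction your argument goes through.
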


\begin{proof}
	In the following, we will analyse a random process in which we utilise Lemma~\ref{lem:layer} to transform each collection of paths~$\mathcal{P}_i$ in a fairly uniform way into a cycle factor~$\cC_i'$ that is a copy of~$\cC_i$.
	
	Suppose~$\mu\ll\delta\ll \rho$ and define a random process inductively as follows.
	Suppose that for some~$i\in[r]$ and all~$j\in[i-1]$ and~$\unord{x}\in\unordsubs{V}{k-1}$, we have defined a cycle factor~$\cC'_{j}$ in~$H$ and a~$\{0,1\}$-valued random variable~$Y_j^{\unord{x}}$.
	Further, set~$\cC'_{<i}=\bigcup_{j\in[i-1]}\cC'_j$.
	
	Let~$\cE_i$ be the event that for all~$\unord{x}\in\unordsubs{V}{k-1}$, we have that~$ d_{F\cap\cC'_{<i}}(\unord{x})\leq\delta^{1/4}n$.
	If~$\cE_i$ does not occur, we set~$\cC'_i=\emptyset$ and choose~${Y_i^{\unord{x}}}_{\vert\cE_i^c}$ such that~$\mathds{P}[Y_i^{\unord{x}}=1\mid (Y^{\unord{x}}_j)_{j\in[i-1]}, \cE_i^c]=\sum_{\unord{e}\in\unordsubs{V}{k}\colon\unord{x}\subseteq\unord{e}}p_{\tau(\unord{e},i)}$.
	In the end, we will show that with high probability,~$\cE_i$ occurs for all~$i\in[r]$.
	
	If~$\cE_i$ occurs, it follows that for all~$\unord{x},\unord{y}\in\unordsubs{V}{k-1}$, we have
	$$\vert N_{F-\cC'_{<i}}(\unord{x})\cap N_{F-\cC'_{<i}}(\unord{y})\vert\geq\vert N_{F}(\unord{x})\cap N_{F}(\unord{y})\vert-2\delta^{1/4} n\geq\frac{\eta} {2}n$$ and that~$F-\cC_{<i}'$ is~$2\rho$-almost regular.
	This means that if~$\cE_i$ occurs, we may apply Lemma~\ref{lem:layer}, with~$H$,~$F-\cC'_{<i}$,~$L$,~$\cC_i$,~$\cP_i$,~$\eta/2$,~$2\rho$,~$\delta$,~$\mu$ here instead of~$H$,~$F$,~$L$,~$\cC$,~$\cP$,~$\eta$,~$\rho$,~$\delta$,~$\mu$ there, to obtain a probabilistic construction of a cycle factor~$\cC'_i$ in~$H$ such that
	\begin{itemize}
		\item  $\cC'_i$ is a copy of the cycle factor~$\mathcal{C}_i$;
		\item $E(\cC'_i)\subseteq E(\cP_i)\cup E(F-\cC'_{<i})$ (in particular,~$\cC'_i$ is edge-disjoint from every cycle factor~$\cC'_{j}$ for all~$j\in[i-1]$);
		\item $\mathds{P}[\unord{e}\in E(F\cap\cC'_i)\mid (Y_j^{\unord{x}})_{j\in[i-1]},\cE_i]\leq p_{\tau (\unord{e},i)}$ for every~$\unord{e}\in \unordsubs{V}{k}$ where~$p_{1\textup{-con}}=\frac{\delta^{1/2}}{n^{k-1}}$, $p_{j\textup{-end}}=\frac{1}{\delta^{k} n^{k-j}}$ for all~$j\in[k-1]$, $p_{\typeend[$k$]}=0$, and~$p_{j
			\textup{-con}}=p_{\textup{lo}}=\frac{1}{\delta ^{k}n^{k-1}}$ for all~$j\in[k]\setminus\{1\}$ (here~$p_{\typeend[$k$]}=0$ holds since if~$\unord{e}\in\unordsubs{V}{k}$ is~$k$-ending with respect to~$\cP_i$, then~$\unord{e}\notin E(F)$).
	\end{itemize}
	Denoting the indicator variable of the event~$d_{F\cap\cC'_i}(\unord{x})\geq 1$ by~$I^{\unord{x}}_i$ for every~$\unord{x}\in\unordsubs{V}{k-1}$, this definition of~$\cC'_i$  implies~$\mathds{P}[I^{\unord{x}}_{i}=1\mid (Y_j^{\unord{x}})_{j\in[i-1]},\mathcal{E}_i]\leq \sum_{\unord{e}\in\unordsubs{V}{k}\colon\unord{x}\subseteq\unord{e}} p_{\tau (\unord{e},i)}$ and we set~${Y^{\unord{x}}_i}_{\vert\cE_i}={I_i^{\unord{x}}}_{\vert\cE_i}$.
	Thus, for all~$\unord{x}\in\unordsubs{V}{k-1}$, we have
	\begin{align}\label{eq:numofocc}
		\big\vert\{i\in[r]\colon d_{F\cap\cC'_{i}}(\unord{x})\geq 1 \}\big\vert\leq\sum_{i\in[r]}Y_i^{\unord{x}}\,.
	\end{align}
	
	By definition, we have that~$\mathds{P}[Y^{\unord{x}}_{i}=1\mid (Y^{\unord{x}}_j)_{j\in[i-1]}]\leq \sum_{\unord{e}\in\unordsubs{V}{k}\colon\unord{x}\subseteq\unord{e}}p_{\tau (\unord{e},i)}$ holds for all~$i\in[r]$ and this entails that for all~$\unord{x}\in\unordsubs{V}{k-1}$, we have
	\begin{align*}
	\MoveEqLeft[4]
	\sum_{i\in[r]}\EE\big[Y_i^{\unord{x}}=1\mid (Y_j^{\unord{x}})_{j\in[i-1]}\big]\leq \sum_{\unord{e}\in\unordsubs{V}{k}\colon\unord{x}\subseteq\unord{e}}\sum_{\tau\in\cT}p_{\tau}\vert\mathcal{I}_{\tau}(\unord{e})\vert\\
	\overset{\text{\ref{it:leftover}--\ref{it:j-end}}}{\leq}&n\Big(\mu r\cdot\frac{1}{\delta^kn^{k-1}}+r\cdot\frac{\delta^{1/2}}{n^{k-1}}+2\sum_{j\in[k-1]\setminus\{1\}}2^{L^2}n^{k-j}\frac{1}{\delta^kn^{k-1}}+\sum_{j\in[k-1]}\frac{n^{k-j}}{L^{1/2}}\frac{1}{\delta^kn^{k-j}}\Big)\leq\frac{\delta^{1/3} n}{2}\,.
	\end{align*}
	\color{black}
	Thus, we obtain by Freedman's inequality (Lemma~\ref{lem:freedman}) that
	\begin{align}\label{eq: app freedman}
	\mathds{P}\Big[\sum_{i\in[r]}Y^{\unord{x}}_i\geq \delta^{1/3} n\Big]\leq \exp(-n^{1/2})\,.
	\end{align}
	Suppose now that~$\sum_{i\in[r]}Y_i^{\unord{x}}\leq\delta^{1/3}n$ holds for all~$\unord{x}\in\unordsubs{V}{k-1}$.
	Then by~\eqref{eq:numofocc}, we conclude that~$d_{F\cap\cC'_{<i}}(\unord{x})\leq\delta^{1/4}n$ holds for all~$i\in[r]$ and~$\unord{x}\in\unordsubs{V}{k-1}$, meaning that the event~$\cE_i$ occurs for all~$i\in[r]$.
	Consequently, by~\eqref{eq: app freedman} and the union bound we conclude that $$\mathds{P}\Big[\bigcap_{i\in[r]}\cE_i\Big]\geq\mathds{P}\biggr[\sum_{i\in[r]}Y^{\unord{x}}_i\leq \delta^{1/3}n\text{ for all }\unord{x}\in \unordsubs{V}{k-1}\biggr]>0\,.$$
	Thus, with positive probability it happens that~$\cE_i$ occurs for all~$i\in[r]$, meaning that in each step~$i$ of this random process we construct a copy of~$\cC_i$ that is edge-disjoint from all previously constructed cycle factors.
	This yields edge-disjoint copies of~$\cC_1,\dots,\cC_r$, as desired.
\end{proof}

\section{Proof of Theorem~\ref{thm: main}}\label{sec: comb}

We now prove our main theorem by combining Propositions~\ref{prop: simultaneous path cover} and~\ref{prop:cycdecomptoHCdecomp}.

\begin{proof}[Proof of Theorem~\ref{thm: main}]
	Suppose~$1/n\ll \rho, 1/L\ll \mu \ll \rho_F \ll \eps \ll \eta, 1/k$.
	Suppose~$H$ is an~$\eta$-intersecting~$\rho$-almost~$r$-regular~$k$-graph on~$n$ vertices. Let~$r'\defeq (1-\eps)r/k$ and suppose~$\cC_1,\ldots,\cC_{r'}$ are cycle factors, whose girth is at least~$L$.
	We show that there is a suitable spanning subgraph~$F$ of~$H$ such that Proposition~\ref{prop: simultaneous path cover} guarantees the existence of collections~$\cP_1,\ldots,\cP_{r'}$ of paths in~$H-F$ that we can connect to cycles forming a copy of~$\cC_i$ for all~$i\in[r']$ via Proposition~\ref{prop:cycdecomptoHCdecomp}.

	Let~$V\defeq V(H)$ and~$E\defeq E(H)$. Let~$F$ be a random spanning subgraph of~$H$ for which every edge~$e\in E$ is included in~$E(F)$ independently at random with probability~$p\defeq \eps-2\rho$.
	Then Chernoff's inequality (Lemma~\ref{lemma: chernoff}) and the union bound show that with positive probability~$F$ is an~$\eps^2\eta/2$-intersecting~$2\rho$-almost~$pr$-regular spanning subgraph of~$H$ such that~$H-F$ is~$\eta/2$-intersecting.
	From now on, let~$F$ denote such a subgraph.

	For all~$v\in V$, we have\COMMENT{%
		use~$(1-\rho)-(1+2\rho)(\eps-2\rho)=1-\rho-\eps+2\rho-2\eps\rho+4\rho^2\geq 1-\rho-\eps+2\rho-2\eps\rho=1-\eps+\rho(1-2\eps)\geq 1-\eps$
	}
	\begin{equation*}
		d_{H'}(v)\geq ((1-\rho)-(1+2\rho)(\eps-2\rho))r\geq kr'
	\end{equation*}
	and\COMMENT{%
		use~$(1+\rho)-(1-2\rho)(\eps-2\rho)=1+\rho-\eps+2\rho+2\eps\rho-4\rho^2\leq 1+3\rho-\eps+2\rho\eps\leq 1+3\rho-\eps+2\rho\eps+\rho-6\rho\eps=(1+4\rho)(1-\eps)$
	}
	\begin{equation*}
		d_{H'}(v)\leq ((1+\rho)-(1-2\rho)(\eps-2\rho))r\leq (1+4\rho)kr'.
	\end{equation*}
	Thus with~$4\rho$,~$L^{1/3}$,~$\eta/2$,~$\mu$,~$H'$ playing the roles of~$\rho$,~$L$,~$\eta$,~$\mu$,~$H$, Proposition~\ref{prop: simultaneous path cover} yields edge-disjoint collections~$\cP_1,\ldots,\cP_{r'}$ of~$L^{1/3}$-paths in~$H'$ with~$\abs{V(\cP_i)}\geq (1-\mu)n$ for all~$i\in[r']$ such that the following holds for all~$\unord{e}\in\unordsubs{V}{k}$.
	\begin{itemize}
		\item $\abs{\cI_{\typelo}(\unord{e})}\leq\mu r'$;
		\item $\abs{\cI_{\typecon[$j$]}(\unord{e})}\leq n^{k-j}/(\eta/2)^{2L^{1/3}}$ for all~$j\in[k-1]$ and~$\abs{\cI_{\typecon[$k$]}(\unord{e})}\leq n/(\eta/2)^{2L^{1/3}}$;
		\item $\abs{\cI_{\typeend[$j$]}(\unord{e})}\leq n^{k-j}/L^{1/6}$ for all~$j\in[k-1]$.
	\end{itemize}
	Since~$F$ is~$2\rho$-almost regular,~$F$ is in particular~$\rho_F$-almost regular. Consequently, with~$L^{1/3}$,~$\mu$,~$\rho_F$, $\eps^2\eta/2$,~$H$,~$F$ playing the roles of~$L$,~$\mu$,~$\rho$,~$\eta$,~$H$,~$F$, Proposition~\ref{prop:cycdecomptoHCdecomp} yields copies of the given cycle factors as desired.
\end{proof}

\section{Concluding remarks}
In this paper, we prove a strong generalization of both the well-known Dirac-type result for the existence of one Hamilton cycle in large $k$-graphs due to R\"odl, Ruci\'nski, and Szemer\'edi~\cite{RRS:06,RRS:11,RRS:08} and a result concerning asymptotically optimal packings of Hamilton cycles in graphs by Ferber, Krivelevich, and Sudakov~\cite{FKS:17}.
In fact, our result even applies to cycle factors of large girth.

It was recently proved independently by Lang and Sanhueza-Matamala~\cite{LS:20} and by Polcyn, Reiher, R\"odl, and Sch\"ulke~\cite{PRRS:20} that every large $k$-graph on $n$ vertices with $\delta_{k-2}(H)\geq (5/9+o(1))\binom{n}{2}$ contains a Hamilton cycle.
We wonder whether such $k$-graphs actually contain $(1-o(1))\reg_k(H)/k$ edge-disjoint Hamilton cycles.

There are yet other sufficient conditions for Hamiltonicity in hypergraphs, see for instance~\cite{S:19}.
It would be interesting to know which other sufficient conditions imply a packing result similar to Theorem~\ref{thm: simple min_deg}.
Another tempting question in this direction is as follows.
Call a~$k$-graph~$H$ \emph{robustly Hamiltonian} if we can delete $o(n)$ edges incident to each $(k-1)$-set and~$H$ still contains a Hamilton cycle.
Does every large robustly Hamiltonian~$k$-graph~$H$ contain $(1-o(1))\reg_k(H)/k$ edge-disjoint Hamilton cycles?

Lastly, it would of course be desirable to obtain a real decomposition of a~$k$-graph into Hamilton cycles, or even stronger, 
to show that any~$k$-graph satisfying certain conditions contains~$\reg_k(H)/k$ edge-disjoint Hamilton cycles.
However, even decompositions of cliques into Hamilton cycles are in general not known to exist.
Further, it was recently shown by Piga and Sanhueza-Matamala~\cite{PS:21} that there are arbitrarily large~$3$-graphs~$H$ with~$\delta_{2}(H)\geq(\frac{2}{3}-o(1))\vert V(H)\vert$ which do not contain~$\reg_3(H)/3$ edge-disjoint Hamilton cycles.
Therefore, our results cannot be improved to exact decompositions without increasing the lower bound on the minimum degree significantly.

\bibliographystyle{amsplain}
\bibliography{ReferencesLocal}

\end{document}